\let\save@mathaccent\mathaccent
\newcommand*\if@single[3]{%
  \setbox0\hbox{${\mathaccent"0362{#1}}^H$}%
  \setbox2\hbox{${\mathaccent"0362{\kern0pt#1}}^H$}%
  \ifdim\ht0=\ht2 #3\else #2\fi
  }
\newcommand*\rel@kern[1]{\kern#1\dimexpr\macc@kerna}
\newcommand*\widebar[1]{\@ifnextchar^{{\wide@bar{#1}{0}}}{\wide@bar{#1}{1}}}
\newcommand*\wide@bar[2]{\if@single{#1}{\wide@bar@{#1}{#2}{1}}{\wide@bar@{#1}{#2}{2}}}
\newcommand*\wide@bar@[3]{%
  \begingroup
  \def\mathaccent##1##2{%
    \let\mathaccent\save@mathaccent
    \if#32 \let\macc@nucleus\first@char \fi
    \setbox\z@\hbox{$\macc@style{\macc@nucleus}_{}$}%
    \setbox\tw@\hbox{$\macc@style{\macc@nucleus}{}_{}$}%
    \dimen@\wd\tw@
    \advance\dimen@-\wd\z@
    \divide\dimen@ 3
    \@tempdima\wd\tw@
    \advance\@tempdima-\scriptspace
    \divide\@tempdima 10
    \advance\dimen@-\@tempdima
    \ifdim\dimen@>\z@ \dimen@0pt\fi
    \rel@kern{0.6}\kern-\dimen@
    \if#31
      \overline{\rel@kern{-0.6}\kern\dimen@\macc@nucleus\rel@kern{0.4}\kern\dimen@}%
      \advance\dimen@0.4\dimexpr\macc@kerna
      \let\final@kern#2%
      \ifdim\dimen@<\z@ \let\final@kern1\fi
      \if\final@kern1 \kern-\dimen@\fi
    \else
      \overline{\rel@kern{-0.6}\kern\dimen@#1}%
    \fi
  }%
  \macc@depth\@ne
  \let\math@bgroup\@empty \let\math@egroup\macc@set@skewchar
  \mathsurround\z@ \frozen@everymath{\mathgroup\macc@group\relax}%
  \macc@set@skewchar\relax
  \let\mathaccentV\macc@nested@a
  \if#31
    \macc@nested@a\relax111{#1}%
  \else
    \def\gobble@till@marker##1\endmarker{}%
    \futurelet\first@char\gobble@till@marker#1\endmarker
    \ifcat\noexpand\first@char A\else
      \def\first@char{}%
    \fi
    \macc@nested@a\relax111{\first@char}%
  \fi
  \endgroup
}
\newcounter{Theorem}
\numberwithin{Theorem}{section}
\theoremstyle{plain}
\newtheorem{prop}[Theorem]{Proposition}
\newtheorem{thm}[Theorem]{Theorem}
\newtheorem{lem}[Theorem]{Lemma}
\newtheorem{cor}[Theorem]{Corollary}
\theoremstyle{remark}
\newtheorem{rem}[Theorem]{\bfseries\itshape Remark}
\newtheorem{ques}[Theorem]{\bfseries\itshape Question}
\theoremstyle{definition}
\newtheorem{defn}[Theorem]{Definition}
\newtheorem{exm}[Theorem]{Example}
\DeclareMathOperator{\id}{id}
 \def\co{\colon\thinspace}
\begin{document}

   \title[Mind-Body Duality]{A duality for labeled graphs and
     factorizations with applications to graph embeddings and Hurwitz
     enumeration} \author{Nikos Apostolakis} \address{Bronx Community
     College of The City University of New York}
   \email{nikolaos.apostolakis@bcc.cuny.edu} \date{\today}

\begin{abstract}
  The set of factorizations of permutations in to $m$ transpositions
  of some symmetric group $\mathcal{S}_n$ is naturally in bijection
  with the set of graphs of order $n$ and size $m$ with both edges and
  vertices labeled.  We define a notion of duality (the
  \emph{mind-body duality}) for factorizations and such labeled graphs
  and interpret it in terms of Properly Embedded Graphs, a class of
  graphs embedded in a bounded compact oriented surface with all the
  vertices lying in the boundary, and show a close connection of this
  duality with the Hurwitz action of the Braid Group.  Connections
  with the theory of Cellularly Embedded Graphs are highlighted and
  hints of possible applications are given.  In this paper we focus on
  developing the necessary theory, leaving specific applications and
  further developments for future projects.
\end{abstract}

\keywords{ Properly Embedded Graph ; Factorizations Edge-Labeled
  Graphs ; Mind-Body Duality ; Hurwitz action ; Garside Element of
  Braid Group ; Cellularly Embedded Graph ; Perfect Trail Double Cover
  ; Medial Digraph ; Perfect Chain Decomposition ; Self-Dual
  Embeddings ; Up-Down Numbers}

\thanks{Part of the work in Sections~\ref{sec:duality}
  and~\ref{sec:gy} was developed jointly with Kerry Ojakian. The
  Computer Algebra Systems Sage~\cite{sagemath}, and GAP~\cite{GAP4}
  were used extensively to confirm calculations and check conjectures
  at several stages of this project.  It is my pleasure to thank
  Nichole McDaniel for help with drawing some of the figures.  Finally
  I gratefully acknowledge the partial support of PSCCUNY Research
  Award TRADA-48-526.}

\maketitle
\tableofcontents

\section{Introduction}
\label{sec:intro}

The research in this paper originated with the author
reading~\cite{GouldenYong} and attempting to understand the notion of
duality implicit in the construction of the ``\emph{structural
  bijection}'' defined there.  That duality is closely related to the
``duality''\footnote{The quotation marks are there because this
  ``duality'' is not involutory, a natural requirement in order to
  call any bijection a duality.} for non-crossing trees defined
in~\cite{Hernando1999}, and the structural properties of the bijection
follow from the properties of that duality, namely from the fact that
the neighborhood of a vertex is transformed to a path that, together
with an arc of the boundary circle, forms the boundary of one of the
region that the disk of the non-crossing tree is divided into by the
tree.  An exposition of the contents of~\cite{GouldenYong} from this
point of view is given in Section~\ref{sec:trees}.  A question posed
in~\cite{GouldenYong} is to find generalizations of the structural
bijection, defined there for minimal factorizations of cycles, to more
general classes of factorizations.  We answer this question by making
explicit, and clarifying, the implicit duality, which we call
``Mind-Body Duality'' for reasons explained in
Section~\ref{sec:factorizations}, and generalizing it so that it
applies to factorizations of any permutation with any number of
factors. We leave specific applications to bijective enumerations to
future projects, see Section~\ref{sec:future} for a sample of such
projects planned for the near future.

By \emph{factorization} we mean an expression of a permutation as a
product of a sequence of transpositions. As observed by D\'enes
in~\cite{Denes1959}, factorizations of permutations of a finite set
$V$ with $m$ factors, are in bijection with graphs with vertex set $V$
and $m$ edges labeled by $[m] := \left\{ 1,\ldots,m \right\}$. This
bijection assigns to a factorization a graph that has an edge
connecting $v$ and $u$ labeled by $i$, if and only if and only if, the
$i$-th factor interchanges $v$ and $u$.  The mind body duality is
first defined in Section~\ref{sec:egraphdual} for \emph{edge-labeled
  graphs} using the fact that a labeling of the edges of a graph
induces two dual structures on the graph: a \emph{Local Edge Order}
(\emph{leo}), and a \emph{Perfect Trail Double Cover} (\emph{PTDC}),
see Definitions~\ref{defn:leo} and~\ref{defn:ptdc}. The leo is simply
the linear orders induced by the edge-labels on the set of edges
incident at each vertex, while, in terms of factorizations, the PTDC is
the set of trails formed by the trajectories of $V$ under the
successive application of the factors.  The mind-body dual of an
edge-labeled graph is then defined as the \emph{intersection graph} of
the family of these trajectories viewed as sets of edges, see
Definition~\ref{defn:mbdualgraph}.

The study of factorizations, branched coverings, and their enumeration
is a classical topic that goes back to Hurwitz
(see.~\cite{Hurwitz1902}). The Braid Group, as a group of
automorphisms of the Free Group seems to have appeared for the first
time in that paper as well.  It turns out that Mind-Body Duality has a
simple interpretation in terms of the Hurwitz action on the set of
factorizations: it is simply the reverse of the action of the Garside
element of the Braid Group (see Theorem~\ref{thm:Hurw}), and this
allows us to get explicit formulas for the dual of a
factorization. Furthermore by exploiting a certain ``operadic''
property of the Garside element (see Theorem~\ref{thm:Doperad}) we are
able to calculate the dual of a factorization ``locally'' by relating
the dual of a concatenation of factorizations to the concatenation of
their duals. The mind-body duality is also closely related to the
duality in the braid group defined by changing under-crossings to
over-crossings and vice-versa (see Theorem~\ref{thm:dubr}).

If one interprets the mind-body duality in the context of Topological
Graph Theory, it turns out that it is a generalization of the duality
of graphs embedded in closed oriented surfaces.  In order to define
that generalization we define the notion of a \emph{proper embedding
  of a graph in a surface with boundary} (see
Definition~\ref{defn:peg}). A Properly Embedded Graph (\emph{peg}) is
a graph embedded in a bounded surface in such a way that all the
vertices lie in the boundary, with some technical conditions that
ensure that the dual graph is also properly embedded.  Namely we
require every boundary component to contain at least one vertex, and
that each region to contain exactly one arc in its boundary.  The
prototype for this concept, that in this generality appears to be new,
is a non-crossing tree. The class of Cellularly Embedded Graphs
(\emph{cegs}) is contained in the class of pegs: a graph cellularly
embedded in a closed oriented surface can be construed as a graph
properly embedded in a surface with boundary in such a way that every
boundary component of the surface contains exactly one vertex.  In
this context, leos are the analogue of \emph{rotation systems}, and
PTDCs the analogue of cycle double covers.  A factorization (or a
vertex-and-edge-labeled graph) gives an oriented surface with boundary
with the graph pegged in it, via a straightforward generalization of
the correspondence between rotation systems and cegs. The dual of a
peg (defined in the obvious way, see Definition~\ref{defn:dualpeg}) 
is then isomorphic to the peg obtained by the mind-body dual of
the factorization, and in the special case where the peg can also be
considered a ceg, it coincides with the standard notion of duality for
cegs (see Theorem~\ref{thm:dualpegceg}).

We also give an alternative construction of the peg associated with a
factorization, and of the mind-body duality, via the theory of
branched coverings over the two dimensional disk.  In this context,
the peg corresponding to the factorization is essentially the lifting
of a certain graph in the disk, a so-called \emph{Hurwitz system}, and
its dual is the lifting of a ``dual'' Hurwitz system, see
Theorem~\ref{thm:pegthroubrcov}.  This interpretation of the peg
associated to a factorization is a generalization of a similar
construction for cegs given by Arnold in~\cite{Arnold1996}, however
our interpretation of duality via branched coverings appears to be
new.

We hint at possible applications of this point of view to the theory
of cegs by reproving some baby cases on the existence of self-dual
complete graphs, and giving examples of self-dual embedding of the
complete \emph{digraph} on six vertices, where it is known that no
self-dual embeddings of the corresponding complete (undirected) graph
exist. We further remark that the theory of pegs is a refinement of
the theory of cegs, that is more attuned to the graph theoretic
properties of the graph: whether a graph can be pegged on to a given
surface is not invariant under subdivisions; in fact (see
Proposition~\ref{prop:pegsub}) any ceg admits a subdivision that
renders it the completion of a peg.  We hope that this refinement will
have future applications in explaining known, as well as discovering
new, enumerative coincidences between various classes of Hurwitz
numbers.

Even though in this work pegs are mainly used as a tool for a
topological understanding of the mind-body duality and are not studied
much as a topic on their own right, we do touch upon some natural
questions that arise.  One such natural question is the analogue of
the genus question from the theory of cegs, namely: given a graph
$\Gamma$, what can one say about the Euler characteristic and the
number of boundary components of the pegs that arise from all possible
edge-labelings of $\Gamma$? We consider that question in
Section~\ref{sec:chib} and give a complete answer in the case of
complete graphs, see Theorem~\ref{thm:bsconv}.  The general case is an
interesting open question.

The main tool we develop is the \emph{medial digraph} of an
edge-labeled graph, or more generally a peg.  This is a digraph that
has vertices in bijection with the edges of the peg and there is an
arc from a vertex $a$ to a vertex $b$ if and only if, the edge
corresponding to $b$ immediately follows the edge corresponding to
$a$ in the local edge order of some vertex of the peg, see
Definition~\ref{defn:medialdigaph} and the paragraph following
Definition~\ref{defn:muofpeg}.  This notion is the analogue of
the \emph{medial graph} from the theory of cegs, which in the case of
embeddings in oriented surfaces, also admits a natural digraph
structure coming from the orientation of the surface.  A peg can be
encoded as a \emph{Perfect Chain Decomposition} (\emph{PCD}) of its
medial digraph (see Definition~\ref{defn:binarydg}) and the mind body
duality corresponds to a natural duality on the set of PCDs (see
Theorem~\ref{thm:dualofpcd}).

We use the medial digraph to characterize the class of pegs obtained
by factorizations in Proposition~\ref{prop:leodag}: a peg comes from
a factorization if and only if its medial ditree is Directed Acyclic
Graph (dag).  A peg can be ``completed'' into a ceg by gluing disks
along the boundary components of its surface, and we use the medial
digraph to characterize the class of cegs that are obtained as closures
of pegs: a ceg is the closure of a peg if and only if its medial
digraph admits a Feedback Arc System of cardinality the size of the
ceg (see Proposition~\ref{prop:real}).

In the penultimate section, we examine the case of minimal transitive
factorizations of a cycle, or equivalently edge-and-vertex-labeled
trees, in the light of the developed theory.  We first give an
exposition of the results in~\cite{GouldenYong}, and we show that the
mind-body duality at the level of factorizations can, in this case, be
expressed via the mind-body duality at the level of rooted
edge-labeled trees.  In general, the mind-body dual of a factorization
of a permutation $\pi$ is a factorization of $\pi^{-1}$, but using
rooted edge-labeled trees we show that one can define a duality
between factorizations of the same cycle, enjoying the same structural
properties as mind-body duality. This is a topic that will be further
explored in~\cite{Apostolakis2018a}.  As an application of the medial
digraph (which in this case is a directed tree) we show that the set
of self-dual edge-labeled trees is equinumerous with the set of
alternating permutations (see Corollary~\ref{cor:zigzag}).

In this work we develop the theory of pegs in enough detail to be able
to treat the case of factorizations, postponing the fully developed
theory for a future paper~\cite{Apostolakis2018b}.  This and other
forthcoming future directions of this project are outlined in the
final section.

\subsection{Conventions and Terminology}
\label{sec:conventions}
All graphs we consider are finite. We view graphs as one-dimensional
complexes, with a set of $0$-cells called \emph{vertices} and a set of
$1$-cells called \emph{edges}.  Digraphs are graphs where every edge
has been endowed with an orientation.  We emphasize that every edge,
\emph{including loops}, admits two distinct orientations.  In general
we work with \emph{loopless} graphs, i.e. all $1$-cells are
attached to two distinct $0$-cells, except in Section~\ref{sec:pegs},
where digraphs are allowed to have loops.  The edges of a digraph are
sometimes called \emph{darts} or \emph{arcs} depending on the context.
We use more or less standard graph theoretical terminology, with a few
exceptions; in particular:
\begin{itemize}
\item We use $\Gamma$ to denote a graph, sometimes endowed with extra
  structure.  The set of vertices of a graph is denoted by $V$, and
  its set of edges by $E$.  The \emph{order} of $\Gamma$ (i.e.
  $\left| V \right|$) is typically denoted by $n$, and its \emph{size}
  (i.e. $\left| E \right|$) by $m$.
\item We use $\chi$ to denote the Euler characteristic so that for
  a graph $\Gamma$, we have $\chi(\Gamma) = n -m$.
\item The \emph{neighborhood of a vertex $v$ of $\Gamma$}, denoted $\nu(v)$ is
  the subgraph of $\Gamma$ defined by all the edges that are incident
  to $v$. The \emph{star of a vertex $v$ of $\Gamma$} is the neighborhood
  of $v$ in the first barycentric subdivision of $\Gamma$, the edges of
  the first barycentric subdivision are the \emph{half-edges} of $\Gamma$.
\item A \emph{trail} is a walk without repeated edges.  Every trail has
   a beginning and an ending vertex that may coincide.
 \item By a \emph{cycle} we mean an equivalence class of closed trails,
   where two trails are equivalent if they have the same edges, i.e. the
   endpoint is not important.  In the context of digraphs a cycle is
   always a \emph{directed cycle}.
\end{itemize}

For a finite set $V$ we denote the \emph{symmetric group of $V$} by
$\mathcal{S}_V$, if $V=[n] := \left\{ 1,\ldots,n \right\}$ the
symmetric group of $V$ is denoted by $\mathcal{S}_n$.  We use the
usual terminology e.g. \emph{permutation}, \emph{transposition},
\emph{cycles}, etc even if $V$ is not $[n]$.  For a transposition
$\tau = (s,t)$ we call $s,t$ the moved points of $\tau$.  We multiply
permutations from left to right, so that $(1\,2)(2\,3) = (3\,2\,1)$.
We use left and right exponential notation for conjugation in a group,
i.e.  $g^{h} := h^{-1} g h$ and $\prescript{h}{}g = h g h^{-1}$.

\section{Mind-Body Duality}
\label{sec:duality}

In this section we define the \emph{mind-body duality} first in a
graph theoretical context and then in the more algebraic context of
factorizations.  We start by defining the main objects of study and
their basic equivalence.

\begin{defn}
  \label{defn:fact}
  A \emph{factorization in} $\mathcal{S}_V$ is a sequence of
  transpositions $\rho = \left( \tau_i \right)_{1\le i \le m}$, with
  $\tau_i \in \mathcal{S}_V$.  The product of $\rho$ is called its
  \emph{total monodromy} or simply its \emph{monodromy} and denoted by
  $\mu(\rho)$.  We also say that $\rho$ is a factorization \emph{of}
  $\mu(\rho)$, and sometimes we'll call $\tau_i$ the $i$-th monodromy
  of $\rho$\footnote{This terminology comes from the theory of
    \emph{branched coverings}, see  Subsection~\ref{sec:brcov}}.

  The \emph{reverse} of a factorization $\rho = (\tau_i)_{1\le i \le m}$
  is the factorization $\rho^{\intercal} := (\tau_{m+1-1})_{1\le i \le m}$.

  The \emph{concatenation} of two factorization
  $\rho_1 = \left( \tau_i \right)_{1\le i \le m_1}$ and
  $ \rho_{2} = \left( \tau_i' \right)_{1\le i \le m_2}$ is the
  factorization
  $\left( \tau_1, \ldots, \tau_{m_1}, \tau_1',\ldots,\tau_{m_2}'
  \right)$.
  The concatenation of two factorizations $\rho_1, \rho_2$ will be
  denoted by $\rho_1\rho_2$.

  For a factorization $\rho = (\tau_i)$ and an element
  $\tau \in \mathcal{S}_V$, we use the notation $\rho^{\tau}$
  (resp. $\prescript{\tau}{}\rho$) for the factorization $(\tau_i^{\tau})$
  (resp. $(\prescript{\tau}{}\tau_i)$).

  An \emph{edge-labeled graph} (\emph{e-graph} for short) is a graph
  with edges labeled with elements of $[m]$ where $m$ is the order of
  the graph, or equivalently a graph with a total order in the set of
  its edges. A \emph{vertex-labeled graph} (or \emph{v-graph} for
  short) is a graph whose vertices are labeled by $[n]$ where $n$
  is its order. An \emph{edge-and-vertex-labeled graph} (or
  \emph{e-v-graph} for short) has both vertices and edges labeled.

  The \emph{reverse} of an e-graph $\Gamma$, is the graph
  $\Gamma^{\intercal}$, with the same vertices and edges as $\Gamma$
  and its edges relabeled according to $i \mapsto m+1-i$.

  The \emph{concatenation} of two e-graphs $\Gamma_1$ and $\Gamma_2$
  of sizes $m_1$ and $m_2$, respectively, is the graph
  $\Gamma_1 \Gamma_2$ with
  $V(\Gamma_1\Gamma_2) = V(\Gamma_1) \cup V(\Gamma_2)$ and
  $E(\Gamma_1\Gamma_2)$ consisting of the edges of $\Gamma_1$ with
  their labels unchanged, and the edges of $\Gamma_2$ with their
  labels increased by $m_1$.

  There is an obvious bijection between the set of factorizations of
  $\mathcal{S_{V}}$ and the set of e-graphs with vertex set $V$: for a
  factorization $\rho$ define the \emph{associated e-graph of $\rho$}
  to be the graph $\Gamma(\rho)$ that has an edge with endpoints $u,v$,
  and labeled by $i$, if and only if the $i$-th monodromy of $\rho$ is $(u,v)$;
  conversely for an e-graph $\Gamma$ the \emph{associated
    factorization of $\Gamma$}, $\rho\left( \Gamma \right)$ has the
  $i$-th monodromy interchanging the endpoints of the edge labeled
  $i$.  This bijection specializes to a bijection between factorization
  of $\mathcal{S}_n$ of length $m$ and e-v-graphs or order $n$ and size $m$.
\end{defn}

Clearly the bijection $\rho \mapsto \Gamma(\rho)$ preserves the notions of
reverse and concatenation.

\begin{exm} \label{exm:trans_seq} The sequence
  $\rho = (3\,4), (1\,3), (1\,2), (3\,4), (2\,3)$ is a factorization
  of the cycle $(4\, 3\, 2\, 1)$ in $\mathcal{S}_4$.  The associated
  graph $\Gamma(\rho)$ is shown in Figure~\ref{fig:grassoc}. We draw
  two versions of it, a standard planar drawing in the left, and one
  that the cyclic order of the edges at every vertex is consistent with
  the order induced by their labels, see Definition~\ref{defn:leo}, on
  the right.  The colors of the vertices are used later, see
  Subsection~\ref{sec:medial}.
\end{exm}

\begin{figure}[htbp]
  \centering
  \begin{pspicture}(-4.5,-2)(8.5,2.6)
    \rput(-1.5,0){%
      \psset{unit=.8}
      \begin{pspicture}(-.3,-.5)(6.5,4.5)
        \rput(0,4){\rnode{n1}{\psdot[dotscale=1.2,linecolor=blue](0,0)}}
        \uput[90](0,4){$1$}
        \rput(0,0){\rnode{n2}{\psdot[dotscale=1.2,linecolor=red](0,0)}}
        \uput[-90](0,0){$2$}
        \rput(3.5,2){\rnode{n3}{\psdot[dotscale=1.2,linecolor=green](0,0)}}
        \uput[90](3.5,2){$3$}
        \rput(6,2){\rnode{n4}{\psdot[dotscale=1.2,linecolor=orange](0,0)}}
        \uput[0](6,2){$4$}
        \ncline{n1}{n2}
        \nbput{\raisebox{.5pt}{\textcircled{\raisebox{-.9pt} {\small $3$}}}}
        \ncline{n1}{n3}
        \naput{\raisebox{.5pt}{\textcircled{\raisebox{-.9pt} {\small $2$}}}}
        \ncline{n2}{n3}
        \nbput{\raisebox{.5pt}{\textcircled{\raisebox{-.9pt} {\small $5$}}}}
        \ncarc[arcangle=40]{n3}{n4}
        \naput{\raisebox{.5pt}{\textcircled{\raisebox{-.9pt} {\small $1$}}}}
        \ncarc[arcangle=40]{n4}{n3}
        \naput{\raisebox{.5pt}{\textcircled{\raisebox{-.9pt} {\small $4$}}}}
      \end{pspicture}}
    \rput(6,.5){%
      \psset{unit=.8}
      \begin{pspicture}(-2,2)(5,7.2)
        \psset{linewidth=.05,dotsize=.25}
        \psbezier(1,5)(1.5,3)(3.5,3)(4,5)
        \psbezier[border=2.5pt](1,5)(4,5.5)(3,2.5)(1,2.5)
        \psbezier(1,5)(1.5,7)(3.5,7)(4,5)
        \psbezier(1,2.5)(-2,2)(-2,3)(-1.5,4)
        \psbezier(-1.5,4)(-2,5)(0,5.5)(1,5)
        \psdot[linecolor=orange](4,5)
        \uput[0](4,5){$4$}
        \psdot[linecolor=green](1,5)
        \uput[-135](1,5){$3$}
        \psdot[linecolor=red](1,2.5)
        \uput[-90](1,2.5){$2$}
        \psdot[linecolor=blue](-1.5,4)
        \uput[0](-1.5,4){$1$}
        \uput[90](2.5,6.6){\raisebox{.5pt}{\textcircled{\raisebox{-.9pt} {\small $1$}}}}
        \uput[90](-1,5.1){\raisebox{.5pt}{\textcircled{\raisebox{-.9pt} {\small $2$}}}}
        \uput[-90](-1.5,2.4){\raisebox{.5pt}{\textcircled{\raisebox{-.9pt} {\small $3$}}}}
        \uput[-45](2.2,2.8){\raisebox{.5pt}{\textcircled{\raisebox{-.9pt} {\small $5$}}}}
        \uput[0](3.7,4){\raisebox{.5pt}{\textcircled{\raisebox{-.9pt} {\small $4$}}}}
      \end{pspicture}}
  \end{pspicture}
  \caption{The graph associated with the factorization $\rho$ of Example~\ref{exm:trans_seq}.}
  \label{fig:grassoc}
\end{figure}
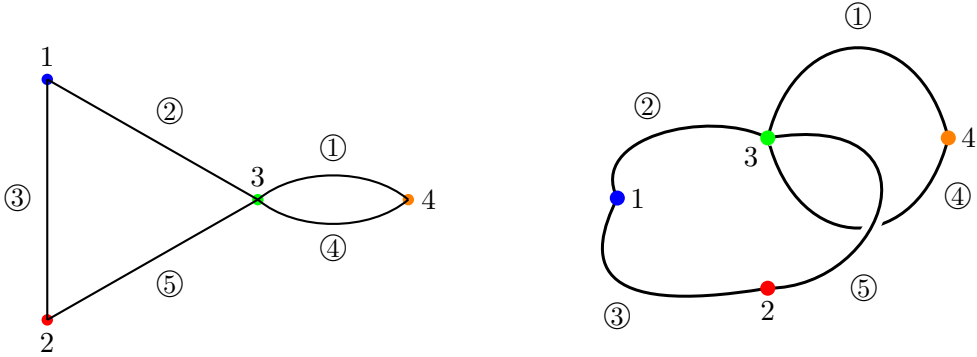

\subsection{Mind-Body duality for e-graphs}
\label{sec:egraphdual}

Let $\Gamma$ be an e-graph.  The edge labels induce two dual structures on $\Gamma$,
a  \emph{Local Edge Ordering} and a \emph{Perfect Trail Double  Cover}.

\begin{defn}
  \label{defn:leo} A \emph{Local Edge Ordering} (\emph{leo} for short)
  of a graph $\Gamma$ is an assignment of a linear order at the
  neighborhood of each vertex of $\Gamma$. We draw leos in such a way
  that the cyclic order induced by the standard (counterclockwise)
  orientation of the plane is consistent with the ordering of the
  edges at every vertex.
\end{defn}

The edge labels of an e-graph induce a leo in the obvious way: the
order of the edges around a vertex is given by the natural order of
their labels.

A leo on a graph $\Gamma$ induces a decomposition of the darts of
$\Gamma$ into chains. We define the relevant structure in more
generality than is strictly needed for this paper in view of future
planned work, see~\cite{Apostolakis2018b}.

\begin{defn}
  \label{defn:ptdc} A \emph{Perfect Trail Double Cover} (PTDC) of a
  graph $\Gamma$ is a family $\mathcal{T}$ of positive length trails
  such that each edge of $\Gamma$ belongs to exactly two trails of
  $\mathcal{T}$, and each vertex is the endpoint of two trails.  We
  emphasize that trivial paths of length zero are not allowed, but
  \emph{closed trails are allowed}; for a closed trail its endpoint is
  counted twice.

  A PTDC is called \emph{orientable} if each trail can be endowed with
  an orientation such that every edge is traversed once in each
  direction, in other words, $\mathcal{T}$ induces a decomposition of
  the darts of $\Gamma$ into chains; a PTDC endowed with such a choice
  of orientations is called \emph{oriented}.  In that case every
  vertex is the start of exactly one trail (we denote that trail by
  $\overrightarrow{v}$) and the end of exactly one trail (we denote
  that trail by $\overleftarrow{v}$).

  A PTDC is called \emph{non-singular} if for every interior (i.e. non-leaf)
  vertex the first edge of $\overrightarrow{v}$ and the last edge of
  $\overleftarrow{v}$ are distinct.
\end{defn}

For the most part of this paper we will be dealing with oriented
non-singular PTDCs, and from now on, barring explicit mention to the
contrary, \emph{we will use PTDC to mean an oriented non-singular
  PTDC}.

Given a leo on $\Gamma$ define the \emph{Minimally Increasing Greedy
  Trail (migt) starting at $v$} to be the trail $\overrightarrow{v}$
obtained as follows: the first edge of $\overrightarrow{v}$ is the
smallest (with respect to the leo) edge in the neighborhood
$\nu(v)$.  We proceed inductively: once we have added an edge $e$ from
$\nu(u)$ to the trail, in the next step we add the smaller edge in the
neighborhood of $w$, the other vertex of $e$, that is larger than $e$
in the leo of $w$, provided that such edge exists.  We stop when $e$
is the last edge at the leo of $w$.\footnote{Recall that our graphs
  are loopless.  Loops could be treated by considering half-edges but
  such generality is not needed in this paper.}

As expected from the notation we have:
\begin{lem}
  \label{lem:leoeqptdc} The migts of a leo give a (non-singular,
  oriented) PTDC. Conversely, a (non-singular, oriented) PTDC
  $\mathcal{T}$ gives a leo on its underlying graph, whose migts are
  the trails of $\mathcal{T}$.
\end{lem}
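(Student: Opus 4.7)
The plan is to prove both halves of the equivalence locally at each vertex, since the migt rule and the definitions of leo and oriented PTDC all factor through the star of a single vertex. I would fix a vertex $v$ of degree $d$ and arrange its incident edges as $e_1 < e_2 < \cdots < e_d$, either from the given leo (forward direction) or from the PTDC by the procedure described below (converse), and then show that both constructions force the same local dart bookkeeping at $v$. Gluing these local pictures over all $v \in V$ then yields the two directions of the bijection simultaneously.

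For the forward direction I would unwind the migt algorithm directly. Three facts fall out of the algorithm's choice of next edge: (i) $\overrightarrow{v}$ leaves $v$ along the outgoing dart of $e_1$; (ii) any migt that reaches $v$ via the incoming dart of $e_i$ with $i<d$ continues along the outgoing dart of $e_{i+1}$; and (iii) any migt that reaches $v$ via the incoming dart of $e_d$ terminates at $v$. Together these rules use each of the $d$ outgoing and $d$ incoming darts at $v$ exactly once, which is precisely the condition that the family $\{\overrightarrow{v}\}_{v \in V}$ decomposes the darts of $\Gamma$ into chains. From this each edge is traversed once in each direction (hence lies in exactly two migts), each vertex is the start of exactly one migt and the end of exactly one, and non-singularity is automatic because $e_1 \ne e_d$ as soon as $d \ge 2$.

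For the converse I would start from a non-singular oriented PTDC $\mathcal{T}$ and read off, at each interior vertex $v$, the edge $s(v)$ whose outgoing dart at $v$ is the first dart of $\overrightarrow{v}$, the edge $t(v)$ whose incoming dart at $v$ is the last dart of $\overleftarrow{v}$, and the pairing $\phi_v$ that assigns to each edge $e \ne t(v)$ the edge $\phi_v(e)$ whose outgoing dart at $v$ is matched with the incoming dart of $e$ by a pass-through trail of $\mathcal{T}$. Non-singularity supplies $s(v) \ne t(v)$; $\phi_v$ is injective because each dart is used exactly once; and it is fixed-point-free because no trail may leave $v$ along the edge by which it arrived. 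I would then declare the leo at $v$ to be the sequence $s(v), \phi_v(s(v)), \phi_v^2(s(v)), \ldots$ continued until it first reaches $t(v)$; simplicity of the sequence follows from injectivity together with the observation that $s(v)$ lies outside the image of $\phi_v$. Once the order at every vertex is in place, rerunning the forward bookkeeping identifies the migts of this leo with the trails of $\mathcal{T}$ edge by edge.

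The delicate step, which I expect to be the main obstacle, is showing that the iteration of $\phi_v$ starting at $s(v)$ actually exhausts all $d$ edges at $v$, rather than reaching $t(v)$ prematurely and leaving one or more disjoint $\phi_v$-orbits on the remaining edges. Purely local dart-counting does not rule this out, so I would close the gap using the global structure of $\mathcal{T}$: any stray orbit would correspond to a bouquet of pass-through trails at $v$ whose gluing with the rest of $\mathcal{T}$ either forces an edge into more than two trails or manufactures an extra start/end event at some vertex, contradicting the PTDC axioms. Once this exhaustion claim is secured, the forward and converse constructions are manifestly mutually inverse and the lemma follows.
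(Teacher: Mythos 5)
Your forward direction is correct and is essentially the paper's own argument: the migt rule pairs the incoming dart of $e_i$ with the outgoing dart of $e_{i+1}$ at each vertex, uses the outgoing dart of $e_1$ to start $\overrightarrow{v}$ and the incoming dart of $e_d$ to end $\overleftarrow{v}$, and this accounts for every dart at $v$ exactly once; non-singularity is immediate. The problem is the converse, and it lies exactly at the step you flag as delicate. You correctly observe that the local dart count only shows that the functional graph of $\phi_v$ is one chain from $s(v)$ to $t(v)$ together with possibly some disjoint cycles, so you must exclude the cycles. But the global argument you offer --- that a stray $\phi_v$-orbit would force an edge into more than two trails or create an extra start/end event somewhere --- does not go through: a stray orbit is perfectly consistent with every axiom of Definition~\ref{defn:ptdc}.

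Concretely, take $\Gamma=K_{1,4}$ with centre $v$, leaves $u_1,\dots,u_4$ and edges $a=vu_1$, $b=vu_2$, $c=vu_3$, $d=vu_4$, and let $\mathcal{T}$ consist of the oriented trails $t_1=(v,a,u_1)$, $t_2=(u_1,a,v,b,u_2)$, $t_3=(u_2,b,v)$, $t_4=(u_3,c,v,d,u_4)$, $t_5=(u_4,d,v,c,u_3)$. Every edge is traversed once in each direction, every vertex starts exactly one trail and ends exactly one, and the first edge $a$ of $\overrightarrow{v}=t_1$ differs from the last edge $b$ of $\overleftarrow{v}=t_3$, so $\mathcal{T}$ is an oriented non-singular PTDC. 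Yet at $v$ the pairing $\phi_v$ has the $2$-cycle $c\mapsto d\mapsto c$, the relation ``$e$ precedes $e'$ in some trail'' yields both $c<d$ and $d<c$, and no leo on $K_{1,4}$ can have $t_4$ and $t_5$ simultaneously as migts. So the exhaustion claim cannot be deduced from the stated hypotheses; as written it is simply false. (For what it is worth, the paper's proof of the converse asserts without argument that the trails through a vertex look like Figure~\ref{fig:localmigts}, so you have put your finger on a genuine soft spot: making the converse true requires building the single-chain local structure into the definition of an oriented PTDC, or otherwise excluding cyclic local transition systems.)
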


\begin{proof}
  A vertex $v$ of a graph $\Gamma$ endowed with a leo is the endpoint
  of exactly two migts: $\overrightarrow{v}$ and $\overleftarrow{v}$.
  Now if the first edge $e$ of $\overrightarrow{v}$ is the same as the
  last edge of $\overleftarrow{v}$, then $e$ is both the first and the
  last edge in the leo of $v$ and therefore is the only edge incident
  to $v$. So for an interior vertex $v$, the migts $\overleftarrow{v}$
  and $\overrightarrow{v}$ are distinct.

  Let $e$ be an edge of $\Gamma$ with endpoints $v,u$, and let $e'$
  (resp. $e''$) be the edge of $\Gamma$ incident to $v$ (resp. $u$)
  and immediately preceding $e$ in the leo at $v$ (resp. $u$), if such
  an edge exists.  Then by the definition of migts, $e$ belongs to two
  migts, $m_1 = \ldots, e',e, \ldots$ that transverses $e$ from $v$ to
  $u$, and $m_2 = \ldots, e'',e, \ldots$ that transverses $e$ from $u$
  to $v$; of course if $e'$ (resp. $e''$) does not exist then $m_1$
  (resp. $m_2$) is simply $\overleftarrow{v}$
  (resp. $\overleftarrow{u}$). If a migt does not pass through $v$ or
  $u$ it clearly can't contain $e$, and the other migts that pass
  through $v$ or $u$ do not contains $e$ because migts are minimally
  increasing. So every edge belongs to exactly two migts that
  transverse it in opposite orientations.

  The above can be summarized by saying that the local configuration
  of the migts in a neighborhood of a vertex $v$ is as in
  Figure~\ref{fig:localmigts}: we have a vertex of degree $4$, there
  are four edges incident at a vertex with their order in the leo is
  indicated by the subscripts, and there are five migts that pass
  trough $v$.

  Conversely, the trails of a non-singular oriented PTDC $\mathcal{T}$
  that go through a vertex $u$ are as in Figure~\ref{fig:localmigts}.
  A leo at $v$ can then be defined by the rule that an edge $e$ is
  less than an edge $e'$ if $e$ precedes $e'$ in some trail.  Clearly
  the migts of that leo are exactly the trails of $\mathcal{T}$.
\end{proof}

For example the PTDC induced by the e-v-labeled graph in
Figure~\ref{fig:grassoc} is shown in Figure~\ref{fig:migts}.

\begin{figure}[htbp]
  \centering
  \begin{pspicture}(-3,-3)(3,3)
    \rput(0,0){
      \includegraphics[scale=0.45]{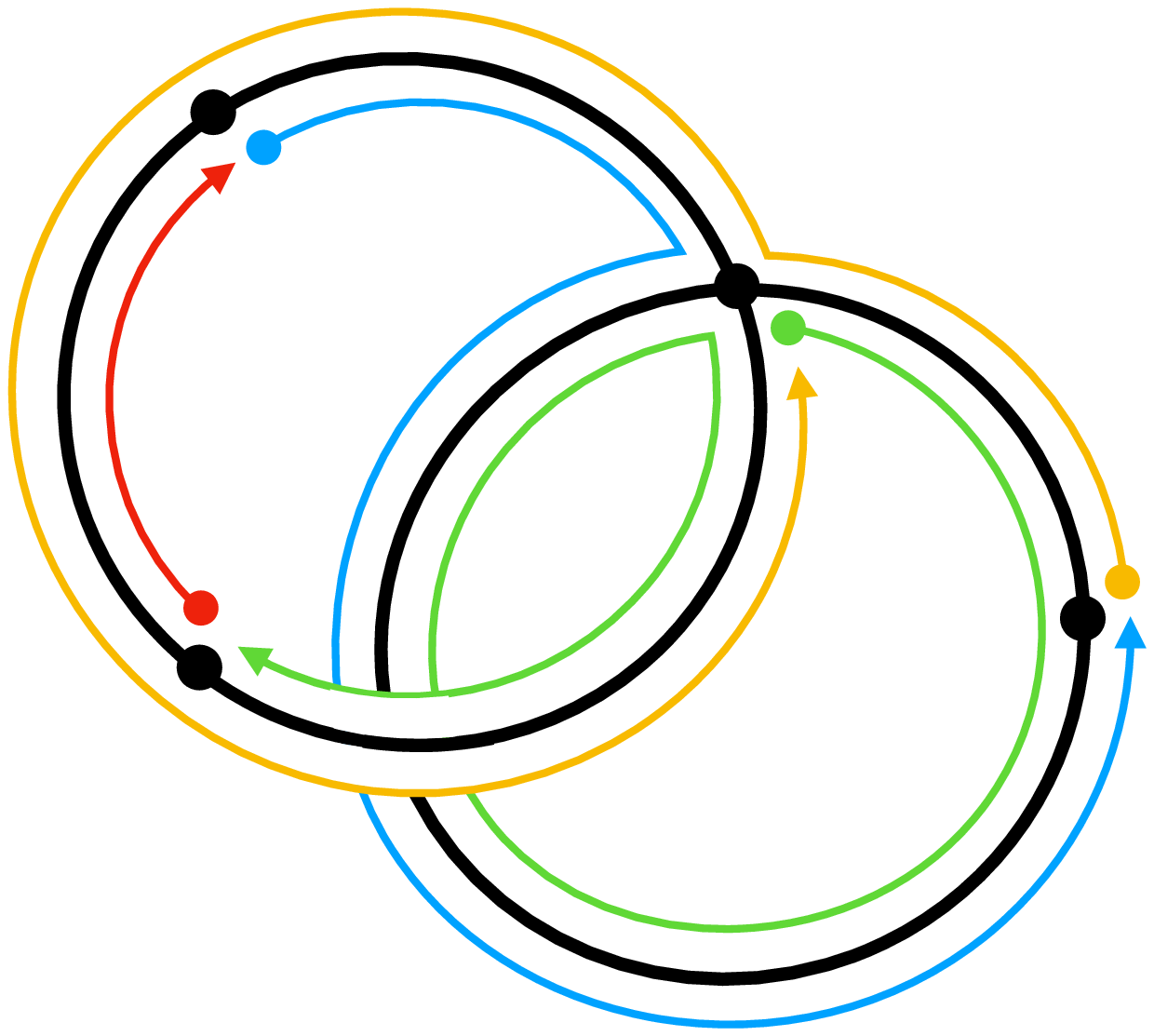}}
    \uput[90](-2,2.2){\blue $1$}
    \uput[0](2.8,-.4){\orange $4$}
    \uput[45](1,1.4){\green $3$}
    \uput[-135](-2,-1){\red $2$}
    \uput[90](2.1,.9){\raisebox{.5pt}{\textcircled{\raisebox{-.9pt} {\small $1$}}}}
    \uput[90](-.5,2.5){\raisebox{.5pt}{\textcircled{\raisebox{-.9pt} {\small $2$}}}}
    \uput[180](-2.8,.6){\raisebox{.5pt}{\textcircled{\raisebox{-.9pt} {\small $3$}}}}
    \uput[-90](.1,-2.5){\raisebox{.5pt}{\textcircled{\raisebox{-.9pt} {\small $5$}}}}
    \uput[-90](1,-.6){\raisebox{.5pt}{\textcircled{\raisebox{-.9pt} {\small $4$}}}}
    \psdot[linecolor=blue,dotsize=.25](-1.8,2.02)
    \psdot[linecolor=red,dotsize=.25](-1.88,-.7)
    \psdot[linecolor=green,dotsize=.25](.8,1.2)
    \psdot[linecolor=orange,dotsize=.25](2.48,-.45)
  \end{pspicture}
  \caption{The graph from Figure~\ref{fig:grassoc} with its migts}
  \label{fig:migts}
\end{figure}

If we start with a factorization $\rho$ its monodromy can be recovered
from the migts of the associated e-v-labeled graph.  Recall, that all
PTDCs of graphs are assumed non-singular and orientable.

\begin{defn}
  \label{defn:ptdcmu} The \emph{monodromy digraph} of a PTDC is the
  digraph that has the same vertices as $\Gamma$ and for each trail an
  arc from its beginning to its end, in other words there is an arc
  from $v$ to $u$ if and only if
  $\overleftarrow{v} = \overrightarrow{u}$.
\end{defn}

\begin{prop}
  \label{prop:graphmu} The monodromy digraph of a PTDC is a functional
  digraph of a permutation in $\mathcal{S}_{V}$.  If the PTDC comes
  from (the associated e-graph of) a factorization then that
  permutation is the monodromy of the factorization.
\end{prop}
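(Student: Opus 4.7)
The plan is to verify the two assertions separately; neither requires much beyond unwinding the definitions, but the second rests on identifying the migt $\overrightarrow{v}$ with the trajectory of $v$ under successive application of the factors of $\rho$.

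For the first assertion, a non-singular oriented PTDC assigns to every vertex $v$ a unique outgoing trail $\overrightarrow{v}$ (which contributes the unique arc out of $v$, ending at the terminus of $\overrightarrow{v}$) and a unique incoming trail $\overleftarrow{v}$ (which contributes the unique arc into $v$). Hence every vertex has in-degree and out-degree equal to $1$ in the monodromy digraph, and this is precisely the condition for being the functional digraph of a bijection $V \to V$, i.e.\ of a permutation in $\mathcal{S}_V$.

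For the second assertion, given $\rho = (\tau_1, \dots, \tau_m)$ and $v \in V$, I would consider the trajectory $v_0 = v, v_1, \dots, v_m$ defined by $v_l := (\tau_1 \cdots \tau_l)(v)$, and list the ``motion indices'' $i_1 < i_2 < \cdots < i_k$, those $l$ for which $v_l \ne v_{l-1}$. Setting $w_j := v_{i_j}$ (with $w_0 := v$), the fact that $\tau_{i_j}$ swaps $v_{i_j-1}$ and $v_{i_j}$ means that the edge of $\Gamma(\rho)$ labeled $i_j$ has endpoints $w_{j-1}$ and $w_j$; thus $(v = w_0, e_{i_1}, w_1, \ldots, e_{i_k}, w_k)$ is a trail in $\Gamma(\rho)$ whose labels are strictly increasing. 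I would then show this trail coincides with $\overrightarrow{v}$: at any intermediate vertex $w_{j-1}$, the edges incident to $w_{j-1}$ in $\Gamma(\rho)$ correspond bijectively to indices $l$ for which $\tau_l$ moves $w_{j-1}$, and the migt rule selects the smallest such $l$ strictly greater than $i_{j-1}$, which by the definition of the motion indices is exactly $i_j$. The trail terminates at $w_k$ because no label exceeding $i_k$ is incident at $w_k$, which is equivalent to $\tau_l$ fixing $w_k$ for all $l > i_k$; in particular $v_m = w_k$. Since $v_m = \mu(\rho)(v)$, the unique outgoing arc from $v$ in the monodromy digraph points to $\mu(\rho)(v)$, identifying the realized permutation as $\mu(\rho)$.

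No serious obstacle arises here; the argument is essentially careful bookkeeping. The one subtle point to articulate cleanly is the equivalence between the migt's greedy ``smallest label greater than the one just traversed'' rule and the trajectory's ``next transposition that actually moves the current position'' rule. A minor caveat concerns vertices isolated in $\Gamma(\rho)$, i.e.\ fixed by every $\tau_l$: such a vertex supports no trail of positive length, so either one restricts the monodromy digraph to non-isolated vertices or one interprets it as carrying a loop at each isolated vertex, which is consistent with such a vertex being a fixed point of $\mu(\rho)$.
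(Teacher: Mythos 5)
Your proposal is correct and follows essentially the same route as the paper: the degree count for the first assertion, and for the second the identification of the migt $\overrightarrow{v}$ with the trajectory of $v$ under successive application of the factors, matching the greedy label rule against the ``next transposition that moves the current point'' rule. Your explicit bookkeeping with motion indices, and the remark about isolated vertices, only make more precise what the paper's proof sketches.
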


\begin{proof}
  The bidegree of every vertex of $\mu \left( \Gamma \right)$ is
  $(1,1)$ because each vertex has exactly one incoming and one
  outgoing trail, so $\mu \left( \Gamma \right)$ is the functional
  digraph of a permutation.

  To prove the second statement we need to prove that if
  $\overrightarrow{u} = \overleftarrow{v}$ then $\mu(u) = v$.  In fact
  it's easy to see that the vertices of $\overrightarrow{u}$ form the
  trajectory of $u$ under the successive applications of the elements
  of $\rho$.  For, let $e_1$ be the first edge of $\overleftarrow{u}$
  and $v_1$ its other endpoint, then $e_1$ is the first edge of
  $\nu(u)$ and therefore $(u\,v_1)$ is the first transposition in
  $\rho$ that moves $u$.  The next time $u$ is moved, is when there is
  a monodromy $(v_1\,v_2)$ in $\rho$ with index larger than the index
  of $e_1$, and this monodromy will correspond to the second edge
  $e_2$ of $\overrightarrow{u}$, and so on until will reach the last
  edge of $\overrightarrow{u} = \overleftarrow{v}$, which is the last
  edge of $\nu(v)$, and no further monodromies move $v$.  It follows then
  that in the product $\mu$ of $\rho$ we have $\mu(u) = v$.
\end{proof}

The above can be verified in Figure~\ref{fig:migts},
$\mu\left(\rho\right) = (4\,3\,2\,1)$, and indeed
$\overrightarrow{4} = \overleftarrow{3}$,
$\overrightarrow{3} = \overleftarrow{2}$,
$\overrightarrow{2} = \overleftarrow{1}$, and
$\overrightarrow{1} = \overleftarrow{4}$.

The terminology PTDC and the monodromy digraph where inspired
by~\cite{Bondy1990}.

Now we can define the \emph{mind-body dual} of an e-labeled graph.
\begin{defn}
  \label{defn:mbdualgraph}
  Let $\Gamma$ be an e-graph.  The \emph{mind-body dual} e-graph is
  the graph $\Gamma^{*}$ that has vertices the migts of $\Gamma$ and
  an edge labeled $i$ connecting two trails $t_{1}$ and $t_2$ if and
  only if the two trails share the edge of $\Gamma$ labeled $i$.
\end{defn}

There is a one-to-one correspondence between the edges of $\Gamma$ and
$\Gamma^{*}$, and corresponding edges have the same label, when needed
we will denote the edge of $\Gamma^{*}$ corresponding to the edge $e$
of $\Gamma$ by $e^{*}$.  Since there are as many migts as vertices,
$\Gamma^{*}$ has as many vertices as $\Gamma$ and there are two
canonical ways to set up a correspondence between the vertices of
$\Gamma$ and the vertices of $\Gamma^{*}$: we can choose $v^{*}$ to be
$\overrightarrow{v}$ or $\overleftarrow{v}$.  We choose the former,
i.e. $v^{*} = \overrightarrow{v}$ but not much of what follows depends
on that choice.  We will comment when the choice makes a difference,
see Remark~\ref{rem:convention} and Theorem~\ref{thm:Hurwbar}.

Notice that if $\Gamma$ is e-v-labeled, $\Gamma^{*}$ is also
e-v-labeled via the correspondence $v \mapsto v^{*}$.  When no
confusion is likely we will abuse language by talking as if $\Gamma$
and $\Gamma^{*}$ have the same vertices and edges.

For example the mind-body dual of the e-v-graph of
Figure~\ref{fig:grassoc} together with its migts is shown in
Figure~\ref{fig:dualgrassoc}.

\begin{figure}[htbp]
  \centering
  \begin{pspicture}(-3.3,-2.3)(3.3,2.3)
    \rput(0,0){\includegraphics[scale=.4]{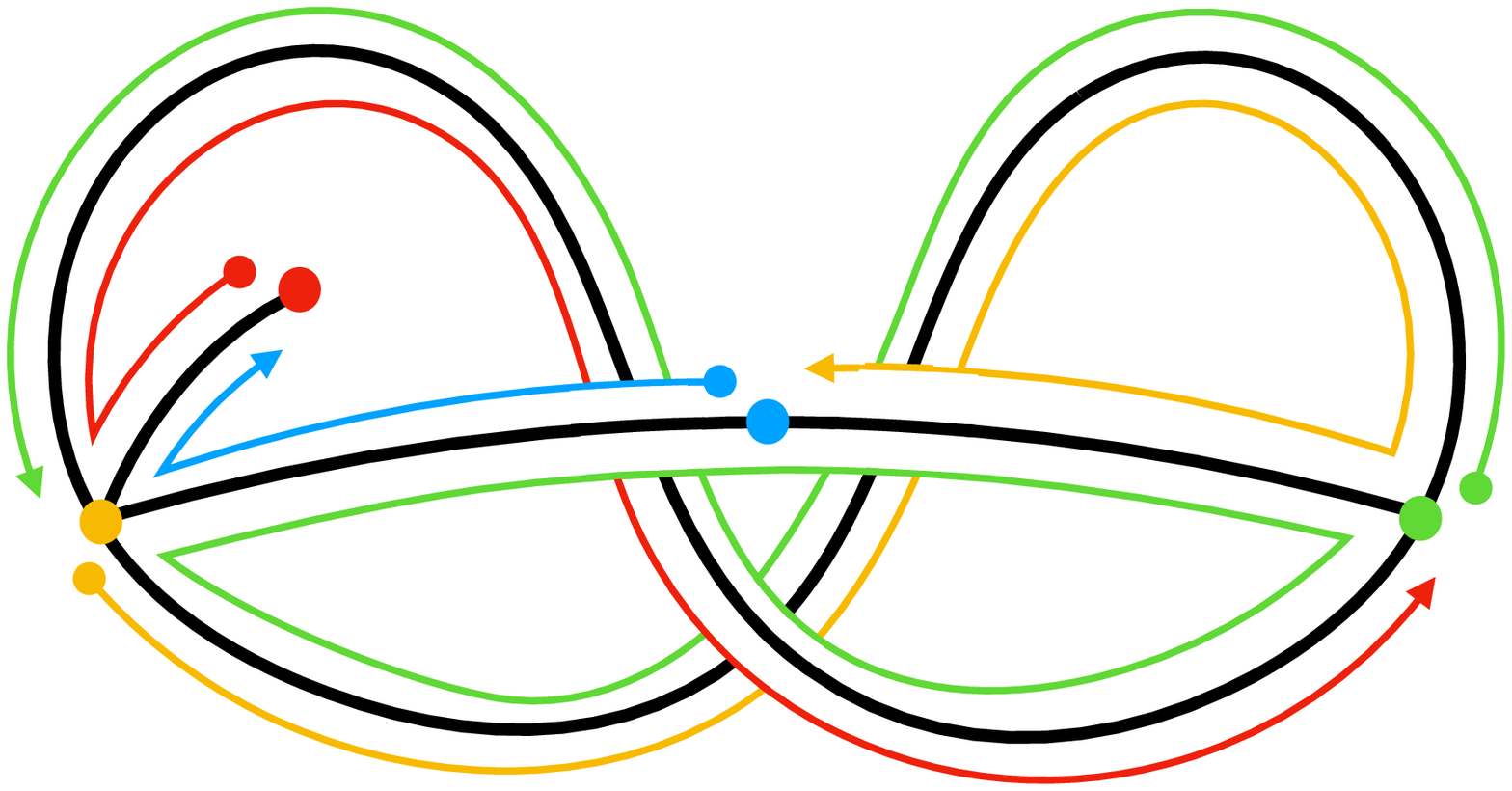}}
    \uput[90](0,0){\blue $1$}
    \uput[90](-2,.6){\red $2$}
    \uput[0](3.1,-.6){\green $3$}
    \uput[180](-3.2,-.6){\orange $4$}
    \uput[-90](-1.5,-.4){\raisebox{.5pt}{\textcircled{\raisebox{-.9pt} {\small $2$}}}}
    \uput[-90](1.5,-.4){\raisebox{.5pt}{\textcircled{\raisebox{-.9pt} {\small $5$}}}}
    \uput[90](2,1.9){\raisebox{.5pt}{\textcircled{\raisebox{-.9pt} {\small $1$}}}}
    \uput[90](-2,1.9){\raisebox{.5pt}{\textcircled{\raisebox{-.9pt} {\small $4$}}}}
    \uput[90](-2,-.1){\raisebox{.5pt}{\textcircled{\raisebox{-.9pt} {\small $4$}}}}
  \end{pspicture}
  \caption{The mind-body dual of the graph in Figure~\ref{fig:grassoc}}
  \label{fig:dualgrassoc}
\end{figure}

\begin{thm}
  \label{thm:dualproperties}
  The following hold:
  \begin{enumerate}
  \item \label{item:startrail} If $v$ is a vertex of $\Gamma$ then the
    neighborhood of $v^{*}$ in $\Gamma^{*}$ consists of (the duals of)
    the edges of $\overrightarrow{v}$.  The migt $\overrightarrow{v^{*}}$
    consists of (the duals of) the edges of $\nu(v)$.
  \item \label{item:involutory}
$\left( \Gamma^{*} \right)^{*} = \Gamma$
  \item \label{item:mu}
    $\mu(\Gamma^{*}) = \mu(\Gamma)^{-1}$
  \end{enumerate}
\end{thm}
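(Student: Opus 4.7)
The plan is to establish (1) directly from the dynamics of migts, and then derive (2) and (3) as consequences.

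For (1), I would let $e_1 < \cdots < e_k$ be the edges at $v$ in the leo at $v$, so that at $v$ the migts arrange as follows: $\overrightarrow{v}$ exits along $e_1$; for each $1 \le j < k$, a migt $m_j$ enters along $e_j$ and leaves along $e_{j+1}$; and $\overleftarrow{v}$ enters along $e_k$. Since every migt of $\Gamma$ has strictly increasing labels, the leo at the vertex $v^* = \overrightarrow{v}$ of $\Gamma^*$ orders the edges of the trail $\overrightarrow{v}$ exactly in the order they appear in that trail. The smallest edge at $v^*$ is thus $e_1^*$, which begins $\overrightarrow{v^*}$, and traversing $e_1^*$ lands us at the other migt of $\Gamma$ through $e_1$, namely $m_1$ (or $\overleftarrow{v}$ when $k = 1$). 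Inductively, at the vertex $m_j$ of $\Gamma^*$, the edges of the trail $m_j$ after $e_j$ have strictly increasing labels starting with $e_{j+1}$, so the smallest edge at $m_j$ with label above $e_j$ is $e_{j+1}^*$. Iterating, $\overrightarrow{v^*}$ traverses $e_1^*, \ldots, e_k^*$ and terminates at $\overleftarrow{v}$, where $e_k$ is already the last edge of the trail. The first part of (1) is immediate from the definition: the edges incident to $v^* = \overrightarrow{v}$ in $\Gamma^*$ are by construction the duals of the edges traversed by $\overrightarrow{v}$.

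Part (2) then follows formally: applying (1) to $\Gamma^*$ identifies the vertex $(v^*)^*$ of $(\Gamma^*)^*$ with $\overrightarrow{v^*}$, and by (1) this migt of $\Gamma^*$ is the set of duals of the edges of $\nu(v)$. Under the canonical double-dualization $e \mapsto (e^*)^*$ of edges, this recovers the neighborhood of $v$ in $\Gamma$, matching both incidence and labels, hence $(\Gamma^*)^* = \Gamma$.

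For (3), set $w := \mu(\Gamma)(v)$, so that Proposition~\ref{prop:graphmu} gives $\overrightarrow{v} = \overleftarrow{w}$ in $\Gamma$, i.e., $v^* = \overleftarrow{w}$ as a vertex of $\Gamma^*$. Applying (1) at $w$, the migt $\overrightarrow{w^*}$ of $\Gamma^*$ traverses the duals of the edges of $\nu(w)$ in label order and terminates at the other migt of $\Gamma$ through the largest edge at $w$, which is $\overleftarrow{w} = v^*$. By the uniqueness of the migt ending at a given vertex (Lemma~\ref{lem:leoeqptdc} applied to $\Gamma^*$), this forces $\overrightarrow{w^*} = \overleftarrow{v^*}$; Proposition~\ref{prop:graphmu} applied to $\Gamma^*$ then yields $\mu(\Gamma^*)(w^*) = v^*$. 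Under the identification $v \leftrightarrow v^*$ by which $\Gamma^*$ inherits its vertex labeling, this reads $\mu(\Gamma^*) \circ \mu(\Gamma) = \id$, i.e., $\mu(\Gamma^*) = \mu(\Gamma)^{-1}$.

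The delicate step is the induction in (1): one must rule out that at an intermediate vertex $m_j$ of $\Gamma^*$ there is some edge of $m_j$'s trail whose label lies strictly between those of $e_j$ and $e_{j+1}$. This is exactly what the defining property of a migt provides, but it is the one spot where the global structure of the migts of $\Gamma$ has to be read off from the local leos of $\Gamma^*$; once this is spelled out carefully, (2) and (3) follow with essentially no further work.
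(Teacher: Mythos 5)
Your proof is correct and follows essentially the same route as the paper: a local analysis of the migts around a vertex establishes part~(1), after which (2) is formal and (3) follows by applying (1) at $w=\mu(\Gamma)(v)$ and noting that $\overrightarrow{w^{*}}$ terminates at $\overleftarrow{w}=v^{*}$. The only difference is that you spell out the induction (including the step ruling out an intermediate label between $e_j$ and $e_{j+1}$) that the paper reads off from its figure of the local migt configuration.
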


\begin{proof}
  The arguments will be easier to follow if the reader refers to
  Figure~\ref{fig:localmigts}.

\begin{figure}[htbp]
  \centering
  \begin{pspicture}(-3,-3)(3,3)
       \rput(0,0){\includegraphics[scale=.75]{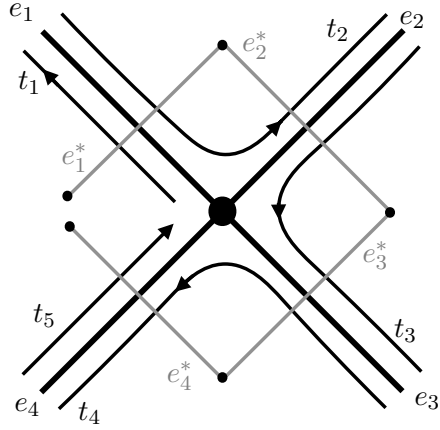}}
       \rput(-1.6,-2.7){$t_4$}
       \rput(2.6,-1.6){$t_3$}
       \rput(1.7,2.4){$t_2$}
       \rput(-2.4,1.7){$t_1$}
       \rput(-2.2,-1.4){$t_5$}
       \rput(-1.8,.7){\gray $e_1^{*}$}
       \rput(.6,2.2){\gray $e_2^{*}$}
       \rput(2.2,-.6){\gray $e_3^{*}$}
       \rput(-.4,-2.2){\gray $e_4^{*}$}
       \uput[135](-2.2,2.4){ $e_1$}
       \uput[45](2.3,2.3){ $e_2$}
       \uput[-45](2.5,-2.3){ $e_3$}
       \uput[125](-2.2,-2.9){ $e_4$}
  \end{pspicture}
  \caption{The local structure of migts}
  \label{fig:localmigts}
\end{figure}

The first statement of Item~\ref{item:startrail} is obvious. To see
the second let $e_1,\ldots,e_d$ be the edges in $\nu(v)$ in the local
ordering, then $e_i$ and $e_{i+1}$ are in some trail $t_{i}$ and thus
$e_i^{*}$ connects $t_i$ and $t_{i+1}$, and $e_{i+1}^{*}$ is the
smallest edge in $t_{i+1}$ greater than $e_i^{*}$.  So in the migt of
$v^{*}$, $e_{i+1}^{*}$ follows $e_{i}^{*}$.

Item~\ref{item:involutory} follows from Item~\ref{item:startrail}.

The proof of Item~\ref{item:mu} is illustrated in
Figure~\ref{fig:mustar}: let $\mu = \mu(\Gamma)$ and
$\mu^{*} = \mu(\Gamma^{*})$, and assume that $m(v) = u$, we need to
show that the migt of $\overrightarrow{u}$ in $\Gamma^{*}$ ends in
$\overrightarrow{v}$.  The last edge of $\overrightarrow{v}$ is the
last edge of $\nu(u)$, Now in $\Gamma^{*}$ the migt of $u^{*}$
consists of the edges dual to the edges in $\nu(u)$ so the last edge
of this migt ends in $\overrightarrow{v}$.

\begin{figure}[htbp]
  \centering
  \begin{pspicture}(-3,-1)(3,1.6)
       \rput(0,0){\includegraphics[scale=.3]{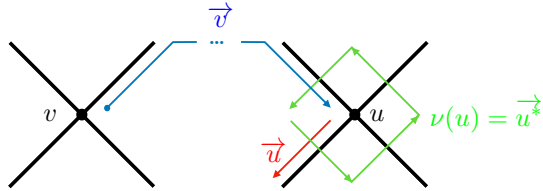}}
       \uput[180](-2,0){\small $v$}
       \uput[0](1.8,0){\small $u$}
       \uput[90](0,1){\blue \small $\overrightarrow{v}$}
       \uput[90](.7,-.8){\red \small $\overrightarrow{u}$}
       \uput[0](2.6,0){\green \small $\nu(u) = \overrightarrow{u^{*}}$}
  \end{pspicture}
  \caption{$\mu(\Gamma^{*}) = \mu(\Gamma)^{-1}$}
  \label{fig:mustar}
\end{figure}

\end{proof}

We end this subsection by noticing that one could more generally
define the dual of a leo, and mutatis mutandis, almost all of the
above would go through.  This will indeed be done
in~\cite{Apostolakis2018b}.

\subsection{Medial digraphs}
\label{sec:medial}

If we put together all the Hasse diagrams of the edge orders of a leo
on $\Gamma$ we obtain the \emph{medial digraph} of the leo. For
general definitions and terminology on digraphs we refer the reader
to~\cite{bang2002diagraphs}.

\begin{defn}
  \label{defn:medialdigaph} The \emph{medial digraph} of of a leo on
  $\Gamma$ is the digraph $\mathcal{M}\left( \Gamma \right)$ with
  vertices the edges of $\Gamma$ and an arc from edge $a$ to edge $b$
  if and only if $a$ immediately proceeds $b$ in the local order
  around a vertex.  A leo is called \emph{e-realizable} if its medial
  digraph is a \emph{dag}, in other words it has no (oriented) cycles.
\end{defn}

For example the medial digraph $\mathcal{M}$ of the e-v-graph in
Figure~\ref{fig:grassoc} is shown in the left side of
Figure~\ref{fig:daggrassoc}, the edges of $\mathcal{M}$ are colored
according to the vertex of $\Gamma$ that they come from.  The right
side of Figure~\ref{fig:daggrassoc} shows the medial digraph of
$\Gamma^{*}$, notice that, edge colors aside, the two digraphs
coincide.  In general, it follows from Item~\ref{item:startrail} of
Theorem~\ref{thm:dualproperties} that

\begin{thm}
  \label{thm:dualmedial} For any factorization $\rho$ we have
    $$\mathcal{M} \left( \Gamma(\rho)^{*} \right) = \mathcal{M}\left( \Gamma(\rho) \right)$$
\end{thm}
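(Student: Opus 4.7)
The plan is to establish the equality of the two medial digraphs by matching their vertex sets using the canonical edge correspondence $e \leftrightarrow e^{*}$, and then showing that the arc sets coincide through a ``consecutive at a vertex equals consecutive in a migt'' equivalence.

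First I would set up the vertices. Since every edge $e$ of $\Gamma$ corresponds to a unique dual edge $e^{*}$ of $\Gamma^{*}$ carrying the same label, the vertex sets of $\mathcal{M}(\Gamma)$ and $\mathcal{M}(\Gamma^{*})$ are canonically identified. So the whole content of the theorem is that the two digraphs have the same arc set under this identification.

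Next I would reinterpret arcs in terms of migts. By Definition~\ref{defn:medialdigaph}, an arc $a \to b$ in $\mathcal{M}(\Gamma)$ records that $a$ immediately precedes $b$ in the leo at some common vertex $w$. By the minimally-increasing-greedy rule used to build migts, this happens if and only if $a$ and $b$ are consecutive edges in the migt that enters $w$ along $a$; equivalently, this migt is some $\overrightarrow{v}$ with $b$ the successor of $a$ in the trail. Thus the arcs of $\mathcal{M}(\Gamma)$ correspond bijectively to pairs of consecutive edges within the migts of $\Gamma$.

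Now I would apply the key input, Item~\ref{item:startrail} of Theorem~\ref{thm:dualproperties}: the neighborhood $\nu(v^{*})$ in $\Gamma^{*}$ consists of the duals of the edges of the migt $\overrightarrow{v}$, and the labels of $\Gamma^{*}$ inherit those of $\Gamma$. Hence the leo at $v^{*}$ is precisely the linear order in which the edges appear along $\overrightarrow{v}$. Consequently $a^{*} \to b^{*}$ is an arc in $\mathcal{M}(\Gamma^{*})$ iff $a$ and $b$ are consecutive in some $\overrightarrow{v}$ --- which is the same condition obtained for $\mathcal{M}(\Gamma)$ in the previous step. Matching the two characterizations completes the proof.

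The main (and essentially only) obstacle is the bookkeeping of the two dualities at play: distinguishing ``consecutive in a leo at a vertex of $\Gamma$'' from ``consecutive in a migt'' and then recognizing that the leo at $v^{*}$ in $\Gamma^{*}$ reads off exactly the second notion. Once Item~\ref{item:startrail} is invoked, the argument collapses to a one-line equivalence, so no computation is needed beyond carefully unpacking the definitions.
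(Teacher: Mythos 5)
Your proof is correct and follows essentially the same route as the paper, which derives the statement directly from Item~\ref{item:startrail} of Theorem~\ref{thm:dualproperties}; you merely spell out in more detail the ``consecutive in a leo'' versus ``consecutive in a migt'' equivalence that the paper leaves implicit.
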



\begin{figure}[htbp]
  \centering
  \psset{unit=1.5,arrowsize=.15}
  \begin{pspicture}(1,-.3)(6,3.2)
    \rput(2,1.6){%
  \begin{pspicture}(0,0)(1,3.2)
    \rput(1,0){\circlenode{1}{\tiny $1$}}
    \rput(.5,1){\circlenode{2}{\tiny $2$}}
    \rput(0,2){\circlenode{3}{\tiny $3$}}
    \rput(1,2){\circlenode{4}{\tiny $4$}}
    \rput(.5,3){\circlenode{5}{\tiny $5$}}
    \ncline[linecolor=green]{->}{1}{2}
    \ncline[linecolor=blue]{->}{2}{3}
    \ncline[linecolor=red]{->}{3}{5}
    \ncline[linecolor=orange]{->}{1}{4}
    \ncline[linecolor=green]{->}{4}{5}
    \ncline[linecolor=green]{->}{2}{4}
  \end{pspicture}}
\rput(5,1.6){%
   \begin{pspicture}(0,0)(1,3.2)
    \rput(1,0){\circlenode{1}{\tiny $1$}}
    \rput(.5,1){\circlenode{2}{\tiny $2$}}
    \rput(0,2){\circlenode{3}{\tiny $3$}}
    \rput(1,2){\circlenode{4}{\tiny $4$}}
    \rput(.5,3){\circlenode{5}{\tiny $5$}}
    \ncline[linecolor=orange]{->}{1}{2}
    \ncline[linecolor=orange]{->}{2}{3}
    \ncline[linecolor=orange]{->}{3}{5}
    \ncline[linecolor=green]{->}{1}{4}
    \ncline[linecolor=green]{->}{4}{5}
    \ncline[linecolor=blue]{->}{2}{4}
  \end{pspicture}}
  \end{pspicture}
  \caption{The medial digraph of the e-v-graph in Figure~\ref{fig:grassoc}.}
  \label{fig:daggrassoc}
\end{figure}
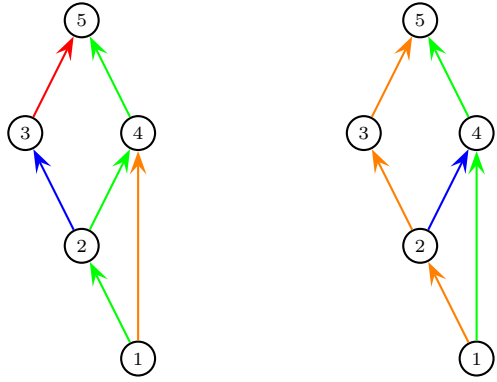

Notice also that the medial digraph in Figure~\ref{fig:daggrassoc} is
a dag, and furthermore the edge labels endow it with a
\emph{topological sort}.  We recall the definition:

\begin{defn}
  \label{defn:topsort} A \emph{topological sort} of a digraph is a total order of its
  vertices such that for two vertices $u,v$, we have that if there is an edge
  from $u$ to $v$ then $u < v$ in that order.  Clearly a digraph admits a topological
  sort if and only if it is a dag.

  A \emph{topsorted dag} is a vertex-labeled dag such that the order of the vertices
  induced by their labeling is topological sort.
\end{defn}

We note the somewhat subtle distinction of the two notions defined
above.  Clearly a topological sort of a dag gives a topsorted dag, but
two different topological sorts of the same dag may give the same
topsorted dag.  For example the updown ditree with three vertices (see
Definition~\ref{defn:zigzag}) admits two topological sorts, but
because of the order two automorphism, there is only one topsorted
dag whose underlying (unlabeled) dag is the updown ditree on three
vertices.

Now we can prove:

\begin{prop}
  \label{prop:leodag} A given leo is induced by an edge labeling of
  $\Gamma$ if and only if it is e-realizable.  Furthermore the edge
  labels induce a topological sort of the medial digraph.
\end{prop}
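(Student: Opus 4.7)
The plan is to prove the two directions separately, and observe that the construction in the backward direction simultaneously establishes the ``furthermore'' clause.

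For the forward direction, suppose the leo is induced by an edge-labeling $\ell \co E(\Gamma) \to [m]$. I would show directly that $\ell$, viewed as a bijection of $V(\mathcal{M}(\Gamma))$ with $[m]$, is a topological sort: if $a \to b$ is an arc of $\mathcal{M}(\Gamma)$, then by definition $a$ immediately precedes $b$ in the local order at some vertex $v$; since that local order is the restriction of $\ell$, we have $\ell(a) < \ell(b)$. Consequently every directed walk in $\mathcal{M}(\Gamma)$ has strictly increasing labels, so there can be no directed cycle, and the leo is e-realizable. This also takes care of the ``furthermore'' assertion.

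For the backward direction, assume $\mathcal{M}(\Gamma)$ is a dag. Then it admits at least one topological sort, which I take to be a bijection $\ell \co E(\Gamma) \to [m]$ with the property that $a \to b$ in $\mathcal{M}(\Gamma)$ implies $\ell(a) < \ell(b)$. I would then check that the leo induced by $\ell$ agrees, at every vertex $v$, with the given leo. Fix $v$ and list its incident edges in the given local order as $e_1 < e_2 < \cdots < e_d$. By the definition of $\mathcal{M}(\Gamma)$, there is an arc $e_i \to e_{i+1}$ for each $1 \le i < d$, hence $\ell(e_1) < \ell(e_2) < \cdots < \ell(e_d)$. A linear order on a finite set is completely determined by its covering relations, so the order on $\nu(v)$ induced by $\ell$ coincides with the given local order at $v$.

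The only step requiring a little care is checking that the labeling produced by the topological sort really recovers the original leo (and not just some leo compatible with the medial digraph), and the observation above that consecutive pairs in the local order already provide enough arcs in $\mathcal{M}(\Gamma)$ to force equality at every vertex disposes of this. I do not foresee a genuine obstacle: the bulk of the argument is a straightforward application of the definitions of medial digraph and topological sort, combined with the elementary fact that a linear order is determined by its cover relations.
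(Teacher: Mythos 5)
Your proof is correct and follows essentially the same route as the paper's: the forward direction observes that arcs of $\mathcal{M}(\Gamma)$ strictly increase the edge labels (which also gives the ``furthermore'' clause), and the converse takes a topological sort of the dag as the edge labeling. The only difference is that you spell out the verification that the labeling coming from the topological sort recovers the original local orders, a step the paper's terse proof leaves implicit.
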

\begin{proof}
  Clearly the medial digraph of an e-v-labeled tree contains no cycles
  since the local orders come from a global order. Conversely any dag
  admits a topological sort, that is a global order compatible with
  all the local orders thus giving a total order in the edges of
  $\Gamma$.
\end{proof}

\begin{exm}
  \label{exm:notrealizableptdc}
  The left side of Figure~\ref{fig:nonrealizableptdc} show the (PTDC
  of) a non-e-realizable leo on a graph.  Its (obviously non-cyclic)
  medial digraph is shown on the right.
\end{exm}

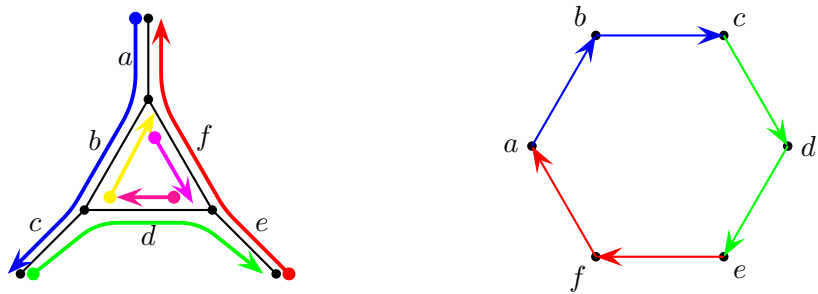
\begin{figure}[htbp]
\begin{center}
{\psset{unit=1.7,arrowsize=.15}
\begin{pspicture}(1.5,-.5)(4,2.1)
\rput(.5,0){%
  \rput(0,0){\rnode{n1}{\psdot(0,0)}}
  \rput(.5,0.866025403784439){\rnode{n2}{\psdot(0,0)}}
  \rput(1,0){\rnode{n3}{\psdot(0,0)}}
  \rput(.5,1.5){\rnode{n4}{\psdot(0,0)}}
  \rput(-.5,-.5){\rnode{n5}{\psdot(0,0)}}
  \rput(1.5,-.5){\rnode{n6}{\psdot(0,0)}}
  \ncline{n1}{n2}
  \naput{$b$}
  \ncline{n2}{n3}
  \naput{$f$}
  \ncline{n1}{n3}
  \nbput{$d$}
  \ncline{n4}{n2}
  \nbput{$a$}
  \ncline{n5}{n1}
  \naput{$c$}
  \ncline{n6}{n3}
  \nbput{$e$}
  \psline[linewidth=.03,linecolor=blue,linearc=.75]{*->}(.4,1.5)(.4,0.866025403784439)(-.1,0)(-.6,-.5)
  \psline[linewidth=.03,linecolor=red,linearc=.75]{<-*}(.6,1.5)(.6,0.866025403784439)(1.1,0)(1.6,-.5)
  \psline[linewidth=.03,linecolor=green,linearc=.45]{*->}(-.4,-.5)(.1,-.1)(.9,-.1)(1.4,-.5)
  \psline[linewidth=.03,linecolor=yellow]{*->}(.2,.1)(.55,0.766025403784439)
  \psline[linewidth=.03,linecolor=magenta]{*->}(.55,0.566025403784439)(.85,.05)
  \psline[linewidth=.03,linecolor=deeppink]{*->}(.7,.1)(.25,.1)}

  \rput(5,.5){\begin{pspicture}(-1.3,-1.3)(1.3,1.3)
    \rput(1.00000000000000, 0.000000000000000){\rnode{d}{\psdot(0,0)}}
    \uput[0](1.00000000000000,0.000000000000000){$d$}
    \rput(0.500000000000000, 0.866025403784439){\rnode{c}{\psdot(0,0)}}
    \uput[45](0.500000000000000, 0.866025403784439){$c$}
    \rput(-0.500000000000000, 0.866025403784439){\rnode{b}{\psdot(0,0)}}
    \uput[135](-0.500000000000000, 0.866025403784439){$b$}
    \rput(-1.00000000000000, 0.000000000000000){\rnode{a}{\psdot(0,0)}}
    \uput[180](-1.00000000000000, 0.000000000000000){$a$}
    \rput(-0.500000000000000, -0.866025403784439){\rnode{f}{\psdot(0,0)}}
    \uput[-135](-0.500000000000000, -0.866025403784439){$f$}
    \rput(0.500000000000000, -0.866025403784439){\rnode{e}{\psdot(0,0) }}
    \uput[-45](0.500000000000000, -0.866025403784439){$e$}
    \ncline[linecolor=blue]{->}{a}{b}
    \ncline[linecolor=blue]{->}{b}{c}
    \ncline[linecolor=green]{->}{c}{d}
    \ncline[linecolor=green]{->}{d}{e}
    \ncline[linecolor=red]{->}{e}{f}
    \ncline[linecolor=red]{->}{f}{a}
  \end{pspicture}}
\end{pspicture}}
\end{center}
\caption{A non-e-realizable PTDC and its medial digraph}
\label{fig:nonrealizableptdc}
\end{figure}

These constructions were inspired by the ideas in section 3
of~\cite{DulPen1993}.  The terminology \emph{medial digraph} is meant
to suggest an analogy with the medial graphs in the theory of graph
embeddings, see for example~\cite{Archdeacon1992}.  This analogy will
be made precise in Section~\ref{sec:pegs}.

Before proceeding we prove a lemma:

\begin{lem}
  \label{lem:leochi} The underlying graph of the medial digraph of a
  leo $\Gamma$ has the same Euler characteristic as $\Gamma$.
\end{lem}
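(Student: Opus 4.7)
The plan is a straightforward double-counting argument, reducing the Euler characteristic of $\mathcal{M}(\Gamma)$ to the counts $n = |V(\Gamma)|$ and $m = |E(\Gamma)|$ via the handshake lemma.

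First I would count the vertices of $\mathcal{M}(\Gamma)$: by Definition~\ref{defn:medialdigaph} these are in bijection with the edges of $\Gamma$, so there are exactly $m$ of them.

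Next I would count the arcs. The defining property is that arcs of $\mathcal{M}(\Gamma)$ correspond to ``immediately precedes'' relations in one of the local orders, so an arc $a \to b$ is contributed each time $a$ and $b$ are consecutive edges in the linear order that the leo assigns to $\nu(v)$ for some vertex $v$ of $\Gamma$. For a vertex $v$ of degree $d(v) \ge 1$, the linear order on its $d(v)$ incident edges is a chain, producing $d(v) - 1$ such consecutive pairs, and hence $d(v) - 1$ arcs; isolated vertices contribute nothing. Summing over vertices and invoking the handshake lemma,
\[
\#\mathrm{arcs}(\mathcal{M}(\Gamma)) \;=\; \sum_{v \in V(\Gamma)} \max(d(v) - 1, 0) \;=\; 2m - n,
\]
where in the second equality I assume no isolated vertices (which is the case of interest; I would add a brief remark noting that each isolated vertex would both drop a vertex from $\mathcal{M}(\Gamma)$'s count and adjust the arc count symmetrically, or simply assume $\Gamma$ has none).

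The underlying (multi)graph of $\mathcal{M}(\Gamma)$ has one $1$-cell per arc, so its Euler characteristic is
\[
\chi(\mathcal{M}(\Gamma)) \;=\; m - (2m - n) \;=\; n - m \;=\; \chi(\Gamma),
\]
as desired. The only genuinely delicate point is bookkeeping around isolated/degree-zero vertices and potential multi-arcs (two arcs between the same pair that happen to come from different vertices of $\Gamma$, or a pair $a \to b$ and $b \to a$); neither causes trouble because we count $1$-cells with multiplicity, so the edge count of the underlying graph agrees with the arc count of $\mathcal{M}(\Gamma)$. Thus there is no real obstacle — the lemma is essentially the handshake identity packaged for the medial digraph.
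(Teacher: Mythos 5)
Your proof is correct and is essentially identical to the paper's own argument: count $m$ vertices for $\mathcal{M}(\Gamma)$, count $\sum_v (d_v - 1) = 2m - n$ arcs via the handshake lemma, and conclude $\chi(\mathcal{M}(\Gamma)) = m - (2m - n) = n - m = \chi(\Gamma)$. The extra bookkeeping about isolated vertices and multi-arcs is a harmless refinement the paper omits.
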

\begin{proof}
  If $\Gamma$ has $m$ edges then $\mathcal{M}$ has $m$ vertices.  Furthermore
  a vertex $v$ of degree $d_v$ contributes $d_{v}-1$ edges.  So $\mathcal{M}$ has
  $\Sigma (d_v - 1) = 2m - n$ edges.  So $\chi(\mathcal{M}) = m - (2m-n) = n -m$.
\end{proof}

Given a leo on $\Gamma$ the local orders of every vertex induce a
decomposition of the edges of $\mathcal{M}(\Gamma)$ into chains.  For
example the graph in Figure~\ref{fig:grassoc} induce a chain
decomposition of its medial digraph that is indicated by the coloring
of the edges.  We formalize this idea in the following definition.

\begin{defn}
  \label{defn:binarydg}
  A digraph $M$ is a \emph{binary digraph} if the in and out degree
  of every vertex is at most $2$.  A vertex of a binary digraph is
  \emph{internal} if its in and out degree is at least $1$.

  A \emph{Perfect Chain Decomposition} (\emph{PCD} for short) of a
  binary digraph $M$ is a decomposition $\mathcal{C}$ of the edges of
  $M$ into chains such that every vertex of $M$ belongs to exactly two
  chains of $\mathcal{C}$, chains of length $0$ are allowed.
\end{defn}

One can now prove:
\begin{lem}
  \label{lem:int}
  The following hold:
  \begin{enumerate}
  \item   \label{item:int1}
    The number of chains in any PCD of a binary digraph $M$ is
    $2m-l$, where $m$ is the number of vertices and $l$ the number of
    edges.
  \item \label{item:int2}
    Given a binary digraph with $\iota$ internal vertices there
    are $2^{\iota}$ PCDs on $M$.  In particular, every medial digraph
    admits a PCD.
  \item \label{item:int3}
    Given any binary \emph{dag} $M$, there is an e-graph whose
    medial digraph is $M$. In fact if $\iota$ stands for the number
    of internal vertices, $\tau$ for the number of topological
    sorts, and $\alpha$ for the number of automorphisms of $M$,
    then, up to isomorphism, there are
    $$ \frac{2^{\iota}\tau}{\alpha} $$
    e-graphs that have $M$ as medial digraph.
  \end{enumerate}
\end{lem}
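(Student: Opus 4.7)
For part (1) the plan is a double count of vertex--chain incidences. Each of the $m$ vertices of $M$ belongs to exactly two chains, giving $2m$ incidences; on the other hand a chain that is a directed path on $k$ edges contributes $k+1$ incidences (and a length-zero chain contributes $1$). Summing over chains yields $l + s = 2m$, whence $s = 2m - l$.

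For part (2), I would analyse the local PCD structure at each vertex $v$. Write $d^-,d^+$ for the in- and out-degree of $v$, let $p$ be the number of pass-throughs (pairings of an in-edge with an out-edge at $v$) and $z_v$ the number of length-zero chains based at $v$. The condition that exactly two chains meet at $v$ becomes $d^-+d^+-p+z_v = 2$. A short case analysis over $(d^-,d^+) \in \{(1,1),(2,1),(1,2),(2,2)\}$ shows that every internal vertex admits exactly two valid local configurations (differing either in the pairing of in- with out-edges, or in the ``pass-through plus length-zero chain'' versus ``two chain endpoints'' alternative), whereas every non-internal vertex has a forced configuration. Independence of the local choices then gives $2^{\iota}$ global PCDs. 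The ``in particular'' clause is immediate from Lemma~\ref{lem:leoeqptdc}: any leo realising $M$ as a medial digraph supplies a PCD via its migts.

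For part (3), given a PCD $\mathcal{C}$ and a topological sort $\sigma$ of a binary dag $M$, I would construct an e-graph $\Gamma(\mathcal{C},\sigma)$ as follows: its vertices are the chains of $\mathcal{C}$, each $a\in V(M)$ is declared an edge joining the two chains containing $a$, and is given the label $\sigma(a)$. The local order at the vertex corresponding to a chain $c$ is the order in which $c$ visits its vertices, which is consistent with $\sigma$ because $M$ is a dag, and one reads off directly that $\mathcal{M}(\Gamma(\mathcal{C},\sigma)) = M$. This yields $2^{\iota}\tau$ labelled constructions. The natural action of $\mathrm{Aut}(M)$ on pairs $(\mathcal{C},\sigma)$ identifies precisely those constructions that produce isomorphic e-graphs, so the number of isomorphism classes is $2^{\iota}\tau/\alpha$ provided the action is free.

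The main obstacle is the final orbit-counting step: justifying freeness of the $\mathrm{Aut}(M)$-action (or otherwise adjusting the count via Burnside), together with the reverse-direction surjectivity --- namely that every e-graph whose medial digraph is $M$ is realised as some $\Gamma(\mathcal{C},\sigma)$ --- which I would establish by recovering $(\mathcal{C},\sigma)$ uniquely, up to $\mathrm{Aut}(M)$, from the migts and edge labels of the e-graph. A secondary subtlety relevant already to part (2) is that for a general (non-dag) binary digraph a local ``pass-through'' choice might assemble globally into a cycle visiting some vertex twice, forcing that vertex onto only one chain; this cannot occur when $M$ is a dag but must be ruled out in the general binary case.
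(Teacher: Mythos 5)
Your proof follows the paper's argument essentially verbatim in all three parts: the same vertex--chain incidence double count for (1), the same local case analysis at each bidegree with independent binary choices for (2), and the same ``chains become vertices'' construction combined with a topological sort for (3). The obstacles you flag at the end are left implicit in the paper as well; note that the $\mathrm{Aut}(M)$-action on pairs $(\mathcal{C},\sigma)$ is automatically free because a topological sort is an injective labeling of the vertices (so only the identity automorphism can fix one), while your observation that pass-through choices on a non-dag binary digraph could close up into a directed cycle --- which is not an open chain, and would break the count in both (1) and (2) --- is a genuine subtlety that the paper's proof of (2) silently passes over, though it is harmless in the dag case actually needed for medial digraphs of e-graphs.
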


\begin{proof}
 Item~\ref{item:int1} follows from the Handshaking Lemma:  If there are
$k$ chains $c_1,\ldots, c_k$ with lengths $l_1,\ldots,l_k$, then $c_i$
will have $l_i + 1$ vertices, so $\sum_{i=1}^k(l_1 +1) = 2m$.  On the other
hand $\sum_{i=1}^k(l_1 +1) = l + k$.

For Item~\ref{item:int2} we note that there are $4$ possible bidegrees
for an internal vertex $v$: $(1,1)$, $(1,2)$, $(2,1)$, and $(2,2)$,
and for each of these bidegrees there are two choices for joining the
edges into $2$ chains. If $v$ has bidegree $(1,1)$, one choice is to
join the two edges together for one of the chains and have the other
chain to be the trivial chain $v$, while the other choice is to have
one of the edges in the first chain and the other edge in the second.
If $v$ has bidegree $(1,2)$ with incoming edge $(x,v)$ and outgoing
edges $(v,y)$ and $(v,z)$ the fist choice is to join $(x,v)$ and
$(v,y)$ and have $(v,z)$ by itself, and the other choice is to join
$(x,v)$ and $(v,z)$ and have $(v,y)$ by itself.  The case of bidegree
$(2,1)$ is entirely similar.  Finally for bidegree $(2,2)$ with
incoming edges $(x,v)$ and $(y,v)$, and outgoing edges $(v,z)$ and
$(v,w)$ the one choice is to join $(x,v)$ with $(v,z)$ and $(y,v)$
with $(v,w)$, and the other is to join $(x,v)$ with $(v,w)$ and
$(y,v)$ with $(v,z)$.  (See the first and third columns of
Figure~\ref{fig:pcdduality}.)

For a non-internal vertex $v$ there is only one choice: if the vertex
is a leaf then one of the chains is the trivial chain $v$ and the
other contains the unique edge, while if $v$ is a minimum
(resp. maximum) each of the outgoing (resp. incoming) edges goes in to
a different chain.

Making a choice in each of the vertices gives a PCD, and there are $2^{\iota}$
such choices.

For Item~\ref{item:int3} we note that any PCD $\mathcal{C}$ of a
binary digraph $M$ defines a graph $\Gamma$ with a leo, as follows:
the vertices of $\Gamma$ are the chains of $\mathcal{C}$, and for each
vertex $a$ of $M$ there is an edge in $\Gamma$ joining the vertices of
$\Gamma$ that correspond to the two chains that $a$ belongs
to. Clearly the neighborhood of a vertex $c$ of $G$ consists of the
edges that correspond to the vertices of that chain in $M$, and the
chain defines a total order on that neighborhood.  By definition, the
medial digraph of $G$ is $M$ and the PCD induced by the leo is
$\mathcal{C}$.

If $M$ is a dag, any topological sorting of $M$, gives an e-labeling
for each of the $2^{\iota}$ graphs $\Gamma$ constructed above.  Taking
into account the action of the automorphism group of $M$ gives the
formula for the number of e-graphs that have $M$ as medial digraph.
\end{proof}

The proof of Item~\ref{item:int3} identifies the set of (isomorphism
classes of) e-graphs with medial dag $M$, with the set of (isomorphism
classes of) PCDs of $M$, and ~\ref{item:int2} identifies PCDs of $M$
with a set of binary choices, one choice for each internal vertex.  It
turns out that under these identifications the mind-body dual of an
e-graph is identified with the PCD obtained by making the opposite
choice at every internal vertex. We make this precise below.

For any given binary digraph one could identify the two choices of
connecting arcs at each internal vertex with $0$ and $1$.  This
identification can be done canonically for vertices of bidegree
$(1,1)$, say the choice that connects the two arcs is $0$, and the
choice that doesn't is $1$.  For the other types of internal vertices
an identification has to be made arbitrarily at each vertex.  One way
to accomplish a uniform encoding is to draw $M$ in the plane and then
use the orientation of the plane to say, for example, that for
vertices of bidegree $(1,2)$ (resp $(2,1)$) choice $0$ is to connect
the single incoming (resp. outgoing) edge with the left outgoing
(resp.  incoming) edge, while for vertices of bidegree $(2,2)$ choice
$0$ means to connect the left incoming to the left outgoing edge.
After such an identification has been made, the proof of
Item~\ref{item:int2} constructs a bijection from the set of all PCDs
on $M$ to the set of all function $s\co I \to \{0,1\}$, where $I$ is
the set of internal vertices of $M$.  (See the first and third column
of Figure~\ref{fig:pcdduality})

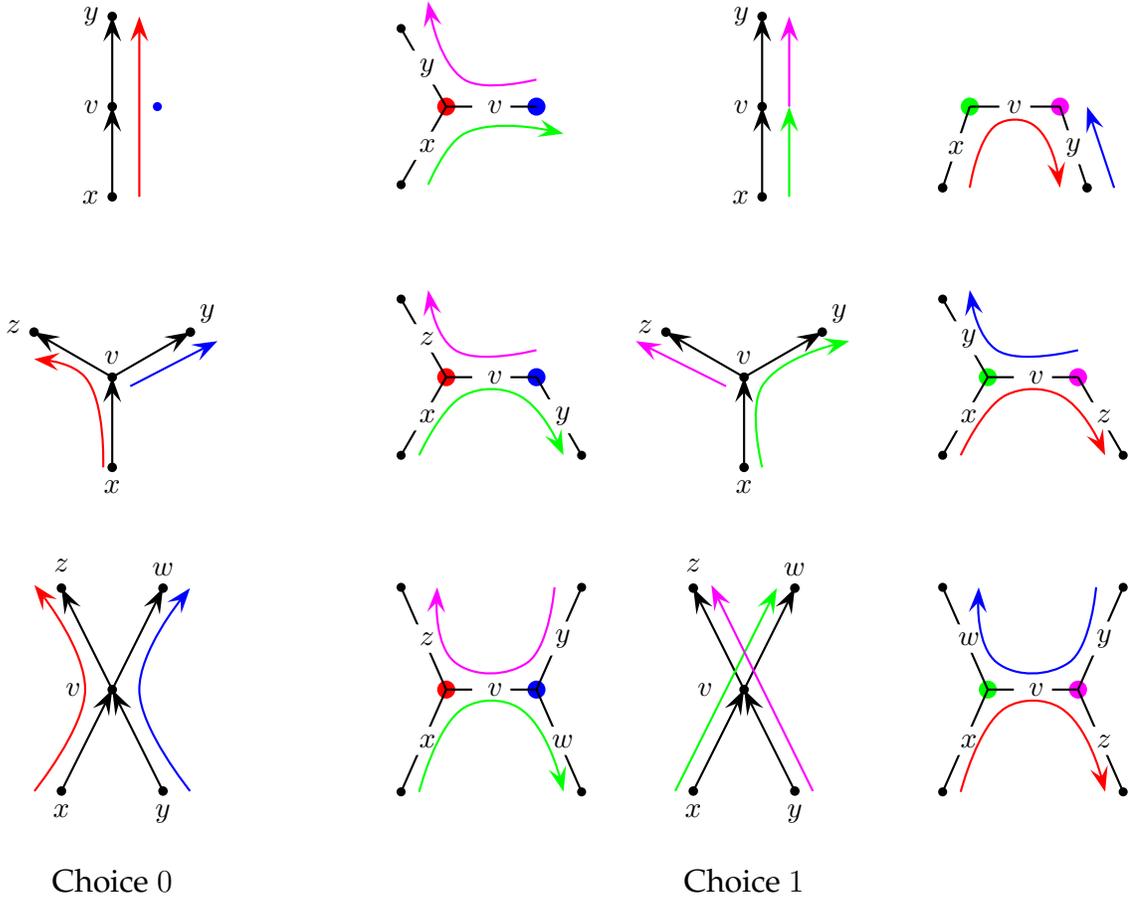
\begin{figure}[htbp]
  \centering
  \psset{unit=1.2,arrowsize=.2}
  \begin{pspicture}(1,-3)(8.7,7)
    \rput(.3,5 ){\rnode{x}{\psdot(0,0)}}
    \uput[180](.3,5 ){$x$}
    \rput(.3,6 ){\rnode{v}{\psdot(0,0)}}
    \uput[180](.3,6 ){$v$}
    \rput(.3,7 ){\rnode{y}{\psdot(0,0)}}
    \uput[180](.3,7 ){$y$}
    \ncline{->}{x}{v}
    \ncline{->}{v}{y}
    \psline[linecolor = red]{->}(.6,5)(.6,7)
    \psdot[linecolor=blue,dotsize=.1](.8,6)
    \rput(4,6){\rnode{a}{\psdot[linecolor=red,dotsize=.2](0,0)}}
    \rput(5,6){\rnode{b}{\psdot[linecolor=blue,dotsize=.2](0,0)}}
    \rput(3.5,6.8660254){\rnode{c}{\psdot[dotsize=.1](0,0)}}
    \rput(3.5,5.1339746){\rnode{d}{\psdot[dotsize=.1](0,0)}}
    \ncline{a}{b}
    \ncput*{ $v$}
    \ncline{a}{c}
    \ncput*{ $y$}
    \ncline{a}{d}
    \ncput*{ $x$}
    \pscurve[linecolor=green]{->}(3.8,5.1339746)(4.2,5.7)(5.3,5.7)
    \pscurve[linecolor=magenta]{->}(5,6.3)(4.2,6.3)(3.8,7.1660254)
    \rput(7.5,5){\rnode{x1}{\psdot(0,0)}}
    \uput[180](7.5,5){$x$}
    \rput(7.5,6){\rnode{v1}{\psdot(0,0)}}
    \uput[180](7.5,6 ){$v$}
    \rput(7.5,7){\rnode{y1}{\psdot(0,0)}}
    \uput[180](7.5,7){$y$}
    \ncline{->}{x1}{v1}
    \ncline{->}{v1}{y1}
    \psline[linecolor = green]{->}(7.8,5)(7.8,6)
    \psline[linecolor = magenta]{->}(7.8,6)(7.8,7)
    \rput(9.5,5.1){\rnode{a1}{\psdot(0,0)}}
    \rput(9.8,6){\rnode{b1}{\psdot[linecolor=green,dotsize=.2](0,0)}}
    \rput(10.8,6){\rnode{c1}{\psdot[linecolor=magenta,dotsize=.2](0,0)}}
    \rput(11.1,5.1){\rnode{d1}{\psdot(0,0)}}
    \ncline{a1}{b1}
    \ncput*{$x$}
    \ncline{b1}{c1}
    \ncput*{$v$}
    \ncline{c1}{d1}
    \ncput*{$y$}
    \pscurve[linecolor=red]{->}(9.8,5.1)(10.1,5.8)(10.5,5.8)(10.8,5.1)
    \psline[linecolor=blue]{->}(11.4,5.1)(11.1,6)
    \rput(.3,3){\rnode{v}{\psdot(0,0)}}
    \uput[90](.3,3){$v$}
    \rput(0.3,2){\rnode{x}{\psdot(0,0)}}
    \uput[-90](0.3,2){$x$}
    \rput(1.16602540378444,3.5){\rnode{y}{\psdot(0,0)}}
    \uput[45](1.16602540378444,3.5){$y$}
    \rput(-0.566025403784438,3.5){\rnode{z}{\psdot(0,0)}}
    \uput[165](-0.566025403784438,3.5){$z$}
    \ncline{->}{x}{v}
    \ncline{->}{v}{y}
    \ncline{->}{v}{z}
    \pscurve[linecolor=red]{->}(.2,2)(0,3)(-0.566025403784438,3.2)
    \psline[linecolor=blue]{->}(.5,2.9)(1.46602540378444,3.4)
    \rput(4,3){\rnode{a3}{\psdot[linecolor=red,dotsize=.2](0,0)}}
    \rput(5,3){\rnode{b3}{\psdot[linecolor=blue,dotsize=.2](0,0)}}
    \rput(3.5,3.8660254){\rnode{c3}{\psdot[dotsize=.1](0,0)}}
    \rput(3.5,2.1339746){\rnode{d3}{\psdot[dotsize=.1](0,0)}}
    \rput(5.5,2.1339746){\rnode{e3}{\psdot[dotsize=.1](0,0)}}
    \ncline{a3}{b3}
    \ncput*{ $v$}
    \ncline{a3}{c3}
    \ncput*{ $z$}
    \ncline{a3}{d3}
    \ncput*{ $x$}
    \ncline{b3}{e3}
    \ncput*{ $y$}
    \pscurve[linecolor=green]{->}(3.7,2.1339746)(4.2,2.8)(4.8,2.8)(5.3,2.1339746)
    \pscurve[linecolor=magenta]{->}(5,3.3)(4.1,3.3)(3.8,3.9660254)
    \rput(7.3,3){\rnode{v}{\psdot(0,0)}}
    \uput[90](7.3,3){$v$}
    \rput(7.3,2){\rnode{x}{\psdot(0,0)}}
    \uput[-90](7.3,2){$x$}
    \rput(8.16602540378444,3.5){\rnode{y}{\psdot(0,0)}}
    \uput[45](8.16602540378444,3.5){$y$}
    \rput(6.43397459621556,3.5){\rnode{z}{\psdot(0,0)}}
    \uput[165](6.43397459621556,3.5){$z$}
    \ncline{->}{x}{v}
    \ncline{->}{v}{y}
    \ncline{->}{v}{z}
    \psline[linecolor=magenta]{->}(7.1,2.9)(6.1,3.4)
    \pscurve[linecolor=green]{->}(7.5,2)(7.5,2.9)(8.46602540378444,3.4)
    \rput(10,3){\rnode{a3}{\psdot[linecolor=green,dotsize=.2](0,0)}}
    \rput(11,3){\rnode{b3}{\psdot[linecolor=magenta,dotsize=.2](0,0)}}
    \rput(9.5,3.8660254){\rnode{c3}{\psdot[dotsize=.1](0,0)}}
    \rput(9.5,2.1339746){\rnode{d3}{\psdot[dotsize=.1](0,0)}}
    \rput(11.5,2.1339746){\rnode{e3}{\psdot[dotsize=.1](0,0)}}
    \ncline{a3}{b3}
    \ncput*{ $v$}
    \ncline{a3}{c3}
    \ncput*{ $y$}
    \ncline{a3}{d3}
    \ncput*{ $x$}
    \ncline{b3}{e3}
    \ncput*{ $z$}
    \pscurve[linecolor=red]{->}(9.7,2.1339746)(10.2,2.8)(10.8,2.8)(11.3,2.1339746)
    \pscurve[linecolor=blue]{->}(11,3.3)(10.1,3.3)(9.8,3.9660254)
    \rput(.3, -0.46375){\rnode{v}{\psdot(0,0)}}
    \uput[180](.1, -0.46375){$v$}
    \rput(-0.2625,-1.58875){\rnode{x}{\psdot(0,0)}}
    \uput[-90](-0.2625,-1.58875){$x$}
    \rput(0.8625,-1.58875){\rnode{y}{\psdot(0,0)}}
    \uput[-90](0.8625,-1.58875){$y$}
    \rput(-0.2625,0.66125){\rnode{z}{\psdot(0,0)}}
    \uput[90](-0.2625,0.696084051562){$z$}
    \rput(0.8625,0.66125){\rnode{w}{\psdot(0,0)}}
    \uput[90](0.8625,0.66125){$w$}
    \ncline{->}{x}{v}
    \ncline{->}{y}{v}
    \ncline{->}{v}{z}
    \ncline{->}{v}{w}
    \pscurve[linecolor=red]{->}(-0.5625,-1.58875)(0, -0.46375)(-0.5625,0.696084051562)
    \pscurve[linecolor=blue]{->}(1.1625,-1.58875)(.6, -0.46375)(1.1625,0.66125)
    \rput(4,-0.46375){\rnode{a3}{\psdot[linecolor=red,dotsize=.2](0,0)}}
    \rput(5,-0.46375){\rnode{b3}{\psdot[linecolor=blue,dotsize=.2](0,0)}}
    \rput(3.5,-1.5977246){\rnode{c3}{\psdot[dotsize=.1](0,0)}}
    \rput(3.5,0.6702246){\rnode{d3}{\psdot[dotsize=.1](0,0)}}
    \rput(5.5,0.6702246){\rnode{e3}{\psdot[dotsize=.1](0,0)}}
    \rput(5.5,-1.5977246){\rnode{f3}{\psdot[dotsize=.1](0,0)}}
    \ncline{a3}{b3}
    \ncput*{ $v$}
    \ncline{a3}{c3}
    \ncput*{ $x$}
    \ncline{a3}{d3}
    \ncput*{ $z$}
    \ncline{b3}{e3}
    \ncput*{ $y$}
    \ncline{b3}{f3}
    \ncput*{ $w$}
    \pscurve[linecolor=green]{->}(3.7,-1.5977246)(4.2,-0.66375)(4.8,-0.66375)(5.3,-1.5977246)
    \pscurve[linecolor=magenta]{->}(5.2,0.6702246)(4.9,-0.16375)(4.1,-0.16375)(3.9,0.6702246)

    \rput(7.3, -0.46375){\rnode{v}{\psdot(0,0)}}
    \uput[180](7.1, -0.46375){$v$}
    \rput(6.7375,-1.58875){\rnode{x}{\psdot(0,0)}}
    \uput[-90](6.7375,-1.58875){$x$}
    \rput(7.8625,-1.58875){\rnode{y}{\psdot(0,0)}}
    \uput[-90](7.8625,-1.58875){$y$}
    \rput(6.7375,0.66125){\rnode{z}{\psdot(0,0)}}
    \uput[90](6.7375,0.696084051562){$z$}
    \rput(7.8625,0.66125){\rnode{w}{\psdot(0,0)}}
    \uput[90](7.8625,0.66125){$w$}
    \ncline{->}{x}{v}
    \ncline{->}{y}{v}
    \ncline{->}{v}{z}
    \ncline{->}{v}{w}
    \psline[linecolor=green]{->}(6.5375,-1.58875)(7.6625,0.66125)
    \psline[linecolor=magenta]{->}(8.0625,-1.58875)(6.9375,0.696084051562)
    \rput(10,-0.46375){\rnode{a3}{\psdot[linecolor=green,dotsize=.2](0,0)}}
    \rput(11,-0.46375){\rnode{b3}{\psdot[linecolor=magenta,dotsize=.2](0,0)}}
    \rput(9.5,-1.5977246){\rnode{c3}{\psdot[dotsize=.1](0,0)}}
    \rput(9.5,0.6702246){\rnode{d3}{\psdot[dotsize=.1](0,0)}}
    \rput(11.5,0.6702246){\rnode{e3}{\psdot[dotsize=.1](0,0)}}
    \rput(11.5,-1.5977246){\rnode{f3}{\psdot[dotsize=.1](0,0)}}
    \ncline{a3}{b3}
    \ncput*{ $v$}
    \ncline{a3}{c3}
    \ncput*{ $x$}
    \ncline{a3}{d3}
    \ncput*{ $w$}
    \ncline{b3}{e3}
    \ncput*{ $y$}
    \ncline{b3}{f3}
    \ncput*{ $z$}
    \pscurve[linecolor=red]{->}(9.7,-1.5977246)(10.2,-0.66375)(10.8,-0.66375)(11.3,-1.5977246)
    \pscurve[linecolor=blue]{->}(11.2,0.6702246)(10.9,-0.16375)(10.1,-0.16375)(9.9,0.6702246)
    \uput[-90](.3,-2.3){\large Choice $0$}
    \uput[-90](7.3,-2.3){\large Choice $1$}
\end{pspicture}
  \caption{From PCDs to e-graphs and duality}
  \label{fig:pcdduality}
\end{figure}

\begin{defn}
  \label{defn:dualofpcd}
  The \emph{dual} of a function $s\co I \to \left\{ 0,1 \right\}$ is
  the function $s^{*}\co I \to \left\{ {0,1} \right\}$ defined by
  $s^{*}(v) = 1-s(v)$.

  The \emph{dual} $\mathcal{C}^{*}$ of a PCD on a binary digraph $M$
  constructed using the function $s$, is the PCD $\mathcal{C}$
  constructed using $s*$.
\end{defn}

\begin{lem}
  \label{lem:pcdofptdc}
  The PTDC of an e-graph $\Gamma$ also induces a PCD on
  $\mathcal{M}\left( \Gamma \right)$. The PCD induced from the PTDC of
  $\Gamma$ is the dual of the PCD induced by the leo of $\Gamma$.
\end{lem}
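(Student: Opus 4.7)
The plan is to proceed in two stages mirroring the two assertions: first construct a PCD on $\mathcal{M}(\Gamma)$ from the PTDC, then verify that it is dual to the leo-induced PCD. Both stages are local analyses, the first along each migt and the second at each vertex of $\mathcal{M}(\Gamma)$ (i.e.\ each edge of $\Gamma$).

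For the first assertion, I would translate each migt of the PTDC directly into a chain of $\mathcal{M}(\Gamma)$. If $a$ and $b$ are consecutive edges of a migt and $v$ is their shared endpoint, then by construction of migts $a$ immediately precedes $b$ in the leo at $v$, so $(a,b)$ is an arc of $\mathcal{M}(\Gamma)$ by Definition~\ref{defn:medialdigaph}. The PTDC axioms (each edge of $\Gamma$ lies on exactly two migts) then give that each vertex of $\mathcal{M}(\Gamma)$ lies on exactly two of these chains; conversely each arc $(a,b)$ of $\mathcal{M}(\Gamma)$, coming from a consecutive pair at some $v$, lies on the unique migt arriving at $v$ via $a$ and departing via $b$, so it lies on exactly one chain.

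For the duality, I would fix an internal vertex $e$ of $\mathcal{M}(\Gamma)$ with endpoints $v,u$ in $\Gamma$, and denote by $e^-_v, e^+_v, e^-_u, e^+_u$ the predecessors and successors of $e$ in the leos at $v$ and $u$ (whenever they exist); the in-arcs and out-arcs of $e$ in $\mathcal{M}(\Gamma)$ are precisely the arcs involving these edges. In the leo-PCD, the chain coming from $v$ pairs the in-arc at $v$ with the out-arc at $v$, and likewise for $u$; in the PTDC-PCD, the migt traversing $e$ from $v$ to $u$ pairs the in-arc at $v$ with the out-arc at $u$, and the migt going the other way pairs the remaining two arcs. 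Thus the leo-PCD pairs \emph{same-endpoint} arcs while the PTDC-PCD pairs \emph{opposite-endpoint} arcs. A brief case-check over the four possible bidegrees of $e$, namely $(1,1), (1,2), (2,1),$ and $(2,2)$, then matches the same-endpoint/opposite-endpoint dichotomy with the two alternatives listed in the proof of Lemma~\ref{lem:int}, so the two PCDs make opposite choices at every internal vertex and are dual in the sense of Definition~\ref{defn:dualofpcd}. The main obstacle is the bookkeeping in the asymmetric $(1,2)$ and $(2,1)$ cases, where a missing predecessor or successor forces a trivial chain on one side; once the same-vs.-opposite endpoint dichotomy is pinned down, the duality itself is essentially immediate.
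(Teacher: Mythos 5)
Your proof is correct and follows essentially the same route as the paper: the duality assertion is verified by a local case analysis at each internal vertex of $\mathcal{M}(\Gamma)$ over the four possible bidegrees, which is exactly what the paper's Figure~\ref{fig:pcdduality} carries out pictorially, and your ``same-endpoint versus opposite-endpoint'' dichotomy is a clean verbal distillation of what that figure shows. For the first assertion the paper simply cites Theorem~\ref{thm:dualmedial} (the migts of $\Gamma$ are the vertex neighborhoods of $\Gamma^{*}$, whose medial digraph equals $\mathcal{M}(\Gamma)$), whereas you unwind the same fact directly by turning each migt into a chain of arcs; both are fine.
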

\begin{proof}
  The first statement follows from~\ref{thm:dualmedial}.  The proof of
  the second is in Figure~\ref{fig:pcdduality}.  The left column shows
  for each type of internal vertex with the PCD imposed by choice $0$,
  the second shows the e-graph constructed from that PCD, the third
  column the PCD induced by the migts.  Notice that in each case the
  PCD in the third column is exactly the PCD that corresponds to
  choice $1$.  The fourth column shows the graph constructed from
  choice $1$ with it's migts.  One can readily verify that the PCD
  imposed by those migts is exactly the PCD imposed by Choice $0$.
\end{proof}

An immediate corollary is:

\begin{thm}
  \label{thm:dualofpcd}
  For an e-graph $\Gamma$, the leo of $\Gamma^{*}$ induces on
  $\mathcal{M}\left( \mu \right)$ the PCD dual to the PCD induced by
  the leo of $\Gamma$.
\end{thm}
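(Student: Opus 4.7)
The plan is to deduce this as essentially an immediate corollary of Theorem~\ref{thm:dualmedial} and Lemma~\ref{lem:pcdofptdc}. First I would invoke Theorem~\ref{thm:dualmedial} to identify $\mathcal{M}(\Gamma^{*})$ with $\mathcal{M}(\Gamma)$ as binary digraphs, so that both PCDs under comparison live on the same underlying object.

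The main step is to check that the PCD induced on $\mathcal{M}(\Gamma)$ by the leo of $\Gamma^{*}$ is the same PCD as the one induced on $\mathcal{M}(\Gamma)$ by the PTDC of $\Gamma$. This is where Item~\ref{item:startrail} of Theorem~\ref{thm:dualproperties} does the work: for every vertex $v$ of $\Gamma$, the neighborhood of $v^{*}$ in $\Gamma^{*}$ is (the set of duals of) the edges of the migt $\overrightarrow{v}$, and the local order at $v^{*}$ is exactly the order in which those edges are traversed along $\overrightarrow{v}$. Consequently the chain that the local order at $v^{*}$ contributes to $\mathcal{M}(\Gamma^{*})$ corresponds, under the identification of Theorem~\ref{thm:dualmedial}, to the chain in $\mathcal{M}(\Gamma)$ traced by the trail $\overrightarrow{v}$ of the PTDC of $\Gamma$. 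Running this through every $v$ produces the same decomposition of the arcs of $\mathcal{M}(\Gamma)$ into chains in both cases.

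Once this identification is in hand, Lemma~\ref{lem:pcdofptdc} immediately asserts that the PCD induced by the PTDC of $\Gamma$ is the dual of the PCD induced by the leo of $\Gamma$, which is exactly the conclusion of the theorem. I do not anticipate any substantive obstacle; the only care needed is to verify that the identification of $\mathcal{M}(\Gamma^{*})$ with $\mathcal{M}(\Gamma)$ is consistent with the arbitrary $0/1$ labeling choices at each internal vertex that enter the definition of duality for PCDs, but this is exactly the local, case-by-case bookkeeping that Lemma~\ref{lem:pcdofptdc} already carries out column-by-column in Figure~\ref{fig:pcdduality}, so no new analysis is required.
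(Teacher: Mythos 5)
Your proposal is correct and follows essentially the same route as the paper, which presents Theorem~\ref{thm:dualofpcd} as an immediate corollary of Lemma~\ref{lem:pcdofptdc}; you merely make explicit the implicit middle step, namely that by Item~\ref{item:startrail} of Theorem~\ref{thm:dualproperties} the local order at $v^{*}$ reproduces, under the identification of Theorem~\ref{thm:dualmedial}, the chain traced by the trail $\overrightarrow{v}$, so the PCD induced by the leo of $\Gamma^{*}$ coincides with the one induced by the PTDC of $\Gamma$. No gap; the case-by-case verification you defer to is exactly the content of Figure~\ref{fig:pcdduality} in the paper's proof of Lemma~\ref{lem:pcdofptdc}.
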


For example see Figure~\ref{fig:daggrassoc} that shows the medial
digraph of the graph of Figure~\ref{fig:grassoc} in the left, and of
its dual (the graph in Figure~\ref{fig:dualgrassoc}) in the right.
The different chains of the PCDs are indicated by the different colors
of the edges, and chains of length $0$ are not shown since their
presence can be deduced.

\subsection{Mind-Body dual of a factorization}
\label{sec:factorizations}

Now we can transfer this notion of duality to factorizations.

\begin{defn}
  Let $\rho = (\tau_1,\ldots, \tau_{m})$ be a factorization
in $\mathcal{S}_n$.  Then its mind-body dual $\rho^{*}$ is defined
to be the factorization associated to the mind-dual e-v-graph associated
with $\rho$.
\end{defn}

The term mind-body duality comes from the following amusing
interpretation of a transposition introduced on the episode\emph{ The
  prisoner of Benda} (sixth season, episode 10) of the animated sitcom
\emph{Futurama} and elaborated on, for example
in~\cite{EvansHuang2014}.  In this scenario there is a machine that
interchanges the minds of any two bodies that enter its two booths and
each such exchange can be encoded by a transposition.  A sequence of
transpositions can then be interpreted as a series of mind exchanges
from the point of view of the bodies. The dual sequence is then the
series of body exchanges that the corresponding minds experience.
This follows from the fact that $\nu(i)$, the neighborhood of a vertex
$i$ with its leo, stands for the sequence of mind exchanges that the
body $i$ experiences, while the trail $\overrightarrow{i}$ is the
trajectory of the mind $i$, and so it describes the sequence of body
exchanges that the mind $i$ experiences.

Pursuing this interpretation a bit, we have a set of minds $M$ and a
set of bodies $B$, of the same cardinality, and an initial assignment
of minds to bodies $\alpha_0 \co M \to B$, say each mind is assigned
to the body it's born in.  Choosing an identification of $M$ with
$[n]$, and pushing it forward via $\alpha_0$ to an identification of
$B$ with $[n]$ we can consider $\alpha_0$ to be the identity
permutation in $\mathcal{S}_n$ and any other mind-body assignment as a
permutation $\alpha \in \mathcal{S}_n$.  That way $\mathcal{S}_n$ acts
on the set of mind-body assignments on the left by permuting the minds
and on the right by permuting the bodies.  The dual of a permutation
of minds $\pi$, with respect to a mind-body assignment $\alpha$ is the
permutation of bodies $\pi^{*_{\alpha}}$ that has the same effect in
$\alpha$ as $\pi$.  In other words we want
$\pi\, \alpha = \alpha\,\pi^{*_{\alpha}}$, and it follows that
\begin{equation}
  \label{eq:mbper}
  \pi^{*_{\alpha}} = \pi^{\alpha}.
\end{equation}

Now given a factorization $\rho = \tau_1,\ldots,\tau_n$, and
considering it as a sequence of mind exchanges, it's mind-body dual is
the factorization $\rho^{*}$ which when considered as a sequence of
body exchanges, has the same effect in the mind-body assignment as
$\rho$, \emph{at every step}. The mind-body assignments we obtain by
applying $\rho$ to $\alpha_0$ are,
$\alpha_1 = \tau_1 \alpha_0 = \tau_1$,
$\alpha_2 = \tau_{2} \alpha_1 = \tau_2 \tau_1$, \ldots,
$\alpha_n = \tau_n\cdots \tau_2 \tau_1$.

Taking into account Equation~\ref{eq:mbper} we have the following explicit
formula for the mind-body dual of a factorization:

\begin{thm}
  \label{thm:exform}
     For a factorization $\rho = \tau_1,\ldots,\tau_n$ we have:
  \begin{equation}
    \label{eq:exform}
    \rho^{*} = \tau_1, \tau_2^{\tau_1},\ldots, \tau_n^{\tau_{n-1}\ldots\tau_1}
  \end{equation}
\end{thm}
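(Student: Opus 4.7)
The plan is to apply Equation~(\ref{eq:mbper}) iteratively at each step of $\rho$. After $i-1$ mind exchanges the running assignment is $\alpha_{i-1} = \tau_{i-1}\cdots\tau_1$, and Equation~(\ref{eq:mbper}) with $\pi = \tau_i$, $\alpha = \alpha_{i-1}$ identifies the body exchange achieving the same effect on that assignment as the mind exchange $\tau_i$: namely $\tau_i^{\alpha_{i-1}} = \tau_i^{\tau_{i-1}\cdots\tau_1}$. Since the mind-body narrative preceding the theorem characterizes $\rho^*$ as exactly the sequence of body exchanges reproducing the successive assignments $\alpha_0, \alpha_1, \ldots, \alpha_n$, this conjugate is the $i$-th factor of $\rho^*$, giving the formula. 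In particular at $i=1$ the running assignment is $\alpha_0 = \mathrm{id}$, and the first factor is $\tau_1$, consistent with the stated formula.

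The only substantive point is to check that the formal $\rho^*$---the factorization associated with the dual graph $\Gamma(\rho)^*$---matches the body-exchange description from the narrative. For this, one identifies the endpoints of the edge $e_i^*$ of $\Gamma(\rho)^*$: under the convention $v^* = \overrightarrow{v}$ they are the labels of the two migts of $\Gamma(\rho)$ that traverse $e_i$. Invoking the proof of Proposition~\ref{prop:graphmu}, the migt $\overrightarrow{x}$ is the trajectory $x = \alpha_0(x), \alpha_1(x), \ldots, \alpha_n(x)$ of $x$ through the successive assignments (with consecutive repetitions suppressed), so $\overrightarrow{x}$ traverses $e_i$ exactly when $\alpha_{i-1}(x) \in \{a_i, b_i\}$, where $\tau_i = (a_i, b_i)$; equivalently when $x \in \{\alpha_{i-1}^{-1}(a_i), \alpha_{i-1}^{-1}(b_i)\}$. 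Consequently the $i$-th factor of $\rho^*$ is the transposition $(\alpha_{i-1}^{-1}(a_i), \alpha_{i-1}^{-1}(b_i))$, and the standard conjugation rule in the symmetric group identifies this with $\tau_i^{\alpha_{i-1}} = \tau_i^{\tau_{i-1}\cdots\tau_1}$.

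The main obstacle is exactly this identification---tracking which two migts traverse $e_i$---but it is immediate from the trajectory description of migts in Proposition~\ref{prop:graphmu}, combined with the fact that $\alpha_{i-1}^{-1}(y)$ is by definition the index $x$ whose current position at step $i$ is the body $y$. With that in place, the formula itself is a single application of Equation~(\ref{eq:mbper}), and no additional calculation is needed.
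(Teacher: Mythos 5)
Your proof is correct, and it is worth noting that it is a genuinely different (and more self-contained) rigorous route than the one the paper actually relies on. In the paper, Theorem~\ref{thm:exform} is stated immediately after the mind--body narrative with only the remark that it ``follows'' from Equation~\eqref{eq:mbper}; the actual rigorous justification is deferred to the next section, where Theorem~\ref{thm:Hurw} identifies $\rho^{*}$ with $\left(\rho\Delta_m\right)^{\intercal}$ by an induction on the number of edges (via Lemma~\ref{lem:daction} on the action of $\delta_m$), and Corollary~\ref{cor:exform} then expands the action of the Garside element to recover the formula. You instead make the narrative precise directly: using the trajectory description of migts from the proof of Proposition~\ref{prop:graphmu}, you identify the two migts traversing $e_i$ as $\overrightarrow{x}$ and $\overrightarrow{y}$ with $\alpha_{i-1}(x)=a_i$, $\alpha_{i-1}(y)=b_i$, so that under the convention $v^{*}=\overrightarrow{v}$ of Definition~\ref{defn:mbdualgraph} the $i$-th factor of $\rho^{*}$ is $\left(\alpha_{i-1}^{-1}(a_i),\alpha_{i-1}^{-1}(b_i)\right)=\tau_i^{\tau_{i-1}\cdots\tau_1}$ (the two conjugates agree because the $\tau_j$ are involutions, so $\alpha_{i-1}^{-1}=\tau_1\cdots\tau_{i-1}$). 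This avoids the braid group entirely and gives the formula in one step; what it does not buy you is the machinery surrounding Theorem~\ref{thm:Hurw} (the operadic decomposition of $\Delta_m$, Theorem~\ref{thm:dubr}, etc.), which the paper needs elsewhere, so the Garside-element proof is not redundant --- but as a proof of this particular statement yours is complete and arguably cleaner.
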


We note that this formula is an expanded version of
Theorem~\ref{thm:Hurw} in the next section.

It is amusing to explain the properties of mind-body duality
described in Theorem~\ref{thm:dualproperties} in terms of the
mind-exchange interpretation.  For example the monodromy of the dual
is the inverse of the monodromy of the original, because from the
point of view of the minds, there are body-mind assignments and the
initial body-mind assignment is of course $\alpha_0^{-1}$.

The author would like to stress that despite the use of this
terminology, he does not subscribe to the philosophically untenable
position of Cartesian dualism that is implicitly assumed.  \footnote{The
  author, after long deliberations, decided to not use the term
  \emph{husband-wife duality} alluding to a more risqu\`e
  interpretation, and to leave such an interpretation to the reader if
  (s)he is so inclined.}



\section{The Hurwitz action}
\label{sec:braid}

The braid group on $m$ strands $B_m$ is the group generated
by $m-1$ generators $\sigma_i$ for $i\in [m-1]$ and relations:
$\sigma_i\sigma_j = \sigma_j\sigma_i$ if  $|i-j| > 1$, and
$\sigma_i\sigma_{i+1}\sigma_i = \sigma_{i+1}\sigma_i\sigma_{i+1}$.
For details about the braid groups we refer the reader
to~\cite{Birman1974} and~\cite{TuraevKassel2008}.  We view
the braid group as the \emph{Mapping Class Group} of a $2$-dimensional
disc $\mathbb{D}_m^2$, with $m$ distinguished points called the punctures;
that is, $B_m$ is the group of isotopy classes of orientation preserving
self homeomorphisms of $\mathbb{D}_m^2$ that fix the boundary circle
pointwise and permute the $m$ punctures.  For details about mapping
class groups of surfaces and this interpretation of the braid groups
we refer the reader to~\cite{Birman1974} and~\cite{FBMprim2012}.
We will represent braids graphically and our convention is that the positive
generator $\sigma_i$ is represented diagrammatically by the $i$-th
strand going \emph{over} the $(i+1)$-th and that multiplication in the
braid group happens from top to bottom, see Figure~\ref{fig:brgen}.

\begin{figure}[htbp]
  \centering
  \begin{pspicture}(0,-1.3)(8,2)
  \psline[linearc=.15](1,1)(1,.9)(0,.1)(0,0)
  \psline[linearc=.15,border=.1](0,1)(0,.9)(1,.1)(1,0)
  \uput[-90](.5,0){$\sigma_i$}
  \rput(3,0){
    \psline[linearc=.15](0,1)(0,.9)(1,.1)(1,0)
    \psline[linearc=.15,border=.1](1,1)(1,.9)(0,.1)(0,0)
    \uput[-90](.5,0){$\sigma_i^{-1}$}}
  \rput(5,0){
    \psline[linearc=.35](2,2)(1,1)(1,0)(2,-1)
    \psline[linearc=.35,border=.1](3,2)(3,1)(1,-1)
    \psline[linearc=.35,border=.1](1,2)(3,0)(3,-1)
    \uput[-90](2,-1){$\sigma_1 \sigma_2\sigma_1^{-1}$}}
\end{pspicture}
  \caption{Generators and multiplication in the braid group}
  \label{fig:brgen}
\end{figure}
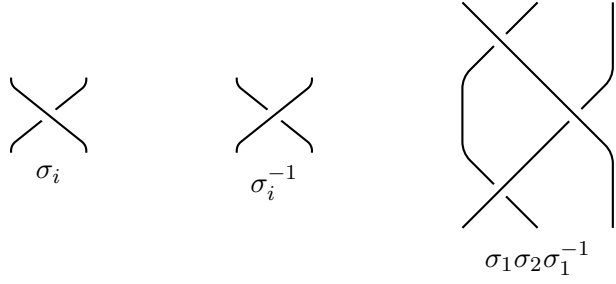

Since the fundamental group of a disc with $m$ punctures is
$\mathsf{F}_m$, the free group on $m$-generators, the interpretation
of elements of $B_{m}$ as self homeomorphisms of $\mathbb{D}_m^2$,
induces a left action of $B_m$ by automorphisms on $\mathsf{F}_m$.  If
$x_1,\ldots,x_m$ are the free generators of $\mathsf{F}_m$ then the
action of the generator $\sigma_i$ is given, on the generators of
$F_m$ by, $\sigma_i\, x_j = x_j$ for $j\ne i,i+1$, while
$\sigma_i \, x_i = x_i x_{i+1} x_i^{-1}$ and
$\sigma_i \, x_{i+1} = x_i$.  It follows that $B_m$ acts on the
right on the set of homomorphisms $\mathsf{F}_m \to G$, for any group
$G$ and in particular for $G$ a symmetric group.  A factorization
$\rho$ is a sequence of elements in a symmetric group, and therefore
can be construed as a representation of $\mathsf{F}_m$ to that group.
So we have a right action of $B_m$ on the set of all factorizations in
any symmetric group, this action is called the \emph{Hurwitz action}.
The action of a generator $\sigma_i$ on the factorization
$\rho = \tau_1,\ldots,\tau_m$ is given by\footnote{Recall that $\prescript{h}{}g$
  stands for $h g h^{-1}$.  Since transpositions are involutions, we
  could have used $\tau_{i+1}^{\tau_{i}}$ in the formula, but we
  choose to write the formula in a way that applies for any elements
  of any group.}:

\begin{equation}
  \label{eq:huract}
  \left(\rho\,  \sigma_i \right)_k =
\begin{cases}
  \tau_k & \text{if } k\neq i,i+1 \\
  \prescript{\tau_i}{}\tau_{i+1} & \text{if } k = i \\
  \tau_i & \text{if } k = i+1
\end{cases}
\end{equation}

The Hurwitz action can be described diagrammatically as in
figure~\ref{fig:hurfac}, where $i,j,k,l$ are distinct elements of
$[n]$ and $i\,j$ stands for the transposition $(i\,j)$. For more details
about the Hurwitz action see~\cite{Apos2003} and references therein.

\begin{figure}[htbp]
  \centering
  \begin{pspicture}(0,-2.8)(4,3.8)
    \rput(0,2.5){
      \psline[linearc=.15](1,1)(1,.9)(0,.1)(0,0)
      \psline[linearc=.15,border=.1](0,1)(0,.9)(1,.1)(1,0)
      \uput[90](0,1){$i\,j$}
      \uput[90](1,1){$i\,j$}
      \uput[-90](0,0){$i\,j$}
      \uput[-90](1,0){$i\,j$}}
    \rput(3,2.5){
      \psline[linearc=.15](0,1)(0,.9)(1,.1)(1,0)
      \psline[linearc=.15,border=.1](1,1)(1,.9)(0,.1)(0,0)
      \uput[90](0,1){$i\,j$}
      \uput[90](1,1){$i\,j$}
      \uput[-90](0,0){$i\,j$}
      \uput[-90](1,0){$i\,j$}}
  \rput(0,0){
      \psline[linearc=.15](1,1)(1,.9)(0,.1)(0,0)
      \psline[linearc=.15,border=.1](0,1)(0,.9)(1,.1)(1,0)
      \uput[90](0,1){$i\,j$}
      \uput[90](1,1){$j\,k$}
      \uput[-90](0,0){$i\,k$}
      \uput[-90](1,0){$i\,j$}}
  \rput(3,0){
    \psline[linearc=.15](0,1)(0,.9)(1,.1)(1,0)
    \psline[linearc=.15,border=.1](1,1)(1,.9)(0,.1)(0,0)
    \uput[90](0,1){$i\,j$}
    \uput[90](1,1){$j\,k$}
    \uput[-90](0,0){$j\,k$}
    \uput[-90](1,0){$i\,j$}}
    \rput(0,-2.5){
      \psline[linearc=.15](1,1)(1,.9)(0,.1)(0,0)
      \psline[linearc=.15,border=.1](0,1)(0,.9)(1,.1)(1,0)
      \uput[90](0,1){$i\,j$}
      \uput[90](1,1){$k\,l$}
      \uput[-90](0,0){$i\,j$}
      \uput[-90](1,0){$k\,l$}}
  \rput(3,-2.5){
    \psline[linearc=.15](0,1)(0,.9)(1,.1)(1,0)
    \psline[linearc=.15,border=.1](1,1)(1,.9)(0,.1)(0,0)
    \uput[90](0,1){$i\,j$}
    \uput[90](1,1){$k\,l$}
    \uput[-90](0,0){$i\,j$}
    \uput[-90](1,0){$k\,l$}}
  \end{pspicture}
  \caption{The Hurwitz action on factorizations}
  \label{fig:hurfac}
\end{figure}
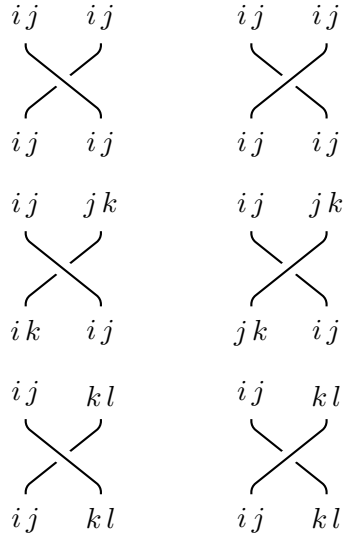

Using the bijection between factorizations of $\mathcal{S}_n$ and
e-graphs on $[n]$ (see Definition~\ref{defn:fact}), we can transfer
this to a $B_m$ action on the set of e-labeled graphs on $[n]$ with
$m$ edges.  It is easily seen that if $\Gamma$ is such an e-v-graph
then $\Gamma \sigma_i$ is obtained from $\Gamma$ by interchanging the
labels of the $i$-th and $(i+1)$-th edge and then ``sliding'' the
$(i+1)$-th edge along the $i$-th, while $\Gamma \sigma_i^{-1}$ is
obtained by interchanging the $i$-th and $(i+1)$-th labels and then
sliding the $i$-th edge along the $(i+1)$-th.  We interpret a slide of
an edge along a non-adjacent edge to have no effect.  This action on
e-v-labeled graphs, which we'll also call the \emph{Hurwitz action},
is shown in figure~\ref{fig:hurfacg}, only the edges labeled $i$ and
$i+1$ are shown since the other edges are not affected.

\begin{figure}[htbp]
  \centering
  \psset{unit=2.5}
  \begin{pspicture}(-.2,-.15)(3.85,3.8)
    \rput(-.1,3.5){
      \rput(0, 0){\rnode{a}{\psdot(0,0)}}
      \uput[180](0,0){$a$}
      \rput(1, 0){\rnode{b}{\psdot(0,0)}}
      \uput[0](1,0){$b$}
      \ncarc[arcangle=25]{a}{b}
      \ncput*{\small $i$}
      \ncarc[arcangle=-25]{a}{b}
      \ncput*{\small $i+1$}}
    \psline[arrowsize=.09]{->}(1.3,3.6)(2.45,3.6)
    \psline[arrowsize=.09]{<-}(1.3,3.4)(2.45,3.4)
    \uput[90](1.875,3.6){$\sigma_{i}$}
    \uput[-90](1.875,3.4){$\sigma_i^{-1}$}
    \rput(2.8,3.5){
      \rput(0, 0){\rnode{a}{\psdot(0,0)}}
      \uput[180](0,0){$a$}
      \rput(1, 0){\rnode{b}{\psdot(0,0)}}
      \uput[0](1,0){$b$}
      \ncarc[arcangle=25]{a}{b}
      \ncput*{\small $i$}
      \ncarc[arcangle=-25]{a}{b}
      \ncput*{\small $i+1$}}
    \rput(-.1,2.5){
      \rput(0, 0){\rnode{a}{\psdot(0,0)}}
      \uput[90](0,0){$a$}
      \rput(1, 0){\rnode{b}{\psdot(0,0)}}
      \uput[90](1,0){$b$}
      \rput(.5, -.7){\rnode{c}{\psdot(0,0)}}
      \uput[-90](.5,-.7){$c$}
      \ncline{a}{c}
      \ncput*{\small $i$}
      \ncline{b}{c}
      \ncput*{\small $i+1$}}
    \psline[arrowsize=.09]{->}(1.3,2.2)(2.45,2.2)
    \psline[arrowsize=.09]{<-}(1.3,2)(2.45,2)
    \uput[90](1.875,2.2){$\sigma_{i}$}
    \uput[-90](1.875,2){$\sigma_i^{-1}$}
    \rput(2.8,2.5){
      \rput(0, 0){\rnode{a}{\psdot(0,0)}}
      \uput[90](0,0){$a$}
      \rput(1, 0){\rnode{b}{\psdot(0,0)}}
      \uput[90](1,0){$b$}
      \rput(.5, -.7){\rnode{c}{\psdot(0,0)}}
      \uput[-90](.5,-.7){$c$}
      \ncline{a}{c}
      \ncput*{\small $i+1$}
      \ncline{a}{b}
      \ncput*{\small $i$}}
    \rput(-.1,1){
      \rput(0, 0){\rnode{a}{\psdot(0,0)}}
      \uput[90](0,0){$a$}
      \rput(1, 0){\rnode{b}{\psdot(0,0)}}
      \uput[90](1,0){$b$}
      \rput(0, -1){\rnode{c}{\psdot(0,0)}}
      \uput[-90](0,-1){$c$}
      \rput(1, -1){\rnode{d}{\psdot(0,0)}}
      \uput[-90](1,-1){$d$}
      \ncline{a}{c}
      \ncput*{\small $i$}
      \ncline{b}{d}
      \ncput*{\small $i+1$}}
    \psline[arrowsize=.09]{->}(1.3,.6)(2.45,.6)
    \psline[arrowsize=.09]{<-}(1.3,.4)(2.45,.4)
    \uput[90](1.875,.6){$\sigma_{i}$}
    \uput[-90](1.875,.4){$\sigma_i^{-1}$}
    \rput(2.8,1){
      \rput(0, 0){\rnode{a}{\psdot(0,0)}}
      \uput[90](0,0){$a$}
      \rput(1, 0){\rnode{b}{\psdot(0,0)}}
      \uput[90](1,0){$b$}
      \rput(0, -1){\rnode{c}{\psdot(0,0)}}
      \uput[-90](0,-1){$c$}
      \rput(1, -1){\rnode{d}{\psdot(0,0)}}
      \uput[-90](1,-1){$d$}
      \ncline{a}{c}
      \ncput*{\small $i+1$}
      \ncline{b}{d}
      \ncput*{\small $i$}}

  \end{pspicture}
  \caption{The Hurwitz action on e-v-graphs}
  \label{fig:hurfacg}
\end{figure}
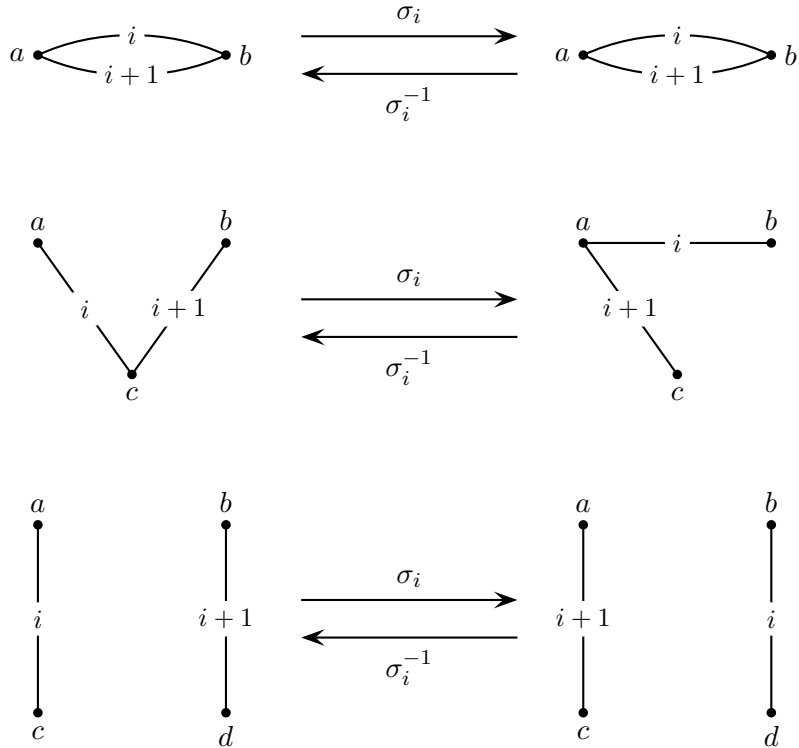

Notice that this action descends at the level of e-labeled graphs
(just forget the v-labels in Figure~\ref{fig:hurfacg}).  We will still
call it the Hurwitz action since no confusion is likely to arise.  We
remark that this $B_{m}$ action on the set of e-graphs, was also noted
in~\cite{CataWaj1991}, for the case of e-trees.

\subsection{The duality in terms of the Hurwitz action}
\label{sec:dualhurw}  We first define the notions that we need
in order to provide the promised characterization.

\begin{defn}
  \label{defn:gar}
  For $i< j \le m $ define the braid $\delta_{i,j}$ to be the braid
  that takes the $j$-th point, and moves it to the $i$-place going
  under all in between strands, leaving all the other strands
  unchanged, i.e. $\delta_{i,j} := \sigma_{j-1}\sigma_{j-2}\ldots \sigma_i$.
  We will write simply $\delta_m$ for $\delta_{1,m}$.

  The braid $\lambda_{i,j}$ is defined to be the braid that takes the
  $i$-th point, and moves it to the $j$-th place going over all the in
  between strands, i.e.
  $\lambda_{i,j} := \sigma_i\ldots \sigma_{j-1}$.  We simply write
  $\lambda_m$ for $\lambda_{1,m}$.

  We also define $\Delta_{i,j} := \delta_{i,j}\delta_{i+1,j}\ldots\delta_{j-1,j}$. We
  simply write $\Delta_m$ for $\Delta_{1,n}$ and call it the \emph{Garside element}
  of $B_m$.
\end{defn}

We summarize some of the properties of the Garside element in the following
proposition.  All of these properties are either well known or follow easily
from the definitions.

\begin{prop}
  \label{prop:garprop}  The following hold:
  \begin{enumerate}
  \item \label{item:DeltaactionBm} For all $i\in [m-1]$ we have
    $\sigma_i^{\Delta_m} = \sigma_{m-i}$.
  \item \label{item:central} $\Delta_m^2$ is central in $B_m$.  In
    fact it generates the center of $B_m$.
  \item\label{item:twist} As an element of the mapping class group of
    $\mathbb{D}_m^2$, $\Delta_m$ is represented by a homeomorphism
    that, leaving the boundary circle fixed, twists a smaller disk
    that contains all the punctures by $\pi$.
  \item \label{item:induction} $\Delta_m = \delta_m \Delta_{2,m}$.
  \item \label{item:deltalambda} $\Delta_m = \Delta_{2,m}\lambda_{m}$.
  \end{enumerate}
\end{prop}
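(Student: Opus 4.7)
The plan is to prove the items roughly in the order \ref{item:induction}, \ref{item:twist}, \ref{item:DeltaactionBm}, \ref{item:central}, \ref{item:deltalambda}, leveraging later items on earlier ones, with the topological interpretation serving as the main engine.

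Item~\ref{item:induction} is immediate from unpacking definitions: $\Delta_m = \delta_{1,m}\delta_{2,m}\cdots\delta_{m-1,m}$, and since $\delta_m = \delta_{1,m}$ and $\Delta_{2,m} = \delta_{2,m}\cdots\delta_{m-1,m}$, the factorization $\Delta_m = \delta_m\Delta_{2,m}$ is read off. For item~\ref{item:twist}, I would argue in the mapping class group model of $B_m$: each $\delta_{i,j}$ drags the $j$-th puncture underneath the intervening strands to position $i$, and iterating according to the definition of $\Delta_m$ reverses the left-to-right order of all punctures, which is precisely the effect of the half-twist of a disk enclosing them. A clean alternative is to induct on $m$ using item~\ref{item:induction} together with the observation that $\delta_m$ cyclically shifts the punctures by one.

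From item~\ref{item:twist}, item~\ref{item:DeltaactionBm} follows quickly: $\sigma_i$ is a local half-twist exchanging punctures $i$ and $i+1$, so conjugating by the global half-twist $\Delta_m$ produces the local half-twist between the images of these two punctures, namely those at positions $m-i+1$ and $m-i$, which is $\sigma_{m-i}$. Item~\ref{item:central} is then automatic: the index involution $i\mapsto m-i$ squares to the identity, so conjugation by $\Delta_m^2$ fixes every generator and $\Delta_m^2$ is central. That $\Delta_m^2$ in fact \emph{generates} the center is the classical result of Chow, which I would quote from~\cite{Birman1974}.

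For item~\ref{item:deltalambda}, the topological picture is again transparent: applying $\Delta_{2,m}$ leaves the first puncture fixed and permutes the others to the order $1, m, m-1, \ldots, 2$, and then $\lambda_m = \sigma_1\sigma_2\cdots\sigma_{m-1}$ slides the first puncture over all the others to position $m$, yielding the fully reversed order $m, m-1, \ldots, 1$, which by item~\ref{item:twist} is $\Delta_m$. Algebraically one can verify this by combining item~\ref{item:induction} with the identity $\Delta_{2,m}^{-1}\delta_m\Delta_{2,m} = \lambda_m$, and this is where I expect the only mildly nontrivial work: one tracks how the Garside element of $\langle\sigma_2,\ldots,\sigma_{m-1}\rangle$ acts on $\sigma_1$ (via commutation, since $\sigma_1$ commutes with $\sigma_3,\ldots,\sigma_{m-1}$) and on $\sigma_2,\ldots,\sigma_{m-1}$ (via item~\ref{item:DeltaactionBm} applied to the sub-braid group on strands $2,\ldots,m$). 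The hardest part, such as it is, is the bookkeeping for item~\ref{item:deltalambda}; the topological argument sidesteps it but demands care in drawing and reading the picture, while a purely algebraic derivation is a short but fiddly braid-relation computation.
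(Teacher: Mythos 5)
The paper offers no proof of this proposition: it is introduced with the remark that all of these properties ``are either well known or follow easily from the definitions,'' so there is nothing to match your argument against except that assertion. What you propose is the standard development and it is essentially correct: item~\ref{item:induction} really is just the definition $\Delta_{i,j}=\delta_{i,j}\delta_{i+1,j}\cdots\delta_{j-1,j}$ unpacked; item~\ref{item:DeltaactionBm} follows from item~\ref{item:twist} by the change-of-coordinates principle for half-twists; item~\ref{item:central} follows from the involutivity of $i\mapsto m-i$ together with Chow's theorem for the ``generates'' clause; and item~\ref{item:deltalambda} reduces, via item~\ref{item:induction}, to the identity $\Delta_{2,m}^{-1}\delta_m\Delta_{2,m}=\lambda_m$, exactly as you say. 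One caution on the two places (items~\ref{item:twist} and~\ref{item:deltalambda}) where your primary justification is that a word induces the order-reversing permutation of the punctures: the induced permutation does not determine a mapping class ($\Delta_m$ and $\Delta_m^{3}$ induce the same one), so ``it reverses the punctures, hence it is the half-twist'' proves nothing by itself. You are rescued by the secondary arguments you sketch --- the induction on $m$ via item~\ref{item:induction} for the half-twist description, and the algebraic reduction for item~\ref{item:deltalambda} --- but those need to be the actual proofs, not alternatives. In that last computation, note also that your parenthetical attributing the action of $\Delta_{2,m}$ on $\sigma_1$ to ``commutation'' is only partly right: $\sigma_1$ fails to commute with $\sigma_2$, and the essential content is that $\sigma_1^{\Delta_{2,m}}$ is the band generator joining strands $1$ and $m$, i.e.\ $(\sigma_2\cdots\sigma_{m-1})^{-1}\sigma_1(\sigma_2\cdots\sigma_{m-1})$, which is precisely what completes $\sigma_2\cdots\sigma_{m-1}$ to $\lambda_m$. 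With those details written out the proof is complete, and it supplies more than the paper itself does.
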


If $e$ is an edge of a graph $\Gamma$ and $t$ a trail in $\Gamma$ ending
in a vertex incident to $e$ then we refer to the operation of detaching
$e$ from the end vertex of $t$ and attaching it to the beginning vertex
as \emph{sliding the edge $e$ along the trail $t$}.

\begin{lem}
  \label{lem:daction}
  Let $\Gamma$ be an e-v-graph with $m$ edges.  If the $m$-th edge of
  $\Gamma$ has endpoints $(v_1,v_2)$ then $\Gamma \delta_m$ is
  obtained from $\Gamma$ by sliding edge $m$ along the migts
  $\overleftarrow{v_1}$ and $\overleftarrow{v_2}$ and relabeling its
  edges according to $i \mapsto (i+1) \mod m$.
\end{lem}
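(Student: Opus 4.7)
The plan is to compute $\rho\delta_m$ directly using the Hurwitz action formula \eqref{eq:huract}, and then translate the result back to the graph $\Gamma(\rho)$ using Proposition~\ref{prop:graphmu}.

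First I would prove by induction on $k$ that applying $\sigma_{m-1}\sigma_{m-2}\cdots\sigma_{m-k}$ to $\rho = (\tau_1,\ldots,\tau_m)$ yields
\[
(\tau_1,\ldots,\tau_{m-k-1},\,\prescript{\tau_{m-k}\cdots\tau_{m-1}}{}\tau_m,\,\tau_{m-k},\ldots,\tau_{m-1}).
\]
The inductive step follows immediately from \eqref{eq:huract} and the identity $\prescript{a}{}(\prescript{b}{}c) = \prescript{ab}{}c$. Taking $k=m-1$ gives
\[
\rho\delta_m = \bigl(\,\prescript{\tau_1\tau_2\cdots\tau_{m-1}}{}\tau_m,\ \tau_1,\ \tau_2,\ \ldots,\ \tau_{m-1}\bigr).
\]
The entries at positions $2,\ldots,m$ are $\tau_1,\ldots,\tau_{m-1}$, so at the graph level the edges of $\Gamma$ originally labeled $1,\ldots,m-1$ are unchanged as subgraphs and merely acquire the new labels $2,\ldots,m$; together with old edge $m$ inheriting the new label $1$, this is precisely the relabeling $i\mapsto (i+1)\bmod m$ asserted in the lemma.

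It therefore remains to identify the endpoints of the (newly labeled) edge $1$, i.e.\ to compute $\prescript{\sigma}{}\tau_m$ where $\sigma := \tau_1\cdots\tau_{m-1}$. Writing $\tau_m = (v_1,v_2)$, the paper's left-to-right conjugation convention gives $\prescript{\sigma}{}\tau_m = (\sigma^{-1}(v_1),\sigma^{-1}(v_2))$. Let $u_i$ denote the starting vertex of $\overleftarrow{v_i}$ in $\Gamma$. Since $m$ is the largest label and lies in both $\nu(v_1)$ and $\nu(v_2)$, it must be the last edge of each migt $\overleftarrow{v_i}$; so by the argument of Proposition~\ref{prop:graphmu}, the trajectory of $u_1$ under $\tau_1,\ldots,\tau_m$ passes through $v_2$ right before its final step $\tau_m$. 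This yields $\tau_1\cdots\tau_{m-1}(u_1) = v_2$, i.e.\ $\sigma^{-1}(v_2) = u_1$, and symmetrically $\sigma^{-1}(v_1) = u_2$. Hence $\prescript{\sigma}{}\tau_m = (u_1,u_2)$, which is exactly the transposition realized by detaching edge $m$ from $v_i$ and reattaching it to the start $u_i$ of $\overleftarrow{v_i}$ for $i=1,2$—that is, by sliding edge $m$ along $\overleftarrow{v_1}$ and $\overleftarrow{v_2}$.

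The main obstacle is bookkeeping with the left-to-right multiplication and conjugation conventions, since they flip the sides and produce $\sigma^{-1}(v_1)=u_2$ rather than $u_1$; once this is pinned down, the identification of new endpoints with the trail starts is forced by Proposition~\ref{prop:graphmu} and the fact that edge $m$, having the largest label, must close off each of the migts $\overleftarrow{v_1}$ and $\overleftarrow{v_2}$.
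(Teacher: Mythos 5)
Your proof is correct, but it takes a genuinely different route from the paper's. The paper works entirely at the graph level: it lists the edges $i_1<\cdots<i_k<m$ of $\overleftarrow{v_1}\cup\overleftarrow{v_2}$, factors $\delta_m$ into the corresponding blocks $(\sigma_{m-1}\cdots\sigma_{i_k+1})(\sigma_{i_k}\cdots\sigma_{i_{k-1}+1})\cdots(\sigma_{i_1}\cdots\sigma_1)$, and observes that within each block all slides but the last involve non-adjacent edges (by minimal increase of the migts) and hence do nothing, so edge $m$ advances one step along the trails per block. You instead compute $\rho\,\delta_m=\bigl(\prescript{\tau_1\cdots\tau_{m-1}}{}\tau_m,\tau_1,\ldots,\tau_{m-1}\bigr)$ in one shot from \eqref{eq:huract}, which disposes of the relabeling claim immediately, and then identify the conjugate $\prescript{\sigma}{}\tau_m=(\sigma^{-1}(v_1),\sigma^{-1}(v_2))$ with $(u_1,u_2)$ via the trajectory interpretation of Proposition~\ref{prop:graphmu}: since edge $m$ carries the largest label it closes off both $\overleftarrow{v_1}$ and $\overleftarrow{v_2}$, so the trajectory of $u_1$ sits at $v_2$ just before step $m$, giving $\sigma^{-1}(v_2)=u_1$ and symmetrically $\sigma^{-1}(v_1)=u_2$; your handling of the left-to-right conventions here is right. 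Your argument is shorter and avoids the adjacency bookkeeping, at the cost of leaning on Proposition~\ref{prop:graphmu}; the paper's argument is more laborious but exhibits the intermediate stages of the slide explicitly, which is the picture the surrounding topological sections rely on.
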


\begin{proof}
  Let $i_1 < i_2< \ldots < i_k < i_{k+1}= m$ be the edges of the union
  of the trails $\overleftarrow{v_1}$ and $\overleftarrow{v_2}$. Then
  by the definition of migts as minimally increasing, the edges
  with labels $l$ with $i_k < l < m$ are not adjacent to edge $m$, the
  edges with labels $l$ in the range $i_{k-1}<l<i_k$ are not adjacent to
  the edge $i_k$, and so on.  So if we write
  $$\delta_m = (\sigma_{m-1}\ldots \sigma_{i_k+1})
  (\sigma_{i_k}\ldots\sigma_{i_{k-1}+1})\ldots (\sigma_{i_1}\ldots
  \sigma_1)$$
  then the action of the first factor has the effect of
  increasing the labels of the edges in the range $i_k<l<m$ by one and
  relabeling edge $m$ as edge $i_k+1$ without changing the underlying
  graph. The action of the second factor on the resulting e-v-labeled
  graph is then to slide the edge $i_{k+1}$ along edge $i_k$, increase
  the labels in the range $i_{k-1}<l<i_{k+1}$ by one, and relabel
  $i_{k+1}$ as $i_{k-1}+1$. This pattern continues so that the
  overall effect of the action by $\delta_{m}$ is to increase all the
  labels in the range $1\le l < m$ by one, relabel the edge originally
  labeled $m$ as $1$ and slide it along all the edges in the trails
  leading to $v_1$ or $v_2$.
\end{proof}

We can now give the characterization of mind-body duality in terms of
the Hurwitz action:
\begin{thm}
  \label{thm:Hurw}
   Let $\Gamma$ be an e-v-graph of size $m$.  Then
  \begin{equation}
    \label{eq:dudel}
     \Gamma^{*} = \left( \Gamma\Delta_m  \right)^{\intercal}
  \end{equation}
\end{thm}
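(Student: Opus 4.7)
The plan is induction on $m$, using the decomposition $\Delta_m = \delta_m\,\Delta_{2,m}$ from Proposition~\ref{prop:garprop}(\ref{item:induction}). The base case $m=1$ is immediate, since $\Delta_1$ is trivial and a single-edge graph equals both its reverse and its own mind-body dual.

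For the induction step, I would first use Lemma~\ref{lem:daction} to describe $\Gamma\delta_m$ explicitly: the edge $e_m$ (with endpoints $v_1, v_2$) gets slid to connect the starting vertices $w_1, w_2$ of the backward migts $\overleftarrow{v_1}, \overleftarrow{v_2}$, while all labels cyclically rotate by $i\mapsto (i+1)\bmod m$. Next, since $\Delta_{2,m}$ lives in the subgroup generated by $\sigma_2,\ldots,\sigma_{m-1}$, its Hurwitz action fixes position $1$ and acts on positions $2,\ldots,m$ exactly as $\Delta_{m-1}$ acts on the length-$(m-1)$ subfactorization, which, up to the label shift, is $\Gamma_{m-1}$ (the sub-e-graph of $\Gamma$ on edges $e_1,\ldots,e_{m-1}$). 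Applying the induction hypothesis to $\Gamma_{m-1}$ and then performing the outer reversal, I expect $(\Gamma\Delta_m)^{\intercal}$ to equal the graph obtained by placing $\Gamma_{m-1}^{*}$ on the edges labeled $1,\ldots,m-1$ and attaching one further edge, labeled $m$, joining $w_1$ and $w_2$.

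What remains is to identify this graph with $\Gamma^{*}$ itself. For the edge labeled $m$, note that $e_m$ is the largest edge at both $v_1$ and $v_2$, so the two migts of $\Gamma$ containing it are exactly $\overleftarrow{v_1}$ and $\overleftarrow{v_2}$; under the convention $v^{*}=\overrightarrow{v}$ these correspond to $w_1$ and $w_2$, and hence $e_m^{*}$ joins $w_1$ and $w_2$ as required. The main obstacle is the analogous claim for edges labeled $k<m$: that the sub-e-graph of $\Gamma^{*}$ on those labels agrees with $\Gamma_{m-1}^{*}$ under the identification $v^{*}\leftrightarrow v$.

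The key observation I would use to resolve that obstacle is that, for $k<m$, the two migts of $\Gamma$ containing $e_k$ have the same \emph{starting} vertices as the two migts of $\Gamma_{m-1}$ containing $e_k$. Indeed, the starting vertex of such a migt is obtained by a backward-greedy process from an endpoint of $e_k$, and at each step this process considers only edges whose labels are strictly smaller than the current one, hence strictly smaller than $k<m$. The edge $e_m$ therefore never enters the consideration, so removing it from $\Gamma$ cannot alter the outcome. Consequently $e_k^{*}$ joins the same vertex pair in $\Gamma^{*}$ and in $\Gamma_{m-1}^{*}$, which closes the induction.
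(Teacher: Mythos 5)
Your proof is correct and follows essentially the same route as the paper's: induction on $m$ via the decomposition $\Delta_m = \delta_m\,\Delta_{2,m}$ and Lemma~\ref{lem:daction}, with the key step being that $\Gamma^{*}$ is $\Gamma_{m-1}^{*}$ together with one new edge labeled $m$ joining the starting vertices of $\overleftarrow{v_1}$ and $\overleftarrow{v_2}$. Your backward-greedy argument for why deleting the top edge leaves the remaining dual edges unchanged is just a more explicit version of the paper's observation that all trails other than $\overleftarrow{v_1}$ and $\overleftarrow{v_2}$ are unaffected, so there is nothing to add.
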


\begin{proof}
  We proceed by induction on the number of edges $m$. For $m=1$ the
  theorem is obvious.  Let the edge labeled $m$ be incident to
  vertices $v_1$ and $v_2$, $v_1'$ be the starting vertex of
  $\overleftarrow{v_1}$, and $v_s'$ be the starting vertex of
  $\overleftarrow{v_2}$. Consider the e-v-labeled graph $\Gamma_1 =
  \Gamma\setminus m$ obtained by deleting edge $m$. After we attach
  the edge $m$ to $\Gamma_{1}$ the trail that ended in $v_1$ gets
  augmented by $m$ and ends in $v_2$ while the trail that ended in
  $v_2$ gets augmented by $m$ and ends in $v_1$, and all the other
  trails are the same.  It follows that $\Gamma^{*}$ is obtained from
  $\Gamma_1^{*}$ by attaching a new edge labeled $m$ to the vertices
  $v_1'$ and $v_2'$.

  On the other hand since $\Delta_m = \delta_m \Delta_{2,n}$, by
  Lemma~\ref{lem:daction}, $\Gamma \Delta_m$ is obtained by
  $\Gamma_1\Delta_{m-1}$ by increasing all edge labels by one and
  attaching an edge labeled $1$ to $v_1'$ and $v_2'$.  Now by
  induction we have that
  $\Gamma_1 \Delta_{m-1} = \left(  \Gamma_1^{*})^{\intercal} \right)$ and so it
  follows that $\Gamma \Delta_m$ is obtained from
  $\left( \Gamma_1^{*} \right)^{\intercal}$ by increasing all edge
  labels by one and adding an edge labeled $1$ attached to the
  vertices $v_1'$ and $v_2'$.  Taking the reverse we conclude that
  $\left( \Gamma\Delta_m\right)^{\intercal} $ is obtained by
  $\Gamma_1^{*}$ by attaching a new edge labeled $m$ to the vertices
  $v_1'$ and $v_2'$.
\end{proof}

Using this we can get the following formula for the mind-body dual
of a factorization:
\begin{cor}
  \label{cor:exform}
  For a factorization $\rho = \tau_1,\ldots,\tau_m$ we have:
  \begin{equation}
    \label{eq:exformleft}
    \rho^{*} = \tau_1,\prescript{\tau_1}{}\tau_2,\ldots, \prescript{\tau_1\ldots\tau_{m-1}}{}\tau_m
  \end{equation}
\end{cor}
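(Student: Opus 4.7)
The plan is to use Theorem~\ref{thm:Hurw} to reduce the statement to a direct calculation of $\rho\,\Delta_m$. Since $\rho^{*} = (\rho\,\Delta_m)^{\intercal}$, it suffices to establish
\[
\rho\,\Delta_m = \bigl(\prescript{\tau_1\cdots\tau_{m-1}}{}\tau_m,\ \prescript{\tau_1\cdots\tau_{m-2}}{}\tau_{m-1},\ \ldots,\ \prescript{\tau_1}{}\tau_2,\ \tau_1\bigr),
\]
since reversing this sequence gives exactly Equation~\ref{eq:exformleft}.

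To compute $\rho\,\Delta_m$, I would first expand $\Delta_m = \delta_{1,m}\,\delta_{2,m}\cdots\delta_{m-1,m}$ from Definition~\ref{defn:gar}, and then establish the following auxiliary formula by a straightforward induction using Equation~\ref{eq:huract}: for any factorization $\mu = (\mu_1,\ldots,\mu_m)$,
\[
\mu\,\delta_{k,m} = \bigl(\mu_1,\ldots,\mu_{k-1},\ \prescript{\mu_k\mu_{k+1}\cdots\mu_{m-1}}{}\mu_m,\ \mu_k,\ \mu_{k+1},\ \ldots,\ \mu_{m-1}\bigr).
\]
In words, $\delta_{k,m}$ migrates the last entry to position $k$, conjugating it by the entries it passes over, and shifts those entries one place to the right. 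This is the algebraic counterpart of the edge-sliding description already worked out in the proof of Lemma~\ref{lem:daction}.

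Next I would apply the auxiliary formula iteratively for $k=1,2,\ldots,m-1$ and prove, by induction on $k$, that after the action of $\delta_{1,m}\,\delta_{2,m}\cdots\delta_{k,m}$ the factorization takes the form
\[
\bigl(\prescript{\tau_1\cdots\tau_{m-1}}{}\tau_m,\ \ldots,\ \prescript{\tau_1\cdots\tau_{m-k}}{}\tau_{m-k+1},\ \tau_1,\ \tau_2,\ \ldots,\ \tau_{m-k}\bigr).
\]
The inductive step follows at once from the auxiliary formula, since by hypothesis the last $m-k+1$ positions hold $\tau_1,\ldots,\tau_{m-k+1}$, so that $\delta_{k+1,m}$ places $\prescript{\tau_1\cdots\tau_{m-k}}{}\tau_{m-k+1}$ in position $k+1$ and shifts the remaining entries. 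Setting $k=m-1$ yields the desired expression for $\rho\,\Delta_m$.

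The whole argument is really just careful index bookkeeping, with no conceptual obstacle. The one delicate point is keeping the conjugation direction consistent: under the convention of Equation~\ref{eq:huract}, each application of $\sigma_i$ conjugates the rightward-moving entry on the \emph{left}, so that the accumulated conjugation after $\delta_{k,m}$ comes out as $\mu_k\cdots\mu_{m-1}$ in that order, which is precisely what produces the prefixes $\tau_1\cdots\tau_j$ appearing in Equation~\ref{eq:exformleft}.
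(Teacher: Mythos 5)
Your proposal is correct and follows essentially the same route as the paper: the paper's proof is a one-line induction based on the decomposition $\Delta_m = \delta_m \Delta_{2,m}$ (Item~\ref{item:induction} of Proposition~\ref{prop:garprop}), which is exactly the recursion that your iterated computation of $\rho\,\delta_{1,m}\delta_{2,m}\cdots\delta_{k,m}$ unrolls, so you have simply written out the bookkeeping the paper leaves to the reader. One small verbal slip: under Equation~\ref{eq:huract} it is the leftward-migrating entry (the one passing from position $i+1$ to position $i$) that gets conjugated, not the rightward-moving one, but your displayed formulas have this right and the accumulated conjugator $\mu_k\cdots\mu_{m-1}$ is in the correct order.
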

\begin{proof}
  One can prove by induction that
$\rho\,\Delta_m = \prescript{\tau_1\ldots\tau_{m-1}}{}\tau_m, \ldots, \prescript{\tau_1}{}\tau_2, \tau_1 $
using Item~\ref{item:induction} of Proposition~\ref{prop:garprop}.
\end{proof}

Notice that since all transpositions are involutions this formula is
the same as Formula~\eqref{eq:exform} in Theorem~\ref{thm:exform}.

The Garside element plays a central role in the theory of braid groups
and can be written in terms of the generators in many interesting
ways, and each of these ways gives some information for the mind body
dual of a factorization.  We give a very general description of many
of these properties, using the language of operads.

All the braid groups can be ``put together'' into an algebraic
structure called the \emph{Braid Operad}.  We refer the reader
to~\cite{MarklShniderStasheff2002} and the references there for (some)
details.  The composition in the braid operad is given by \emph{cabling},
that is the composition
$$
B_m\times B_{i_1} \times \ldots \times B_{i_m} \to B_{i_1 + \cdots + i_m}
$$
sends $(\beta, \beta_1,\ldots,\beta_{m})$ to
$\beta \left[ \beta_1,\ldots,\beta_{m} \right]$ defined
informally\footnote{We won't give the formal definition since it would
  take us far afield.  We hope this informal description is enough for
  the reader to fill the details if (s)he wishes.} as follows: think
of the strands of $\beta$ as ``cables'' where several strands are
weaved together: the cable that corresponds to the $k$-th strand is
weaved according to the braid $\beta_{k}\in B_{i_k}$.  The braid
$\beta \left[ \beta_1,\ldots,\beta_{m} \right]$ is then the braid that
results if we forget the ``cable structure'' and view all the strands
of all the cables as strands of new bigger braid in
$B_{i_1 + \cdots + i_m}$.  See for example Figure~\ref{fig:D4oper},
for cabling using Garside elements.  With that notation in place we
can now state the following property of the Garside element, which the
author feels it should be well known but wasn't able to find a
reference in the literature.  For the statement of the following, we
take $\Delta_1 = 1$, the unique one strand braid; we also remark that
$\Delta_2 = \sigma_1$.

\begin{thm}
  \label{thm:Doperad}
  The family $\left( \Delta_k \right)_{k\ge 1}$ is a suboperad of the
  Braid Operad isomorphic to the Associative Operad\footnote{Thanks to
    Najib Idrissi for observing this in
    \href{https://mathoverflow.net/questions/289968/question-about-terminology-and-reference-request-related-to-the-braid-operad?noredirect=1\#comment718861_289968}{this
      comment in MathOverflow.}}. Indeed, for all positive integers
  $i_{1},\ldots,i_m$ we have:
$$
\Delta_m \left[ \Delta_{i_1}, \ldots, \Delta_{i_m} \right] = \Delta_{i_1 + \cdots+i_m}
$$
\end{thm}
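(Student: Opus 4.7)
The plan is to exploit the geometric description of the Garside element given in Item~\ref{item:twist} of Proposition~\ref{prop:garprop}: $\Delta_N$ is represented by a homeomorphism $f$ of $\mathbb{D}_N^2$ that fixes $\partial \mathbb{D}_N^2$ pointwise and rotates a smaller disk $D^* \subset \mathbb{D}_N^2$ (containing all $N = i_1 + \cdots + i_m$ punctures) by $\pi$. Set up the punctures symmetrically inside $D^*$: choose $m$ pairwise disjoint, congruent round subdisks $E_1,\ldots,E_m \subset D^*$ arrayed along the horizontal diameter of $D^*$ with their centers symmetric about the center of $D^*$, and place $i_k$ punctures inside $E_k$ arranged symmetrically. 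The symmetry forces the $\pi$-rotation of $D^*$ to restrict to an isometry $E_k \to E_{m+1-k}$, equal to the translation taking the center of $E_k$ to the center of $E_{m+1-k}$ post-composed with the $\pi$-rotation of $E_k$ about its center.

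Next, decompose $f$ as $f = g \circ h$ up to isotopy rel $\partial \mathbb{D}_N^2$, where $h$ permutes the fat points by $k \leftrightarrow m+1-k$ via the ``straight'' translations (untwisted identifications) and $g$ is the product of $m$ commuting $\pi$-twists, one supported in each $E_k$. Viewing $\mathbb{D}_N^2$ as a disk with $m$ fat points $E_k$, the mapping class $h$ is a standard representative of $\Delta_m$ in the cable braid group, so when regarded as an element of $B_N$ it represents the cabling $\Delta_m[\mathbf{1}_{i_1},\ldots,\mathbf{1}_{i_m}]$. Applying Item~\ref{item:twist} of Proposition~\ref{prop:garprop} again to each $E_k$ separately shows that the $\pi$-twist supported on $E_k$ realizes $\Delta_{i_k}$ on its $i_k$ punctures; since the supports are disjoint, $g$ represents $\mathbf{1}_m[\Delta_{i_1},\ldots,\Delta_{i_m}]$.

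Finally, by the definition of operad composition in the braid operad,
\begin{equation*}
  \Delta_m[\Delta_{i_1},\ldots,\Delta_{i_m}] \;=\; \mathbf{1}_m[\Delta_{i_1},\ldots,\Delta_{i_m}] \cdot \Delta_m[\mathbf{1}_{i_1},\ldots,\mathbf{1}_{i_m}] \;=\; g \cdot h \;=\; f \;=\; \Delta_N,
\end{equation*}
which is the desired equality. The main obstacle is the precise bookkeeping in the middle step: one must justify that the geometric composition $g\circ h$ really does correspond to the operadic cabling, which requires making explicit the identification of the mapping class group of $\mathbb{D}^2$ with $m$ fat points with the cable braid group, and fixing the convention for whether the internal $\alpha_k$'s are inserted at the top or bottom of each cable. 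Once the convention is fixed, both decompositions become canonical and the equality is forced by the symmetric arrangement of the $E_k$'s. An alternative, purely algebraic route by induction on $m$ using $\Delta_m = \delta_m \Delta_{2,m}$ from Item~\ref{item:induction} is possible but considerably less transparent, so the geometric argument above seems preferable.
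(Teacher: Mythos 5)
Your geometric strategy --- realizing $\Delta_N$ as a $\pi$-rotation of a disk of clusters and peeling off the cluster-internal half-twists --- is the right conceptual picture and genuinely different from the paper's proof, which is a two-level induction (on $m$ and on the size of the last cable) carried out diagrammatically using $\Delta_m = \delta_m\Delta_{2,m}$. However, as written your setup only works when the composition is a palindrome, i.e.\ $i_k = i_{m+1-k}$ for all $k$. The rigid $\pi$-rotation of $D^*$ carries $E_k$ onto $E_{m+1-k}$, hence carries the $i_k$ punctures you placed in $E_k$ to $i_k$ points of $E_{m+1-k}$; for the rotation to preserve the total puncture set these must coincide with the $i_{m+1-k}$ punctures of $E_{m+1-k}$, which forces $i_k = i_{m+1-k}$. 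Already for $m=2$, $i_1=1$, $i_2=2$ there is no arrangement of three punctures, split as a singleton in $E_1$ and a pair in the congruent symmetric disk $E_2$, that is invariant under the rotation. The same objection applies to your map $h$: a translation taking $E_k$ onto $E_{m+1-k}$ cannot map the puncture set to itself when the cluster sizes differ. So outside the palindromic case neither the restriction of $f$ to the cluster structure nor $h$ is a well-defined mapping class of $\mathbb{D}_N^2$, and the decomposition $f = g\circ h$ does not exist as stated. Since the theorem is asserted for arbitrary $i_1,\ldots,i_m$, this is a genuine gap and not just the bookkeeping issue you flag at the end.

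The gap is repairable, but the repair is where the real content lies. One clean option is to drop the rotation entirely and exhibit $\Delta_m[\Delta_{i_1},\ldots,\Delta_{i_m}]$ directly as a positive braid in which every pair of the $N$ strands crosses exactly once: two strands in the same cable cross once inside $\Delta_{i_k}$ and never during the flat cabling of $\Delta_m$, while two strands in different cables cross once when their cables cross in $\Delta_m$ and never internally; since $\Delta_N$ is characterized as the unique positive braid with this property, the identity follows. Alternatively one falls back on the paper's induction, which sidesteps the symmetry issue by attaching one strand or one cable at a time via $\Delta_{k+1} = \delta_{k+1}\Delta_{2,k+1}$. If you insist on keeping the mapping-class picture, you must replace the rigid rotation by a representative of $\Delta_N$ adapted to clusters of unequal size and then prove that representative is isotopic to the standard half-twist --- but that isotopy is essentially the assertion of the theorem, so nothing has been gained.
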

\begin{proof}
  We proceed by induction on $m$.  For $m=1$ the result is obvious.
  For $m=2$, i.e. proving
  $\Delta_2 \left[ \Delta_{k_1}, \Delta_{k_2} \right] = \Delta_{k_1 +
    k_2}$
  we first observe that if $k_2 = 1$, this is simply a restatement of
  the definition of
  $\Delta_{k_2 +1} := \delta_{k_2 + 1} \Delta_{2,k_2 + 1} $ (see
  Definition~\ref{defn:gar}).  This can be seen in the top of
  Figure~\ref{fig:operadic}.  In the diagrammatic calculations we use
  the convention that a cable weaved according to $\Delta_k$ is
  denoted by a thick strand carrying a box labeled $k$.  If we assume
  that the result has been proved for $k_2$, then the bottom of
  Figure~\ref{fig:operadic}, and the definition of
  $\Delta_{k_1 + k_{2} +1}$ as
  $\delta_{k_1 + k_2 +1} \Delta_{2,k_1+k_2 + 1}$ proves it for
  $k_2+1$.

  Assuming now that the result has been proved for $m$ we use
  induction on $k_m$ to prove it for $m+1$.  For $k_m = 1$, it is
  again a restatement of the definition of the Garside element.
  Assuming that it has been proved for $k_{m+1}$
  Figure~\ref{fig:operadic2}, proves it for $k_{m+1}+1$, using the case
  $m=2$ that was proved above.  This concludes the induction and the
  proof.

\begin{figure}[htbp]
  \centering
  \begin{pspicture}(-1,1)(9,10)
    \psline[linearc=.35](2,10)(2,9)(1,8)(1,7.7)
    \psline[linearc=.35,linewidth=0.05,border=3pt](1,9)(1,8.7)(2,8)(2,7.7)
    \psline[linewidth=0.05](1,10)(1,9.7)
    \psline(.7,9.7)(1.3,9.7)(1.3,9.3)(.7,9.3)(.7,9.7)
    \psline[linewidth=0.05](1,9.3)(1,9)
    \rput(1,9.5){\small $k$}
    \psline[linearc=.35](4.5,10)(4.5,9.7)(3.5,9)(3.5,7.7)
    \psline[linearc=.35,linewidth=0.05,border=3pt](3.5,10)(3.5,9.7)(4.5,9)(4.5,8.4)
    \psline(4.2,8.4)(4.8,8.4)(4.8,8)(4.2,8)(4.2,8.4)
    \psline[linewidth=0.05](4.5,8)(4.5,7.7)
    \rput(4.5,8.2){\small $k$}
    \rput(-.7,8.85){$\Delta_2 \left[ \Delta_k, \Delta_1 \right] = $}
    \rput(2.6,8.85){$=$}
    \rput(6.8,8.85){$= \delta_{k+1}\Delta_{2,k+1} = \Delta_{k+1}$}
\rput(-1.2,5){$\Delta_2 \left[ \Delta_k,\Delta_{l+1} \right] = $}
    \rput(0,5){
      \psline[linewidth=0.05](2.3,1.1)(2.3,.8)
      \psline[linewidth=0.05](1,1.1)(1,.8)
      \psline[linewidth=0.05,linearc=.35](2.3,.4)(2.3,.1)(1,-.4)(1,-.7)
      \psline[linewidth=0.05,border=3pt,linearc=.35](1,.4)(1,.1)(2.3,-.4)(2.3,-.7)
      \psline(1.9,.4)(2.7,.4)(2.7,.8)(1.9,.8)(1.9,.4)
      \psline(.7,.4)(1.3,.4)(1.3,.8)(.7,.8)(.7,.4)
      \rput(2.3,.6){\tiny $l+1$}
      \rput(1,.6){\tiny $k$}}
\rput(3.5,5){$=$}
\rput(3.6,5.4){
  \psline[linearc=.35](3,1.1)(3,.8)(2,.2)(2,-.1)(1,-.6)(1,-1.9)
  \psline[linewidth=0.05,border=3pt,linearc=.35](2,1.1)(2,.8)(3,.2)(3,-.1)(3,-.3)
  \psline[linewidth=0.05,border=3pt,linearc=.35](3,-.7)(3,-1)(2,-1.6)(2,-1.9)
  \psline[linewidth=0.05,border=3pt,linearc=.35](1,.4)(1,.1)(3,-1.6)(3,-1.9)
  \psline(.7,.4)(1.3,.4)(1.3,.8)(.7,.8)(.7,.4)
  \psline[linewidth=0.05](1,1.1)(1,.8)
  \rput(1,.6){\tiny $k$}
  \psline(2.7,-.3)(3.3,-.3)(3.3,-.7)(2.7,-.7)(2.7,-.3)
  \rput(3,-.5){\tiny $l$}
}
\rput(8,5){$=$}
\rput(8.3,5.4){
  \psline[linearc=.35](3,1.1)(3,.8)(1,.2)(1,-1.9)
  \psline[linewidth=0.05,border=3pt,linearc=.35](2,1.1)(2,.8)(3,.2)(3,-.1)(3,-.3)
 \psline[linewidth=0.05,border=3pt,linearc=.35](3,-.7)(3,-1)(2,-1.6)(2,-1.9)
  \psline[linewidth=0.05,border=3pt,linearc=.35](1,1.1)(1,.8)(2,.1)(2,-.3)
  \psline[linewidth=0.05,border=3pt,linearc=.35](2,-.7)(2,-1)(3,-1.6)(3,-1.9)
  \psline(1.7,-.3)(2.3,-.3)(2.3,-.7)(1.7,-.7)(1.7,-.3)
  \rput(2,-.5){\tiny $k$}
  \psline(2.7,-.3)(3.3,-.3)(3.3,-.7)(2.7,-.7)(2.7,-.3)
  \rput(3,-.5){\tiny $l$}
}
\rput(0,2){$=$}
\rput(0,2){
  \psline[linearc=.35](2,1.1)(2,.8)(1,.2)(1,-1)
  \psline[linewidth=0.05,border=3pt,linearc=.35](1,1.1)(1,.8)(2,.2)(2,-.3)
  \psline(1.6,-.3)(2.4,-.3)(2.4,-.7)(1.6,-.7)(1.6,-.3)
  \psline[linewidth=0.05](2,-.7)(2,-1)
  \rput(2,-.5){\tiny $k+l$}
}
\rput(4,2){$=\Delta_{k+l+1}$}
  \end{pspicture}
  \caption{First Part of Proof of Theorem~\ref{thm:Doperad}}
  \label{fig:operadic}
\end{figure}
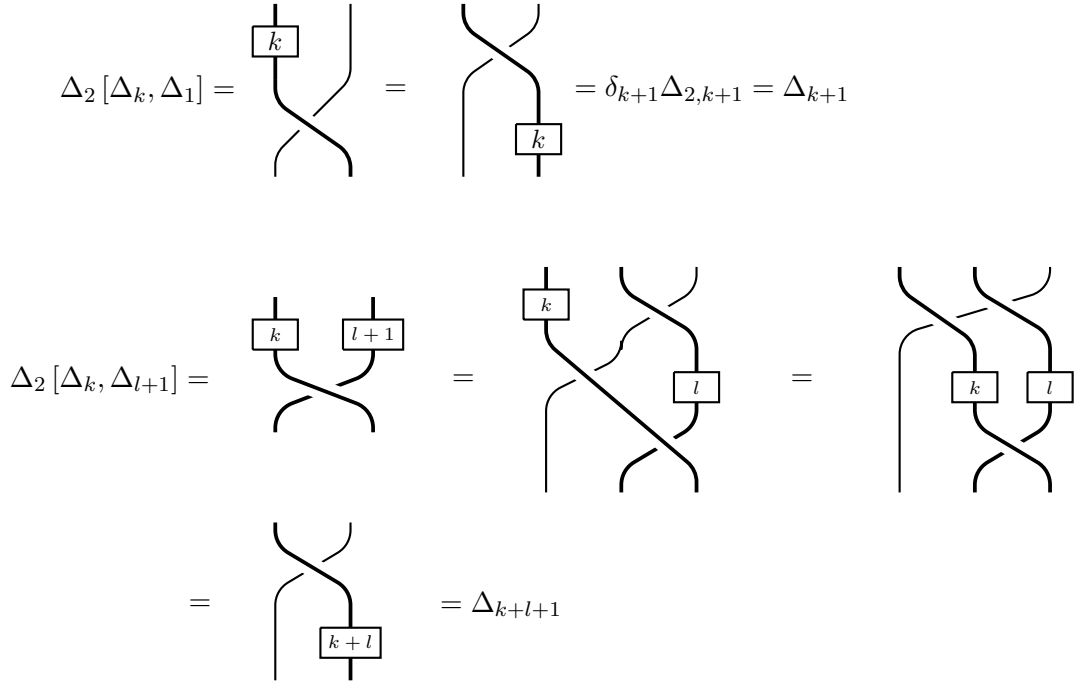

\begin{figure}[htbp]
  \centering
  \begin{pspicture}(-1,-3)(9.5,8.2)
    \rput(0,6){$\displaystyle{\Delta \left[ \Delta_{k_1},\ldots,\Delta_{k_m}, \Delta_{k_{m+1} + 1} \right] = }$}
\rput(2.9,7){
  \psline[linewidth=0.05](5.5,1.1)(5.5,.2)
  \psline[linewidth=0.05,linearc=.35](5.5,-.4)(5.5,-.8)(2,-1.9)(2,-2.2)
  \psline[border=3pt,linewidth=0.05,linearc=.35](2,-.3)(2,-.8)(5.5,-1.9)(5.5,-2.2)
  \psline(1,1.1)(1,.8)
  \psline(3,1.1)(3,.8)
  \psline(1,.4)(1,.1)
  \psline(3,.4)(3,.1)
  \psline(.7,.8)(1.3,.8)(1.3,.4)(.7,.4)(.7,.8)
  \psline(2.7,.8)(3.3,.8)(3.3,.4)(2.7,.4)(2.7,.8)
  \rput(1,.6){\tiny $k_1$}
  \rput(3,.6){\tiny $k_m$}
  \rput(2,.6){$\ldots$}
  \psline(.3,.1)(3.6,.1)(3.6,-.3)(.3,-.3)(.3,.1)
  \rput(2,-.1){\small$m$}
  \psline(4.7,.2)(6.3,.2)(6.3,-.4)(4.7,-.4)(4.7,.2)
  \rput(5.5,-.1){\small $k_{m+1}+1$}
}
\rput(-.5,2){$=$}
\rput(-1,3.5){
\rput(2,-.5){\small $\sum_1^m k_i$}
\psline[linewidth=0.05](2,.8)(2,-.15)
\psline[linearc=.35](4.2,.8)(4.2,.5)(3,-1)(3,-1.3)(2,-2.7)(2,-3)
\psline[linearc=.35,border=3pt,linewidth=0.05](3,.8)(3,.5)(4.2,-.1)(4.2,-.8)
\rput(4.2,-1){\small $k_{m+1}$}
\psline[linewidth=0.05,linearc=.35](4.2,-1.3)(4.2,-1.6)(3,-2.7)(3,-3)
\psline(3.7,-.8)(4.7,-.8)(4.7,-1.3)(3.7,-1.3)(3.7,-.8)
\psline[border=3pt,linewidth=0.05,linearc=.35](2,-.75)(2,-1)(4.2,-2.7)(4.2,-3)
\psline(1.35,-.15)(2.6,-.15)(2.6,-.75)(1.35,-.75)(1.35,-.15)
}
\rput(4.8,2){$=$}
\rput(4,3.2){
  \psline[linearc=.35](4.2,1.1)(4.2,.8)(2,-.6)(2,-2.8)
  \psline[linearc=.35,border=3pt,linewidth=0.05](3,1.1)(3,.8)(4.2,-.2)(4.2,-.8)
  \psline[linearc=.35,border=3pt,linewidth=0.05](2,1.1)(2,.8)(3,-.1)(3,-.8)
  \psline[linearc=.35,border=3pt,linewidth=0.05](4.2,-1.4)(4.2,-1.7)(3,-2.4)(3,-2.8)
  \psline[linearc=.35,border=3pt,linewidth=0.05](3,-1.5)(3,-1.8)(4.2,-2.5)(4.2,-2.8)
\psline(2.35,-.8)(3.6,-.8)(3.6,-1.5)(2.35,-1.5)(2.35,-.8)
\rput(3,-1.1){\small $\sum_1^m k_i$}
\psline(3.75,-.8)(4.7,-.8)(4.7,-1.4)(3.75,-1.4)(3.75,-.8)
\rput(4.2,-1.1){\small $k_{m+1}$}
}
\rput(-.5,-1.5){$=$}
\rput(0,-1.5){
\psline[linearc=.35](2,1.1)(2,.8)(.8,0)(.8,-1.2)
\psline[linearc=.35,border=3pt,linewidth=0.05](.8,1.1)(.8,.8)(2,0)(2,-.3)
\psline(1.2,-.3)(2.8,-.3)(2.8,-.9)(1.2,-.9)(1.2,-.3)
\rput(2,-.6){\small $\sum_1^{m+1} k_i$}
\psline[linewidth=0.05](2,-.9)(2,-1.2)
}
\rput(5,-1.5){$\displaystyle{=\Delta_{k_1+\cdots+k_{m+1}+1}}$}
  \end{pspicture}
  \caption{Second Part of Proof of Theorem~\ref{thm:Doperad}}
  \label{fig:operadic2}
\end{figure}
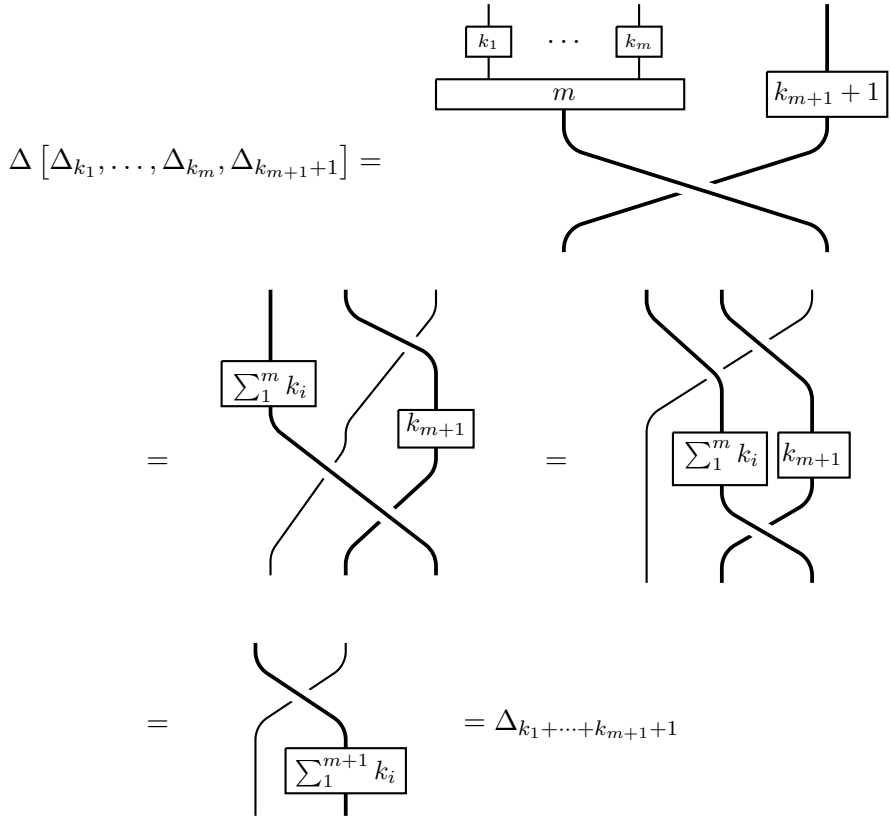
\end{proof}

For example Figure~\ref{fig:D4oper} shows the Garside element $\Delta_{4}$ as
$\Delta_2 \left[ \Delta_1,\Delta_3 \right]$, (its definition) in the left,
as $\Delta_2 \left[ \Delta_2, \Delta_2 \right]$ in the center, and as $\Delta_3 \left[ \Delta_1, \Delta_2, \Delta_1 \right]$ in the right.
\begin{figure}[htbp]
  \centering
  \begin{pspicture}(.7,-.3)(7,4)
    \rput(0,2){%
      \psset{unit=.7}
      \begin{pspicture}(4.5,6.3)
        \psline[linecolor=red,linearc=.35](4,6)(1,3)(1,1)
        \psline[linecolor=blue,linearc=.35,border=3pt](3,6)(4,5)(4,4)(2,2)(2,1)
        \psline[linecolor=blue,linearc=.35,border=3pt](2,6)(2,5)(4,3)(4,2)(3,1)
        \psline[linecolor=blue,linearc=.35,border=3pt](1,6)(1,4)(4,1)
      \end{pspicture}}
    \uput[-90](.2,.1){$\Delta_2 \left[ {\blue \Delta_3}, {\red \Delta_1} \right] $}
    \rput(4.5,1.5){%
      \psset{unit=.7}
      \begin{pspicture}(4.5,6.3)
        \psline[linearc=.35,linecolor=red](4,6)(1,3)(1,2)
        \psline[linearc=.35,linecolor=red,border=3pt](3,6)(4,5)(4,4)(2,2)
        \psline[linearc=.35,linecolor=blue,border=3pt](2,6)(1,5)(1,4)(3,2)
        \psline[linearc=.35,linecolor=blue,border=3pt](1,6)(4,3)(4,2)
      \end{pspicture}}
    \uput[-90](4.7,.1){$\Delta_2 \left[ {\blue \Delta_2}, {\red \Delta_2} \right] $}
    \rput(9,2){%
      \psset{unit=.7}
      \begin{pspicture}(4.5,6.3)
        \psline[linearc=.35,linecolor=green](4,6)(4,5)(1,2)(1,1)
        \psline[linearc=.35,linecolor=red,border=3pt](3,6)(2,5)(2,4)(3,3)(2,1)
        \psline[linearc=.35,linecolor=red,border=3pt](2,6)(4,4)(4,3)(3,1)
      \psline[linearc=.35,linecolor=blue,border=3pt](1,6)(1,3)(4,1)
      \end{pspicture}}
    \uput[-90](9,.1){$\Delta_3 \left[ {\blue \Delta_1}, {\red \Delta_2}, {\green \Delta_1} \right] $}
  \end{pspicture}
  \caption{$\Delta_4$ }
  \label{fig:D4oper}
\end{figure}



Using the operadic property of the Garside elements we can prove the following
generalization of Corollary~\ref{cor:exform}, that allow one to compute the
mind-body dual of a concatenation of factorizations ``piecewise''.

\begin{thm}
  \label{thm:dualconcat}
  For a factorization that is a concatenation of $k$ factorizations
  $\rho = \rho_1\,\rho_2\ldots\,\rho_k$, we have:
  $$ \rho^{*} = \rho_1^{*}\, \prescript{\mu(\rho_1)}{}{\left( \rho_2^{*} \right)}\,
     \prescript{\mu(\rho_1\ldots \rho_{k_1})}{}{\left( \rho_k^{*} \right)}$$
\end{thm}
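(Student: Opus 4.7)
The plan is to reduce the statement to a routine index-chasing calculation using the explicit formula for the mind-body dual from Corollary~\ref{cor:exform}, applied both to the entire concatenation $\rho$ and to each block $\rho_j$ separately, and then to match corresponding transpositions.

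Write $\rho_j = (\tau_1^{(j)}, \dots, \tau_{m_j}^{(j)})$ for $j = 1, \dots, k$, set $M_j := \mu(\rho_1 \cdots \rho_{j-1})$ (with $M_1 = 1$), and let $P_l^{(j)} := \tau_1^{(j)} \tau_2^{(j)} \cdots \tau_{l-1}^{(j)}$ denote the partial product inside $\rho_j$. The transposition at the global position in $\rho$ that corresponds to the $l$-th transposition of $\rho_j$ is $\tau_l^{(j)}$, and the product of all strictly earlier factors of $\rho$ equals $M_j \cdot P_l^{(j)}$. By Corollary~\ref{cor:exform}, the transposition at that global position in $\rho^*$ is therefore $\prescript{M_j P_l^{(j)}}{}\tau_l^{(j)}$. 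Applying the same Corollary to $\rho_j$ alone gives that the $l$-th transposition of $\rho_j^*$ is $\prescript{P_l^{(j)}}{}\tau_l^{(j)}$, so the $l$-th transposition of $\prescript{M_j}{}(\rho_j^*)$ is $\prescript{M_j}{}\bigl(\prescript{P_l^{(j)}}{}\tau_l^{(j)}\bigr) = \prescript{M_j P_l^{(j)}}{}\tau_l^{(j)}$. The two expressions coincide.

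Since this matching respects the block structure (the first $m_1$ positions of $\rho^*$ come from $\rho_1$, the next $m_2$ from $\rho_2$, and so on), and since left-conjugation commutes with concatenation in the obvious sense, we may assemble the blockwise equalities into the claimed global identity $\rho^* = \rho_1^* \cdot \prescript{M_2}{}(\rho_2^*) \cdots \prescript{M_k}{}(\rho_k^*)$. There is no essential obstacle: the only care needed is the bookkeeping that $M_j P_l^{(j)}$ factors cleanly as $M_j \cdot P_l^{(j)}$, which is precisely what allows $M_j$ to be peeled off as an outer conjugation applied to $\rho_j^*$.

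As a structural alternative worth mentioning, one can instead start from Theorem~\ref{thm:Hurw} and the operadic identity $\Delta_m = \Delta_k[\Delta_{m_1}, \dots, \Delta_{m_k}]$ of Theorem~\ref{thm:Doperad} (where $m = \sum m_j$): the inner $\Delta_{m_j}$'s act within each block to produce the pieces whose reverses are the $\rho_j^*$, while the outer $\Delta_k$, acting on the $k$ blocks treated as cables, is exactly what inserts the conjugations by the block monodromies $M_j$ before reversing the whole arrangement. This route is conceptually pleasing but requires a careful formalization of the Hurwitz action on cables, which is why the direct calculation above is the cleaner path to write down.
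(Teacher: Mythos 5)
Your proposal is correct, and it is worth noting that the paper itself supplies no written proof of this theorem: it merely asserts, just before the statement, that the result follows ``using the operadic property of the Garside elements,'' i.e.\ from Theorem~\ref{thm:Hurw} together with the cabling identity $\Delta_m = \Delta_k[\Delta_{m_1},\dots,\Delta_{m_k}]$ of Theorem~\ref{thm:Doperad} --- precisely the route you sketch as your ``structural alternative.'' Your primary argument, the direct index chase through Corollary~\ref{cor:exform}, is a genuinely different and more elementary path: the entry of $\rho^{*}$ at global position $m_1+\cdots+m_{j-1}+l$ is the $l$-th factor of $\rho_j$ conjugated on the left by the product of all strictly earlier factors, which is $M_j P_l^{(j)}$ with $M_j=\mu(\rho_1\cdots\rho_{j-1})$, and the identity $\prescript{M_j}{}{\bigl(\prescript{P_l^{(j)}}{}{\tau_l^{(j)}}\bigr)}=\prescript{M_jP_l^{(j)}}{}{\tau_l^{(j)}}$ (immediate from $\prescript{h}{}{g}=hgh^{-1}$) matches this with the $l$-th entry of $\prescript{M_j}{}{(\rho_j^{*})}$. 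What your route buys is a complete, self-contained verification requiring nothing beyond the explicit formula and left-to-right multiplication of partial products; what the operadic route buys is a conceptual explanation of \emph{why} the block monodromies appear (the outer $\Delta_k$ acting on cables), at the cost of formalizing the Hurwitz action on cables. One housekeeping remark: the displayed formula in the theorem as printed has a typo ($\mu(\rho_1\ldots\rho_{k_1})$ should be $\mu(\rho_1\ldots\rho_{k-1})$, with the intermediate conjugated blocks elided); your computation establishes the correctly stated identity.
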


An other corollary is the following:
\begin{cor}
  \label{cor:bubblesd}
  If $\rho = \tau_1, \tau_1, \tau_2, \tau_2, \ldots, \tau_k, \tau_k$
  is a factorization of the identity permutation into $k$ pairs of
  identical transpositions, then $\rho^{*} = \rho$.
\end{cor}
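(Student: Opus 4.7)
The plan is to decompose $\rho$ as a concatenation $\rho = \rho_1 \rho_2 \cdots \rho_k$, where $\rho_i = (\tau_i, \tau_i)$, and then apply Theorem~\ref{thm:dualconcat} together with Corollary~\ref{cor:exform} to each piece. The key observation is that each $\rho_i$ is itself a factorization of the identity, since $\tau_i \tau_i = e$, which collapses all of the conjugations appearing in the concatenation formula.

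First I would compute the dual of a single block. By Corollary~\ref{cor:exform},
\begin{equation*}
  (\tau_i, \tau_i)^{*} = \bigl(\tau_i,\; \prescript{\tau_i}{}\tau_i\bigr) = (\tau_i, \tau_i),
\end{equation*}
because any element conjugates to itself. So $\rho_i^{*} = \rho_i$ for each $i$.

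Next, since $\mu(\rho_i) = \tau_i \tau_i = e$, every partial product $\mu(\rho_1 \rho_2 \cdots \rho_j)$ equals $e$, and the conjugations $\prescript{\mu(\rho_1 \cdots \rho_{j-1})}{}{(\rho_j^{*})}$ in Theorem~\ref{thm:dualconcat} are all trivial. Thus
\begin{equation*}
  \rho^{*} = \rho_1^{*}\, \rho_2^{*}\, \cdots\, \rho_k^{*} = \rho_1\, \rho_2\, \cdots\, \rho_k = \rho,
\end{equation*}
as claimed. There is essentially no obstacle: the only thing to double check is that $\rho$ does factor the identity (it does, because it is a concatenation of factorizations of the identity), which is precisely what makes the conjugations drop out of the concatenation formula.
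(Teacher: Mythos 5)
Your proof is correct and is essentially the paper's own argument in a different packaging: the paper invokes the cabling identity $\Delta_{2k} = \Delta_k[\sigma_1,\ldots,\sigma_1]$ directly, while you route the same decomposition into pairs through Theorem~\ref{thm:dualconcat}, which is itself the paper's corollary of that identity. Both hinge on the two facts you isolate — each block $(\tau_i,\tau_i)$ is self-dual, and the conjugating partial monodromies are all trivial — so nothing further is needed.
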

\begin{proof}
  This follows form the fact that $\Delta_{2k} = \Delta_k[\sigma_1, \ldots, \sigma_1]$.
  Each $\sigma_1$ fixes the corresponding pair and since the product of each pair
  is the identity permutation, the conjugations resulting from action of $\Delta_k$
  have no effect.
\end{proof}

We conclude this subsection with the following remark:
\begin{rem}
  \label{rem:convention}
  Recall that in Section~\ref{sec:duality} we made the convention
  $v^{*} = \overrightarrow{v}$ and we remarked that not much would
  change if we had made the convention $v^{*} = \overleftarrow{v}$
  instead. In our context, if we had made that convention then
  Formula~\eqref{eq:dudel} would read
  $$\Gamma^{*} = \left( \Gamma\Delta_m^{-1}  \right)^{\intercal}  $$
  instead.  Indeed, there is a straightforward analogue of
  Lemma~\ref{lem:daction} and the proof of Theorem~\ref{thm:Hurw}
  would go through almost verbatim.  For a different proof see
  Theorem~\ref{thm:Hurwbar} in the next subsection.
\end{rem}

\subsection{A closer look at the relation of the Hurwitz action and
  duality}
\label{sec:furtherhurdual}

So far we have seen two involutions on the set of our objects of
study: mind-body duality $x \mapsto x^{*}$ and reversion
$x \mapsto x^{\intercal}$.  These involutions are related to analogous
involutions on the braid group.  In this subsection we explore that
relation.

It's easy to check that the assignment $\sigma_i \mapsto \sigma_i^{-1}$
defines an (outer) automorphism $*\co B_m \to B_m$\footnote{Actually
  (see~\cite{DyerGrossman19811981}) it is the only non-trivial outer
  automorphism of $B_m$.} If $\beta\in B_m$ we will denote its image
under this automorphism by $\beta^{*}$.  Diagrammatically a diagram
for $\beta^{*}$ is obtained from a diagram of $\beta$ by reversing all
the crossings i.e. turning over-crossings to under-crossings and vice
versa.

We have the following relation of this automorphism of $B_n$ and mind-body
duality:

\begin{thm}
  \label{thm:dubr}
  Let $\beta\in B_m$ and $\Gamma$ an e-graph with $m$
  edges. Then the Hurwitz action has the following property:
  $$
  (\Gamma \beta)^{*} = \Gamma^{*}\beta^{*}
  $$
\end{thm}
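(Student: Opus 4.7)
The strategy is to reduce to the case of a single Artin generator and then induct on braid word length, using that $*\co B_m\to B_m$ is a group homomorphism.

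The main technical ingredient is a commutation lemma between the reversion $\intercal$ and the Hurwitz action: for every e-graph $\Gamma$ with $m$ edges and every generator $\sigma_i\in B_m$,
\begin{equation*}
(\Gamma\sigma_i^{\pm 1})^\intercal = \Gamma^\intercal\,\sigma_{m-i}^{\mp 1}.
\end{equation*}
I would prove this by passing to the associated factorization $\rho(\Gamma) = (\tau_1,\ldots,\tau_m)$, for which $\rho(\Gamma^\intercal) = (\tau_m,\ldots,\tau_1)$, and then directly applying Equation~\eqref{eq:huract}: on both sides only positions $i,i+1$ (respectively $m-i,m+1-i$) are affected, and the two resulting conjugations agree because transpositions are involutions, so that $\prescript{\tau}{}{\tau'} = \tau^{-1}\tau'\tau = (\tau')^{\tau}$.

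Given the lemma, the base case of the theorem is a short manipulation built on Theorem~\ref{thm:Hurw} and the braid identity $\sigma_i\Delta_m = \Delta_m\sigma_{m-i}$, an immediate consequence of Item~\ref{item:DeltaactionBm} of Proposition~\ref{prop:garprop}:
\begin{equation*}
(\Gamma\sigma_i)^{*} = \bigl((\Gamma\sigma_i)\Delta_m\bigr)^\intercal = \bigl(\Gamma\Delta_m\sigma_{m-i}\bigr)^\intercal = (\Gamma\Delta_m)^\intercal\sigma_i^{-1} = \Gamma^{*}\sigma_i^{-1},
\end{equation*}
where the third equality invokes the key lemma applied to the e-graph $\Gamma\Delta_m$. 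The entirely analogous computation with $\sigma_i^{-1}$ in place of $\sigma_i$ yields $(\Gamma\sigma_i^{-1})^{*} = \Gamma^{*}\sigma_i$, so the theorem holds whenever $\beta$ is a generator or the inverse of one.

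An easy induction on braid word length then completes the proof: if the identity holds for a braid $\beta$, then for $\beta' = \beta\sigma_i^{\pm 1}$ we apply the base case to the e-graph $\Gamma\beta$ and the induction hypothesis to obtain
\begin{equation*}
(\Gamma\beta')^{*} = \bigl((\Gamma\beta)\sigma_i^{\pm 1}\bigr)^{*} = (\Gamma\beta)^{*}\sigma_i^{\mp 1} = \Gamma^{*}\beta^{*}\sigma_i^{\mp 1} = \Gamma^{*}(\beta')^{*},
\end{equation*}
using that $*$ sends $\sigma_i^{\pm 1}$ to $\sigma_i^{\mp 1}$. The main obstacle is really just the key lemma; once that explicit case-analysis at positions $i,i+1$ is carried out, everything else is symbolic bookkeeping built on top of Theorem~\ref{thm:Hurw}.
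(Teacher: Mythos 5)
Your proof is correct, but it takes a genuinely different route from the paper's. The paper verifies the identity directly on the generators $\sigma_i$ by a combinatorial case analysis on how the migts transform, splitting into three cases according to whether the edges labeled $i$ and $i+1$ share two, one, or no vertices. You instead derive the statement formally from Theorem~\ref{thm:Hurw} ($\Gamma^{*} = (\Gamma\Delta_m)^{\intercal}$), the Garside conjugation identity $\sigma_i\Delta_m = \Delta_m\sigma_{m-i}$ from Item~\ref{item:DeltaactionBm} of Proposition~\ref{prop:garprop}, and a commutation lemma between reversal and the Hurwitz action; that lemma is essentially Theorem~\ref{thm:reversebr} of the paper specialized to generators, which the paper proves later by exactly the coordinate computation you sketch and independently of Theorem~\ref{thm:dubr}, so there is no circularity. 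Your computation checks out: the only delicate point is that $(\rho\sigma_i)^{\intercal}$ carries $\prescript{\tau_i}{}{\tau_{i+1}}$ where $\rho^{\intercal}\sigma_{m-i}^{-1}$ carries $\tau_{i+1}^{\tau_i}$, and these coincide precisely because transpositions are involutions — a restriction you correctly flag, and which is harmless since e-graphs encode transposition factorizations. The trade-off is that the paper's argument is self-contained and shows concretely how the dual graph changes under a single slide move, giving geometric insight, whereas yours is shorter modulo Theorem~\ref{thm:Hurw}, pushes all the combinatorics into that earlier result, and incidentally establishes the generator case of Theorem~\ref{thm:reversebr} as a byproduct.
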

\begin{proof}
  Let $\Gamma$ be an e-v-labeled graph, it suffices to prove that for
  all $i\le m-1$ we have:
  $$ \left( \Gamma \sigma_i \right)^{-1} = \Gamma^{*} \sigma_i^{-1} $$
  There are three cases, the edges $i$ and $i+1$ have two, one, or no
  vertices in common.

  In the first case $\Gamma\sigma_i = \Gamma$ so we need to prove that
  $\Gamma^{*}\sigma_i^{-1} = \Gamma^{*}$, i.e. the edges $i$ and $i+1$
  have two vertices in common in $\Gamma^{*}$ as well.  This is the
  case, because both trails, say $t_1$ and $t_2$, that contain the edge
  $i$ have to continue with $i+1$, thus in $\Gamma^{*}$ both edges $i$
  and $i+1$ connect $t_1$ to $t_{2}$.

  In the second case, in $\Gamma$ there are exactly three migts that
  contain edge $i$ or $i+1$, $t_0$ that contains both $i$ and $i+1$,
  $t_1$ that contains only $i$, and $t_2$ that contains only
  $i+1$. Then in $\Gamma^{*}$ edge $i$ connects $t_0$ to $t_1$, and
  edge $i+1$ connects $t_0$ to $t_{2}$; in particular $i$ and $i+1$
  are adjacent at $t_0$. After the action of $\sigma_i$, all migts
  except these three remain the same, while $t_0$ changes by loosing
  the edge $i+1$, $t_1$ remains the same except that the edge that was
  labeled $i$ is now labeled $i+1$, and$t_{2}$ changes by replacing
  the edge $i+1$ with two edges $i$ and $i+1$.  So in the dual of
  $\Gamma\sigma_i$ the vertex $t_2$ is connected to $t_0$ by an edge
  labeled $i$ and to $t_1$ by an edge labeled $i+1$.  See
  Figure~\ref{fig:dubr}.

\begin{figure}[htbp]
  \centering
  \psset{unit=1.5}
  \begin{pspicture}(-2,-5.5)(5,1)
    \rput(-1,0){
      \rput(-.5, 0){\rnode{a}{\psdot(0,0)}}
      \uput[135](-.5,0){$a$}
      \rput(1.5, 0){\rnode{b}{\psdot(0,0)}}
      \uput[45](1.5,0){$b$}
      \rput(.5,-1){\rnode{c}{\psdot(0,0)}}
      \uput[-90](.5,-1){$c$}
      \ncline{a}{c}
      \ncput*{\tiny $i$}
      \ncline{b}{c}
      \ncput*{\tiny $i+1$}
      \psline[ArrowInside=->, arrowsize=.125,linecolor=red,linearc=.15](-.3,.3)(.5,-.8)(1.2,.3)
      \uput[90](.5,-.6){\red \small $w_0$}
      \psline[ArrowInside=->, arrowsize=.125,linecolor=blue](.3,-1.2)(-1,0)
      \uput[180](-.35,-0.6){\blue \small $w_1$}
      \psline[ArrowInside=->, arrowsize=.125,linecolor=green](2,0)(.8,-1.2)
      \uput[0](1.4,-0.6){\green \small $w_2$}}
    \psline[arrowsize=.1]{->}(1.2,-.6)(2.2,-.6)
    \uput[90](1.775,-.6){$\sigma_{i}$}
    \rput(3.2,0){
      \rput(-.5, 0){\rnode{a}{\psdot(0,0)}}
      \uput[135](-.5,0){$a$}
      \rput(1.5, 0){\rnode{b}{\psdot(0,0)}}
      \uput[45](1.5,0){$b$}
      \rput(.5,-1){\rnode{c}{\psdot(0,0)}}
      \uput[-90](.5,-1){$c$}
      \ncline{a}{c}
      \ncput*{\tiny $i+1$}
      \ncline{a}{b}
      \ncput*{\tiny $i$}
      \psline[ArrowInside=->, arrowsize=.125,linecolor=green,linearc=.15](1.6,-.25)(0,-.25)(.9,-1.1)
      \uput[-45](.3,-.25){\green $w_2$}
      \psline[ArrowInside=->, arrowsize=.125,linecolor=red,linearc=.15](-.6,.3)(1.5,.3)
      \uput[90](.5,.3){\red $w_0$}
      \psline[ArrowInside=->, arrowsize=.125,linecolor=blue](.3,-1.2)(-1,0)
      \uput[180](-.35,-0.6){\blue \small $w_1$}
    }
    \rput(-1,-3.7){
      \rput(.5,0){\rnode{w0}{\psdot(0,0)}}
      \uput[90](.5,0){\red $w_0$}
      \rput(-.5,-1){\rnode{w1}{\psdot(0,0)}}
      \uput[225](-.5,-1){\blue $w_1$}
      \rput(1.5,-1){\rnode{w2}{\psdot(0,0)}}
      \uput[315](1.5,-1){\green $w_2$}
      \ncline{w0}{w1}
      \ncput*{\tiny $i$}
      \ncline{w0}{w2}
      \ncput*{\tiny $i+1$}}
    \psline[arrowsize=.1]{->}(1.2,-4.1)(2.35,-4.1)
    \uput[90](1.775,-4.1){$\sigma_{i}^{-1}$}
    \rput(3.2,-3.7){
      \rput(.5,0){\rnode{w0}{\psdot(0,0)}}
      \uput[90](.5,0){\red $w_0$}
      \rput(-.5,-1){\rnode{w1}{\psdot(0,0)}}
      \uput[180](-.5,-1){\blue $w_1$}
      \rput(1.5,-1){\rnode{w2}{\psdot(0,0)}}
      \uput[0](1.5,-1){\green $w_2$}
      \ncline{w1}{w2}
      \ncput*{\tiny $i+1$}
      \ncline{w0}{w2}
      \ncput*{\tiny $i$}}
    \psline[arrowsize=.11]{->}(-.5,-1.6)(-.5,-3.1)
    \uput[180](-.5,-2.35){$*$}
    \psline[arrowsize=.11]{->}(3.8,-1.6)(3.8,-3.1)
    \uput[0](3.8,-2.35){$*$}
  \end{pspicture}
  \caption{The second case in the proof of Theorem~\ref{thm:dubr}.}
  \label{fig:dubr}
\end{figure}
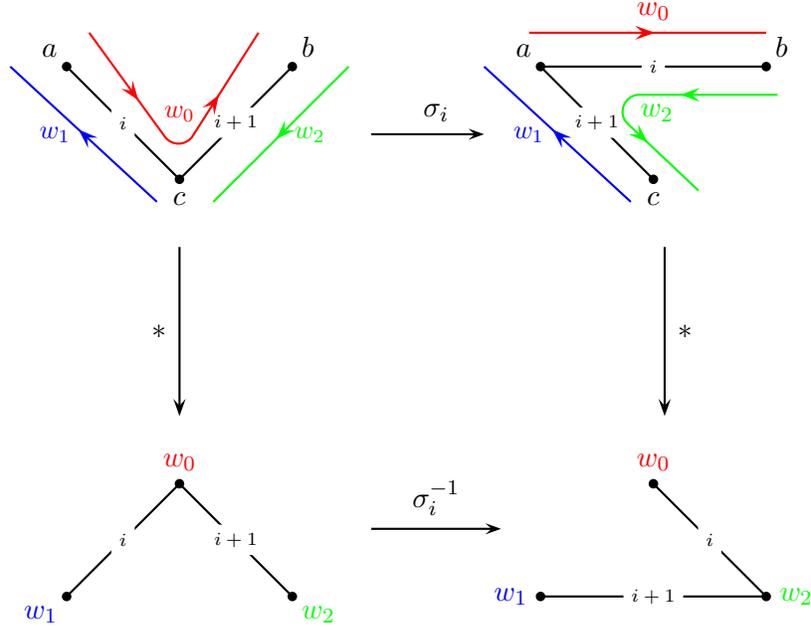

  Finally in the third case $\Gamma$ and $\Gamma \sigma_i$ have the
  same underlying graph and their labeling differs only in that the
  edges $i$ and $i+1$ have exchanged labels.  We need to prove that
  the same is true for $\Gamma^{*}$ and $\Gamma^{*}\sigma_i^{-1}$. This is
  the case because a migt that contains edge $i$ cannot contain edge
  $i+1$, since it starts with edges less or equal to $i$ and either
  ends in an endpoint of $i$ or continues with the largest edge greater
  than $i$ incident at an endpoint of $i$, which is greater that $i+1$.
  So the edges $i$ and $i+1$ cannot belong to the same migt.
\end{proof}

The reverse of a factorization is also related to an involution of
the braid group. Define the \emph{reverse} of a braid $\beta \in B_m$
to be the dual of $\beta$ conjugated by the Garside element,
i.e. $\beta^{\intercal} := (\beta^{*})^{\Delta_m}$.  Then we have:

\begin{thm}
  \label{thm:reversebr}
  For any factorization with $m$ monodromies we and any braid $\beta \in B_m$ we
  have:
  $$ \left( \rho \beta \right)^{\intercal} = \rho^{\intercal} \beta^{\intercal} $$
\end{thm}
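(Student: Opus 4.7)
The plan is to reduce the claim to the mind-body identity $(\rho\beta)^{*} = \rho^{*}\beta^{*}$ of Theorem~\ref{thm:dubr} by first establishing an auxiliary ``reverse equals dual times Garside'' identity at the level of factorizations, namely
\[
\rho^{\intercal} = \rho^{*}\,\Delta_m.
\]
This identity will follow directly from Theorem~\ref{thm:Hurw}. Substituting $\rho^{*}$ for $\rho$ in $\rho^{*} = (\rho\,\Delta_m)^{\intercal}$ and using the involutivity $\rho^{**} = \rho$ from Theorem~\ref{thm:dualproperties}\,(\ref{item:involutory}) gives $\rho = (\rho^{*}\,\Delta_m)^{\intercal}$; applying $\intercal$ to both sides (which is also an involution) yields $\rho^{\intercal} = \rho^{*}\,\Delta_m$ as claimed.

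Once this formula is in hand, the computation is essentially bookkeeping, using only that the Hurwitz action is a right action of $B_m$. I would simply write
\[
(\rho\beta)^{\intercal} \;=\; (\rho\beta)^{*}\,\Delta_m \;=\; \rho^{*}\beta^{*}\,\Delta_m \;=\; (\rho^{*}\,\Delta_m)\bigl(\Delta_m^{-1}\beta^{*}\Delta_m\bigr) \;=\; \rho^{\intercal}\,(\beta^{*})^{\Delta_m} \;=\; \rho^{\intercal}\,\beta^{\intercal},
\]
where the first equality applies the auxiliary identity to $\rho\beta$, the second is Theorem~\ref{thm:dubr}, the third inserts $\Delta_m\Delta_m^{-1}$ and reassociates in $B_m$, the fourth is the auxiliary identity again, and the last is the definition $\beta^{\intercal} := (\beta^{*})^{\Delta_m}$.

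I expect no real obstacle. The only subtlety to be careful about is side conventions: because the Hurwitz action is on the right and the paper uses $g^{h} = h^{-1}gh$, the $\Delta_m$ produced by moving $\rho^{\intercal}$ past $\beta^{*}$ must land on the \emph{left} of $\beta^{*}$ so that the resulting conjugate matches the definition of $\beta^{\intercal}$ exactly; the computation above is arranged so that this matches on the nose. The conceptually interesting step is the first one, which shows that the braid-group involution $\beta \mapsto (\beta^{*})^{\Delta_m}$ defined a posteriori to make the theorem work is forced by the combination of Theorems~\ref{thm:Hurw} and~\ref{thm:dubr}.
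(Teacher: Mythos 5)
Your proof is correct, but it takes a genuinely different route from the paper's. The paper proves the identity by brute force on the generators: it writes out the Hurwitz action formula for $\rho\sigma_k$, reverses indices via $\rho^{\intercal}(i) = \rho(m+1-i)$, uses the fact that $\sigma_k^{\intercal} = \sigma_{m-k}^{-1}$, and checks case by case that the two sides agree. Your argument instead derives the auxiliary identity $\rho^{\intercal} = \rho^{*}\,\Delta_m$ from Theorem~\ref{thm:Hurw} (via involutivity of $*$ and $\intercal$) and then reduces the whole statement to Theorem~\ref{thm:dubr} together with the right-action property $(\rho\beta_1)\beta_2 = \rho(\beta_1\beta_2)$; I checked the chain of equalities and the conjugation lands on the correct side given the convention $g^{h}=h^{-1}gh$, so there is no gap, and there is no circularity since neither Theorem~\ref{thm:Hurw} nor Theorem~\ref{thm:dubr} depends on this result. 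What each approach buys: the paper's computation is self-contained and independent of the (inductive, somewhat involved) proof of Theorem~\ref{thm:Hurw}, whereas your argument is shorter, avoids the index bookkeeping, and makes transparent why the braid involution must be defined as $\beta^{\intercal} := (\beta^{*})^{\Delta_m}$ --- it is the unique choice forced by combining the Garside characterization of duality with the equivariance of $*$. The one dependency worth flagging explicitly if you write this up is that your proof inherits the full strength of Theorem~\ref{thm:Hurw}, so it proves strictly less "from scratch" than the paper's direct verification.
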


\begin{proof}
 It suffices to
  prove this for the standard generators $\sigma_k$ of the braid group
  $B_m$. We have that for $1\le i \le n$ then $\rho^{\intercal}(i) =
  \rho(m+1-i)$. Now
  $$ \left( \rho\sigma_k \right)(i) =
  \begin{cases}
    \rho(i) & \text{if } i \ne k, k+1 \\
    \rho(k+1)^{\rho(k)} & \text{if } i = k \\
    \rho(k) & \text{if } i = k+1
  \end{cases}
  $$
  So that
  \begin{align*}
    \left( \rho \sigma_k \right)^{\intercal}(i) &= (\rho\sigma_k)(m+1-i) \\
    &= \begin{cases}
    \rho(m+1-i) & \text{if } m + 1 -i \ne k, k+1 \\
    \rho(k+1)^{\rho(k)} & \text{if } m+1-i = k\\
    \rho(k) & \text{if } m+1 -i = k + 1
  \end{cases}\\
    &= \begin{cases}
    \rho(m+1-i) & \text{if } i \ne m+1-k, m-k \\
    \rho(k+1)^{\rho(k)} & \text{if } i = m+1-k\\
    \rho(k) & \text{if } i = m-k
  \end{cases}
  \end{align*}
 Note that for the standard generators $\sigma_i\in B_m$ we
  have $\sigma_k^{\intercal} = \sigma_{n-k}^{-1}$.  So
  \begin{align*}
   \left(  \rho \sigma_k \right)^{\intercal}(i) &= \rho^{\intercal}\sigma_{n-k}^{-1}(i)\\
    &= \begin{cases}
      \rho^{\intercal}(i) & \text{if } i \ne m-k, m-k+1 \\
      \rho^{\intercal}(m-k+1) & \text{if } i = m-k \\
      \rho^{\intercal}(m-k)^{\rho^{\intercal}(m-k+1)} & \text{if } i = m-k+1
  \end{cases} \\
  &= \begin{cases}
    \rho(m+1-i) & \text{if } i \ne m-k, m-k+1 \\
    \rho(k) & \text{if } i = n-k \\
    \rho(k+1)^{\rho(k)} & \text{if } i = m-k+1
  \end{cases}
  \end{align*}
\end{proof}

We conclude this section by having a closer look at the action of the
Garside element $\Delta_m$.

\begin{prop}
  \label{prop:centralaction}
  Let $\rho$ be a factorization, then
  $$\rho \Delta_m^2 = \prescript{\mu(\rho)}{}\rho$$.
\end{prop}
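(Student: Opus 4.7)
The plan is to deduce the proposition from Corollary~\ref{cor:exform} by applying the formula for the action of the Garside element twice. In the proof of that corollary it is shown that if $\rho = (\tau_1,\ldots,\tau_m)$, then
$$ \rho \Delta_m = (\sigma_1, \ldots, \sigma_m) \quad \text{with} \quad \sigma_k = \prescript{\tau_1 \cdots \tau_{m-k}}{}\tau_{m-k+1}, $$
using the convention $\tau_1\cdots\tau_0 = 1$. So the first step is to take this as the input and compute $\rho \Delta_m^2 = (\rho \Delta_m)\Delta_m$ by applying the same formula to the sequence $(\sigma_1,\ldots,\sigma_m)$.

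The key lemma I would establish is the telescoping identity
$$ \sigma_1 \sigma_2 \cdots \sigma_j \;=\; \mu(\rho)\,(\tau_1 \cdots \tau_{m-j})^{-1} \qquad \text{for } 1 \le j \le m. $$
This is proved by a short induction on $j$: the base case $j=1$ is $\sigma_1 = (\tau_1\cdots\tau_{m-1})\tau_m = \mu(\rho)(\tau_1\cdots\tau_{m-1})^{-1}\cdot \tau_1\cdots\tau_{m-1}\cdot \tau_m \cdot (\tau_1\cdots\tau_{m-1})^{-1}$, which simplifies to $\mu(\rho)(\tau_1\cdots\tau_{m-1})^{-1}$; the inductive step uses that $(\tau_1\cdots\tau_{m-j})^{-1}(\tau_1\cdots\tau_{m-j-1}) = \tau_{m-j}^{-1}$, which collapses the two conjugating factors into one.

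Once this is in hand, the rest is bookkeeping. Applying the formula for $\Delta_m$ again gives $\rho \Delta_m^2 = (\sigma'_1,\ldots,\sigma'_m)$ with $\sigma'_k = \prescript{\sigma_1\cdots\sigma_{m-k}}{}\sigma_{m-k+1}$. Substituting the telescoped expression for $\sigma_1\cdots\sigma_{m-k}$ and the defining expression $\sigma_{m-k+1} = \prescript{\tau_1\cdots\tau_{k-1}}{}\tau_k$, the inner conjugations cancel via the identities $(\tau_1\cdots\tau_k)^{-1}(\tau_1\cdots\tau_{k-1}) = \tau_k^{-1}$ and $\tau_k \cdot \tau_k^{-1}\tau_k = \tau_k$ (using that the $\tau_i$ are involutions only incidentally; the computation works at the level of any group), leaving simply $\sigma'_k = \mu(\rho)\,\tau_k\,\mu(\rho)^{-1} = \prescript{\mu(\rho)}{}\tau_k$.

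The main obstacle is just keeping track of the nested conjugations and verifying that the cancellations really are as clean as they appear. As a sanity check, one can also argue geometrically: under the mapping class group interpretation, $\Delta_m^2$ is the Dehn twist about a curve parallel to $\partial \mathbb{D}_m^2$ (Proposition~\ref{prop:garprop}\ref{item:twist} and \ref{item:central}), which acts on each free generator $x_i$ of $\mathsf{F}_m$ by conjugation by the boundary loop $x_1 x_2 \cdots x_m$; pushing this forward along $\rho$ gives conjugation of each $\tau_i$ by $\mu(\rho)$, in agreement with the combinatorial calculation above.
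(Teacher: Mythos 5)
Your proof is correct, and it takes a genuinely different route from the paper. The paper's proof rewrites $\Delta_m^2$ as $\lambda_m^m$ with $\lambda_m = \sigma_1\cdots\sigma_{m-1}$ and argues combinatorially on e-v-graphs: each application of $\lambda_m$ interchanges the two vertex labels of the edge labeled $1$ and cyclically shifts the edge labels, so after $m$ iterations every edge has swapped its endpoints' labels in the order of the original labeling, which is exactly relabeling the vertices by $\mu(\Gamma)$. You instead iterate the explicit formula $\left(\rho\Delta_m\right)_k = \prescript{\tau_1\cdots\tau_{m-k}}{}\tau_{m-k+1}$ from Corollary~\ref{cor:exform} and carry out the nested conjugations directly; your telescoping identity $\sigma_1\cdots\sigma_j = \mu(\rho)\left(\tau_1\cdots\tau_{m-j}\right)^{-1}$ is exactly the right intermediate statement, and I have checked that the cancellations close up as claimed (including the boundary cases $j=m$, which recovers that the Hurwitz action preserves total monodromy, and $k=m$, where the conjugator is empty). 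Your approach buys a purely algebraic proof valid verbatim for factorizations into arbitrary group elements, with no appeal to the graph picture or to the identity $\Delta_m^2=\lambda_m^m$; the paper's approach buys a one-line geometric picture of \emph{why} the answer is conjugation by the monodromy, which your closing remark about the boundary Dehn twist recovers independently. The only blemishes are cosmetic: the displayed base-case computation conflates $\prescript{\tau_1\cdots\tau_{m-1}}{}\tau_m$ with the product $\left(\tau_1\cdots\tau_{m-1}\right)\tau_m$ in an intermediate line (the conclusion $\sigma_1=\mu(\rho)\left(\tau_1\cdots\tau_{m-1}\right)^{-1}$ is nonetheless right), and your argument of course inherits whatever burden of proof remains in the paper's own inductive justification of Corollary~\ref{cor:exform}.
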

\begin{proof}
  We will use the well known fact that $\Delta_m^2 = \lambda_m^m$,
  where $\lambda_m = \sigma_1\ldots \sigma_{m-1}$ (see
  Definition~\ref{defn:gar}).  It is easy to see that for an e-v-graph
  $\Gamma$, $\Gamma\lambda_m$ is obtained from $\Gamma$ by sliding all
  the edges along the edge labeled $1$ and then relabel all the edges
  according to $i\to i-1 \mod m$, or equivalently, interchanging the
  two vertex labels of the edge labeled $1$ and then relabeling the
  edges according to $i\mapsto i-1 \mod m$.  After $m$ iterations all
  the edges have their original labels, and each edge has interchanged
  its labels, in the order of the original e-labeling.  This means
  that the vertex labels have been relabeled according to
  $\mu(\Gamma)$.
\end{proof}

At the level of e-graphs the vertex labels are not important so it follows:
\begin{cor}
  \label{cor:ecentralaction} Action by $\Delta_m^2$ fixes all
  e-graphs, and therefore the action of $\Delta_m$ is an involution
  on the set of all e-graphs.
\end{cor}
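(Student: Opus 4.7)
The plan is to deduce this directly from Proposition~\ref{prop:centralaction}, using the fact that e-graphs (as opposed to e-v-graphs) forget the vertex labels and hence are invariant under any simultaneous relabeling of vertices.

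First I would note that Proposition~\ref{prop:centralaction} says that for any factorization $\rho$, one has $\rho\Delta_m^2 = \prescript{\mu(\rho)}{}\rho$. Translating through the bijection of Definition~\ref{defn:fact}, the factorization on the right corresponds to the e-v-graph $\Gamma(\rho)$ with its vertex labels permuted by $\mu(\rho)$: conjugating each transposition $\tau_i = (u,v)$ by a permutation $\pi$ produces the transposition $(\pi(u),\pi(v))$, which amounts to relabeling the endpoints of every edge by $\pi$ while leaving the edge labels untouched.

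The key observation is then that the forgetful map from e-v-graphs to e-graphs identifies any two e-v-graphs that differ by a relabeling of their vertices. Consequently, although $\Gamma(\rho)$ and $\Gamma(\rho\Delta_m^2)$ are distinct as e-v-graphs (unless $\mu(\rho)=\mathrm{id}$), they coincide as e-graphs. Hence $\Delta_m^2$ acts as the identity on the set of e-graphs.

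For the final clause I would simply remark that if $\Delta_m^2$ acts as the identity, then the action of $\Delta_m$ satisfies $(\Gamma \Delta_m)\Delta_m = \Gamma\Delta_m^2 = \Gamma$ for every e-graph $\Gamma$, so $\Delta_m$ acts as an involution. No step here is genuinely difficult; the only subtlety worth being explicit about is the distinction between the e-graph and e-v-graph levels, since on e-v-graphs $\Delta_m^2$ does \emph{not} act trivially in general — it acts by the vertex-permutation $\mu(\rho)$, which is the whole content of Proposition~\ref{prop:centralaction}.
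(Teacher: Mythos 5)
Your proposal is correct and follows essentially the same route as the paper: the paper derives the corollary in one line from Proposition~\ref{prop:centralaction} by observing that ``at the level of e-graphs the vertex labels are not important,'' which is exactly your argument that conjugation by $\mu(\rho)$ is a vertex relabeling and hence invisible after forgetting v-labels. Your explicit remark that $\Delta_m^2$ does \emph{not} act trivially on e-v-graphs is a worthwhile clarification but not a different proof.
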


Recall that in Section~\ref{sec:egraphdual} we made the convention
that in the case of an e-v-graph $\Gamma$ the vertex labeling of
$\Gamma^{*}$ is that the vertex that corresponds to the migt
$\overrightarrow{v}$ gets the same label as $v$.  With this definition
of duality we have that
$\left( \Gamma^{*} \right)^{\intercal} = \Gamma \Delta $. Let
$\bar{*}$ be the dual defined by the convention that the vertex of the
dual that corresponds to the migt $\overleftarrow{v}$ gets the same
label as $v$, in other words, $v^{\bar{*}} = \overleftarrow{v}$.  Then
we can prove the following:

\begin{thm}
  \label{thm:Hurwbar}
  For an e-v-labeled graph with $m$ edges, or factorization with $m$
  monodromies, $x$, we have:
$$
x^{\bar{*}} = \left( x\Delta_m^{-1} \right)^{\intercal}
$$
 In particular for a factorization $\rho = \tau_1\ldots \tau_m$ we have
 $$ \rho^{\bar{*}} = \tau_1^{\tau_2\tau_3\ldots\tau_m}, \tau_2^{\tau_3\ldots\tau_m},
     \ldots, \tau_{m-1}^{\tau_m}, \tau_m$$
\end{thm}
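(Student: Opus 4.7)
The plan is to reduce the theorem to Theorem~\ref{thm:Hurw} by observing that the e-graphs underlying $\Gamma^{*}$ and $\Gamma^{\bar{*}}$ are identical --- they differ only by a permutation of the vertex labels. Both have vertices in bijection with the migts of $\Gamma$, and by Proposition~\ref{prop:graphmu} a migt $t$ that starts at $s$ and ends at $e$ satisfies $\mu(s) = e$, where $\mu := \mu(\rho)$. The $*$-convention labels $t$ by $s$, while the $\bar{*}$-convention labels it by $e = \mu(s)$, so moving from $\Gamma^{*}$ to $\Gamma^{\bar{*}}$ sends each label $v$ to $\mu(v)$. At the level of factorizations, this relabels every transposition $(a,b)$ of $\rho^{*}$ to $(\mu(a),\mu(b))$, which in the paper's left-to-right multiplication convention is exactly $\prescript{\mu^{-1}}{}(a,b)$. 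Hence $\rho^{\bar{*}} = \prescript{\mu^{-1}}{}\rho^{*}$.

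Next I would rewrite the action by $\Delta_m^{-1}$ in terms of the action by $\Delta_m$ and a conjugation by $\mu^{-1}$. Since the Hurwitz action preserves the total monodromy, Proposition~\ref{prop:centralaction} applied to the factorization $\rho\Delta_m$ yields $(\rho\Delta_m)\Delta_m^{-2} = \prescript{\mu^{-1}}{}(\rho\Delta_m)$, so
\[
\rho\Delta_m^{-1} = (\rho\Delta_m)\Delta_m^{-2} = \prescript{\mu^{-1}}{}(\rho\Delta_m).
\]
Reversing a sequence of transpositions commutes with termwise conjugation by a fixed element, so combining this identity with Theorem~\ref{thm:Hurw} gives
\[
(\rho\Delta_m^{-1})^{\intercal} = \prescript{\mu^{-1}}{}(\rho\Delta_m)^{\intercal} = \prescript{\mu^{-1}}{}\rho^{*} = \rho^{\bar{*}},
\]
which establishes the first identity of the theorem.

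For the explicit formula I would substitute Corollary~\ref{cor:exform} into $\rho^{\bar{*}} = \prescript{\mu^{-1}}{}\rho^{*}$. The $i$-th term of $\rho^{*}$ reads $(\tau_1\cdots\tau_{i-1})\tau_i(\tau_{i-1}\cdots\tau_1)$, and conjugating it by $\mu^{-1} = \tau_m\cdots\tau_1$ on the left and $\mu = \tau_1\cdots\tau_m$ on the right triggers the telescoping cancellations $(\tau_m\cdots\tau_1)(\tau_1\cdots\tau_{i-1}) = \tau_m\cdots\tau_i$ and its mirror. After then collapsing the resulting $\tau_i\tau_i\tau_i = \tau_i$ in the middle, the term becomes $(\tau_{i+1}\cdots\tau_m)^{-1}\tau_i(\tau_{i+1}\cdots\tau_m) = \tau_i^{\tau_{i+1}\cdots\tau_m}$, as required.

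The main obstacle is purely notational: one must be meticulous about the direction of the relabeling (namely $v \mapsto \mu(v)$ versus $v\mapsto\mu^{-1}(v)$) under the paper's left-to-right multiplication convention, and correspondingly about whether to invoke Proposition~\ref{prop:centralaction} with $\Delta_m^{2}$ or with $\Delta_m^{-2}$. A quick sanity check on the $m=2$ example $\rho = \tau_1,\tau_2$ confirms both signs and pins down the direction of the conjugation. An alternative route, already hinted at in Remark~\ref{rem:convention}, would bypass this bookkeeping entirely by proving an analogue of Lemma~\ref{lem:daction} describing the action of $\delta_m^{-1}$ and then re-running verbatim the induction that established Theorem~\ref{thm:Hurw}.
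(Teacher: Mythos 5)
Your proof is correct, but it takes a genuinely different route from the paper's. The paper's key observation is that the migts of $\Gamma^{\intercal}$ are the reverses of the migts of $\Gamma$, so that $\Gamma^{\bar{*}} = \left( \left( \Gamma^{\intercal} \right)^{*} \right)^{\intercal}$; it then applies Theorem~\ref{thm:Hurw} to $\Gamma^{\intercal}$ and finishes with the compatibility of reversal with the Hurwitz action (Theorem~\ref{thm:reversebr}) together with $\Delta_m^{\intercal} = \Delta_m^{-1}$ (Lemma~\ref{lem:deltastar}). You instead conjugate by the monodromy rather than by reversal: your observation that the migt from $s$ to $e=\mu(s)$ is labeled $s$ under $*$ and $\mu(s)$ under $\bar{*}$ yields $\rho^{\bar{*}} = \prescript{\mu^{-1}}{}{\left(\rho^{*}\right)}$, and you then trade $\Delta_m^{-1}$ for $\Delta_m$ via Proposition~\ref{prop:centralaction} (most cleanly by applying it to $\rho\Delta_m^{-1}$, whose monodromy is still $\mu$ since the Hurwitz action preserves monodromy). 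Both reductions to Theorem~\ref{thm:Hurw} are sound; yours has the advantage of isolating the structurally meaningful identity $\rho^{\bar{*}} = \left(\rho^{*}\right)^{\mu}$ relating the two labeling conventions directly through the monodromy, and of making the explicit formula for $\rho^{\bar{*}}$ fall out of an honest telescoping computation, whereas the paper obtains the displayed identity by a formal chain of equalities and merely asserts that the explicit formula is its expansion. The paper's route, on the other hand, needs no case-checking of conventions at all once Theorem~\ref{thm:reversebr} and Lemma~\ref{lem:deltastar} are in hand, which is presumably why those two statements were developed in that subsection.
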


We will need the following Lemma in the proof:

\begin{lem}
  \label{lem:deltastar} We have:
  $$ \Delta_m^{\intercal} = \Delta_m^{*} = \Delta_m^{-1} $$
\end{lem}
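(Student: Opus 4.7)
The plan is to establish $\Delta_m^{*} = \Delta_m^{-1}$ first, after which $\Delta_m^{\intercal} = \Delta_m^{-1}$ is immediate from the definition $\beta^{\intercal} := (\beta^{*})^{\Delta_m}$: indeed, one then computes $\Delta_m^{\intercal} = \Delta_m^{-1}\Delta_m^{*}\Delta_m = \Delta_m^{-1}\Delta_m^{-1}\Delta_m = \Delta_m^{-1}$.

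To prove $\Delta_m^{*} = \Delta_m^{-1}$, I would introduce the word-reversal map $\widehat{\,\cdot\,}\co B_m \to B_m$ defined on words by $\widehat{\sigma_{i_1}\cdots\sigma_{i_k}} := \sigma_{i_k}\cdots\sigma_{i_1}$. This is well-defined as an anti-automorphism of $B_m$ because both defining relations $\sigma_i\sigma_j = \sigma_j\sigma_i$ (for $|i-j|>1$) and $\sigma_i\sigma_{i+1}\sigma_i = \sigma_{i+1}\sigma_i\sigma_{i+1}$ are palindromic: reversing each side yields the same relation. The composition $\beta \mapsto \widehat{\beta^{-1}}$ is then an automorphism (anti-automorphism composed with inversion) that sends each $\sigma_i$ to $\widehat{\sigma_i^{-1}} = \sigma_i^{-1}$. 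Since it agrees with the automorphism $*$ on generators, the two maps coincide, giving the identity $\beta^{*} = \widehat{\beta^{-1}}$ for every $\beta \in B_m$. Consequently $\Delta_m^{*} = \widehat{\Delta_m^{-1}} = (\widehat{\Delta_m})^{-1}$, and the desired identity reduces to showing that $\Delta_m$ is a palindrome, i.e. $\widehat{\Delta_m} = \Delta_m$.

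I would prove $\widehat{\Delta_m} = \Delta_m$ by induction on $m$, using \emph{both} decompositions $\Delta_m = \delta_m\,\Delta_{2,m}$ and $\Delta_m = \Delta_{2,m}\,\lambda_m$ from Items~\ref{item:induction} and~\ref{item:deltalambda} of Proposition~\ref{prop:garprop}. The base case $m=1$ (or $m=2$, where $\Delta_2 = \sigma_1$) is immediate. For the inductive step, note directly from Definition~\ref{defn:gar} that
\[
\widehat{\delta_m} = \widehat{\sigma_{m-1}\sigma_{m-2}\cdots\sigma_1} = \sigma_1\sigma_2\cdots\sigma_{m-1} = \lambda_m.
\]
Applying the inductive hypothesis inside the subgroup $\langle \sigma_2,\ldots,\sigma_{m-1}\rangle \cong B_{m-1}$ gives $\widehat{\Delta_{2,m}} = \Delta_{2,m}$. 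Then, using the anti-homomorphism property of $\widehat{\,\cdot\,}$ and the two decompositions,
\[
\widehat{\Delta_m} \;=\; \widehat{\delta_m\,\Delta_{2,m}} \;=\; \widehat{\Delta_{2,m}}\,\widehat{\delta_m} \;=\; \Delta_{2,m}\,\lambda_m \;=\; \Delta_m,
\]
which closes the induction.

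The only delicate point is the setup in the second paragraph: checking that $\widehat{\,\cdot\,}$ is well-defined as an anti-automorphism of $B_m$ and verifying the factorization $\beta^{*} = \widehat{\beta^{-1}}$. Once these are in place, the induction is mechanical, leveraging the two dual forms of the Garside element that appear in Proposition~\ref{prop:garprop}. A purely geometric alternative would use Item~\ref{item:twist} of Proposition~\ref{prop:garprop}: reversing all crossings in a diagram for $\Delta_m$ corresponds to reflecting $\mathbb{D}_m^2$ through a diameter containing the punctures, which sends the half-twist by $\pi$ to the half-twist by $-\pi$, namely $\Delta_m^{-1}$. This is appealing pictorially but requires more topological bookkeeping than the algebraic induction above.
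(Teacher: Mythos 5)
Your proof is correct. The paper's own argument makes the same initial reduction (from $\Delta_m^{\intercal}$ to $\Delta_m^{*}$ via the definition of reverse) and then proves $\Delta_m^{*}=\Delta_m^{-1}$ directly by induction: it applies the automorphism $*$ to the factorization $\Delta_{m+1}=\delta_{m+1}\Delta_{2,m+1}$, invokes the ``easily checked'' identity $\delta_m^{*}=\lambda_m^{-1}$ together with the inductive hypothesis, and regroups using $\Delta_{m+1}=\Delta_{2,m+1}\lambda_{m+1}$. Your route differs in that you first factor $*$ as word-reversal composed with inversion (checking that reversal is a well-defined anti-automorphism because the braid relations are palindromic) and thereby reduce the lemma to the statement that $\Delta_m$ is a palindrome, $\widehat{\Delta_m}=\Delta_m$. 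The induction you then run is structurally identical to the paper's -- same base cases, same two decompositions from Items~\ref{item:induction} and~\ref{item:deltalambda} of Proposition~\ref{prop:garprop}, and your key computation $\widehat{\delta_m}=\lambda_m$ is equivalent to the paper's $\delta_m^{*}=\lambda_m^{-1}$ via $\delta_m^{*}=\bigl(\widehat{\delta_m}\bigr)^{-1}$. What your packaging buys is a cleaner conceptual statement (palindromicity of the Garside element, a standard and reusable fact) and an explicit justification of the step the paper leaves as ``easily checked''; what it costs is the extra setup verifying that reversal descends to $B_m$ and that $\beta^{*}=\widehat{\beta^{-1}}$. Both are sound; yours is slightly longer but more self-contained.
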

\begin{proof}
  If $\Delta_m^{*} = \Delta_m^{-1}$ then
  $\Delta_m^{\intercal} = \left( \Delta_m^{-1} \right)^{\Delta_m} =
  \Delta_m^{-1}$. To prove the former we proceed by induction.
  For $m = 1,2$ it is clear.  Assuming that it has been proved
  for $m$ we have
  \begin{align*}
    \Delta_{m+1}^{*} &= \delta_{m+1}^{*} \Delta_{2,m+1}^{*}\\
                   &= \lambda_{m+1}^{-1} \Delta_{2,m+1}^{-1}\\
                   &= \left( \Delta_{2,m+1} \lambda_{m+1} \right)^{-1} \\
                   &= \Delta_{m+1}^{-1}
  \end{align*}
  where we used Items~\ref{item:induction} and~\ref{item:deltalambda}
  of Proposition~\ref{prop:garprop}, and the easily checked fact that
  $\delta_m^{*}=\lambda_m^{-1}$.
\end{proof}

\begin{proof}[Proof of Theorem~\ref{thm:Hurwbar}]
  Clearly the migts of $\Gamma^{\intercal}$, for an e-v-graph $\Gamma$
  are the inverses of the migts of $\Gamma$, and therefore
  $\Gamma^{\bar{*}}$ is the reverse of the mind-body dual of the reverse
  of $\Gamma$. So:
  \begin{align*}
    \Gamma^{\bar{*}} &= \left( \left( \Gamma^{\intercal} \right)^{*}
                       \right)^{\intercal}\\
                     &= \left( \left( \left( \Gamma^{\intercal} \right) \Delta_m \right)^{\intercal} \right)^{\intercal}\\
    &= \Gamma^{\intercal}\Delta_m \\
    &= \left( \Gamma \Delta^{\intercal} \right)^{\intercal} \\
    &= \left( \Gamma \Delta_m^{-1} \right)^{\intercal}
  \end{align*}
  The formula for the $\rho^{\bar{*}}$ is just the expanded form of this.
\end{proof}

\subsection{Loop Braid Group Action}
\label{sec:loopbraid}
The \emph{Loop Braid Group} $LB_m$ in $m$ strands is an extension of
the braid group and has been defined several times in the literature
under a variety of different names and points of view, we refer the
reader to~ \cite{Damiani2017} for a survey of the different
manifestations of these groups.  We will follow the spirit
of~\cite{FennRourkeRimanyi1997}: $LB_m$ is generated by $2(m-1)$
generators $\sigma_1,\ldots,\sigma_{m-1}, s_1,\ldots,s_{m-1}$, the
$\sigma_i$ generate a subgroup isomorphic to $B_m$ and the $s_i$ a
subgroup isomorphic to $\mathcal{S}_m$ ($s_i$ stands for the
transposition $(i\, i+1)$) and there are three types of additional
relations involving generators of both types:
$\sigma_i s_j = s_j \sigma_i$ for $\left| i-j \right| > 1$,
$s_is_{i+1}\sigma_i = \sigma_{i+1}s_is_{i+1}$, and
$\sigma_i \sigma_{i+1} s_i = s_{i+1}\sigma_i\sigma_{i+1}$.

$LB_m$ is isomorphic to a subgroup of the automorphism group of
$\mathsf{F}_m$, where the $\sigma_i$ acts like a braid, while $s_i$
interchanges the $i$-th and $(i+1)$-th generators.  It follows that
the Hurwitz action extends to an action of $LB_m$, where $s_i$ just
interchanges the $i$-th and $(i+1)$-th monodromy (or the labels of the
corresponding edges for an e-graph).

\begin{defn}
  \label{defn:permDn}
  The element of $LB_n$ corresponding to the permutation
  $\prod_{i=1}^{\lfloor m/2\rfloor} (i,m+1-i)$ is denoted by $D_m$.
  Notice that $D_m$ is the image of $\Delta_m$ under the standard
  surjection $B_m \to \mathcal{S}_m$.

  The \emph{dualizer} is the element $d_m := \Delta_m D_m \in LB_m$.
\end{defn}

It is clear that $\rho D_m = \rho^{\intercal}$, so we have the following theorem
justifying the name dualizer:

\begin{thm}
  \label{thm:dualizer}
  If $x$ is an e-graph with $m$ edges, or a factorization with $m$
  monodromies, we have:
  $$ x^{*} = x d_m $$
\end{thm}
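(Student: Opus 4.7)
The plan is to chain together two facts already established in the paper. By Theorem~\ref{thm:Hurw} we know that for an e-v-graph $\Gamma$ of size $m$ (and hence, via the bijection of Definition~\ref{defn:fact}, for any factorization of length $m$),
$$ x^{*} = \left( x\Delta_m \right)^{\intercal}. $$
Just before the statement of the theorem it is observed that the permutation-type generator $D_m \in LB_m$ implements the reversion involution: $y D_m = y^{\intercal}$ for every e-graph or factorization $y$ of the appropriate size. This is immediate from the description of the $s_i$ in the Hurwitz action of $LB_m$ (each $s_i$ swaps the $i$-th and $(i+1)$-th monodromy / edge-label), together with the fact that $D_m$ is the image in $\mathcal{S}_m$ of $\Delta_m$, i.e. the order-reversing permutation $i\mapsto m+1-i$.

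Given these two ingredients the proof is essentially a one-line computation. The first step is to apply Theorem~\ref{thm:Hurw} to rewrite $x^{*}$ as $(x\Delta_m)^{\intercal}$. The second step is to replace the outer $\intercal$ by right-action of $D_m$, using $y D_m = y^{\intercal}$ with $y = x\Delta_m$. Because the Hurwitz action is a right action of $LB_m$, one then concatenates the two right-actions:
$$ x^{*} = \left( x\Delta_m \right)^{\intercal} = \left( x\Delta_m \right) D_m = x\,(\Delta_m D_m) = x\, d_m, $$
where the last equality is the definition of the dualizer in Definition~\ref{defn:permDn}.

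The only thing that needs any care is the justification that the right-action of $LB_m$ is associative when we mix the $\Delta_m \in B_m \le LB_m$ factor with the $D_m$ factor; this is automatic because $B_m$ sits as a subgroup of $LB_m$ and the Hurwitz action of $LB_m$ restricts to the usual Hurwitz action on its $B_m$ subgroup (so $x\Delta_m$ computed in $B_m$ agrees with $x\Delta_m$ computed in $LB_m$). No genuine obstacle arises; the content of the theorem is really just the repackaging of mind-body duality as a \emph{single} element of the loop braid group, made possible by the fact that $LB_m$, unlike $B_m$ alone, can also perform the reversion $\intercal$.
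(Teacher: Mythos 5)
Your proof is correct and is exactly the argument the paper intends: the paper states $\rho D_m = \rho^{\intercal}$ immediately before the theorem and leaves the remaining one-line combination with Theorem~\ref{thm:Hurw} implicit, which is precisely the computation $x^{*} = (x\Delta_m)^{\intercal} = (x\Delta_m)D_m = x\,d_m$ that you carry out.
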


\section{Properly Embedded Graphs}
\label{sec:pegs}

The reader may have noticed the close analogy of the mind-body dual
with the dual of a graph embedded in a (closed oriented) surface.  In
this section we elaborate on that analogy.  We refer the reader
to~\cite{gross1987topological} and~\cite{LandoZvonkin2004} for the
rich theory of graph embeddings and maps.  Most of the constructions
in this section are entirely analogous to the usual case of embeddings
in a closed surface, so some of the details are skipped trusting the
reader to supply them.

\begin{defn}
  \label{defn:peg}
  Let $F$ be an oriented surface with boundary and $\Gamma$ a graph.
  A \emph{proper embedding of $\Gamma$ into $F$} is an embedding
  $i\co \Gamma \to F$ such that:
  \begin{enumerate}
  \item The vertices of $\Gamma$ are mapped in the boundary of $F$,
    i.e.  $i(V) \subset \partial F$, and the interior of each edge of
    $\Gamma$ is mapped into the interior of $F$.
  \item $F\setminus i(\Gamma)$ is a disjoint union of simply connected
    domains, called the \emph{regions} of the embedding, and the
    interior of each region is homeomorphic to an open disc.
  \item $\partial F \setminus i(V)$ is a disjoint union of open
    intervals, called the \emph{arcs} of the embedding, and the closure
    of each region contains exactly one arc.
  \end{enumerate}

  A \emph{properly embedded graph} (\emph{peg} for short) is a graph
  endowed with a proper embedding into a surface.  We will abuse
  notation by not distinguishing a peg from its image, and we will use
  the same symbol to denote a peg, its underlying graph, or even the
  surface that the graph is embedded.

  An \emph{isomorphism of pegs} is an orientation preserving
  homeomorphism of their surfaces that restricts to a graph
  isomorphism on the images of the graphs.  Two pegs are called
  \emph{isomorphic} if there is an isomorphism between them\footnote{
  More nuanced notions of maps and equivalences between pegs will be
  considered in the planned work~\cite{Apostolakis2018b}}.
\end{defn}

We emphasize that, contrary to the usual convention in the theory of
graph embeddings, neither $\Gamma$ nor the surface $F$ is assumed
connected.

The prototype of a peg is a \emph{non-crossing tree}.  Non-crossing
trees are well studied in the literature, see for
example~\cite{DulPen1993}, and~\cite{Noy1998301}, from our perspective
a non-crossing tree is a tree properly embedded in a disk.  The left
side of Figure~\ref{fig:nctree} shows a non-crossing tree, and the
right side shows a unicycle\footnote{That is a connected graph with
  exactly one cycle.} properly embedded in an annulus (drawn with
thick black lines). See also the right side of
Figure~\ref{fig:pegofexample} that shows the graph of
Figure~\ref{fig:grassoc} properly embedded in a torus with a disk
removed.

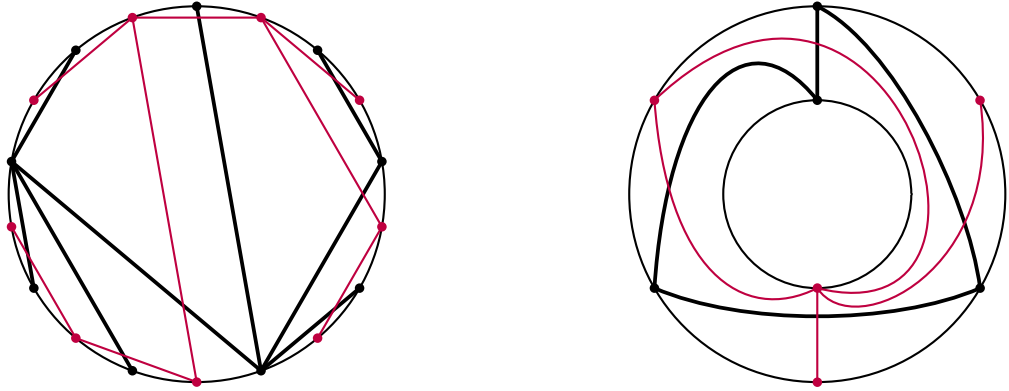
\begin{figure}[htbp]
  \centering
\psset{unit=2.5}
\begin{pspicture}(-1.2,-1.2)(5,1.2)
\pscircle(0,0){1}
 \rput(0, 1.0){\rnode{1}{\psdot(0,0)}}
 \rput(-0.6427876096865393, 0.7660444431189781){\rnode{2}{\psdot(0,0)}}
 \rput(-0.9848077530122081, 0.17364817766693025){\rnode{3}{\psdot(0,0)}}
 \rput(-0.8660254037844388, -0.4999999999999998){\rnode{4}{\psdot(0,0)}}
 \rput(-0.3420201433256685, -0.9396926207859084){\rnode{5}{\psdot(0,0)}}
 \rput(0.3420201433256682, -0.9396926207859085){\rnode{6}{\psdot(0,0)}}
 \rput(0.8660254037844384, -0.5000000000000004){\rnode{7}{\psdot(0,0)}}
 \rput(0.9848077530122081, 0.17364817766692991){\rnode{8}{\psdot(0,0)}}
 \rput(0.6427876096865397, 0.7660444431189778){\rnode{9}{\psdot(0,0)}}
 %
 \ncline[linewidth=0.02]{1}{6}
 \ncline[linewidth=0.02]{2}{3}
 \ncline[linewidth=0.02]{3}{4}
 \ncline[linewidth=0.02]{3}{5}
 \ncline[linewidth=0.02]{3}{6}
 \ncline[linewidth=0.02]{7}{6}
 \ncline[linewidth=0.02]{8}{6}
 \ncline[linewidth=0.02]{8}{9}

 \rput(-0.342020143325669, 0.939692620785908){\rnode{1}{\psdot[linecolor=purple](0,0)}}
 \rput(-0.866025403784439, 0.500000000000000){\rnode{2}{\psdot[linecolor=purple](0,0)}}
  \rput(-0.984807753012208, -0.173648177666930){\rnode{3}{\psdot[linecolor=purple](0,0)}}
 \rput(-0.642787609686540, -0.766044443118978){\rnode{4}{\psdot[linecolor=purple](0,0)}}
 \rput(0, -1.00000000000000){\rnode{5}{\psdot[linecolor=purple](0,0)}}
  \rput(0.642787609686539, -0.766044443118978){\rnode{6}{\psdot[linecolor=purple](0,0)}}
  \rput(0.984807753012208, -0.173648177666931){\rnode{7}{\psdot[linecolor=purple](0,0)}}
 \rput(0.866025403784439, 0.500000000000000){\rnode{8}{\psdot[linecolor=purple](0,0)}}
 \rput(0.342020143325669, 0.939692620785908){\rnode{9}{\psdot[linecolor=purple](0,0)}}

 \ncline[linecolor=purple]{1}{2}
 \ncline[linecolor=purple]{1}{5}
 \ncline[linecolor=purple]{1}{9}
 \ncline[linecolor=purple]{3}{4}
 \ncline[linecolor=purple]{4}{5}
 \ncline[linecolor=purple]{7}{6}
 \ncline[linecolor=purple]{7}{9}
 \ncline[linecolor=purple]{8}{9}

\rput(3.3,0){
  \pscircle(0,0){1}
  \pscircle(0,0){.5}
  \rput(0,1){\rnode{1}{\psdot(0,0)}}
  \rput(-0.866,-0.5){\rnode{2}{\psdot(0,0)}}
  \rput(0.866,-0.5){\rnode{3}{\psdot(0,0)}}
  \rput(0,.5){\rnode{4}{\psdot(0,0)}}
  \ncline[linewidth=0.02]{1}{4}
  \psbezier[linewidth=0.02](0,1)(.4,.8)(.8,0)(.866,-.5)
  \psbezier[linewidth=0.02](0,.5)(-.4,1)(-.8,.5)(-.866,-.5)
  \psbezier[linewidth=0.02](-.866,-.5)(-.4,-.7)(.4,-.7)(.866,-.5)

\psdots[linecolor=purple](-0.866, 0.5)(0,-1)(0.866,0.5)(0,-.5)

\psline[linecolor=purple](0,-1)(0,-.5)
\psbezier[linecolor=purple](0,-.5)(-.4,-.7)(-.8,-.4)(-.866,.5)
\psbezier[linecolor=purple](-.866,.5)(.4,1.7)(1.2,-.8)(0,-.5)
\psbezier[linecolor=purple](0,-.5)(.2,-.8)(1,-.4)(.866,.5)
}
\end{pspicture}
  \caption{Properly embedded tree (left) and unicycle (right).}
  \label{fig:nctree}
\end{figure}

If $\Gamma$ is properly embedded in $F$ then the vertices and arcs
endow the boundary with the structure of a $1$-complex, i.e. a graph.
The orientation of $\partial F$ further endows it with a digraph
structure, and clearly that digraph is the functional digraph of a
permutation.

\begin{defn}
  \label{defn:muofpeg} The \emph{monodromy digraph of a peg $\Gamma$}
  is the inverse of the digraph $\partial F$ described above.  The
  \emph{monodromy of $\Gamma$} is the permutation of $V$ defined by
  this digraph.  As usual we will use the same symbol $\mu(\Gamma)$
  to denote the monodromy of a peg or its functional digraph.

  Given a graph $\Gamma$ pegged in the oriented surface $F$, the
  orientation of $F$ induces a leo structure on $\Gamma$: for any
  vertex $v$ order the edges in $\nu(v)$ starting from the rightmost
  one and proceeding according to the orientation.
\end{defn}

The leo structure of the definition above means that a peg $\Gamma$
has a \emph{medial digraph}. That digraph can be considered actually
embedded (not properly) in $F$: simply put the vertex of $M$ in the
middle of the edge of $\Gamma$ that it corresponds to, and draw the
edge that connects two consecutive edges inside the region of $\Gamma$
in whose boundary they lie.

Given an e-graph (or more generally a leo) there is a natural way to
construct a peg, analogous to the way one defines a cellularly
embedded graph from a rotation scheme,
see~\cite{gross1987topological}, and the discussion in
Subsection~\ref{sec:cegrel}.  One obtains a sort of \emph{ribbon
  graph} with the vertices in the boundary instead of the interior.
We describe this procedure with more details below:

\begin{defn}
  \label{defn:pegofleo}
  The \emph{peg associated with a leo $\Gamma$} is obtained as
  follows: Thicken the star of each vertex, into a ribbon surface as
  shown in Figure~\ref{fig:thick} (for a vertex of degree $4$), so
  that the vertex is at the boundary and each edge is thickened into a
  ribbon with the ribbons arranged according to the local order around
  that vertex. Then glue, \emph{via an orientation reversing
    homeomorphism of the interval} the free ends of any pair of
  ribbons that correspond to the two half edges of the same edge of
  $\Gamma$ together to get a surface $F$ with $\Gamma$ embedded in it
  in such a way that the vertices are on the boundary.  We use
  $P \left( \Gamma \right)$ to denote the peg of a leo $\Gamma$.
  \end{defn}

  \begin{figure}[htbp]
    \centering
    \psset{unit=2}
    \begin{pspicture}(-2,-.8)(2,.8)
      \rput(-1.5,0){%
        \psset{unit=.6}
      \psline(0,0)(-1,1)
      \psline(0,0)(1,1)
      \psline(0,0)(1,-1)
      \psline(0,0)(-1,-1)
      \psdots(0,0)(-1,1)(1,1)(1,-1)(-1,-1)
      \psarc[linecolor=red,arrowsize=.2]{->}(0,0){.5}{-135}{135}}
    \rput(1.5,-.5){%
      \psset{unit=1.2}
      \pscustom[linecolor=white,fillstyle=solid,fillcolor=lightblue]{%
        \psline(-.86,0.2)(0,0)(-0.866025403784,0.5)(-.86,0.2)}
      \pscustom[linecolor=white,fillstyle=solid,fillcolor=lightblue]{%
        \psline(0,0)(-0.866025403784,0.5)
        \psline[linearc=.03](-.83,0.6)(-.2,.2)(-.6,.85)
        \psline(-0.5,0.866025403784)(0,0)}
      \pscustom[linecolor=white,fillstyle=solid,fillcolor=lightblue]{%
        \psline(0,0)(-0.5,0.866025403784)
        \psline[linearc=.03](-.4,.85)(0,.2)(.4,.85)
        \psline(0.5,0.866025403784)(0,0)}
      \pscustom[linecolor=white,fillstyle=solid,fillcolor=lightblue]{%
       \psline(0,0)(0.5,0.866025403784)
      \psline[linearc=.03](.6,.85)(.2,.2)(.83,0.6)
      \psline(0.866025403784,0.5)(0,0)}
      \pscustom[linecolor=white,fillstyle=solid,fillcolor=lightblue]{%
        \psline(0.866025403784,0.5)(0,0)(.86,0.2)
}
      \psline[linecolor=blue,linewidth=.02](-.86,0.2)(0,0)
      \psline[linecolor=blue,linewidth=.02](.86,0.2)(0,0)
      \psline[linecolor=blue,linearc=.03,linewidth=.02](-.83,0.6)(-.2,.2)(-.6,.85)
      \psline[linewidth=.02](0,0)(0.866025403784,0.5)
      \psline[linecolor=blue,linearc=.03,linewidth=.02](-.4,.85)(0,.2)(.4,.85)
      \psline[linewidth=.02](0,0)(0.5,0.866025403784)
      \psline[linecolor=blue,linearc=.03,linewidth=.02](.83,0.6)(.2,.2)(.6,.85)
      \psline[linewidth=.02](0,0)(-0.866025403784,0.5)
      \psline[linewidth=.02](0,0)(-0.5,0.866025403784)
      \psdot(0,0)
    \psarc[linecolor=red,arrowsize=.09]{->}(0,0){.17}{30}{150}}
    \end{pspicture}
    \caption{Thickening the star of a vertex}
    \label{fig:thick}
  \end{figure}
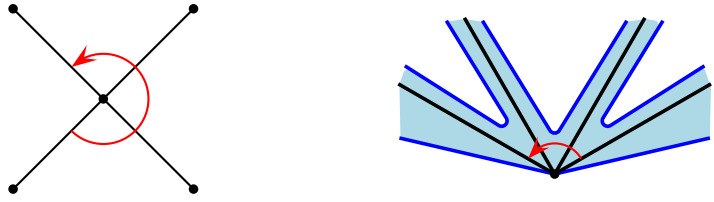

For example the left hand side of Figure~\ref{fig:pegofexample} shows
the thickening of the stars of the vertices of the e-graph of
Figure~\ref{fig:grassoc} while the right side shows them assembled
into a peg.

  \begin{figure}[htbp]
    \centering
    \begin{pspicture}(-6.8,-2.4)(6.2,2.4)
      \rput(-3.5,0){\includegraphics[scale=.3]{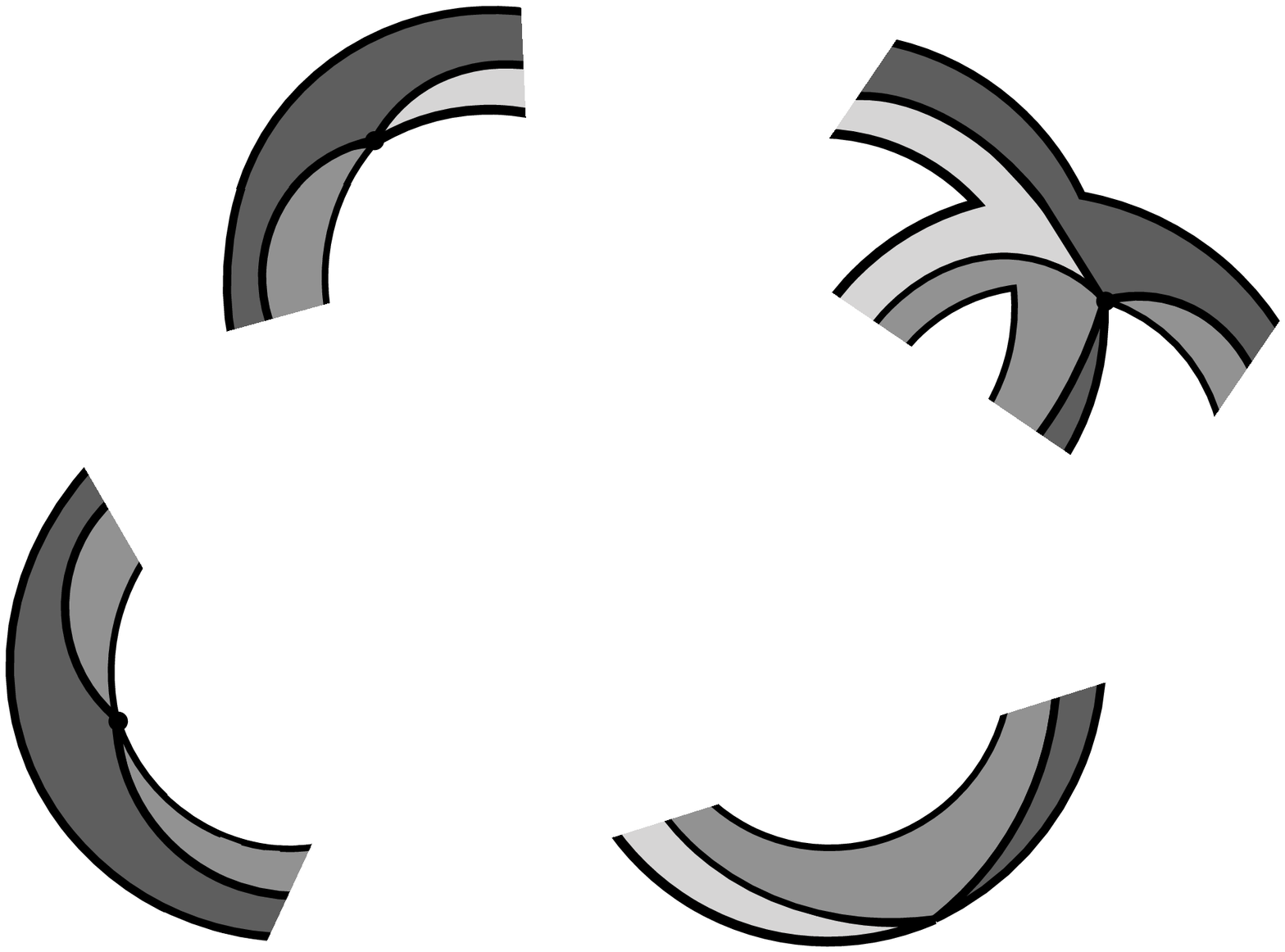}}
      \rput(3.5,0){\includegraphics[scale=.45]{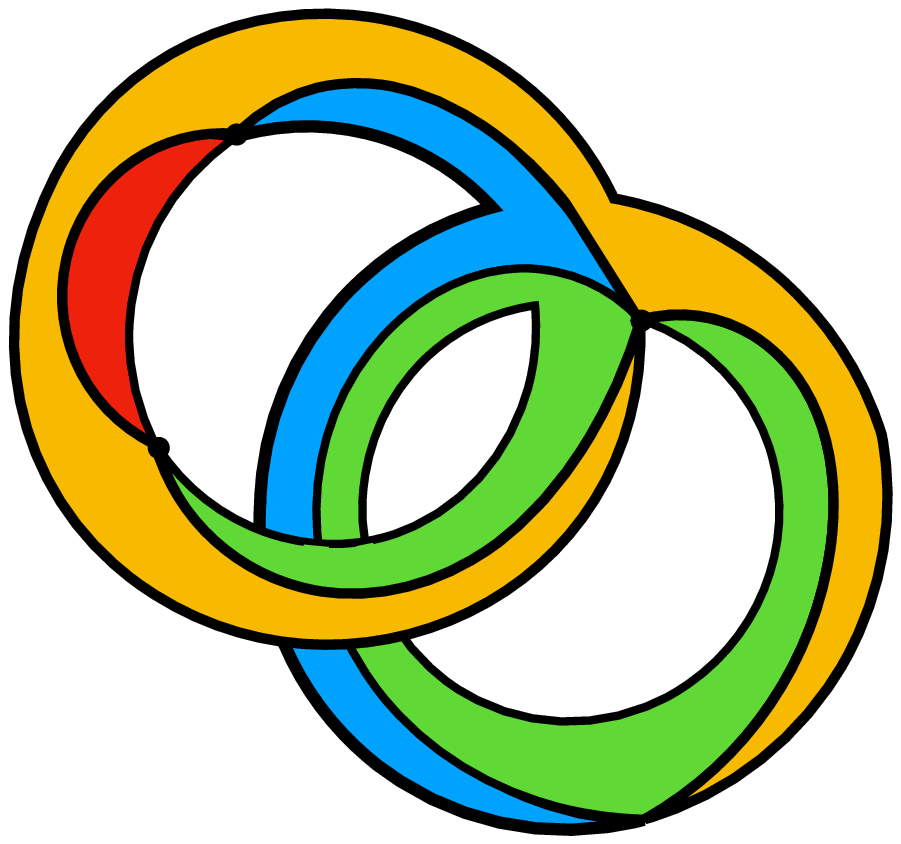}}
      \uput[0](-4.8,1.4){\small $1$}
      \uput[-90](-1.3,.8){\small $3$}
      \uput[0](-5.9,-1){\small $2$}
      \uput[-90](-2.2,-1.9){\small $4$}
      \uput[-135](-2.5,.8){\raisebox{.5pt}{\textcircled{\raisebox{-.9pt} {\small $4$}}}}
      \uput[-135](-1.7,.3){\raisebox{.5pt}{\textcircled{\raisebox{-.9pt} {\small $5$}}}}
      \uput[-45](-.8,.5){\raisebox{.5pt}{\textcircled{\raisebox{-.9pt} {\small $1$}}}}
      \uput[-135](-2.5,2.1){\raisebox{.5pt}{\textcircled{\raisebox{-.9pt} {\small $2$}}}}
      \uput[45](-4,1.7){\raisebox{.5pt}{\textcircled{\raisebox{-.9pt} {\small $2$}}}}
      \uput[90](-1.8,-1){\raisebox{.5pt}{\textcircled{\raisebox{-.9pt} {\small $1$}}}}
      \uput[90](-3.5,-1.5){\raisebox{.5pt}{\textcircled{\raisebox{-.9pt} {\small $4$}}}}
      \uput[0](-5.1,-1.8){\raisebox{.5pt}{\textcircled{\raisebox{-.9pt} {\small $5$}}}}
      \uput[45](-6,-.3){\raisebox{.5pt}{\textcircled{\raisebox{-.9pt} {\small $3$}}}}
      \uput[-90](-5.2,.8){\raisebox{.5pt}{\textcircled{\raisebox{-.9pt} {\small $3$}}}}
      \uput[-90](2.5,1.4){\blue \small $1$}
      \uput[0](2.1,.1){\red \small $2$}
      \uput[-90](4.55,.55){\green \small $3$}
      \uput[-90](4.6,-1.6){\orange \small $4$}
      \uput[0](5.4,-.5){\raisebox{.5pt}{\textcircled{\raisebox{-.9pt} {\small $1$}}}}
      \uput[90](3.8,1.7){\raisebox{.5pt}{\textcircled{\raisebox{-.9pt} {\small $2$}}}}
      \uput[0](2,.6){\raisebox{.5pt}{\textcircled{\raisebox{-.9pt} {\small $3$}}}}
      \uput[180](3.2,-1.6){\raisebox{.5pt}{\textcircled{\raisebox{-.9pt} {\small $4$}}}}
      \uput[0](4,-.6){\raisebox{.5pt}{\textcircled{\raisebox{-.9pt} {\small $5$}}}}
    \end{pspicture}
    \caption{The peg of the e-v-graph of Figure~\ref{fig:grassoc}}
    \label{fig:pegofexample}
  \end{figure}

Notice that the boundary of each region (colored with
different colors) is the union of a migt of $\Gamma$ and an arc of
$\partial P(\Gamma)$.  In fact one can easily see that this is
always the case:

\begin{lem}
  \label{lem:pegfromptdc}
  $P \left( \Gamma \right)$ is indeed a peg and is isomorphic to the
  peg obtained by attaching a half disk, that is a domain homeomorphic
  to $\left\{ (x,y) \in \mathbb{D}^{2} : y\ge 0\right\}$, along each
  migt of $\Gamma$, in such a way that the part of the boundary of the
  half disk that lies on the real axis is identified with the migt.
\end{lem}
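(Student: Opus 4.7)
The plan is to verify directly that the surface $F$ produced by the ribbon gluing of Definition~\ref{defn:pegofleo} satisfies the three axioms of a peg, and that the open regions of $F\setminus\Gamma$ are in bijection with the migts of $\Gamma$ in a way that matches the half-disk construction.

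First I would dispatch the formal items. Each thickened star is an oriented disk carrying the standard planar orientation, and because adjacent ribbon ends are identified via orientation-reversing homeomorphisms of intervals, these local orientations assemble into a coherent global orientation of $F$. The vertices of $\Gamma$ sit on the boundaries of the stars, which are not glued away, so $V \subset \partial F$; moreover $\partial F \setminus V$ is a disjoint union of open arcs indexed by the ``corners'' at each vertex, i.e. by pairs of consecutive edges in the leo of a vertex together with the two outermost corners flanking its smallest and largest edges.

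The heart of the argument is a boundary-trace analysis identifying regions with migts. Starting at the outermost corner $c$ at a vertex $v$ lying immediately below the smallest edge $e$ in the leo of $v$, I trace the component of $F\setminus\Gamma$ whose boundary meets $c$: it runs along the side of $e$ to its other endpoint $u$; because the ribbon gluing is orientation-reversing, the trace emerges on the side of the successor of $e$ in the leo of $u$, provided such a successor exists. Iterating, the trace hugs exactly the sequence of edges of the migt $\overrightarrow{v}$, and it terminates when the current edge is last in the leo of some vertex $w$, where the trace crosses the outermost corner at $w$ opposite its starting side and runs back along the other side of those same edges, closing off the region. Applying this analysis to every outermost corner yields a bijection between the regions of $F\setminus\Gamma$ and the migts of $\Gamma$, and shows that each region is a disk bounded by one migt together with one arc of $\partial F\setminus V$ meeting at their endpoints. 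This simultaneously verifies axioms (2) and (3) of Definition~\ref{defn:peg} and exhibits the claimed isomorphism: the half-disk attached along each migt matches, cell-for-cell, the corresponding region in $P(\Gamma)$, so the obvious piecewise homeomorphism assembling these disks to the ribbons of $P(\Gamma)$ is a peg isomorphism.

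The main obstacle is orientation bookkeeping: one must track which side of each ribbon the boundary trace occupies so that the inductive step ``continue onto the next ribbon at $u$'' translates precisely into the migt rule ``select the smallest edge in the leo of $u$ strictly greater than the current edge.'' This is exactly where the orientation-reversing gluing of Definition~\ref{defn:pegofleo} is essential; an orientation-preserving identification would destroy orientability of $F$ and cause the trace to jump to the wrong ribbon, producing walks that are not migts. Once this bookkeeping is in place, the remainder of the verification is formal.
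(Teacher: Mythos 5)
Your proof is correct in substance and follows essentially the same route as the paper: both arguments verify that the regions of $F\setminus\Gamma$ are swept out by the migts because the orientation-reversing identification of the ribbon ends forces the boundary trace, upon crossing an edge, to continue along the successor of that edge in the local order at the far endpoint --- which is precisely the migt rule --- so that each region is a disk bounded by one migt and one boundary arc, i.e.\ a half disk attached along the migt. One misstatement to fix: the components of $\partial F\setminus i(V)$ are not indexed by the corners at the vertices (there are $2m+n$ of those) but by the vertices themselves ($n$ of them); each arc is a concatenation of several corner pieces --- a flank at each end of the corresponding migt and a notch at each of its interior vertices. Your own trace analysis in the following paragraph establishes exactly this, so the conclusion that the closure of each region contains exactly one arc is unaffected, but as written the sentence would contradict it.
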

\begin{proof}
  It's clear that the resulting complex is an orientable ribbon
  surface, with the vertices of the graph in the boundary, and the
  interior of the edges in the interior. To see that the regions are
  indeed open discs with exactly one arc in their boundary, notice
  that each thickened edge separates its ribbon in to two simply
  connected regions whose interior is an open disc, and when we glue
  the half edges together the four regions are glued together
  according to the migts of $\Gamma$ because the gluing is happening
  via an orientation reversing homeomorphism.
\end{proof}

We note that an alternative construction of $P(\Gamma)$ as the total
space of a branched covering over the disc is given in
Subsection~\ref{sec:brcov} below, see Theorem~\ref{thm:pegthroubrcov}.

We have the following general properties:
\begin{lem}
  \label{lem:eulermu} Let $\Gamma$ be an e-graph.  The following hold:
    \begin{enumerate}
    \item \label{item:b0peg} $P(\Gamma)$ has the same number of
      connected components as $\Gamma$.
    \item \label{item:euler}
      $\chi \left( P \left( \Gamma \right) \right) = \chi(\Gamma)$.
    \item \label{item:mupeg} The monodromy digraph of $P(\Gamma)$ is
      isomorphic to the monodromy digraph of $\Gamma$.
    \item \label{item:peggenus} If $\Gamma$ is connected and
      $\mu(\Gamma) \in \mathcal{S}_V$ has $b$ disjoint cycles then
      $$ \chi \left( P \left( \Gamma \right) \right) = \chi \left( \Gamma \right) + b$$
      and therefore genus of $P(\Gamma)$ is:
      $$ g = \frac{2+m-n-b}{2} $$
    \end{enumerate}
  \end{lem}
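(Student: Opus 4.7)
The plan is to work with the alternative construction of $P(\Gamma)$ given in Lemma~\ref{lem:pegfromptdc}, namely $P(\Gamma) = \Gamma \cup \bigcup_{v\in V} D_v$ where $D_v$ is a half-disk attached along the migt $\overrightarrow{v}$. This presentation equips $P(\Gamma)$ with an explicit CW-structure that makes items (\ref{item:b0peg}), (\ref{item:euler}), and (\ref{item:mupeg}) essentially bookkeeping, and item (\ref{item:peggenus}) a direct consequence of the genus formula for oriented surfaces with boundary.

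For item~(\ref{item:b0peg}), each half-disk is glued along a migt, i.e.\ a trail in $\Gamma$, and a trail lies inside a single connected component of $\Gamma$. Hence attaching the disks neither merges components nor creates new ones; equivalently, $\Gamma$ is a deformation retract of $P(\Gamma)$. For item~(\ref{item:euler}), use the CW-structure whose $0$-cells are the $n$ vertices of $\Gamma$, whose $1$-cells are the $m$ edges of $\Gamma$ together with the $n$ boundary arcs (one per migt, and migts are in bijection with vertices), and whose $2$-cells are the $n$ regions. The alternating sum collapses to $n - (m+n) + n = n-m = \chi(\Gamma)$.

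For item~(\ref{item:mupeg}), regions of $P(\Gamma)$ are canonically indexed by migts of $\Gamma$: the region attached along $\overrightarrow{v}$ has its unique arc connecting the two endpoints of $\overrightarrow{v}$. Tracking the orientation of $\partial F$ induced by the orientation of $F$, and applying Definition~\ref{defn:muofpeg} (where the monodromy digraph is the \emph{inverse} of $\partial F$), one sees that the region indexed by $\overrightarrow{v}$ contributes exactly the arc from $v$ to the endpoint of $\overrightarrow{v}$. This is the same arc contributed by the same migt to the monodromy digraph of $\Gamma$ via Definition~\ref{defn:ptdcmu}, so the map $\overrightarrow{v} \mapsto $ (corresponding region) induces an isomorphism of digraphs. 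Finally, for item~(\ref{item:peggenus}), when $\Gamma$ is connected so is $P(\Gamma)$ by item~(\ref{item:b0peg}); by item~(\ref{item:mupeg}) the number of boundary components of $P(\Gamma)$ equals the number of cycles of the monodromy permutation, namely $b$. The classification of compact oriented surfaces gives $\chi(P(\Gamma)) = 2 - 2g - b$, and combining with item~(\ref{item:euler}) yields $g = (2+m-n-b)/2$.

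The one nontrivial step is matching orientation conventions in item~(\ref{item:mupeg}). The required verification is local: in the thickening picture of Figure~\ref{fig:thick}, the free ends of paired ribbons are glued by an \emph{orientation-reversing} identification of intervals, so walking along $\partial F$ with $F$ on the left corresponds to traversing a migt from its end back to its start. Inverting the resulting boundary digraph, as prescribed by Definition~\ref{defn:muofpeg}, then aligns with the convention ``arc from $v$ to $u$ iff $\overleftarrow{v} = \overrightarrow{u}$'' of Definition~\ref{defn:ptdcmu}. Once this local check is in hand, the rest of the lemma assembles immediately.
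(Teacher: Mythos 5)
Your proof is correct and takes essentially the same route as the paper's: the paper likewise collapses $P(\Gamma)$ onto $\Gamma$ for items~(1)--(2), reads item~(3) off the fact that each region's boundary is a migt together with an oppositely oriented arc, and deduces item~(4) from the surface having $b$ boundary components; your explicit CW count and local orientation check merely spell out steps the paper declares ``clear.'' (Note that the displayed equality $\chi(P(\Gamma)) = \chi(\Gamma) + b$ in item~(4) can only refer to the capped-off closed surface, which your derivation via $\chi = 2-2g-b$ handles correctly.)
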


  \begin{proof}
    Clearly the surface of $P(\Gamma)$ can be homotopically collapsed
    to $\Gamma$ so Items~\ref{item:b0peg} and~\ref{item:euler} are
    immediate.  Item~\ref{item:mupeg} follows from the fact that the
    boundary of a region of $P(\Gamma)$ consists of a migt and an arc,
    and since $P(G)$ is oriented, the migt and the arc have opposite
    orientations.  Item~\ref{item:peggenus} is also clear, since the
    resulting surface has $b$ boundary components.
  \end{proof}

We remark that the notion of peg is more general than that of e-graph.
Indeed given any leo $\Gamma$, e-realizable or not, one can define the
peg $P(\Gamma)$.  For example the peg of Figure~\ref{fig:nonepeg} does
not come from an e-graph, in fact it is $P(\Gamma)$ for the
non-e-realizable leo of Example~\ref{exm:notrealizableptdc} (see
Figure~\ref{fig:nonrealizableptdc}).

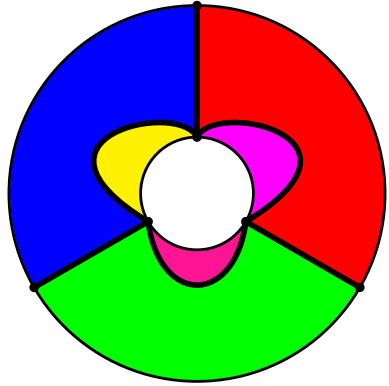
\begin{figure}[htbp]
  \centering
  \psset{unit=2.5}
  \begin{pspicture}(-1.1,-1.1)(1.1,1.1)
    \pscustom[linecolor=white,fillstyle=solid,fillcolor=blue]{%
      \psarc(0,0){1}{90}{210}
      \psline(-0.866025403784,-0.5)(-0.259807621135,-.15)
      \psbezier(-0.259807621135,-0.15)(-1,.3)(-.1,.5)(0,0.3)
      \psline(0,.3)(0,1)}
    \pscustom[linecolor=white,fillstyle=solid,fillcolor=red]{%
      \psarc(0,0){1}{-30}{90}
      \psline(0,1)(0,.3)
      \psbezier(0,0.3)(.1,.5)(1,.3)(0.2598,-0.15)
      \psline(0.866025403784,-0.5)(0.259807621135,-.15)}
    \pscustom[linecolor=white,fillstyle=solid,fillcolor=green]{%
      \psarc(0,0){1}{210}{330}
      \psline(0.866025403784,-0.5)(0.259807621135,-.15)
      \psbezier(0.259807621135,-0.15)(.2,-.6)(-.2,-.6)(-0.259807621135,-0.15)
      \psline(-0.259807621135,-0.15)(-0.866,-0.5)}
    \pscustom[linecolor=white,fillstyle=solid,fillcolor=yellow]{%
      \psarc(0,0){.3}{90}{210}
      \psbezier(-0.2598,-0.15)(-1,.3)(-.1,.5)(0,0.3)}
    \pscustom[linecolor=white,fillstyle=solid,fillcolor=deeppink]{%
      \psarc(0,0){.3}{210}{330}
      \psbezier(0.2598,-0.15)(.2,-.6)(-.2,-.6)(-0.2598,-0.15)}
    \pscustom[linecolor=white,fillstyle=solid,fillcolor=magenta]{%
      \psarc(0,0){.3}{330}{90}
      \psbezier(0,0.3)(.1,.5)(1,.3)(0.2598,-0.15)}
    \pscircle[linewidth=0.015](0,0){1}
    \pscircle[linewidth=0.015](0,0){.3}
  \rput(0,1){\rnode{1}{\psdot(0,0)}}
  \rput(-0.866,-0.5){\rnode{2}{\psdot(0,0)}}
  \rput(0.866,-0.5){\rnode{3}{\psdot(0,0)}}
  \rput(0,.3){\rnode{a}{\psdot(0,0)}}
  \rput(-0.2598,-0.15){\rnode{b}{\psdot(0,0)}}
  \rput(0.2598,-0.15){\rnode{c}{\psdot(0,0)}}
  \ncline[linewidth=.03]{1}{a}
  \ncline[linewidth=.03]{2}{b}
  \ncline[linewidth=.03]{3}{c}
  \psbezier[plotpoints=300,linewidth=.03](0,0.3)(-.1,.5)(-1,.3)(-0.2598,-0.15)
  \psbezier[plotpoints=300,linewidth=.03](0,0.3)(.1,.5)(1,.3)(0.2598,-0.15)
  \psbezier[plotpoints=300,linewidth=.03](0.2598,-0.15)(.2,-.6)(-.2,-.6)(-0.2598,-0.15)
  \end{pspicture}
  \caption{A peg that doesn't come from an e-graph}
  \label{fig:nonepeg}
\end{figure}

\begin{defn}
  \label{defn:epeg}
  An e-peg is a peg that is $P(\Gamma)$ for some e-graph $\Gamma$.
\end{defn}

\subsection{The dual of a peg}
\label{sec:dualpeg}

We can now interpret mind-body duality in terms of pegs.  Given a peg
there is a natural way to define its dual analogous to the way one
defines the dual of a Cellularly Embedded Graph.

\begin{defn}
  \label{defn:dualpeg}
  The \emph{dual peg} of a peg $\Gamma$ is the peg $\Gamma^{*}$
  obtained as follows:
  \begin{itemize}
  \item If $\Gamma$ is embedded in the surface $F$ then $\Gamma^{*}$
    is embedded is $F^{\intercal}$, that is, $F$ endowed with the
    opposite orientation.
  \item The vertices of $\Gamma^{*}$ are in one-to-one correspondence
    with the regions of $\Gamma$; when we draw $\Gamma^{*}$ we place
    its vertices on the arcs of the corresponding regions.
  \item The edges of $\Gamma^{*}$ are in one-to-one correspondence
    with the edges of $\Gamma$, the edge $e^{*}$ that corresponds to
    the edge $e$ connects the vertices of $\Gamma^{*}$ that correspond
    to the two regions of $\Gamma$ that $e$ lies in the boundary of.
  \end{itemize}
\end{defn}

For example the duals of the pegs in Figure~\ref{fig:nctree} are shown
with lighter purple lines.  The reason we consider $\Gamma^{*}$ to be
embedded in the oppositely oriented surface than that of $\Gamma$ will
become clear later in this section.

This notion of duality for non-crossing trees is implicit
in~\cite{GouldenYong}, although they do not explicitly consider the
dual as a non-crossing tree.  An explicit notion of dual for
non-crossing trees was defined in~\cite{Hernando1999}, however
properly speaking the notion defined there, though closely related to
ours, is not a real duality since it fails to be involutory; rather it
reflects the action of the Garside element (see
Section~\ref{sec:braid}).  See also Section~\ref{sec:futurenc}.

Since the boundary of each region of $\Gamma$ contains exactly one
arc and exactly one migt of $\Gamma$, and two regions share an edge if
and only if the corresponding migts do, the following proposition is
clear:
\begin{prop}
  \label{prop:dualpeg} If $\Gamma$ is an e-graph then, the peg that
  corresponds to $\Gamma^{*}$ is the dual of the peg that corresponds
  to $\Gamma$, i.e.
  $P \left( \Gamma^{*} \right) = P \left( \Gamma \right)^{*}$.
\end{prop}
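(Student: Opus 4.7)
The plan is to decompose the claim into two pieces: identify the underlying e-graph of $P(\Gamma)^{*}$ with $\Gamma^{*}$, and then check that the leo on $P(\Gamma)^{*}$ induced by the orientation of the ambient surface agrees with the leo on $\Gamma^{*}$ coming from its edge labels. Once this is done, Definition~\ref{defn:pegofleo} (together with Lemma~\ref{lem:pegfromptdc}) produces $P(\Gamma^{*})$ from exactly the same data, so $P(\Gamma^{*})\cong P(\Gamma)^{*}$.

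For the first piece, I would use Lemma~\ref{lem:pegfromptdc}: the regions of $P(\Gamma)$ are in natural bijection with the migts of $\Gamma$, each region having exactly one migt in its boundary. By Definition~\ref{defn:dualpeg}, the vertex set of $P(\Gamma)^{*}$ is indexed by these regions, and its edges are indexed by the edges of $\Gamma$. Two regions share $e$ in their boundary if and only if the two migts containing $e$ are the corresponding ones, which is precisely the adjacency condition in $\Gamma^{*}$ from Definition~\ref{defn:mbdualgraph}. Transferring edge labels via $e \leftrightarrow e^{*}$ gives an identification of the underlying e-graphs.

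For the second piece, I would fix a vertex $v^{*}$ of $\Gamma^{*}$, sitting on the arc of the region $R$ whose boundary migt is $\overrightarrow{v} = (e_1,\ldots,e_k)$ listed in the order of traversal from $v$ to $\mu(v)$. By Item~\ref{item:startrail} of Theorem~\ref{thm:dualproperties}, the edges of $\Gamma^{*}$ incident to $v^{*}$ are exactly $e_1^{*},\ldots,e_k^{*}$, and since $\overrightarrow{v}$ is minimally-increasing, the labels of the $e_i$ are strictly increasing in $i$, so the leo on $\Gamma^{*}$ at $v^{*}$ reads $e_1^{*} < \cdots < e_k^{*}$. On the other hand, in $P(\Gamma)^{*}$ the edges $e_i^{*}$ fan out from $v^{*}$ into the interior of $R$, each crossing the corresponding $e_i$. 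Traversing $\partial R$ in the direction compatible with the orientation of $F$, the migt edges are met in the order $e_1,e_2,\ldots,e_k$; this matches the cyclic order of the $e_i^{*}$ around $v^{*}$ as seen from inside $R$ in $F^{\intercal}$, precisely because $F^{\intercal}$ carries the reversed orientation (which is the reason that the convention in Definition~\ref{defn:dualpeg} places the dual peg in $F^{\intercal}$ rather than in $F$).

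The main obstacle will be step two: pinning down the orientation conventions carefully so that the matching of the two leos is a genuine verification rather than an assertion. One has to track two sign choices simultaneously (the direction in which $\partial R$ is traversed with $R$ on a fixed side, and the passage from $F$ to $F^{\intercal}$), and observe that they compose to give the ordering convention used in Definition~\ref{defn:muofpeg} applied to $P(\Gamma)^{*}$. Everything else is formal: once the leo identification is established, the peg $P$ associated with a leo (Definition~\ref{defn:pegofleo}) is determined up to isomorphism of pegs, so applying $P$ to both sides of the identification completes the proof.
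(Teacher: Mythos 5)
Your proposal is correct and follows the same underlying idea as the paper, whose entire proof is the single sentence preceding the proposition: since each region of $P(\Gamma)$ contains exactly one arc and one migt in its boundary, and two regions share an edge if and only if the corresponding migts do, the claim is declared ``clear.'' Your second step --- matching the orientation-induced leo on $P(\Gamma)^{*}$ with the label-induced leo on $\Gamma^{*}$ via Item~\ref{item:startrail} of Theorem~\ref{thm:dualproperties} and the passage to $F^{\intercal}$ --- is exactly the verification the paper leaves implicit, and supplying it is a genuine improvement rather than a deviation.
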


\subsection{Relation with Cellularly Embedded Graphs}
\label{sec:cegrel}
There is a direct connection with the usual cellular embeddings to a
closed surface.  We will use the abbreviation \emph{ceg} to refer to a
cellularly embedded graph in a closed surface.

As noted the construction of a peg from a leo, given in the previous
subsection is directly analogous to the construction of a ceg from a
rotation system. Using that we can see that any peg can be
``completed'' to a ceg.
\begin{defn}
  \label{defn:hat}
  The \emph{closure} of a compact oriented surface with boundary $F$ is
  the surface $\widebar{F}$ obtained by $F$ by attaching $2$-cells along its
  boundary components.

  If $i\co \Gamma \to F$ is a peg then its \emph{closure} is the ceg
  $\bar{i}\co \Gamma \to \widebar{F}$ obtained by composing $i$ with
  the natural inclusion $F \hookrightarrow \widebar{F}$.  When we
  abuse notation and refer to a peg by its underlying graph $\Gamma$
  we will further abuse the notation by denoting its closure by
  $\widebar{\Gamma}$.
\end{defn}

As we have already remarked, the construction of the peg of an e-graph
is very similar to the construction of a ceg given a rotation system.
In fact the following is clear:
\begin{lem}
  \label{lem:rotofceg} The rotation system of $\widebar{\Gamma}$ is
  obtained by the leo of $\Gamma$ by ``completing'' each local edge
  order to a cyclic order.
\end{lem}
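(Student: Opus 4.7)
My plan is to trace through the construction of $P(\Gamma)$ from the leo, as given in Definition~\ref{defn:pegofleo}, and see directly what happens to the local picture at each vertex when the boundary components are capped off. Near a vertex $v$, the thickening process places $v$ on the straight edge of a small half-disk, with the ribbons corresponding to the edges of $\nu(v)$ emerging into the interior in the linear order prescribed by the leo at $v$; the straight edge lies on $\partial P(\Gamma)$ and divides the boundary near $v$ into an ``incoming'' and an ``outgoing'' arc (with respect to the boundary orientation). By Definition~\ref{defn:muofpeg}, this linear order---read counterclockwise starting from the rightmost ribbon---is precisely the leo that the orientation of $F$ induces on the peg $P(\Gamma)$.

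When we cap off the boundary component through $v$ with a disk $D$ to form $\widebar{P(\Gamma)}$, the straight portion of $\partial P(\Gamma)$ at $v$ is glued to an arc of $\partial D$, and the half-disk neighborhood of $v$ merges with a small sector of $D$ into an ordinary full-disk neighborhood of $v$ in $\widebar{F}$. In this full-disk neighborhood the ribbons incident at $v$ are now arranged cyclically rather than linearly, and the orientation of $\widebar{F}$---which extends that of $F$---induces on this arrangement exactly the cyclic order obtained from the leo by declaring the last edge to be followed by the first.

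Since the rotation system of a ceg records, at each vertex, the counterclockwise cyclic order of the incident edges with respect to the orientation of the ambient surface, the observation of the previous paragraph is precisely the claim of the lemma. The only mild technical point is verifying orientation compatibility---that the local counterclockwise order that arises in the thickening construction agrees with the global counterclockwise order in $\widebar{F}$ after capping---and this is ensured by carrying out the thickening in Definition~\ref{defn:pegofleo} and the cap attachment in Definition~\ref{defn:hat} with a consistent choice of orientation throughout. I do not expect any serious obstacle; the lemma is essentially a compatibility check built into the parallel between the rotation-system construction of cegs and the leo construction of pegs.
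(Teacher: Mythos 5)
The paper offers no proof of this lemma (it is stated as ``clear'' from the parallel between the leo-to-peg and rotation-system-to-ceg constructions), and your argument is exactly the intended unpacking of that remark: the thickening places the ribbons at $v$ in the linear leo order along a half-disk, and capping the boundary component fills in the complementary sector so that the linear order closes up into the cyclic order, with orientations matching by construction. Your proof is correct and takes the same (only natural) approach.
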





A natural question that arises at this point is what cegs are
completions of an e-peg?  An obvious necessary condition is that there
should be at least as many vertices as regions, but as we will see
shortly this is not sufficient, for example we will see that the ceg
in Figure~\ref{fig:unpeg} does not come from an e-graph.  In order to
answer this question we introduce the concept of the \emph{medial
  digraph} of a ceg.

The medial graph $\mathcal{M}(\Gamma)$ of a ceg $\Gamma$ is defined
(see for example~\cite{Archdeacon1992}) as the graph that has vertices
the edges of $\Gamma$, and an edge connecting any two vertices that
correspond to a pair of consecutive edges in the rotation system given
by the embedding.  Since we consider embeddings into \emph{oriented}
surfaces we observe that $\mathcal{M}(\Gamma)$ has a natural
orientation given by the cyclic order, so we will refer to
$\mathcal{M}(\Gamma)$ as the \emph{medial digraph} of the ceg
$\Gamma$.

The medial digraph $M$ of a ceg $\Gamma$ is embedded in the same
surface that the original graph is.  Indeed one can place the vertex
of the medial digraph in the middle of the corresponding edge of the
graph, and draw the edge connecting the vertices that correspond to
two consecutive edges inside the region that has the corresponding
edges in its boundary. If $\Gamma$ has $n$ vertices then the edges of
$\mathcal{M}(\Gamma)$ can be colored with $n$ colors corresponding to
the vertices of $\Gamma$, where all the edges corresponding to a
rotation around a vertex are colored by the color of that
vertex. Notice that if $e$ is an edge then at the corresponding vertex
of $\mathcal{M}$ there are edges of two different colors, say blue and
red and the cyclic order induced from the embedding is red-in,
blue-out, blue-in, red-out.

The regions of the medial digraph correspond to either vertices, or
regions of the original graph, and their boundaries consist of
(oriented) cycles.  The boundary cycle of a region that corresponds to
a vertex of the original graph is \emph{monochromatic}, and the
boundary cycle of a region that corresponds to a region of the
original graph is \emph{properly edge colored}, that is any two
adjacent edges are colored with different colors\footnote{Recall that
  we consider loopless graphs.  For graphs with loops this observation
  is not necessarily true.}.  These concepts are illustrated in
Figure~\ref{fig:sdk4} for the standard genus $0$ embedding of the
complete graph on four vertices.

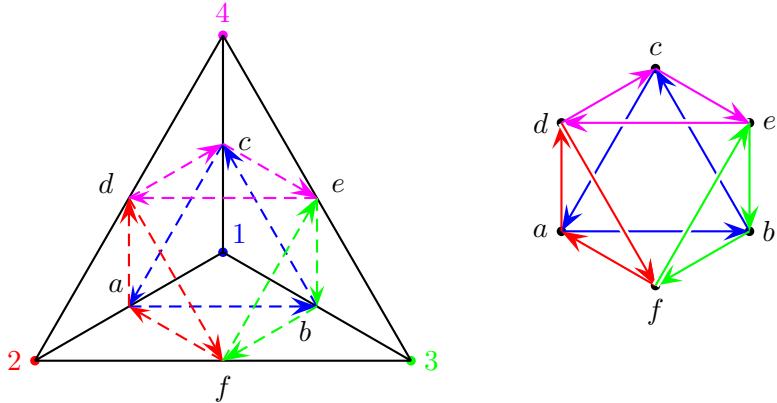
\begin{figure}[htbp]
  \centering
  \psset{unit=5}
  \begin{pspicture}(0,-.05)(2,1)
    \rput(0,0){
    \pnode(.5,0.288675134594813){0}
    \psdot[linecolor=blue](.5,0.288675134594813)
    \uput[45](.5,0.288675134594813){\blue $1$}
    \pnode(0,0){1}
    \psdot[linecolor=red](0,0)
    \uput[180](0,0){\red $2$}
    \pnode(1,0){2}
    \psdot[linecolor=green](1,0)
    \uput[0](1,0){\green $3$}
    \pnode(.5, 0.866025403784439){3}
    \psdot[linecolor=magenta](.5, 0.866025403784439)
    \uput[90](.5, 0.866025403784439){\magenta $4$}
    \ncline{0}{1}
    \nbput{$a$}
    \ncline{0}{2}
    \nbput{$b$}
    \ncline{0}{3}
    \nbput{$c$}
    \ncline{1}{2}
    \nbput{$f$}
    \ncline{1}{3}
    \naput{$d$}
    \ncline{2}{3}
    \nbput{$e$}
    \pnode(0.25,0.1443375672974){a1}
    \pnode(0.75,0.43301270189221){bc}
    \pnode(0.5,0.57735026918962){c1}
    \pnode(0.75,0.1443375672974){b1}
    \pnode(0.5,0){ab}
    \pnode(0.25,0.43301270189221){ac}
    \ncline[arrowsize=.05,linestyle=dashed,linecolor=blue]{->}{a1}{b1}
    \ncline[arrowsize=.05,linestyle=dashed,linecolor=blue]{->}{b1}{c1}
    \ncline[arrowsize=.04,linestyle=dashed,linecolor=blue]{->}{c1}{a1}
    \ncline[arrowsize=.05,linestyle=dashed,linecolor=red]{->}{a1}{ac}
    \ncline[arrowsize=.05,linestyle=dashed,linecolor=red]{->}{ac}{ab}
    \ncline[arrowsize=.04,linestyle=dashed]{->,linecolor=red}{ab}{a1}
    \ncline[arrowsize=.05,linestyle=dashed,linecolor=green]{->}{b1}{ab}
    \ncline[arrowsize=.05,linestyle=dashed,linecolor=green]{->}{ab}{bc}
    \ncline[arrowsize=.04,linestyle=dashed,linecolor=green]{->}{bc}{b1}
    \ncline[arrowsize=.05,linestyle=dashed,linecolor=magenta]{->}{ac}{c1}
    \ncline[arrowsize=.05,linestyle=dashed,linecolor=magenta]{->}{c1}{bc}
    \ncline[arrowsize=.04,linestyle=dashed,linecolor=magenta]{->}{bc}{ac}}
 \rput(1.15,.2){
   \pnode(0.25,0.1443375672974){a1}
   \psdot(0.25,0.1443375672974)
   \uput[180](0.25,0.1443375672974){$a$}
    \pnode(0.75,0.43301270189221){bc}
    \psdot(0.75,0.43301270189221)
    \uput[0](0.75,0.43301270189221){$e$}
    \pnode(0.5,0.57735026918962){c1}
    \psdot(0.5,0.57735026918962)
    \uput[90](0.5,0.57735026918962){$c$}
    \pnode(0.75,0.1443375672974){b1}
    \psdot(0.75,0.1443375672974)
    \uput[0](0.75,0.1443375672974){$b$}
    \pnode(0.5,0){ab}
    \psdot(.5,0)
    \uput[-90](.5,0){$f$}
    \pnode(0.25,0.43301270189221){ac}
    \psdot(0.25,0.43301270189221)
    \uput[180](0.25,0.43301270189221){$d$}
    \ncline[arrowsize=.05,linecolor=blue]{->}{a1}{b1}
    \ncline[arrowsize=.05,linecolor=blue]{->}{b1}{c1}
    \ncline[arrowsize=.05,linecolor=blue]{->}{c1}{a1}
    \ncline[arrowsize=.05,linecolor=magenta,,border=1pt]{->}{bc}{ac}
    \ncline[arrowsize=.05,linecolor=red]{->}{ab}{a1}
    \ncline[arrowsize=.05,linecolor=red]{->}{a1}{ac}
    \ncline[arrowsize=.05,linecolor=red,border=1pt]{->}{ac}{ab}
    \ncline[arrowsize=.05,linecolor=green]{->}{b1}{ab}
    \ncline[arrowsize=.05,linecolor=green,border=1pt]{->}{ab}{bc}
    \ncline[arrowsize=.05,linecolor=green]{->}{bc}{b1}
    \ncline[arrowsize=.05,linecolor=magenta]{->}{ac}{c1}
    \ncline[arrowsize=.05,linecolor=magenta]{->}{c1}{bc}}
  \end{pspicture}
  \caption{An embedded $K_4$ and its medial digraph}
  \label{fig:sdk4}
\end{figure}

The medial digraph of the dual of a ceg is isomorphic to the medial
digraph of the original graph, or its inverse depending on whether we
consider the dual ceg embedded in the oppositely oriented, or the same
surface as the original.  \emph{We make the convention that for a
  graph $\Gamma$ embedded in the closed surface $F$, it's dual
  $\Gamma^{*}$ is embedded in $F^{\intercal}$.}

If $\Gamma$ is a peg then the medial digraph of the ceg
$\widebar{\Gamma}$ is obtained from $\mathcal{M}\left( \Gamma \right)$
by adding an edge (of the appropriate color) from the last to the
first vertex of every chain in the PCD of $\Gamma$.  Removing those
edges from $\mathcal{M}\left( \widebar{\Gamma} \right)$, makes it
into a dag.

\begin{defn}
\label{defn:fas}
  A \emph{Feedback Arc Set} (FAS) in a digraph $D$ is a set of edges
  whose removal converts $D$ into a dag (see for
  example~\cite{bang2002diagraphs}, chapter 10.)

  A set of edges in an edge-colored digraph is called \emph{diverse}
  if it contains a representative of each color.
\end{defn}

Clearly a ceg $\Gamma$ that comes from a factorization, i.e. it is the
completion of an e-peg, has a diverse FAS, since each vertex of
$\Gamma$ gives a monochromatic cycle in
$\mathcal{M} \left( \Gamma \right)$. With these concepts in place we
can now state the following:

\begin{prop}
  \label{prop:real}
  The ceg $\Gamma$, with $n$ vertices, comes from a factorization
  exactly when its medial digraph has a FAS of cardinality $n$.
\end{prop}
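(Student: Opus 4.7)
The plan is to prove both directions using the correspondence between cyclic rotations at vertices of the ceg and monochromatic cycles in the medial digraph, together with Proposition~\ref{prop:leodag}. First, I would establish the key lower bound on any FAS: the medial digraph $\mathcal{M}(\Gamma)$ contains exactly $n$ pairwise edge-disjoint monochromatic cycles, one for each vertex of $\Gamma$ (coming from the cyclic rotation at that vertex). Any FAS must meet every cycle, and since these $n$ cycles are edge-disjoint, any FAS has cardinality at least $n$. Consequently, a FAS of cardinality exactly $n$ must contain precisely one arc from each monochromatic cycle.

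For the forward implication, suppose $\Gamma = \widebar{\Gamma'}$ for some e-peg $\Gamma'$. The leo on $\Gamma'$ induces a PCD on $\mathcal{M}(\Gamma')$ whose chains are exactly the ordered lists of edges around each vertex of $\Gamma'$. Counting via the Handshaking argument (as in Lemma~\ref{lem:int}), there are exactly $n$ such chains. The passage from $\mathcal{M}(\Gamma')$ to $\mathcal{M}(\widebar{\Gamma'})=\mathcal{M}(\Gamma)$ closes each chain into a monochromatic cycle by adding one arc per chain, giving $n$ extra arcs. Removing these $n$ arcs recovers $\mathcal{M}(\Gamma')$, which is a dag by Proposition~\ref{prop:leodag} since $\Gamma'$ is an e-peg. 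Hence these $n$ arcs form a FAS of size $n$.

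For the reverse implication, assume $\mathcal{M}(\Gamma)$ admits a FAS $S$ of cardinality $n$. By the lower bound, $S$ picks out exactly one arc from each monochromatic cycle. Cutting each monochromatic cycle at its chosen arc converts the cyclic rotation at that vertex into a linear ordering of the edges incident there, and the totality of these linear orders is a leo on the underlying graph of $\Gamma$. Let $D$ denote $\mathcal{M}(\Gamma)\setminus S$; by hypothesis $D$ is a dag, and $D$ is precisely the medial digraph of this leo. Proposition~\ref{prop:leodag} then yields an edge labeling realizing the leo, producing an e-graph $\Gamma_0$. The associated e-peg $P(\Gamma_0)$ has the prescribed leo; by Lemma~\ref{lem:rotofceg}, its closure $\widebar{P(\Gamma_0)}$ has the same rotation system as $\Gamma$, and the standard correspondence between rotation systems and cegs (see~\cite{gross1987topological}) forces $\widebar{P(\Gamma_0)}\cong \Gamma$. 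Thus $\Gamma$ is the completion of an e-peg and hence comes from a factorization.

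The only mildly delicate step is the reverse direction: one must check that cutting the monochromatic cycles at the FAS arcs really does produce a well-defined leo whose medial digraph is $\mathcal{M}(\Gamma)\setminus S$. This is essentially bookkeeping about how the chains in $\mathcal{M}(\Gamma)$ are glued across vertices; the arcs connecting one vertex's rotation to another's are untouched by $S$, so they still encode the way successive edges at different vertices interact, and cutting each monochromatic cycle simply chooses a minimum for each local order. Everything else is an application of Proposition~\ref{prop:leodag} and the uniqueness of cegs with a given rotation system.
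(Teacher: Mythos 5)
Your proposal is correct and follows essentially the same route as the paper: the forward direction observes that $\mathcal{M}(\widebar{\Gamma'})$ is obtained from the dag $\mathcal{M}(\Gamma')$ by adding one closing arc per vertex, and the reverse direction uses the $n$ edge-disjoint monochromatic cycles to force a size-$n$ FAS to cut each vertex's rotation exactly once, yielding a leo to which Proposition~\ref{prop:leodag} applies. Your version merely spells out the lower-bound argument and the final rotation-system identification a bit more explicitly than the paper does.
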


\begin{proof}
  If $\Gamma$ is an e-peg, then $\mathcal{M}\left( \Gamma \right)$ is
  a dag contained in, and having exactly $n$ edges less than,
  $\mathcal{M}\left( \widebar{\Gamma} \right)$.

  Conversely, since the medial digraph
  $\mathcal{M}\left( \Gamma \right)$ of a ceg has $n$ monochromatic
  cycles, a FAS $S$ with $n$ edges will contain an edge in the cycle of
  each vertex. Removing $S$ therefore will give a leo structure on
  $\Gamma$, whose medial digraph is a dag, and whose completion will be
  the the rotation system of $\Gamma$.
\end{proof}

As an example consider the genus $0$ embedding of $K_4$ in
Figure~\ref{fig:sdk4}.  The set of edges $\{de, fa, bd, eb\}$ is a
diverse FAS for its medial digraph.  Taking a topological sort of the
resulting dag give us the factorization
$(1,2),(1,3),(2,4),(1,4),(2,3),(3,4)$ of $\id$, see
Figure~\ref{fig:sdk4el}.

\begin{figure}[htbp]
  \centering
  \psset{unit=5}
  \begin{pspicture}(2,1)
    \rput(0,.25){
   \pnode(0.25,0.1443375672974){a1}
   \psdot(0.25,0.1443375672974)
   \uput[180](0.25,0.1443375672974){$1$}
    \pnode(0.75,0.43301270189221){bc}
    \psdot(0.75,0.43301270189221)
    \uput[0](0.75,0.43301270189221){$6$}
    \pnode(0.5,0.57735026918962){c1}
    \psdot(0.5,0.57735026918962)
    \uput[90](0.5,0.57735026918962){$4$}
    \pnode(0.75,0.1443375672974){b1}
    \psdot(0.75,0.1443375672974)
    \uput[0](0.75,0.1443375672974){$2$}
    \pnode(0.5,0){ab}
    \psdot(.5,0)
    \uput[-90](.5,0){$5$}
    \pnode(0.25,0.43301270189221){ac}
    \psdot(0.25,0.43301270189221)
    \uput[180](0.25,0.43301270189221){$3$}
    \ncline[arrowsize=.05,linecolor=blue]{->}{a1}{b1}
    \ncline[arrowsize=.05,linecolor=blue]{->}{b1}{c1}
    \ncline[arrowsize=.05,linecolor=red]{->}{a1}{ac}
    \ncline[arrowsize=.05,linecolor=red,border=1pt]{->}{ac}{ab}
    \ncline[arrowsize=.05,linecolor=green]{->}{b1}{ab}
    \ncline[arrowsize=.05,linecolor=green,border=1pt]{->}{ab}{bc}
    \ncline[arrowsize=.05,linecolor=magenta]{->}{ac}{c1}
    \ncline[arrowsize=.05,linecolor=magenta]{->}{c1}{bc}}
  \rput(1.12,0){
    \pnode(.5,0.288675134594813){0}
    \psdot[linecolor=blue](.5,0.288675134594813)
    \uput[45](.5,0.288675134594813){\blue $1$}
    \pnode(0,0){1}
    \psdot[linecolor=red](0,0)
    \uput[180](0,0){\red $2$}
    \pnode(1,0){2}
    \psdot[linecolor=green](1,0)
    \uput[0](1,0){\green $3$}
    \pnode(.5, 0.866025403784439){3}
    \psdot[linecolor=magenta](.5, 0.866025403784439)
    \uput[90](.5, 0.866025403784439){\magenta $4$}
    \ncline{0}{1}
    \ncput*{\red $1$}
    \ncline{0}{2}
    \ncput*{\red $2$}
    \ncline{0}{3}
    \ncput*{\red $4$}
    \ncline{1}{2}
    \ncput*{\red $5$}
    \ncline{1}{3}
    \ncput*{\red $3$}
    \ncline{2}{3}
    \ncput*{\red $6$}}
  \end{pspicture}
  \caption{An e-labeling for the embedded $K_4$ in Figure~\ref{fig:sdk4}}
  \label{fig:sdk4el}
\end{figure}
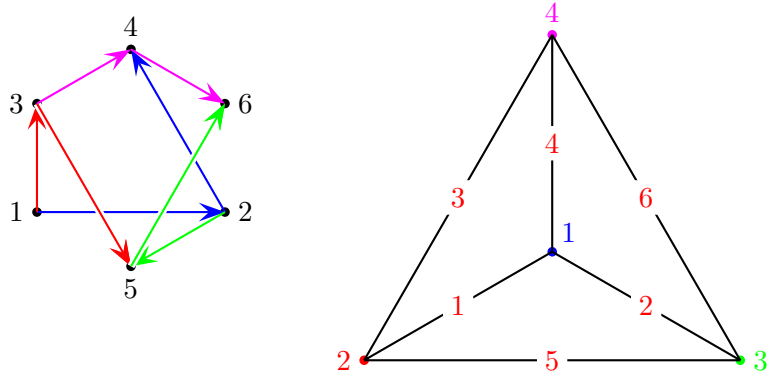

As an example of an non-peggable ceg consider the plane graph in
Figure~\ref{fig:unpeg}.  It's clear that its colored digraph does
not admit a diverse FAS so it does not come from an e-graph.

\begin{figure}[htbp]
  \centering
    \psset{unit=2}
  \begin{pspicture}(-.5,-.8)(5,1)
   \rput(1,0){%
    \pnode(-1,0){0}
    \psdot[linecolor=green](-1,0)
    \pnode(0,0){1}
    \psdot[linecolor=red](0,0)
    \pnode(1,0){2}
    \psdot[linecolor=blue](1,0)
    \ncline{0}{1}
    \ncput*{$a$}
    \ncline{1}{2}
    \ncput*{$c$}
    \ncarc[arcangle=45]{1}{2}
    \ncput*{$b$}
    \ncarc[arcangle=-45]{1}{2}
    \ncput*{$d$}}
    %
    \psset{unit=.6, arrowsize = .2}
     \rput(7.5,0){%
     \pnode(-1.0,0.0){a}
     \psdot(-1.0,0.0)
     \uput[180](-1.0,0.0){$a$}
     \pnode(0,1.0){b}
     \psdot(0,1.0)
     \uput[90](0,1.0){$b$}
     \pnode(0.0,0.0){c}
     \psdot(0.0,0.0)
     \uput[0](0.0,0.0){$c$}
     \pnode(0,-1.0){d}
     \psdot(0,-1.0)
     \uput[-90](0,-1.0){$d$}
     \nccircle[nodesep=3pt,angle=90,linecolor=green]{->}{a}{.7cm}
     \ncline[linecolor=red]{->}{b}{a}
     \ncarc[arcangle=25,linecolor=red]{->}{c}{b}
     \ncarc[arcangle=25, linecolor=red]{->}{d}{c}
     \ncline[linecolor=red]{->}{a}{d}
     \ncarc[arcangle=25, linecolor=blue]{->}{b}{c}
     \ncarc[arcangle=25, linecolor=blue]{->}{c}{d}
     \ncarc[arcangle=-85, linecolor=blue]{->}{d}{b}
}
  \end{pspicture}
  \caption{A non-peggable ceg and it's medial digraph}
  \label{fig:unpeg}
\end{figure}
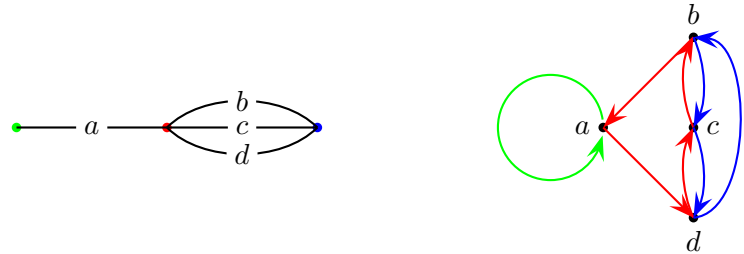

In general, if $\Gamma$ is peg then $\widebar{\Gamma^{*}}$ and
$\left( \widebar{\Gamma} \right)^{*}$ are not isomorphic as graphs.
For example if $\rho = (1\,3), (1\,2), (1\,3)$ then
$\rho^{*} = (1\,3), (2\,3), (1\,2)$, and the reader can easily check
that the duals of the completions of the corresponding pegs, which are
planar, are different.

However one can easily see the following:
\begin{thm}
  \label{thm:dualpegceg}
  If for a peg $\Gamma$ we have $\mu \left( \Gamma \right) = \id$ then
  $\left( \widebar{\Gamma} \right)^{*} = \widebar{\Gamma^{*}}$.
\end{thm}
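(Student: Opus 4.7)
The plan is to use the hypothesis $\mu(\Gamma) = \id$ to identify the regions of the closure $\widebar{\Gamma}$ with the regions of the peg $\Gamma$ itself, and then match rotation systems on the two cegs.

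First I would observe that, by Theorem~\ref{thm:dualproperties}(3), $\mu(\Gamma^{*}) = \mu(\Gamma)^{-1} = \id$ as well, so the argument below applies symmetrically to $\Gamma$ and $\Gamma^{*}$. By the description of the monodromy digraph in Definition~\ref{defn:muofpeg}, the condition $\mu(\Gamma) = \id$ forces every boundary component of the surface $F$ of $\Gamma$ to contain exactly one vertex, and hence exactly one arc (which is a loop based at that vertex). By the third defining condition of a peg, each such arc lies in the boundary of a \emph{unique} region of $\Gamma$; consequently, when we attach a $2$-cell along a boundary component to form $\widebar{F}$, the attached disk fuses across that single arc with exactly one region of $\Gamma$. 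This produces a canonical bijection between the regions of the ceg $\widebar{\Gamma}$ and the regions of the peg $\Gamma$, hence with the migts $\overrightarrow{v}$ of $\Gamma$.

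Next I would compare the underlying graphs. The vertices of $(\widebar{\Gamma})^{*}$ are the regions of $\widebar{\Gamma}$, which by the previous step are in bijection with the vertices $v^{*}$ of $\Gamma^{*} = V(\widebar{\Gamma^{*}})$. The edges of both cegs are in bijection with the edges of $\Gamma$ via $e \mapsto e^{*}$, and the two regions of $\widebar{\Gamma}$ flanking an edge $e$ correspond, under the bijection above, to the two migts containing $e$, i.e., to the endpoints of $e^{*}$ in $\Gamma^{*}$ (Definition~\ref{defn:mbdualgraph}). So the two cegs have the same underlying graph.

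It remains to verify that the embeddings agree, which reduces to matching rotation systems. By Lemma~\ref{lem:rotofceg}, the rotation system of $\widebar{\Gamma^{*}}$ at $v^{*}$ is the cyclic completion of the leo of $\Gamma^{*}$ at $v^{*}$, which by Theorem~\ref{thm:dualproperties}(1) is the cyclic order obtained from the sequence of edges in the migt $\overrightarrow{v}$ (dualized). On the other side, by the standard description of ceg duality, the rotation of $(\widebar{\Gamma})^{*}$ at the vertex corresponding to the region $R$ of $\widebar{\Gamma}$ (obtained from the peg-region with boundary $\overrightarrow{v}$ together with its attached disk) is the cyclic order of the edges along $\partial R$; and by our identification of regions, $\partial R \subset \widebar{F}$ is precisely the closed migt $\overrightarrow{v}$. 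The main obstacle I anticipate is the orientation bookkeeping in this last step: the dual peg $\Gamma^{*}$ lives in $F^{\intercal}$ (Definition~\ref{defn:dualpeg}), so one must check that the cyclic completion of the leo of $\Gamma^{*}$ on $F^{\intercal}$ matches, with the correct sense of rotation, the cyclic order around $\partial R$ inside $\widebar{F}^{\intercal} = \widebar{F^{\intercal}}$. This is a local check at a single vertex, using the explicit thickening construction of Definition~\ref{defn:pegofleo} together with the local picture of migts in Figure~\ref{fig:localmigts}; once verified there, the global identification of cegs follows by gluing.
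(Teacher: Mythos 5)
Your proposal is correct and follows essentially the same route as the paper: the key step in both is that $\mu(\Gamma)=\id$ forces one vertex (hence one arc) per boundary component, so capping off identifies the regions of $\widebar{\Gamma}$ with those of $\Gamma$ in an incidence-preserving way. You go somewhat further than the paper by explicitly matching the rotation systems via Lemma~\ref{lem:rotofceg} and Theorem~\ref{thm:dualproperties}(1) — the paper stops at the incidence correspondences — and your deferred local orientation check does go through, so this is a refinement rather than a different argument.
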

\begin{proof}
  If $\mu \left( \Gamma \right) = \id$ then each boundary component of
  $P(\Gamma)$ contains exactly one point, and the closure of each
  region is a ``pinched'' annulus, and so the regions of
  $\widebar{\Gamma}$ are in one-to-one correspondence with the regions
  of $\Gamma$, and this correspondence obviously preserves incidence
  relations between regions and edges.  Since the vertices, edges, and
  regions of $\Gamma$ and $\widebar{\Gamma}$ are in one-to-one
  correspondence and those correspondences preserve incidence
  relations, it follows that
  $ \left( \,\widebar{\Gamma}\, \right)^{*} = \widebar{\Gamma^{*}}$.
\end{proof}

\begin{exm}\label{exm:sdk45}
  It is known that the complete graph $K_n$ admits a
  self-dual\footnote{In the sense that the underlying graph of the
    dual ceg is also complete.} embedding into a closed oriented
  surface, if and only if, $n \equiv 0 \text{ or } 1 \pmod{4}$,
  (see~\cite{Pengelley1975}).  These are exactly the degrees for which
  the complete graph has an even number of edges, and so it's possible
  to give $K_n$ an e-labeling with monodromy equal to the identity.
  We can ask then, whether the theory of e-pegs we developed can be
  used to prove this result. In this work we will give a simple proof
  for the cases $n=4$ and $n=5$ to illustrate the basic ideas.

  For $n=4$, start with the self-dual factorization (see
  Corollary~\ref{cor:bubblesd})
  $$\rho_{0;4} := (1\,2),(1\,2), (2\,3), (2\,3), (3\,4), (3\,4)$$
  and notice that if $\beta = \sigma_2 \sigma_4 \sigma_3^{-1}$ then
  $\rho_0\beta = (1\,2), (1\,3), (2\,4), (1\,4), (2\,3), (3\,4)$,
  whose associated graph is complete is the complete e-v-graph on the
  right side of Figure~\ref{fig:sdk4el}.  On the other hand we also
  have that
  $\rho_0\beta^{*} = (1\,2), (2\,3), (1\,4), (1\,3), (2\,4), (3\,4)$,
  whose associated graph is also complete.  Using
  Theorems~\ref{thm:dualpegceg} and~\ref{thm:dubr} we conclude that
  the e-labeling of $K_4$ in Figure~\ref{fig:sdk4el} gives a self-dual
  embedding of $K_{4}$ into the sphere.

  For $n=5$, one can start with the self-dual factorization
$$
\rho_{0;5} := (1\,2),(1\,2), (2\,3), (2\,3), (3\,4), (3\,4), (4\,5),
(4\,5), (1\,5), (1\,5)
$$
and observe that if
$\beta =
\delta_{2,10}\sigma_3^{-1}\sigma_5^{-1}\sigma_7^{-1}\sigma_9^{-1} $
then
$$
\rho_{0;5} \beta = (1\,2), (2\,5), (2\,3), (1\,3), (3\,4), (2\,4),
(4\,5), (3\,5), (1\,5), (1\,4)
$$
and
$$
\rho_{0;5} \beta^{*} = (1\,2), (1\,5), (3\,5), (2\,5), (2\,4), (2\,3),
(1\,3), (3\,4), (4\,5), (1\,4)
$$
both factorizations with complete associated e-v-graphs, thus giving
self-dual embeddings of $K_{5}$ into a torus.
\end{exm}

\begin{rem}\label{rem:sdk6}
  We also remark that even though $K_{6}$ does not admit self-dual
  embeddings into a closed surface, it can be self-dually pegged in
  surfaces with boundary.  Indeed the factorization
\begin{align*}
\rho_1 := & (1,2), (3,5), (1,3), (4,6), (2,4), (1,4), (5,6), (1,6), \\
          & (2,3), (2,5), (1,5), (3,4), (4,5), (2,6), (3,6)
\end{align*}
pegs $K_6$ into a torus with three boundary components, and the
factorization
\begin{align*}
\rho_2 := &(1,2), (3,6), (1,3), (4,5), (4,6), (2,4), (2,3), (1,5), \\
          &  (1,4), (5,6), (3,4), (2,5), (3,5), (1,6), (2,6)
\end{align*}
pegs it, self-dually, into a genus $2$ surface with one boundary
component.
\end{rem}

In general we can use mind-body duality to get self-dual embeddings of
graphs into closed surfaces by gluing a pair of dual pegs along their
common boundary.

\begin{defn}
  \label{defn:bdryconnsum} Let $P_1, P_2$ be two pegs, with
  $\mu\left( P_2 \right) = \mu \left( P_1 \right)^{-1}$ and
  $f\co \partial P_1 \to \partial P_2$ an orientation reversing
  homeomorphism, that maps the vertices of $P_1$ to the vertices of
  $P_{2}$.  The \emph{boundary connected sum of $P_1$ and $P_2$, with
    respect to $f$} is the ceg $P_1 \#_f P_2$ defined as follows:
  \begin{itemize}
  \item The surface of $P_1 \#_f P_2$ is the boundary connected sum of
    the surfaces of $P_1$ and $P_2$ with respect to $f$.
  \item $P_1 \#_f P_2$ has the same vertices as $P_1$ and
    $P_2$, and edges the union of the edges of $P_1$ and the edges
    of $P_2$.
  \end{itemize}
\end{defn}

Clearly $P_1 \#_f P_2$ is a ceg and each of its regions is obtained by
gluing a region of $P_1$ with the region of $P_2$ that has the same
boundary arc, along their common boundary.  Now we can prove:

 \begin{thm}
   \label{thm:bdryconnsum}
   For every peg $P$, the boundary connected sum $P \#_{\id} P^{*}$
   is self-dually embedded.
 \end{thm}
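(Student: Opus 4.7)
The plan is to reduce the self-duality of the ceg $C := P\#_{\id}P^{*}$ to an application of Theorem~\ref{thm:dualpegceg} by realizing $C$ as the closure of a peg $Q$ with $\mu(Q)=\id$.

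First I would construct $Q$ by performing the $\id$-gluing on $\partial P \sqcup \partial P^{*}$ \emph{except} on an arbitrarily small arc about each identified pair of vertices, so that every vertex $v\in V$ survives as the unique vertex of its own small boundary circle.  Then $Q$ is a peg on $DF$ punctured at the vertices, with $n$ boundary components and $\mu(Q)=\id$, and capping each circle recovers $\overline{Q}=C$.  When $P$ is an e-peg associated to a factorization $\rho$, one can identify $Q$ concretely with $P(\rho\,\rho^{*})$: at every vertex $v$ the local edge ordering of $Q$ is the leo of $P$ at $v$ followed by the leo of $P^{*}$ at $v$, which is exactly how the edge-labels of $\rho\,\rho^{*}$ order the edges of $\nu(v)$.

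Next, since $\mu(Q)=\id$, Theorem~\ref{thm:dualpegceg} gives $C^{*}=(\overline{Q})^{*}=\overline{Q^{*}}$, so self-duality of $C$ reduces to a ceg-isomorphism $\overline{Q}\cong\overline{Q^{*}}$.  The candidate isomorphism $\Phi$ is built from two canonical identifications: on edges, the bijection $e\leftrightarrow e^{*}$ between $E(P)$ and $E(P^{*})$ of Definition~\ref{defn:mbdualgraph}; on vertices, the bijection between $V$ and the regions of $C$ arising from the paragraph following Definition~\ref{defn:bdryconnsum} (each region of $C$ glues one region of $P$, corresponding to a vertex of $P^{*}$, with one region of $P^{*}$, corresponding to a vertex of $P$), which lands $v$ at a region labeled by $\mu(v)$ (or $\mu^{-1}(v)$, depending on which side is used for the label).

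The main obstacle is verifying that $\Phi$ preserves the cyclic rotation system.  At each vertex $v$ of $C$ the rotation is the $P$-leo at $v$ followed cyclically by the $P^{*}$-leo at $v$.  Under $\Phi$ this cyclic order is transported to the rotation of $\overline{Q^{*}}$ at $\Phi(v)$.  By Item~\ref{item:startrail} of Theorem~\ref{thm:dualproperties}, the $P^{*}$-edges at $v$ (via $e\leftrightarrow e^{*}$) are exactly the edges of $\overrightarrow{v}$ in $P$, while the migt $\overrightarrow{v^{*}}$ in $P^{*}$ consists of the duals of $\nu(v)$; this is precisely the local compatibility needed for $\Phi$ to exchange the two halves of the rotation correctly.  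The non-trivial linear-order mismatch that arises at the peg level $Q\not\cong Q^{*}$ is washed out upon closure because the two leos $\overrightarrow{v}$ and $\overrightarrow{v^{*}}$ differ only by a cyclic shift once concatenated, and cyclic shifts are invisible in a rotation system.  Establishing this cyclic compatibility vertex-by-vertex, using Item~\ref{item:startrail} and the description of arcs shared between paired regions from my first paragraph, is the technical heart of the proof.
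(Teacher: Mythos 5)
Your route is genuinely different from the paper's, and it is workable in outline, but as written it stops short of being a proof. The paper's argument is a three-line direct computation: each region of $C := P\#_{\id}P^{*}$ is a region of $P$ glued to a region of $P^{*}$ along their common boundary arc; placing the dual vertex of each region on that arc and reading off Definition~\ref{defn:dualpeg} region by region gives $C^{*}=P^{*}\#_{\id}P$, which is literally the same object as $P\#_{\id}P^{*}$. Your detour through the auxiliary peg $Q$ with $\mu(Q)=\id$ and Theorem~\ref{thm:dualpegceg} buys the clean identity $C^{*}=\overline{Q^{*}}$ (and the identification of $Q$ with $P(\Gamma(\rho\,\rho^{*}))$ in the e-peg case is a nice observation), but it does not actually reduce the work: the isomorphism $\overline{Q^{*}}\cong\overline{Q}$ that you still owe is exactly the region-by-region matching of rotations that constitutes the paper's entire proof. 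You locate this step correctly and name the right ingredients --- Item~\ref{item:startrail} of Theorem~\ref{thm:dualproperties}, plus the fact that the leo of $Q$ at $v$, namely $(\nu_{P}(v),\nu_{P^{*}}(v))$, and the leo of $Q^{*}$ at the corresponding region, namely $\bigl((\overrightarrow{v})^{*},(\overrightarrow{v^{*}})^{*}\bigr)=(\nu_{P^{*}}(v),\nu_{P}(v))$, differ only by a cyclic shift that closure erases --- so I believe the plan goes through; but the ``technical heart'' is declared rather than executed, and it is the whole theorem. Note also that Theorem~\ref{thm:dualproperties} is stated for e-graphs while the claim is for arbitrary pegs, so strictly you need its leo/PTDC analogue, which the paper only promises for future work.

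One step, as literally described, fails and needs repair: the construction of $Q$. If the unglued arc is taken \emph{about} each identified pair of vertices (i.e.\ with $v$ in its interior), then the two copies of $v$ in $\partial P$ and $\partial P^{*}$ are never identified, so the graph of $Q$ has $2n$ vertices; and forcing the identification afterwards pinches the new boundary circle into a wedge of two circles, which is not a surface boundary. The slit must be taken strictly to one side of $v$, with $v$ as an endpoint of the unglued arc. Then $v$ survives as the unique vertex of its boundary circle, that circle carries a single arc, and that arc lies in the closure of exactly one region of $C$ (the one whose corner at $v$ meets $\partial F$ on the chosen side); since each vertex is the initial point of exactly one boundary arc of $P$, the assignment of vertices to regions is a bijection and $Q$ is indeed a peg with $\mu(Q)=\id$. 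With that fix, and with the cyclic-shift verification actually carried out, your argument closes.
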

 \begin{proof}
   Let $C$ be the boundary connected sum. First since the orientations
   of $P$ and $P^{*}$ are opposite $\id$ is orientation reversing so
   $C$ is defined. Now observe that each region of $C$ is obtained by
   gluing a region of $P$ and a region of $P^{*}$ along their boundary
   arc. If we choose the vertex dual to a region to lie along that
   common arc, we see that $C^{*} = P^{*} \#_{\id} P$.
 \end{proof}

 \begin{exm}
   \label{exm:sddoublek6}
   Consider $\overrightarrow{K}_6$ the complete digraph with six
   vertices, any two of which are connected by a pair of opposite
   edges.  Theorem~\ref{thm:bdryconnsum} and Remark~\ref{rem:sdk6}
   imply that $\overrightarrow{K}_{6}$ admits self-dual embeddings
   into a surface of genus $4$.  Furthermore this is a digraph
   embedding in the sense of~\cite{BCMMC2002}, i.e. the boundary of
   every region is a directed cycle.
 \end{exm}

 We end this subsection by remarking that the theory of pegs is a
 refinement of the theory of cegs, more attuned to the graph
 theoretical properties of the graph. For example if a graph is
 cellularly embeddable into a closed surface then so is any graph
 homeomorphic to it. This is not the case with pegs. Indeed we
 have the following:

\begin{prop}
  \label{prop:pegsub} Any ceg has a subdivision that is the closure of
  an e-peg.
\end{prop}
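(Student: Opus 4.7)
The plan is to apply Proposition~\ref{prop:real}, which characterizes closures of e-pegs among cegs as those whose medial digraph admits a feedback arc set of cardinality equal to the number of vertices. Accordingly, it suffices to produce a subdivision $\Gamma'$ of $\Gamma$ and an e-labeling of $\Gamma'$ whose induced leo at every vertex is a linear extension of the cyclic rotation inherited from the embedding. By Proposition~\ref{prop:leodag} (together with Lemma~\ref{lem:rotofceg}), such an e-labeling gives a FAS of the required cardinality, since the labels themselves supply a topological sort of $\mathcal{M}(\Gamma')$ after removal of the $n'$ closing arcs of the monochromatic cycles.

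The key observation is that a vertex of degree at most $2$ imposes no nontrivial constraint: its cyclic rotation (on one or two incidences) coincides with a linear order, and at a degree-$2$ vertex the two possible linearizations of the $2$-cycle in the medial digraph are freely available. Subdividing an edge therefore introduces "slack" in the constraint system while preserving the rotation at every original vertex of $\Gamma$. So my strategy is to subdivide each edge of $\Gamma$ a sufficient number of times, then build the e-labeling via a depth-first traversal of a spanning tree of $\Gamma'$ (handling each connected component separately). Specifically, I would root a spanning tree at some $v_0$, order the children of each vertex $v$ by the restriction of the rotation of $\Gamma'$ to the tree edges at $v$, and label each tree edge upon its first discovery; each non-tree edge is subdivided and its two halves are labeled at the times when the DFS would otherwise need to re-enter its endpoints.

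The main obstacle is verifying that this scheme is always realizable at every original vertex $v$ of $\Gamma$: the DFS visits the edges at $v$ in several bursts separated by recursive calls, and the labels assigned to these edges must form, globally, a cyclic shift of the rotation at $v$. The role of the subdivisions is to absorb the discrepancies between the DFS visit-order and the rotation: by inserting degree-$2$ vertices along the "problematic" edges at $v$, one can always freely choose the linearization at each subdivision vertex so that the sequence of labels at $v$, read in rotation order from some designated starting edge, is increasing. I expect the verification of feasibility to reduce to a combinatorial bookkeeping argument — essentially checking that a sufficient but finite number of subdivisions per edge suffices — with the remainder of the proof being routine. As an alternative to the explicit DFS construction, one can instead argue topologically, by removing a small open disk from a face of $\Gamma$ in the surface $F$ and pushing each vertex of $\Gamma$ along an arc into the resulting boundary component, subdividing edges at the points where they cross the boundary; this produces a peg in a surface with boundary whose closure realizes the required subdivision of $\Gamma$.
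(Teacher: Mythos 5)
Your overall route is genuinely different from the paper's, and it is viable. The paper works entirely inside the medial digraph: it takes an arbitrary FAS of $\mathcal{M}(\Gamma)$, extracts a minimal diverse subset $S'$ (one arc per colour), and for each leftover arc subdivides the corresponding edge of $\Gamma$ twice so that the leftover arc can be traded for arcs of the newly created colours, producing a diverse FAS with exactly one arc per colour of the subdivided ceg. You instead aim to construct an explicit edge-labeling of a subdivision whose leo at every vertex is a linearization of the rotation; by Proposition~\ref{prop:real}, Proposition~\ref{prop:leodag} and Lemma~\ref{lem:rotofceg} this is an equivalent goal, and that reduction is correct.

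The difficulty is that you stop at exactly the step that constitutes the proof: you ``expect the verification of feasibility to reduce to a combinatorial bookkeeping argument'' and never carry it out. In fact the verification is immediate, and the obstacle you worry about is not there: at a non-root vertex $w$ the DFS sweeps \emph{once} through the rotation at $w$ starting from the parent edge, drawing labels from a globally increasing counter (the recursive calls only make the counter jump by more than one between consecutive edges of the sweep), so the labels at $w$ increase along the rotation from the parent edge --- precisely the required linearization --- and the parent edge carries the smallest label at $w$ because it was assigned just before $w$ was entered. One subdivision of each non-tree edge suffices, since the degree-$2$ midpoint decouples the constraints at the two original endpoints. Better still, your own key observation makes the DFS unnecessary: subdivide \emph{every} edge once; then the stars of distinct original vertices are edge-disjoint, so assigning to the star of each original vertex a consecutive block of labels in rotation order (vertices and components taken in any order) already yields a compatible e-labeling. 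Finally, the ``alternative topological argument'' you offer does not work as stated: if an arc of the boundary is pushed across an edge $e$ of $\Gamma$, the edge is not subdivided but severed --- the two cut ends land on opposite sides of the removed strip and are distinct boundary points --- so dragging vertices to the boundary through faces not incident to them does not obviously produce a peg whose closure is a subdivision of the original ceg. I would drop that remark and instead complete the combinatorial argument as above.
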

\begin{proof}
  Let $\Gamma$ be a ceg.  Subdividing an edge of $\Gamma$, adds a
  pair of opposite arcs with a new color to the medial digraph of
  $\Gamma$.  Chose a FAS $S$ for $\mathcal{M}(\Gamma)$, and let $S'$
  be a minimal diverse subset of $S$ (it's clear that any FAS contains
  a diverse set).  Now for every arc $a$ in $S\setminus S'$ subdivide
  the edge of $\Gamma$ that corresponds to the beginning vertex of
  $a$, twice, and then replace, $a$ with a pair of the new edges, each
  going in opposite direction.  The resulting set $S''$ is a diverse
  FAS for the medial digraph of the subdivided ceg. See
  Figure~\ref{fig:subdiv}

  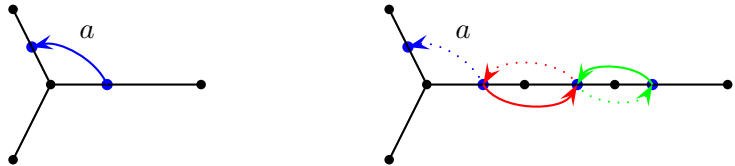
\begin{figure}[htbp]
  \centering
  \begin{pspicture}(8,2)
    \rput(.5,1){\rnode{v}{\psdot(0,0)}}
    \rput(0,2){\rnode{u}{\psdot(0,0)}}
    \rput(0,0){\rnode{w}{\psdot(0,0)}}
    \rput(2.5,1){\rnode{x}{\psdot(0,0)}}
    \rput(1.25,1){\rnode{m1}{\psdot[linecolor=blue,dotsize=.15](0,0)}}
    \rput(.25,1.5){\rnode{m2}{\psdot[linecolor=blue,dotsize=.15](0,0)}}
    \ncline{x}{v}
    \ncline{u}{v}
    \ncline{w}{v}
    \ncarc[arcangle=-40,arrowsize=.2,linecolor=blue]{->}{m1}{m2}
    \nbput{$a$}
    \rput(5,0){
    \rput(.5,1){\rnode{v}{\psdot(0,0)}}
    \rput(0,2){\rnode{u}{\psdot(0,0)}}
    \rput(0,0){\rnode{w}{\psdot(0,0)}}
    \rput(4.5,1){\rnode{x}{\psdot(0,0)}}
    \rput(1.25,1){\rnode{m1}{\psdot[linecolor=blue,dotsize=.15](0,0)}}
    \rput(2.5,1){\rnode{m3}{\psdot[linecolor=blue,dotsize=.15](0,0)}}
    \rput(3.5,1){\rnode{m4}{\psdot[linecolor=blue,dotsize=.15](0,0)}}
    \rput(.25,1.5){\rnode{m2}{\psdot[linecolor=blue,dotsize=.15](0,0)}}
    \ncline{x}{v}
    \ncline{u}{v}
    \ncline{w}{v}
    \ncarc[arcangle=-40,arrowsize=.2,linecolor=blue,linestyle=dotted]{->}{m1}{m2}
    \nbput{$a$}
    \ncarc[linestyle = dotted,arcangle=-60,arrowsize=.2,linecolor=red]{->}{m3}{m1}
    \ncarc[arcangle=60,arrowsize=.2,linecolor=red]{<-}{m3}{m1}
    \ncarc[linestyle = dotted, arcangle=-60,arrowsize=.2,linecolor=green]{->}{m3}{m4}
    \ncarc[arcangle=60,arrowsize=.2,linecolor=green]{<-}{m3}{m4}}
  \psdots(6.8,1)(8,1)
  \end{pspicture}
  \caption{Subdividing to get a diverse FAS}
  \label{fig:subdiv}
\end{figure}
\end{proof}

\subsection{On the genus and number of boundary components of e-pegs}
\label{sec:chib}

For an e-graph $\Gamma$, we see from Item~\ref{item:peggenus} of
Lemma~\ref{lem:eulermu} that the Euler characteristic of
$P \left( \Gamma \right)$ is determined by the number $b$ of disjoint
cycles of $\mu \left( \Gamma \right)$ and, of course, the Euler
characteristic of $\Gamma$.  A natural question that arises is: given
a graph $\Gamma$ what can we say about the values of $b$ that arise
from the different edge-labelings of $\Gamma$? The following
proposition provides two obvious necessary, but not in general
sufficient, conditions.

\begin{prop}
  \label{prop:bnecess}
  For every edge-labeling of $\Gamma$, the number $b$ of disjoint
  cycles of $\mu \left( \Gamma \right)$, or equivalently the number of
  boundary components of $P \left( \Gamma \right)$, satisfies:
  $$ b \le n, \quad \text{ and } \quad b \equiv \chi \left( \Gamma \right)\pmod{2}$$
\end{prop}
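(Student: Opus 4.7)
The plan is to observe that both conditions are consequences of elementary properties of the permutation $\mu(\Gamma) \in \mathcal{S}_V$, which is necessarily a product of $m$ transpositions (one for each edge of $\Gamma$). Neither statement will require any geometric input about $P(\Gamma)$; the proof is purely combinatorial.

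For the inequality $b \le n$, I would simply note that since $\mu(\Gamma)$ is a permutation of the $n$-element set $V$, its decomposition into disjoint cycles has at most $n$ terms (each cycle has length at least $1$, and the lengths sum to $n$). Equality holds exactly when $\mu(\Gamma) = \id$.

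For the parity congruence, my plan is to use the classical sign formula: a permutation of $n$ points with $b$ disjoint cycles (counting fixed points as $1$-cycles) has sign $(-1)^{n-b}$. Since $\mu(\Gamma)$ is a product of the $m$ transpositions corresponding to the edges (via the bijection in Definition~\ref{defn:fact}), its sign equals $(-1)^m$. Comparing the two expressions gives
\[
(-1)^{n-b} = (-1)^m,
\]
so $n - b \equiv m \pmod 2$, which rearranges to $b \equiv n - m = \chi(\Gamma) \pmod 2$.

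There is no substantive obstacle here; the only thing to verify is the convention that $b$ counts fixed points of $\mu(\Gamma)$ as $1$-cycles, which is consistent with the interpretation via the boundary components of $P(\Gamma)$ (each isolated fixed vertex gives rise to a boundary component whose corresponding migt has length $0$ in the sense of a trivial monodromy arc). Under that convention the sign-cycle formula applies verbatim and the argument is one short line.
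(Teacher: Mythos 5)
Your proof is correct, but it takes a genuinely different route from the paper. The paper argues geometrically: $b\le n$ because every boundary component of $P(\Gamma)$ must contain at least one vertex, and the parity constraint follows from Item~\ref{item:peggenus} of Lemma~\ref{lem:eulermu}, since the genus $g=\frac{2+m-n-b}{2}$ of the closed surface $\widebar{P(\Gamma)}$ must be an integer. You instead work entirely with the permutation $\mu(\Gamma)$: the cycle count of a permutation of an $n$-set is at most $n$, and the sign identity $(-1)^{n-b}=\operatorname{sgn}(\mu(\Gamma))=(-1)^m$ gives $b\equiv n-m=\chi(\Gamma)\pmod 2$. Both are one-line arguments, but yours buys something concrete: the paper's genus formula is stated only for connected $\Gamma$ (so the disconnected case implicitly requires summing over components), whereas the sign computation applies verbatim with no connectivity hypothesis and no appeal to the classification of surfaces. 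What the paper's version buys in exchange is that it exhibits the two constraints as visibly topological facts about $P(\Gamma)$, which is the viewpoint the surrounding section is developing. One small caution on your closing remark: your convention that fixed points count as $1$-cycles is indeed the one needed for the equivalence with boundary components, but a fixed point of $\mu(\Gamma)$ need not be an isolated vertex of $\Gamma$ (a vertex is fixed whenever its outgoing and incoming migts coincide as a closed trail), so the parenthetical justification is loosely worded even though the convention itself is right and the sign formula applies regardless.
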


\begin{proof}
  Every boundary component of $P \left( \Gamma \right)$ contains at
  least one vertex of $\Gamma$, so $b \le n$.  Also, since the genus
  of a closed surface is an integer, by Item~\ref{item:peggenus} of
  Lemma~\ref{lem:eulermu} we infer that $b$ has the same parity as
  $\chi \left( \Gamma \right)$.
\end{proof}

To see that these conditions are not sufficient consider the graph
$\Gamma$ shown on the left side of Figure~\ref{fig:unpeg}: we have
$n=3$, and $\chi \left( \Gamma \right) = -1$ so the value $b=3$
satisfies both conditions, but, as one can easily see, for every
edge-labeling of $\Gamma$ the monodromy is a $3$-cycle, so that
$b=1$.

\begin{ques}
  \label{ques:chib}
  For what class of graphs are the conditions of
  Proposition~\ref{prop:bnecess} sufficient?
\end{ques}

We don't know the complete answer but we can prove that complete
graphs belong in that class:

\begin{thm}
    \label{thm:bsconv}
    For every complete graph $K_n$ all values of $b$ allowed by
    Proposition~\ref{prop:bnecess} occur.  Actually all possible
    conjugacy classes of $\mu$ consistent with
    Proposition~\ref{prop:bnecess} occur.
\end{thm}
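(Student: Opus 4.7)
The plan is to prove by induction on $n$ the equivalent statement: for every $\pi \in \mathcal{S}_n$ with $\mathrm{sgn}(\pi) = (-1)^{\binom{n}{2}}$, there is a linear ordering of the $\binom{n}{2}$ transpositions of $[n]$ whose product is $\pi$. By the bijection of Definition~\ref{defn:fact} and Item~\ref{item:peggenus} of Lemma~\ref{lem:eulermu}, this is equivalent to the theorem, because the conjugacy classes in $\mathcal{S}_n$ of the prescribed sign are exactly those whose cycle counts satisfy the parity condition of Proposition~\ref{prop:bnecess}. The cases $n \le 2$ are trivial; for the inductive step on $n \ge 3$ we split into three cases.

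Case 1: $\pi(n) \ne n$. Pick any ordering $(i_1, i_2, \ldots, i_{n-1})$ of $[n-1]$ with $i_1 = \pi(n)$, and set $\sigma := (i_1, n)(i_2, n) \cdots (i_{n-1}, n)$. A direct calculation shows that $\sigma$ equals the $n$-cycle $(n, i_1, \ldots, i_{n-1})$, whence $\mu := \pi \sigma^{-1}$ fixes $n$ and satisfies $\mathrm{sgn}(\mu) = (-1)^{\binom{n}{2} + (n-1)} = (-1)^{\binom{n-1}{2}}$. Regarding $\mu$ as a permutation of $[n-1]$, the inductive hypothesis supplies a factorization $\rho_\mu$ using each transposition of $[n-1]$ exactly once; its concatenation with the factors $(i_1, n), \ldots, (i_{n-1}, n)$ of $\sigma$ yields the required factorization of $\pi = \mu \sigma$.

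Case 2: $\pi(n) = n$ and $\pi \ne \id$. Choose $k \in [n-1]$ with $\pi(k) \ne k$ and set $\pi' := (k, n) \pi (k, n)$; a direct check gives $\pi'(n) = \pi(k) \ne n$, so Case~1 produces a factorization $\rho' = (\tau'_1, \ldots, \tau'_m)$ of $\pi'$ using each transposition of $[n]$ once. Conjugation by $(k, n)$ is a setwise involution of the transpositions of $[n]$---it swaps $(k, j) \leftrightarrow (j, n)$ for $j \in [n-1] \setminus \{k\}$ and fixes both $(k, n)$ and all transpositions disjoint from $\{k, n\}$---so $\rho := \bigl((k, n) \tau'_i (k, n)\bigr)_{i=1}^m$ still uses each transposition of $[n]$ exactly once, and its product telescopes to $(k, n) \pi' (k, n) = \pi$.

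Case 3: $\pi = \id$. The parity constraint forces $n \equiv 0, 1 \pmod{4}$, and this is the main obstacle of the proof, since neither preceding device applies: Case~1 requires $\pi(n) \ne n$, and Case~2 requires $\pi$ to have a non-fixed point. The base cases $n = 4, 5$ are treated explicitly in Example~\ref{exm:sdk45}, where a Hurwitz braid applied to the self-dual doubled-cycle factorization $\rho_{0;n}$ redistributes the edge multiplicities to one each. For larger $n$ one invokes Pengelley's theorem~\cite{Pengelley1975}, which furnishes a self-dual cellular embedding of $K_n$ in a closed oriented surface precisely when $n \equiv 0, 1 \pmod{4}$; via the construction of Subsection~\ref{sec:cegrel}, such an embedding arises from an edge-labeling of $K_n$ whose monodromy is the identity.
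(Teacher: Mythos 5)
Your reduction of the theorem to the sign condition is correct (the parity constraint of Proposition~\ref{prop:bnecess} on $b$ is exactly $\mathrm{sgn}(\mu)=(-1)^{\binom{n}{2}}$, and achieving one representative of a conjugacy class achieves the whole class by relabeling), and your Cases~1 and~2 are correct and genuinely different from the paper's argument: the paper builds the labeled $K_n$ vertex by vertex, lengthening a cycle with a pendant edge and then inserting the remaining cross-edges in monodromy-preserving pairs via the $T$-operation of Lemma~\ref{lem:muinv}, with a case analysis on $n \bmod 4$ and on the parity of the smallest part of the cycle type; your device of peeling off the star of the vertex $n$ as the explicit $n$-cycle $\sigma=(i_1,n)\cdots(i_{n-1},n)$ with $i_1=\pi(n)$, plus conjugation by $(k,n)$ to move a non-fixed point into position, disposes of every non-identity class in one clean inductive step. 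That part is a real simplification and worth keeping.

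The gap is in Case~3, for $\pi=\id$ and $n\ge 8$. A self-dual cellular embedding of $K_n$ does \emph{not} automatically ``arise from an edge-labeling'': by Proposition~\ref{prop:real} a ceg is the closure of an e-peg only when its medial digraph admits a feedback arc set of cardinality $n$, and the paper exhibits a ceg (Figure~\ref{fig:unpeg}) admitting no such FAS. Passing from a ceg back to an edge-labeling means refining each cyclic rotation to a linear order compatible with one global order on the edges, which is precisely the FAS/topological-sort condition; the construction of Subsection~\ref{sec:cegrel} only goes in the opposite direction. So Pengelley's theorem hands you the surface but not the factorization, and verifying the FAS condition for his embeddings is not obviously easier than the original problem. (Granting peggability, your conclusion $\mu=\id$ would indeed follow, since a self-dual embedding of $K_n$ has $n$ regions and hence forces $b=n$.) The identity case needs a direct construction, and the paper supplies one you could splice in: concatenate an identity-monodromy labeling of $K_{n-4}$ with one of $K_4$ on the remaining four vertices when $n\equiv 0\pmod 4$ (or adjoin an isolated vertex to an identity-labeled $K_{n-1}$ when $n\equiv 1\pmod 4$), and then create the missing cross-edges two at a time by $T$-operations from each new vertex along a perfect matching of the old ones; since each $T$-operation preserves the monodromy, the result is an e-labeled $K_n$ with $\mu=\id$.
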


We will use the following lemmata in the proof of
Theorem~\ref{thm:bsconv}.  The proofs of the lemmata are
straightforward.

\begin{lem}
  \label{lem:prom}
  Let $\Gamma$ be an e-graph of size $m$ with $\mu(\Gamma)$ a
  $d$-cycle, and let $\Gamma'$ be the graph obtained from $\Gamma$ by
  adding one new vertex $v$ and connecting it by an edge labeled $m+1$
  to a vertex $u$ that is moved by the cycle $\mu(\Gamma)$.  Then
  $\mu(\Gamma')$ is a $(d+1)$-cycle; namely if
  $\mu(\Gamma) = (\ldots,u',u,u'',\ldots)$ then
  $\mu(\Gamma') = (\ldots,u',v,u,u'',\ldots)$.
\end{lem}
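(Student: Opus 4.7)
The plan is to read off $\mu(\Gamma')$ directly from the associated factorization. By Definition~\ref{defn:fact}, passing from $\Gamma$ to $\Gamma'$ corresponds, at the level of factorizations, to appending the transposition $(v,u)$ at the end of $\rho(\Gamma)$, so $\rho(\Gamma')$ is the concatenation of $\rho(\Gamma)$ with the length-one factorization $(v,u)$, and therefore $\mu(\Gamma')=\mu(\Gamma)\,(v,u)$. Using the paper's left-to-right multiplication convention (Section~\ref{sec:conventions}), this means $\mu(\Gamma')(x)=(v,u)\bigl(\mu(\Gamma)(x)\bigr)$ for every $x\in V(\Gamma')$.

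Next I would evaluate this formula at the relevant vertices. Since $v$ is a brand new vertex not in $V(\Gamma)$, $\mu(\Gamma)$ fixes $v$, and hence $\mu(\Gamma')(v)=(v,u)(v)=u$. Writing the $d$-cycle of $\mu(\Gamma)$ as $(\ldots,u',u,u'',\ldots)$, we have $\mu(\Gamma)(u')=u$, so $\mu(\Gamma')(u')=(v,u)(u)=v$; and $\mu(\Gamma)(u)=u''$ with $u''\notin\{u,v\}$ (in the edge case $d=2$ one still has $u''=u'\ne v$), so $\mu(\Gamma')(u)=(v,u)(u'')=u''$. For any other vertex $x$, the image $\mu(\Gamma)(x)$ lies in $V(\Gamma)\setminus\{u\}$, since $u'$ is the unique preimage of $u$ under $\mu(\Gamma)$, and therefore $(v,u)$ fixes $\mu(\Gamma)(x)$ and $\mu(\Gamma')(x)=\mu(\Gamma)(x)$.

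Assembling these equalities shows that in $\mu(\Gamma')$ the vertex $v$ has been inserted between $u'$ and $u$ in the original $d$-cycle, producing exactly $(\ldots,u',v,u,u'',\ldots)$, while every fixed point of $\mu(\Gamma)$ other than $v$ remains fixed. This is a $(d+1)$-cycle, as claimed. There is no real obstacle in this argument; the only points requiring care are to respect the left-to-right multiplication convention and to recall that $v$, being new, is a fixed point of $\mu(\Gamma)$ so that the transposition $(v,u)$ genuinely inserts $v$ into the cycle rather than merging or splitting existing cycles.
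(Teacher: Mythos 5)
Your proof is correct: the identification $\mu(\Gamma')=\mu(\Gamma)\,(v,u)$ under the paper's left-to-right convention, together with the evaluation at $v$, $u'$, $u$, and the remaining vertices (including the $d=2$ edge case), is exactly the routine verification intended here. The paper omits the argument entirely, declaring the proofs of this lemma and Lemma~\ref{lem:muinv} straightforward, so your write-up simply supplies the intended computation and there is nothing to compare against.
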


\begin{defn}
  \label{def:muinv} Let $v$ be a vertex, and $e$ an edge of the
  e-graph $\Gamma$ not incident to $v$.  Then a \emph{$T$-operation
    from the vertex $v$ on the edge $e$} is the following
  modification: If $i$ is the label of $e$, increase all edge-labels
  greater than $i$ by $2$, and change $i$ to $i+1$.  Then add two new
  edges labeled $i$ and $i+2$ connecting $v$ to the endpoints of $e$.
  See Figure~\ref{fig:T}.
\end{defn}
\begin{figure}[htbp]
  \centering
  \begin{pspicture}(8,3)
    \pnode[0,1.5]{v}
    \psdot(v)
    \uput[180](v){$v$}
    \pnode[2,.5]{u}
    \psdot(u)
    \pnode[2,2.5]{w}
    \psdot(w)
    \ncline{u}{w}
    \ncput*{\blue $\small i$}
    \pnode[6,1.5]{v1}
    \psdot(v1)
    \uput[180](v1){$v$}
    \pnode[8,.5]{u1}
    \psdot(u1)
    \pnode[8,2.5]{w1}
    \psdot(w1)
    \ncline{u1}{w1}
    \ncput*{\blue $\small i+1$}
    \ncline{v1}{u1}
    \ncput*{\blue $\small i$}
    \ncline{v1}{w1}
    \ncput*{\blue $\small i+2$}
    \psline[linewidth=.08,arrowsize=.3]{->}(3,1.5)(5,1.5)
    \uput[90](4,1.5){$T$}
  \end{pspicture}
  \caption{The $T$-operation}
  \label{fig:T}
\end{figure}
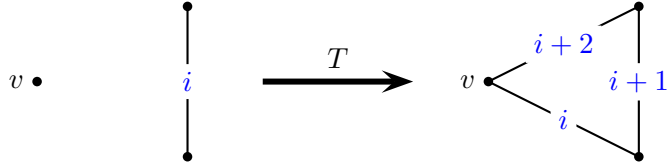
\begin{lem}
  \label{lem:muinv}
  The operation $T$ doesn't change the monodromy of the graph.
\end{lem}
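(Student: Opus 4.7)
The plan is to translate the $T$-operation to the level of factorizations via the bijection of Definition~\ref{defn:fact}, and then reduce the statement to a one-line identity in the symmetric group. Let $\rho = (\tau_j)_{1\le j\le m}$ be the factorization associated to $\Gamma$, and let $a,b$ be the endpoints of the edge $e$, so that $\tau_i = (a,b)$.

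Unwinding the definition of the $T$-operation and comparing labels with Figure~\ref{fig:T}, the factorization $\rho'$ associated to the modified e-graph agrees with $\rho$ in positions $j < i$, equals $\tau_j$ at position $j+2$ for every $j > i$, and inserts the triple $(v,a),\, (a,b),\, (v,b)$ at the three consecutive positions $i, i+1, i+2$. In other words $\rho'$ is obtained from $\rho$ by replacing the single factor $\tau_i = (a,b)$ with the three factors $(v,a),\,(a,b),\,(v,b)$, leaving all other factors unchanged (up to the shift of their indices).

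Since the total monodromy is the product of the factorization from left to right, to conclude $\mu(\rho') = \mu(\rho)$ it suffices to verify the purely local identity
\[
(v,a)\,(a,b)\,(v,b) \;=\; (a,b)
\]
in $\mathcal{S}_V$. This is a routine check tracking only the three moved points $v, a, b$ (all other points are fixed by each of the three factors), applying the multiplication convention from Section~\ref{sec:conventions} that products are read left to right: $v \mapsto a \mapsto b \mapsto v$, $a \mapsto v \mapsto v \mapsto b$, and $b \mapsto b \mapsto a \mapsto a$, which indeed is the transposition $(a,b)$.

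There is no genuine obstacle here: the only points requiring care are matching the labels $i, i+1, i+2$ in the figure to the correct transpositions $(v,a), (a,b), (v,b)$ (which relies on the hypothesis that $v$ is not incident to $e$, so all three transpositions are well-defined), and applying the multiplication convention consistently. Once the identity above is verified, the invariance $\mu(\rho') = \mu(\rho)$ follows by substitution into the expanded product, proving the lemma.
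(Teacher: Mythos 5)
Your proof is correct: translating the $T$-operation into the replacement of the factor $(a,b)$ by the three consecutive factors $(v,a),(a,b),(v,b)$ and verifying the local identity $(v,a)(a,b)(v,b)=(a,b)$ under the paper's left-to-right multiplication convention is exactly the ``straightforward'' argument the paper leaves to the reader (it states the lemma without proof). Your tracking of the three moved points and your remark that $v\notin\{a,b\}$ (so the two new transpositions are well defined) are both accurate, and the choice of which new edge receives label $i$ versus $i+2$ is immaterial since the product equals $(a,b)$ in either order.
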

\begin{proof}[Proof of Theorem~\ref{thm:bsconv}]
  We proceed by induction on $n$.  The theorem is obvious for
  $n=1,2,3$ and proven in Figure~\ref{fig:d=4} for $n=4$.  Assume
  then the theorem proven for all values less than $n$.

  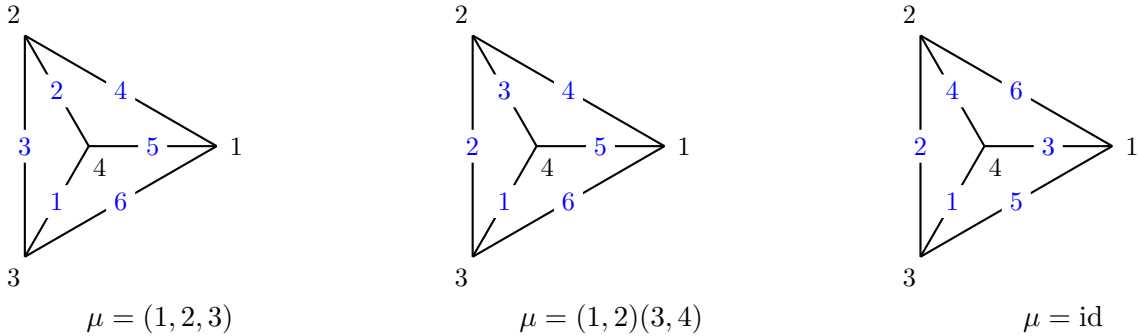
\begin{figure}[htbp]
    \centering
    \begin{pspicture}(-2.5,2.8)(12,7.5)
      \rput(-2,5.3){ \psset{unit=1.7}
        \begin{pspicture}(-1.1,-1.1)(1.1,1.1)
          \pnode[1.0,0.0]{1}
          \uput[0](1,0){\small $1$}
          \pnode[-0.5,0.86602540378444]{2}
          \uput[120](-0.5,0.86602540378444){\small $2$}
          \pnode[-0.5,-0.86602540378444]{3}
          \uput[-120](-0.5,-0.86602540378444){\small $3$}
          \pnode[0,0]{4}
          \uput[-60](0,0){\small $4$}
          \ncline{1}{2}
          \ncput*{\blue \small $4$}
          \ncline{1}{3}
          \ncput*{\blue \small $6$}
          \ncline{1}{4}
          \ncput*{\blue \small $5$}
          \ncline{2}{3}
          \ncput*{\blue \small $3$}
          \ncline{2}{4}
          \ncput*{\blue \small $2$}
          \ncline{3}{4}
          \ncput*{\blue \small $1$}
        \end{pspicture}}
      \rput(-1,3){$\mu=(1,2,3)$}
      \rput(4,5.3){ \psset{unit=1.7}
        \begin{pspicture}(-1.1,-1.1)(1.1,1.1)
          \pnode[1.0,0.0]{1}
          \uput[0](1,0){\small $1$}
          \pnode[-0.5,0.86602540378444]{2}
          \uput[120](-0.5,0.86602540378444){\small $2$}
          \pnode[-0.5,-0.86602540378444]{3}
          \uput[-120](-0.5,-0.86602540378444){\small $3$}
          \pnode[0,0]{4}
          \uput[-60](0,0){\small $4$}
          \ncline{1}{2}
          \ncput*{\blue \small $4$}
          \ncline{1}{3}
          \ncput*{\blue \small $6$}
          \ncline{1}{4}
          \ncput*{\blue \small $5$}
          \ncline{2}{3}
          \ncput*{\blue \small $2$}
          \ncline{2}{4}
          \ncput*{\blue \small $3$}
          \ncline{3}{4}
          \ncput*{\blue \small $1$}
        \end{pspicture}
      }
      \rput(5,3){$\mu = (1,2)(3,4)$}
      \rput(10,5.30){\psset{unit=1.7}
        \begin{pspicture}(-1.1,-1.1)(1.1,1.1)
          \pnode[1.0,0.0]{1}
          \uput[0](1,0){\small $1$}
          \pnode[-0.5,0.86602540378444]{2}
          \uput[120](-0.5,0.86602540378444){\small $2$}
          \pnode[-0.5,-0.86602540378444]{3}
          \uput[-120](-0.5,-0.86602540378444){\small $3$}
          \pnode[0,0]{4}
          \uput[-60](0,0){\small $4$}
          \ncline{1}{2}
          \ncput*{\blue \small $6$}
          \ncline{1}{3}
          \ncput*{\blue \small $5$}
          \ncline{1}{4}
          \ncput*{\blue \small $3$}
          \ncline{2}{3}
          \ncput*{\blue \small $2$}
          \ncline{2}{4}
          \ncput*{\blue \small $4$}
          \ncline{3}{4}
          \ncput*{\blue \small $1$}
        \end{pspicture}
      }
      \rput(11,3){$\mu = \id$}
    \end{pspicture}
    \caption{Proof of Theorem~\ref{thm:bsconv} for $n=4$}
    \label{fig:d=4}
  \end{figure}

  If $n \equiv 0 \pmod{4}$, then $\mu$ has to induce a partition into
  an even number of parts $b$.  If $b=n$ then $\mu = \id$.  To
  construct an e-labeling of $K_n$ with $\mu(K_n) = \id$, start with
  an e-labeling of $K_{n-4}$ with $\mu(K_{n-4}) = \id$ and an
  e-labeling of $K_4$ using labels,
  $\binom{n}{2}-6,\ldots,\binom{n}{2}$ with $\mu(K_4) = \id$. Then
  chose a partition of the vertices of $K_{n-4}$ into
  $\frac{1}{2} \binom{n-4}{2}$ pairs, and apply a $T$ operation from
  each vertex of $K_4$ on each of the edges of $K_{n-4}$ determined by
  those pairs.  The result is a $K_n$ with monodromy equal to $\id$.

  If $b<n$, and the corresponding partition is $k_1+\cdots + k_b =
  n$, with $k_b>1$, chose an e-labeled $K_{n-1}$ with monodromy of
  type $(k_1,\ldots,k_b-1)$ and add a new vertex $v$. Choose a
  vertex $u$ of $K_{n-1}$ that belongs in the $(k_{b}-1)$-cycle and
  partition the rest of the vertices of $K_{n-1}$ into pairs. Now
  add an edge connecting $v$ and $u$ and label it
  $\binom{b-1}{2}+1$, resulting in a graph with monodromy of type
  $(k_1,\ldots,k_b)$.  Then apply $T$ operations from $v$ to the
  edges determined by the partition into pairs of the remaining
  vertices of $K_{n-1}$.  At the end we get an e-labeled $K_d$
  whose monodromy has type $(k_1,\ldots,k_b)$.

  If $n \equiv 1 \pmod{4}$, then the possible values of $b$ are
  $1,3,\ldots,n$.  Let $n=k_1+\cdots+k_b$ be the partition induced
  by $\mu$. If $k_1$ is odd, by the inductive hypothesis we can find
  an e-labeled $K_{n-1}$ with monodromy of type
  $(k_2,\ldots,k_b,1,\ldots,1)$, where there are $k_1-1$ ones (if
  $k_1 = 1$ there are no ones).  Add a new vertex and connect it to
  the $k_1-1$ fixed points of $\mu(K_{n-1})$ with edges labeled
  $\binom{n-1}{2}+1,\ldots, \binom{n-1}{2} + k_1-1$.  The resulting
  graph has monodromy of type $(k_1,\ldots,k_b)$.  At this stage,
  there are an even number of vertices of $K_{n-1}$ not connected to
  the new vertex, so we can partition them into pairs and apply $T$
  moves from the new vertex to get an e-labeled $K_n$ with the
  desired monodromy.

  If $k_1$ is even, then we start with an e-labeled $K_{n-1}$ with
  monodromy of type $(k_2,\ldots,k_b-1,1,\ldots,1)$, where there are
  $k_1$ ones.  Add a new vertex $v$ and connect it to one of the
  vertices of the $(k_b-1)$-cycle with an edge labeled
  $\binom{n-1}{2}+1$, and to $k_1-1$ of the fixed points of
  $\mu(K_{n-1})$ by edges labeled
  $\binom{n-1}{2}+2,\ldots,\binom{n-1}{2}+k$.  The resulting graph
  has monodromy of type $(k_1,\ldots,k_b)$, and there are an even
  number of vertices not connected to the new vertex. Then we can
  use $T$ moves to get an e-labeled $K_n$ with the desired
  monodromy.

  If $n \equiv 2 \pmod{4}$, then $b=1,\ldots,n-1$.  If
  $n = k_1 + \ldots + k_b$ is a partition of $n$ then $b_K > 1$ and we
  can choose a $K_{n-1}$ with monodromy of type $(k_1,\ldots,k_b-1)$.
  Add a new vertex and connect it to a vertex of the $(k_b-1)$-cycle
  by a edge labeled $\binom{n-1}{2}+1$.  The result is a graph with
  monodromy of type $(k_1,\ldots,k_b)$ and an even number of edges
  unconnected to the new vertex.  So we can apply $T$-moves from the
  new vertex to get an e-labeled $K_n$ with the desired monodromy.

  If $n \equiv 3 \pmod{4}$, then $b=2,\ldots,n-1$.  Let
  $n = k_1 +\ldots+k_b$ be a partition of $n$.  If $k_1$ is odd,
  choose a $K_{n-1}$ with monodromy of type
  $(k_2,\ldots,k_b,1,\ldots,1)$, where there are $k_1-1$ ones.  Add a
  new vertex and connect it to the fixed points of $\mu(K_{n-1})$ by
  edges labeled $\binom{n-1}{2}+1,\ldots,\binom{n-1}{2}+k_1-1$.  This
  gives a graph with monodromy of the right type and an even number of
  vertices not connected to the new vertex.  So we can use $T$-moves
  to complete the proof.

  If $k_1$ is even, choose a $K_{n-1}$ with monodromy of type
  $(k_2,\ldots,k_b-1, 1,\ldots,1)$, where there are $k_1$ ones.  Add
  a new vertex and connect it to a vertex on the $k_b-1$-cycle by an
  edge labeled $\binom{n-1}{2} + 1$, and to $k_1-1$ fixed points of
  $\mu(K_{n-1})$ with edges labeled, $\binom{n-1}{2}+2, \ldots,
  \binom{n-1}{2}+k$. This gives a graph with the right monodromy and
  an even number of edges not connected to the new vertex.  So again
  we can use $T$-moves to complete the proof.  This completes the
  inductive step and the proof of the theorem.
\end{proof}

Item~\ref{item:peggenus} of Lemma~\ref{lem:eulermu} implies that in
general peggable embeddings have genera in the upper part of the genus
range of a graph.  These and related topics will be addressed in more
detail in the planned work~\cite{Apostolakis2018b}.

\subsection{Branched coverings interpretation}
\label{sec:brcov}

In this subsection we provide an alternative construction of the peg
associated with a factorization via the theory of branched coverings
of the two-dimensional disk $\mathbb{D}^{2}$.

Recall that the $B_m$-action on the free group comes from the fact
that $B_m$ is the \emph{mapping class group relative to the boundary}
of a $\mathbb{D}_{m}^2$, a disk with $m$ punctures, while the
fundamental group $\pi_1(\mathbb{D}_m^2)$ is a free group with $m$
generators, see for example~\cite{Birman1974},
\cite{TuraevKassel2008}, and~\cite{FBMprim2012}.  A factorization
$\rho$ can be thought of as a representation
$\pi_1(\mathbb{D}_m^2) \to \mathcal{S}_n$ and therefore gives a
covering of $\mathbb{D}^2$ branched over $m$ points, and the Hurwitz
action can be thought of as an action of $B_m$ to the set of
(equivalence classes of) branched coverings over the disk
$\mathbb{D}^2$.  For details on branched coverings
see~\cite{BernsEdm1979} and~\cite{Apos2003}, the later describes the
Hurwitz action in detail.

A free generating set of $\pi_1$ can be represented by a \emph{Hurwitz
  system} i.e. an ordered system of arcs connecting each branching
point to the basepoint of the disk, which we take to be on the
boundary circle, and whose interiors are pairwise disjoint.  An arc in
the system represents the loop that starts at the basepoint, follows
the arc to a small neighborhood of the puncture, goes once around the
puncture counterclockwise and returns to the basepoint along the arc.
To be concrete we consider the unit disk in $\mathbb{R}^{2}$, the
basepoint $b$ to be $(0,-1)$, while the $m$ branching points to be
equally spaced along the interval $[-1,1]$, and we take the
\emph{standard Hurwitz system} $h_0$, to be the $m$ straight line
segments connecting the basepoint to the branching points, this is
shown in the left side of Figure~\ref{fig:Hursys} in black. The $B_m$
action on $\mathbb{F}_m$ determines a left action on the set of
(isotopy classes of) Hurwitz systems, for example the image of $h_0$
under the action of the Garside element $\Delta_m$ is shown in the
left side of Figure~\ref{fig:Hursys} in green.

\begin{figure}[htbp]
  \centering
  \begin{pspicture}(-6,-2.5)(6,2.5)
    \rput(0,0){\includegraphics[scale=.55]{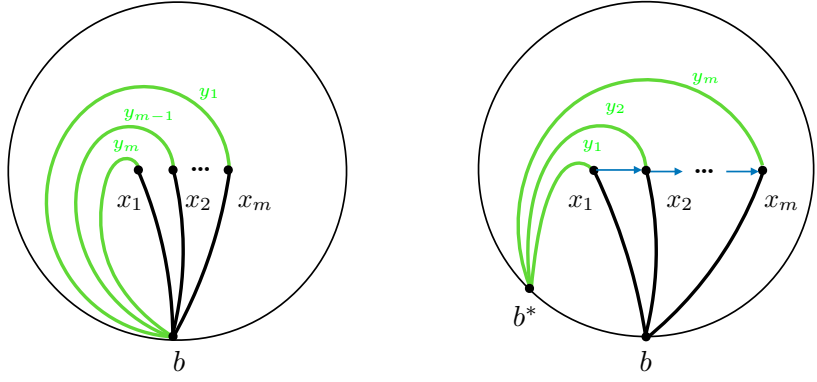}}
    \uput[-90](-3,-2.2){$b$}
    \uput[-90](3.2,-2.2){$b$}
    \uput[-90](1.6,-1.6){$b^{*}$}
    \uput[180](-3.3,-.4){\small $x_1$}
    \uput[0](-3.1,-.4){\small $x_2$}
    \uput[0](-2.4,-.4){\small $x_m$}
    \uput[90](-3.7,.1){\green \tiny $y_m$}
    \uput[90](-3.4,.5){\green \tiny $y_{m-1}$}
    \uput[90](-2.6,.8){\green \tiny $y_1$}
    \uput[180](2.7,-.4){\small $x_1$}
    \uput[0](3.3,-.4){\small $x_2$}
    \uput[0](4.6,-.4){\small $x_m$}
    \uput[90](2.5,0.1){\green \tiny $y_1$}
    \uput[90](2.8,.6){\green \tiny $y_2$}
    \uput[90](4,1){\green \tiny $y_m$}
  \end{pspicture}
  \caption{The standard Hurwitz system and its image under $\Delta$ (left), and its dual (right)}
  \label{fig:Hursys}
\end{figure}

A factorization in $\mathcal{S}_n$ gives a \emph{simple} branched
covering, that is a branched covering where the preimage of each
branching point has only one singular point and $n-2$ regular points
called \emph{pseudosingular}.  There is an explicit model for the
simple branched covering corresponding to a factorization $\rho$ (see
for example~\cite{BernsEdm1979} and~\cite{Apos2003}) and we briefly
recall that construction.  First choose a \emph{cut system} for
$\mathbb{D}_m^2$, consisting of $m$ segments connecting each branch
point to the boundary, to be explicit we take the $m$ vertical segments
in the upper half disk, and ``cut'' the disk open along this cut
system. Then take $n$ labeled copies of the disk (the sheets of the
covering) and glue them together along the cuts and according to the
monodromy sequence, that is, for $i=1,\ldots,m$ if the $i$-th
monodromy of $\rho$ is $(k\,l)$ we glue the $i$-th cuts of the $k$-th
and $l$-th sheet together, and ``sew''back together the $i$-th cut of
any other sheet.  The surface resulting from all these gluings is the
total space of the covering.  This construction is illustrated in
Figure~\ref{fig:deg3brcov} for the factorization $(1\,2),(2\,3)$.

\begin{defn}
  \label{defn:essentpreim}
  The \emph{essential preimage} of a Hurwitz system is defined to be
  the union of all the preimages of the arcs that contain a
  singular point.

  The \emph{reverse} of a Hurwitz system $h$ is the Hurwitz system
  $h^{\intercal}$ that has the same arcs as $h$ but in reverse order:
  the $i$-th arc of $h^{\intercal}$ is the $(m+1-i)$-th arc of $h$.

  The \emph{dual} of a Hurwitz system $h$ is the Hurwitz system
  $h^{*} := \left( \Delta_m h \right)^{\intercal}$ with basepoint slightly
  to the left of the basepoint of $h$.
\end{defn}

For example the dual of the standard Hurwitz system is shown in green
in the right side of Figure~\ref{fig:Hursys}.

We can now prove the following theorem:
\begin{thm}
  \label{thm:brpeg}
  Let $p\co F \to D^2$ be a simple branched covering. Then for any
  Hurwitz system $h$, the essential preimage of $h$ is a graph pegged
  in the total space of the covering.  Moreover the dual peg is the
  essential preimage of the dual Hurwitz system $h^{*}$.
\end{thm}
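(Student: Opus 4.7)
My plan is to proceed in three parts: identify the essential preimage of $h$ as an e-graph canonically isomorphic to $\Gamma(\rho_h)$, where $\rho_h$ is the factorization induced by $h$; verify the peg axioms by a covering-space argument; and deduce the duality statement from Theorem~\ref{thm:Hurw} together with the correspondence between the Hurwitz action on Hurwitz systems and on factorizations.

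First I would analyze the preimage $p^{-1}(x_j)$ for each arc $x_j \in h$ with monodromy $\tau_j = (k,l)$. Since the covering is simple, this preimage has $n-1$ connected components: one essential arc passing through the singular preimage of $y_j$ with endpoints $b_k, b_l \in p^{-1}(b) \subset \partial F$, and $n-2$ non-essential ``whisker'' arcs, each running from some $b_i$ with $i \notin \{k,l\}$ to a pseudosingular preimage of $y_j$ in the interior of $F$. Collecting all essential components as $j$ ranges over $\{1,\ldots,m\}$ gives a graph $\Gamma_h \subset F$ with vertex set $p^{-1}(b)$ on $\partial F$ and with $m$ edges, the $j$-th joining $b_k$ and $b_l$; labeling the vertex $b_i$ by $i$ and the $j$-th edge by $j$ produces an e-v-graph canonically isomorphic to $\Gamma(\rho_h)$.

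Next I would verify the peg axioms of Definition~\ref{defn:peg}. Condition (i) is immediate. For (ii), since $\mathbb{D}^2 \setminus h$ is simply connected, the restriction of $p$ above it is a trivial $n$-fold cover, so $p^{-1}(\mathbb{D}^2 \setminus h)$ is a disjoint union of $n$ open discs. The set $F \setminus \Gamma_h$ enlarges this by adjoining the non-essential whiskers, each of which is an arc lying entirely in a single sheet with one endpoint a boundary vertex and the other an interior pseudosingular point; including such a whisker in an open disc preserves the property of being an open disc. Hence $F \setminus \Gamma_h$ is a disjoint union of $n$ open discs. For (iii), each such disc-region contains exactly one preimage of $b$ in its boundary, and correspondingly exactly one arc of $\partial F$; this is consistent with, and in fact identifies $F$ with, the peg $P(\Gamma_h)$ described in Lemma~\ref{lem:pegfromptdc}.

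For the duality statement, the Hurwitz action of $B_m$ on isotopy classes of Hurwitz systems corresponds, via the monodromy representation, to the right $B_m$-action on factorizations, and reversal of a Hurwitz system corresponds to reversal of the associated factorization. Thus $\rho_{h^*} = (\rho_h \Delta_m)^\intercal$, and Theorem~\ref{thm:Hurw} yields $\Gamma(\rho_{h^*}) = \Gamma(\rho_h)^*$. Combined with Proposition~\ref{prop:dualpeg}, this identifies the essential preimage of $h^*$ with the dual peg of the essential preimage of $h$; the shift of the basepoint of $h^*$ slightly to the left of that of $h$ is precisely what places the vertices of $\Gamma_{h^*}$ on the arcs of $\partial F$ in accordance with Definition~\ref{defn:dualpeg}. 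The main obstacle I anticipate is the careful verification of (ii) and (iii): one must ensure that the non-essential whiskers neither create additional regions nor disrupt the bijection between regions and boundary arcs. This is handled most cleanly by invoking the explicit ``cut-and-glue'' construction, in which $F$ is obtained from $n$ copies of $\mathbb{D}^2$ by identifying pairs of essential arcs according to the monodromy, so that each sheet in $F \setminus \Gamma_h$ closes up to exactly one boundary arc and the non-essential whiskers sit harmlessly inside the sheets.
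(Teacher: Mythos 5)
Your proposal is correct and follows essentially the same route as the paper's proof: identify the essential preimage arc-by-arc, observe that the complement of the full preimage of $h$ consists of $n$ contractible sheets each with one boundary arc, note that reinserting the non-essential whisker arcs preserves this structure, and deduce the duality claim from Theorem~\ref{thm:Hurw}. Your write-up is somewhat more explicit about the cut-and-glue model and the basepoint shift for $h^{*}$, but the underlying argument is the same.
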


\begin{proof}
  Let $x$ be an arc of $h$, then $p^{-1}(x)$ consists of $n$ arcs, one
  in each sheet of the covering, Of these preimages, only two are
  essential, and they meet at the same singular point.  So $x$
  contributes to the essential preimage an arc connecting two points
  in the boundary of the total space, namely the preimages of $b$ in
  the two sheets that are glued together in the cut that corresponds
  to $x$. It follows that the essential preimage of $h$ is a graph
  $\Gamma$ embedded in $F$ with all the vertices in the boundary.  To
  see that $\Gamma$ is indeed properly embedded we first observe that
  the complement of the full preimage of $h$ consists of $n$ disjoint
  domains with one arc in their boundary, and interior homeomorphic to
  an open disc. Indeed, if we remove $h$ from $D^2$ we are left with
  a contractible set, with exactly one arc on the boundary and its
  interior homeomorphic to a disc. That set has $n$ homeomorphic
  preimages, that constitute the complement of the whole preimage of
  $h$ in $F$. A component of the complement of $\Gamma$ is obtained
  from a component of the complement of the full preimage, by possibly
  inserting some semi-open arcs, and it is easily seen that this
  still results in a contractible set with exactly one arc in its
  boundary and interior homeomorphic to an open disc.
  
  The fact that the essential preimage of $h^{*}$ is $\Gamma^{*}$ follows 
  from Theorem~\ref{thm:Hurw}.
\end{proof}

From the explicit construction of the branched covering
$p\co F \to \mathbb{D}^2$, associated with a factorization $\rho$
described above one can easily see that the essential preimage of the
standard Hurwitz system $h_0$ is a graph isomorphic to $\Gamma(\rho)$.
Indeed if the $k$-th monodromy of $\rho$ is $(i\,j)$, then the whole
preimage of the $k$-th arc of $h$ consists of $n-2$ ``short'' arcs
connecting the $n-2$ preimages of the basepoint $x_0$ in the sheets
with labels different than $i,j$ to the $n-2$ points in the
pseudo-singular locus above $x_{k}$, and one ``long'' arc that
consists of the two preimages of the arc that start at the sheets
labeled $i$ and $j$ and are glued together at the singular point above
$x_k$, see Figure~\ref{fig:deg3brcov}, for an example when $m=2$ and
$n=3$.  As a corollary then we have   an alternative
construction of the peg associated with an e-graph $\Gamma$, and its
mind-body dual:

\begin{thm}
  \label{thm:pegthroubrcov}
  For a factorization $\rho$, the peg $P \left( \Gamma(\rho) \right)$ is 
  the essential preimage of the standard Hurwitz system in the branched
  covering determined by $\rho$.  Furthermore it's dual peg is the 
  essential preimage of $h_0^{*}$.
\end{thm}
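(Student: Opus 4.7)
The plan is to use Theorem~\ref{thm:brpeg}, which already establishes that for any Hurwitz system $h$ the essential preimage is a peg and that its dual peg is the essential preimage of $h^{*}$. The remaining work is to identify, in the special case of the standard Hurwitz system $h_0$, the resulting peg with $P(\Gamma(\rho))$; the dual statement will then follow formally by combining Theorem~\ref{thm:brpeg} with Proposition~\ref{prop:dualpeg}.

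First I would make the sheet-construction of the covering $p\co F \to \mathbb{D}^2$ explicit, as sketched in the paragraph preceding the theorem: cut $\mathbb{D}^2$ along vertical segments above the branch points $x_1,\ldots,x_m$, take $n$ labeled copies, and glue sheets $i$ and $j$ across the $k$-th cut whenever the $k$-th monodromy of $\rho$ is the transposition $(i\,j)$. The preimage $p^{-1}(b)$ consists of $n$ points $v_1,\ldots,v_n$, one in each sheet, all lying in $\partial F$; I identify $v_i$ with the vertex labeled $i$ of $\Gamma(\rho)$. Next I would analyze edges: for each arc $x_k$ of $h_0$ with $\tau_k=(i\,j)$, the full preimage $p^{-1}(x_k)$ consists of $n-2$ short arcs (one in each sheet other than $i,j$, containing only pseudosingular points) and one long arc through the singular point above $x_k$ joining $v_i$ to $v_j$. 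Only the long arc belongs to the essential preimage, so the essential preimage has exactly one edge between $v_i$ and $v_j$ corresponding to the index $k$, matching the edge of $\Gamma(\rho)$ labeled $k$.

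The main obstacle, and the heart of the argument, is to check that the leo induced on the essential preimage by the orientation of $F$ (as in Definition~\ref{defn:muofpeg}) coincides with the leo on $\Gamma(\rho)$ induced by the edge labels. The key observation is that, away from the branch points, $p$ is a local orientation-preserving homeomorphism, so the cyclic order of arcs around $v_i$ in $F$ is the pullback of the cyclic order of the corresponding arcs of $h_0$ around $b$ in $\mathbb{D}^2$. Because the standard Hurwitz system $h_0$ was set up so that $x_1,\ldots,x_m$ emanate from $b$ in order of increasing index with respect to the orientation of $\mathbb{D}^2$, the edges incident to $v_i$ are ordered by the increasing indices $k$ for which $\tau_k$ moves $i$—precisely the leo of $\Gamma(\rho)$ at $i$. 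By Lemma~\ref{lem:pegfromptdc} (or directly by Definition~\ref{defn:pegofleo}, comparing the half-disk attached along each migt with the region of $F$ bounded by an essential preimage trail and a boundary arc), the resulting peg is isomorphic to $P(\Gamma(\rho))$.

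Finally, the dual statement is immediate: by Theorem~\ref{thm:brpeg}, the essential preimage of $h_0^{*}$ is the dual peg of the essential preimage of $h_0$, which we have just identified with $P(\Gamma(\rho))$. By Proposition~\ref{prop:dualpeg} this dual peg is $P(\Gamma(\rho)^{*})=P(\Gamma(\rho^{*}))$, which finishes the identification.
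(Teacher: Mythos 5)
Your proposal is correct and follows essentially the same route as the paper, which likewise treats this theorem as a corollary of Theorem~\ref{thm:brpeg} together with the explicit sheet-by-sheet construction of the covering and the observation that only the ``long'' arc through each singular point is essential. You are in fact somewhat more careful than the paper in explicitly verifying that the leo induced by the orientation of $F$ at each $v_i$ agrees with the leo of $\Gamma(\rho)$ coming from the edge labels, a step the paper leaves implicit.
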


An example of the theorem is shown in Figure~\ref{fig:deg3brcov} for
the factorization $(1,2), (2,3)$ and its mind-body dual $(1,2), (1,3)$.

\begin{figure}[htbp]
  \centering
  \begin{pspicture}(-6.5,-4.4)(7,4.5)
    \rput(0,0){\includegraphics[scale=.6]{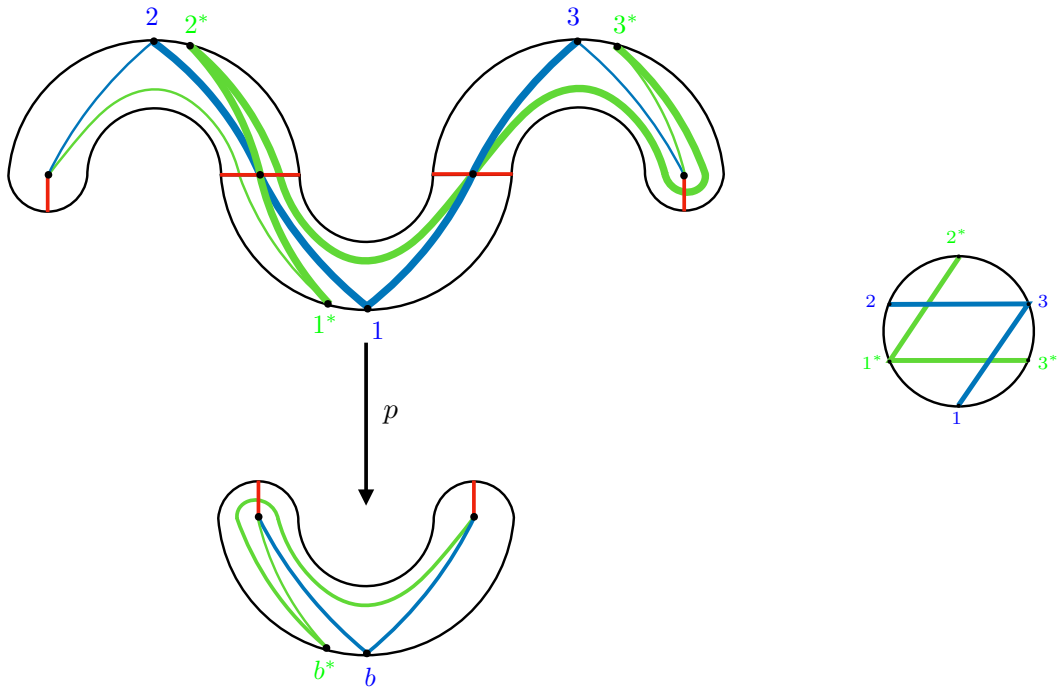}}
    \uput[0](-1.9,-.8){$p$}
    \uput[-90](-1.9,-4){\blue \small $b$}
    \uput[-90](-2.5,-3.9){\green \small $b^{*}$}
    \uput[-90](-1.8,.6){\blue \small $1$}
    \uput[-90](-2.5,.7){\green \small $1^{*}$}
    \uput[90](-4.8,4.2){\blue \small $2$}
    \uput[90](-4.2,4.1){\green \small $2^{*}$}
    \uput[90](.8,4.2){\blue \small $3$}
    \uput[90](1.5,4.1){\green \small $3^{*}$}
    \uput[-90](5.9,-.6){\blue \tiny $1$}
    \uput[180](5.1,-.1){\green \tiny $1^{*}$}
    \uput[180](5,.7){\blue \tiny $2$}
    \uput[90](5.9,1.3){\green \tiny $2^{*}$}
    \uput[0](6.8,.7){\blue \tiny $3$}
    \uput[0](6.8,-.1){\green \tiny $3^{*}$}
  \end{pspicture}
  \caption{The branched covering $(1\,2), (2\,3)$ and its dual $(1\,2),(1\,3)$.}
  \label{fig:deg3brcov}
\end{figure}

\begin{rem}
  \label{rem:medialbr}
  The medial digraph of the peg $P(\Gamma)$, for an e-graph $\Gamma$
  can also be interpreted via the associated branched covering.
  Consider the $n-1$ oriented intervals connecting $x_i$ to $x_{i+1}$,
  for $i=1,\ldots,n-1$, shown in blue on the right side of
  Figure~\ref{fig:Hursys}.  Clearly the subset of the preimage of
  those intervals, consisting only of those arcs that connect two
  singular points is a digraph isomorphic to the medial digraph of
  $\Gamma$.
 \end{rem}

The relation of e-labeled graphs with branched covering was observed
in~\cite{Arnold1996} (see also~\cite{LandoZvonkin2004}).  However they
consider branched coverings over the sphere $S^{2}$, by adding an
additional branched point with monodromy $\mu(\Gamma)^{-1}$, so that
in effect they obtain the ceg $\widebar{\Gamma}$.

If $\rho$ is a factorization of the identity permutation, it
determines not only a branched covering $p$ of the $2$-disk but also a
branched cover $\bar{p}$ of the sphere $2$-sphere $S^2$.  This is so
because the fundamental group of $S^2$ with $m$ punctures has a
presentation with $m$ generators $x_1, \ldots, x_m$, corresponding to
loops going around each puncture, and a single relation
$x_1\ldots x_m = \id$.  In that case the essential preimage of the
Hurwitz system under $\bar{p}$ is a ceg, and it is easy to see that it
is the completion of the peg obtained as the essential preimage under
$p$.  One can also easily see the following:

\begin{thm}
  \label{thm:brbdryconn}
  Let $\rho_1$ and $\rho_{2}$ be factorizations with
  $\mu(\rho_2) = \mu(\rho_1)^{-1}$, and $P_1$, $P_2$ the corresponding
  pegs.  Then $P_1\#_{\id} P_2$ is the ceg obtained from the
  concatenation of the factorizations $\rho_1\,\rho_{2}$ interpreted
  as a branched cover of the $2$-sphere.
\end{thm}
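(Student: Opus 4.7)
The plan is to realize the branched covering of $S^2$ corresponding to $\rho_1\rho_2$ as the gluing of two branched coverings of $2$-disks corresponding to $\rho_1$ and $\rho_2$ respectively, and then identify this gluing with the boundary connected sum operation on the associated pegs.

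First I would choose a decomposition $S^2 = D_1 \cup D_2$ into two closed hemispheres meeting along the equator $S^1$, with basepoint on the equator, placing the first $m_1$ branch points in the interior of $D_1$ and the remaining $m_2$ in the interior of $D_2$. Then I would choose a Hurwitz system $h$ for $S^2$ (minus the branch points) based at this point so that the first $m_1$ arcs are contained in $D_1$ and the rest in $D_2$; the restrictions give Hurwitz systems $h_1$ for $D_1$ and $h_2$ for $D_2$. Since $\mu(\rho_2) = \mu(\rho_1)^{-1}$, the factorization $\rho_1\rho_2$ is a factorization of the identity and hence determines a branched covering $\bar{p}\colon F \to S^2$. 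Moreover, $\bar{p}$ restricts over each $D_i$ to a branched covering $p_i \colon F_i \to D_i$ realizing $\rho_i$ (the monodromy matching condition along the equator is precisely $\mu(\rho_2) = \mu(\rho_1)^{-1}$).

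Next I would invoke Theorem~\ref{thm:pegthroubrcov}: the essential preimage of $h_i$ under $p_i$ is $P_i$, and these essential preimages sit inside the $F_i$. Since none of the arcs of $h$ crosses the equator, the essential preimage of $h$ under $\bar{p}$ is exactly the disjoint union inside $F$ of these two subgraphs, and its closure in the ceg structure on $F$ coincides with the graph $P_1 \cup P_2$ sitting in $F = F_1 \cup_{S^1 \text{ preimage}} F_2$.

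It then remains to identify this union with $P_1 \#_{\id} P_2$. The boundary $\partial F_i$ is the preimage $\bar{p}^{-1}(S^1)$, which is the same for $i=1$ and $i=2$ (this is why the gluing is well defined); the vertices of $P_i$ sit on this boundary exactly at the preimages of the basepoint, so the gluing homeomorphism identifies vertices with vertices. Because $D_1$ and $D_2$ induce opposite orientations on the equator, the identification $\partial F_1 \to \partial F_2$ is orientation reversing, as required in Definition~\ref{defn:bdryconnsum}. Under the natural labeling of vertices of $P_1$ and $P_2$ by sheets of the covering, this identification is literally the identity on vertices, so the gluing is $\#_{\id}$. The regions of the resulting ceg are obtained by pairing each region of $P_1$ with the region of $P_2$ sharing its boundary arc, matching the description of regions of a boundary connected sum.

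The main obstacle is verifying the two orientation compatibilities at once: that the branched covering of $D_2$ really does realize $\rho_2$ (and not, say, its reverse or its dual) when viewed with $D_2$'s own orientation, and that this exactly cancels with the orientation-reversal of the gluing so that one gets the identity gluing $\#_{\id}$ rather than some twisted variant. This amounts to a careful bookkeeping of conventions for Hurwitz systems with respect to the boundary orientation of a disk, together with the observation that the boundary digraph of the peg $P_i$, traversed according to the $D_i$-orientation, realizes the cycle structure of $\mu(\rho_i)$, so that the two boundary digraphs are indeed inverses of each other and admit an orientation-reversing vertex-preserving identification.
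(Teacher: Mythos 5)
Your argument is correct and is precisely the decomposition the paper has in mind: the paper offers no written proof of this theorem (it is introduced with ``one can also easily see''), and the hemisphere decomposition of $S^2$ with the Hurwitz system split between the two disks, combined with Theorem~\ref{thm:pegthroubrcov}, is the natural way to supply one. The orientation bookkeeping you defer does close up exactly as you indicate: both hemispheres inherit the orientation of $S^2$, so the loops around the punctures are positively oriented in each restricted disk and each restriction realizes $\rho_i$ itself (with the sheets, hence the vertex labels, shared between the two restrictions because the fibre over $b$ lies on the equator), while the two induced boundary orientations on the equator are opposite, so the tautological identification of $\bar{p}^{-1}(S^1)$ with itself is orientation reversing and vertex-preserving, i.e.\ the gluing is $\#_{\id}$. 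One small wording correction: the essential preimage of $h$ is not the \emph{disjoint} union of the essential preimages of $h_1$ and $h_2$ --- the two subgraphs share their entire vertex set (the fibre over $b$); it is a union with disjoint edge sets, which is what your identification with $P_1 \#_{\id} P_2$ actually uses.
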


\section{On Cycles and Trees}
\label{sec:trees}

The research on this paper started by the author reading~\cite{GouldenYong}.
That paper provides a ``structural'' bijection $\phi$ from the set of minimal
factorizations of an $n$-cycle to the set of trees on $[n]$.  In this section
we give an exposition of that and related topics in light of the present work.

The set of all cyclic permutations on $[n]$ is denoted by
$\mathcal{C}_n$ and we choose the \emph{standard cyclic permutation}
to be $\zeta_0 := (n,n-1,\ldots,1)$.  $\mathcal{E}_n$ stands for the
set of edge-labeled trees (\emph{e-trees} for short), with $n$
vertices, $\mathcal{E}_n^{*}$ for the set of rooted e-trees with
$n$-vertices, $\mathcal{V}_n$ for the set of vertex labeled trees
(\emph{v-trees} for short) with $n$ vertices, and $\mathcal{L}_n$ for
the set of edge and vertex labeled trees (\emph{e-v-trees} for short).

We denote the set of all factorizations in $\mathcal{S}_n$ of length
$m$ by $\mathcal{F}_m (n)$. For a permutation
$\pi \in \mathcal{S}_{n}$ and $m\in \mathbb{N}$, the set of all
factorizations of $\pi$ as a product of $m$ transpositions is denoted
by $\mathcal{F}_m^{\pi}$.

\subsection{Bijections between $\mathcal{F}_n^{\zeta_0}$ and $\mathcal{V}_n$}
\label{sec:gy}

In~\cite{Denes1959}, D\'enes proved that

\begin{thm}
  \label{thm:Denes1}
  The graph of a factorization $\rho \in \mathcal{F}_{n-1}(n)$ is a
  tree if and only if $\mu(\rho) \in \mathcal{C}_{n}$.
\end{thm}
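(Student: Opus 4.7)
The plan is to prove both implications, with the converse being essentially immediate and the forward direction using the peg machinery developed earlier.

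For the implication $\mu(\rho)\in\mathcal{C}_n\Rightarrow \Gamma(\rho)$ is a tree, I would argue as follows: every transposition $\tau_i$ has both of its moved points in the same connected component of $\Gamma(\rho)$, so each component is invariant under every $\tau_i$ and hence under $\mu(\rho)$. If $\mu(\rho)$ is an $n$-cycle, its unique orbit $[n]$ must be contained in a single component, so $\Gamma(\rho)$ is connected. A connected multigraph on $n$ vertices with exactly $n-1$ edges has cyclomatic number zero, so it is a tree.

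For the forward direction I would invoke Item~\ref{item:peggenus} of Lemma~\ref{lem:eulermu} applied to the peg $P(\Gamma(\rho))$: if $\Gamma(\rho)$ is a tree (so connected of order $n$ and size $n-1$) and $\mu(\rho)$ has $b$ disjoint cycles, then the genus of $P(\Gamma(\rho))$ equals
$$ g = \frac{2+(n-1)-n-b}{2} = \frac{1-b}{2}. $$
For this to be a non-negative integer with $b\geq 1$ we must have $b=1$. Because the disjoint cycle decomposition of a permutation in $\mathcal{S}_n$ counts fixed points as $1$-cycles (equivalently, isolated vertices of $\Gamma$ contribute separate boundary components to $P(\Gamma)$), $b=1$ together with the fact that a tree on $n\geq 2$ vertices has no isolated vertex forces $\mu(\rho)$ to be a single cycle of length $n$.

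An elementary alternative avoiding the peg theory would be induction on $n$: take a leaf $v$ adjacent to some vertex $u$ via the edge labeled $i$, set $A=\tau_1\cdots\tau_{i-1}$, and note $A(v)=v$ since $v$ lies in no $\tau_j$ for $j<i$. Deleting $\tau_i$ from $\rho$ yields a factorization $\rho'$ of length $n-2$ whose associated graph is a tree on $V\setminus\{v\}$ together with the isolated vertex $v$; by induction $\mu(\rho')$ restricts to an $(n-1)$-cycle on $V\setminus\{v\}$, and one checks that $\mu(\rho)=(v,A(u))\cdot \mu(\rho')$, which inserts $v$ next to $A(u)$ in the $(n-1)$-cycle to produce an $n$-cycle. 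The main subtlety in either approach is bookkeeping about fixed points: in the peg argument one must verify that $b$ really counts fixed points of $\mu(\rho)$ as separate cycles in the sense used by Item~\ref{item:peggenus}; in the induction one must keep track of how deleting $\tau_i$ from the middle of $\rho$ affects the monodromy only through conjugation by $A$, and must handle the low-dimensional base case $n=2$ separately.
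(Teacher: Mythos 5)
Your proposal is correct. For the direction ``tree $\Rightarrow$ $n$-cycle'' you take essentially the paper's route: the paper applies Item~\ref{item:euler} of Lemma~\ref{lem:eulermu} to get $\chi(P(\Gamma))=1$, hence a disk with a single boundary circle carrying all the vertices, while you apply Item~\ref{item:peggenus} to force $b=1$ from $g=\frac{1-b}{2}\ge 0$; these are the same computation packaged differently, and your care about whether $b$ counts fixed points is well placed (it does, since every boundary component of $P(\Gamma)$ contains at least one vertex and every vertex lies on some boundary component). For the converse the paper again routes through the peg ($\mu(\rho)\in\mathcal{C}_n$ forces $P(\rho)$, hence $\Gamma(\rho)$, to be connected), whereas your argument that each component of $\Gamma(\rho)$ is invariant under every $\tau_i$ and hence under $\mu(\rho)$ is more elementary and avoids the topology entirely; this is a cleaner way to get connectedness, after which the edge count finishes as in the paper. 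Your sketched induction on a leaf is a third, fully combinatorial route (essentially D\'enes's original argument); it buys independence from Section~\ref{sec:pegs} at the cost of the conjugation bookkeeping you already flag, but as written it is a sketch rather than a complete proof, so if you adopt it you should carry out the computation $\mu(\rho)=\prescript{A}{}{\tau_i}\,\mu(\rho')$ explicitly with the paper's left-to-right multiplication convention.
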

\begin{proof}
  If the graph $\Gamma(\rho)$ is a tree, then by Item~\ref{item:euler}
  of Lemma~\ref{lem:eulermu} we have that $P(\Gamma)$ is an orientable
  surface with Euler characteristic $1$, and therefore a disk.  It
  follows that all vertices of $\Gamma$ lie in a circle and therefore
  $\mu(\rho)$ is an $n$-cycle.

  Conversely, if $\mu(\rho)$ is an $n$-cycle, $P(\rho)$ is connected and therefore
  $\Gamma(\rho)$ is a connected graph with Euler characteristic $n - (n-1) = 1$.
  Therefore $\Gamma(\rho)$ is a tree.
\end{proof}

Using this result one can then establish the following:
\begin{thm} \cite{Denes1959}
  \label{thm:Denes2}
   For any $\zeta \in \mathcal{C}_n$ there is a bijection
   $f_{\zeta}\co \mathcal{F}^{\zeta} \to \mathcal{E}_n^{*}$.
\end{thm}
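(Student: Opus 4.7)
The plan is to construct the bijection explicitly using the migt structure developed in Section~\ref{sec:duality}, with the rooting convention dictated by the distinguished element of $\zeta$. Write $\zeta = (a_1, a_2, \ldots, a_n)$.

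For the forward map, given $\rho \in \mathcal{F}^\zeta$, the associated graph $\Gamma(\rho)$ is a tree by Theorem~\ref{thm:Denes1}; I would define $f_\zeta(\rho) \in \mathcal{E}_n^*$ to be this e-tree with vertex labels forgotten but with the vertex originally labeled $a_1$ marked as the root.

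For the inverse, given $(T, r) \in \mathcal{E}_n^*$, the e-labeling of $T$ induces a leo and hence a family of migts. By Proposition~\ref{prop:graphmu} combined with Theorem~\ref{thm:Denes1}, the monodromy digraph of $T$ is the functional digraph of an $n$-cycle $\sigma_T$ on $V(T)$. Define $\ell_r \co V(T) \to [n]$ by $\ell_r(\sigma_T^i(r)) := a_{i+1}$ for $i = 0, 1, \ldots, n-1$; this is a bijection (since $\sigma_T$ is an $n$-cycle) and by construction satisfies $\ell_r \sigma_T \ell_r^{-1} = \zeta$. Reinstating these vertex labels on $T$ yields an e-v-tree, which via Definition~\ref{defn:fact} corresponds to a factorization $g_\zeta(T, r) \in \mathcal{F}^\zeta$.

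To conclude, I would check that $f_\zeta$ and $g_\zeta$ are mutually inverse. That $f_\zeta \circ g_\zeta = \id$ is immediate from the definition of $\ell_r$, which sends $r$ to $a_1$. For $g_\zeta \circ f_\zeta = \id$, the key observation is that the migt structure of an e-tree is determined by its edge-labeling alone, so the $n$-cycle $\sigma_T$ computed from the unlabeled version of $\Gamma(\rho)$ is just $\mu(\rho) = \zeta$ transported along the forgetting of labels; the canonical relabeling $\ell_r$ then restores the original v-labels. The only non-routine point is uniqueness of $\ell_r$, which is forced because the constraints $\ell_r(r) = a_1$ and $\ell_r \sigma_T \ell_r^{-1} = \zeta$ propagate along the $\sigma_T$-orbit of $r$, which is all of $V(T)$. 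Equivalently, the centralizer of $\zeta$ in $\mathcal{S}_n$ has order exactly $n$, matching the $n$ possible choices of root and making the rooting genuinely rigidify the labeling. I do not anticipate a serious obstacle, as all the necessary machinery --- migts, Proposition~\ref{prop:graphmu}, and Theorem~\ref{thm:Denes1} --- is already in place.
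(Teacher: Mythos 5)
Your construction is essentially identical to the paper's proof: both directions use Theorem~\ref{thm:Denes1} to identify the monodromy as an $n$-cycle and then rigidify the vertex labeling by the conjugation condition $\ell\,\sigma_T\,\ell^{-1}=\zeta$ together with a fixed label for the root (you assign the root the label $a_1$; the paper assigns it $1$ — a purely cosmetic difference). The argument is correct, and your remark about the centralizer of $\zeta$ having order $n$ is exactly the reason the rooting makes the labeling unique.
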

\begin{proof}
  Given $\rho \in \mathcal{F}^{\zeta}$ we obtain $f(\rho)$ by taking the
  corresponding graph in $\mathcal{L}_n$, declaring, say, $1$ to be the
  root and forgetting the v-labels.  To go back, starting from a rooted
  e-tree $t$, by Theorem~\ref{thm:Denes1}, $\mu(t)$ is an $n$-cycle in
  $\tau \in \mathcal{S}_{V}$ and we can label the vertices of $t$ so that the
  root is labeled $1$ and arranging so that the label of $\tau(v)$ is the image
  under $\zeta$  of the label of $v$ for all vertices $v$.
\end{proof}

Putting all these $(n-1)!$ bijections together one obtains:
\begin{thm} \cite{Denes1959}
  \label{thm:Denes3}
  There is a bijection
  $D\co \mathcal{C}_n \times \mathcal{E}_n^{*} \to \mathcal{L}_n$.
\end{thm}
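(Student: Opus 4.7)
The plan is to stitch together the two previous theorems. The bijection $\rho \mapsto \Gamma(\rho)$ from Definition~\ref{defn:fact} restricts to a bijection between $\mathcal{F}_{n-1}(n)$ (factorizations in $\mathcal{S}_n$ of length $n-1$) and the set of e-v-graphs of order $n$ and size $n-1$. By Theorem~\ref{thm:Denes1}, this bijection restricts further to a bijection between $\{\rho \in \mathcal{F}_{n-1}(n) : \mu(\rho) \in \mathcal{C}_n\}$ and $\mathcal{L}_n$. Partitioning the factorizations by their monodromy gives
\[
\{\rho \in \mathcal{F}_{n-1}(n) : \mu(\rho) \in \mathcal{C}_n\} = \bigsqcup_{\zeta \in \mathcal{C}_n} \mathcal{F}_{n-1}^{\zeta},
\]
so $\mathcal{L}_n$ is in bijection with this disjoint union.

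Then I would apply Theorem~\ref{thm:Denes2}: for each $\zeta \in \mathcal{C}_n$ the map $f_\zeta \co \mathcal{F}_{n-1}^{\zeta} \to \mathcal{E}_n^{*}$ is a bijection, so assembling these yields
\[
\bigsqcup_{\zeta \in \mathcal{C}_n} \mathcal{F}_{n-1}^{\zeta} \;\longleftrightarrow\; \bigsqcup_{\zeta \in \mathcal{C}_n} \mathcal{E}_n^{*} \;=\; \mathcal{C}_n \times \mathcal{E}_n^{*}.
\]
Composing the two bijections defines $D$. Explicitly, I would give $D$ as follows: for $(\zeta, t) \in \mathcal{C}_n \times \mathcal{E}_n^{*}$, set $\rho := f_\zeta^{-1}(t) \in \mathcal{F}_{n-1}^{\zeta}$ and put $D(\zeta, t) := \Gamma(\rho) \in \mathcal{L}_n$. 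For the inverse, given $T \in \mathcal{L}_n$, form the factorization $\rho(T)$, let $\zeta := \mu(\rho(T))$ (an $n$-cycle by Theorem~\ref{thm:Denes1}), and set $D^{-1}(T) := (\zeta, f_{\zeta}(\rho(T)))$.

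There is really no hard step here since both input ingredients are already established; the only thing to be careful about is bookkeeping. Specifically, one should check that the $f_\zeta$'s assemble compatibly into a single bijection on the disjoint union, which is immediate because the domain of each $f_\zeta$ is exactly $\mathcal{F}_{n-1}^{\zeta}$ and these sets are disjoint for distinct $\zeta$. No further work is needed, so the proof will amount to a short verification that the above $D$ and $D^{-1}$ are mutual inverses, which is immediate from the corresponding properties of $\rho \mapsto \Gamma(\rho)$ and $f_\zeta$.
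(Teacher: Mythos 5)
Your proposal is correct and is essentially the paper's own argument: the paper simply states that Theorem~\ref{thm:Denes3} follows by ``putting all these $(n-1)!$ bijections together,'' i.e.\ exactly the assembly of the $f_\zeta$ over the monodromy partition, combined with the D\'enes correspondence $\rho \mapsto \Gamma(\rho)$ restricted via Theorem~\ref{thm:Denes1}. Your write-up just makes the bookkeeping explicit.
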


\begin{cor}
  \label{cor:enumln}
   $\left| \mathcal{F}_{n-1}^{\zeta_0} \right| = n^{n-2}$
\end{cor}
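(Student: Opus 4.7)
The plan is to combine Theorem~\ref{thm:Denes2}, which identifies $\mathcal{F}_{n-1}^{\zeta_0}$ with the set $\mathcal{E}_n^{*}$ of rooted edge-labeled trees on $n$ vertices, with a double-counting of $\mathcal{L}_n$ that reduces the enumeration to Cayley's formula for $|\mathcal{V}_n|$.

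First I would invoke Theorem~\ref{thm:Denes2} with $\zeta = \zeta_0$ to get $|\mathcal{F}_{n-1}^{\zeta_0}| = |\mathcal{E}_n^{*}|$, reducing the problem to counting rooted e-trees on $n$ vertices.

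Next I would count the set $\mathcal{L}_n$ of e-v-trees in two different ways. On one hand, an e-v-tree is a v-tree together with a choice of bijection between its $n-1$ edges and $[n-1]$ (note that a v-tree has no non-trivial automorphisms, so distinct edge labelings yield distinct e-v-trees), giving $|\mathcal{L}_n| = (n-1)!\,|\mathcal{V}_n|$. On the other hand, Theorem~\ref{thm:Denes3} provides a bijection $\mathcal{C}_n \times \mathcal{E}_n^{*} \to \mathcal{L}_n$; since $|\mathcal{C}_n| = (n-1)!$, this yields $|\mathcal{L}_n| = (n-1)!\,|\mathcal{E}_n^{*}|$. Comparing the two expressions gives $|\mathcal{E}_n^{*}| = |\mathcal{V}_n|$.

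Finally, Cayley's classical formula gives $|\mathcal{V}_n| = n^{n-2}$, and I would conclude $|\mathcal{F}_{n-1}^{\zeta_0}| = n^{n-2}$. There is no genuine obstacle here, since Cayley's formula is being cited as a known ingredient; indeed, one may read the corollary as a statement equivalent to Cayley's formula modulo the bijective machinery developed in this section, so an alternative route would be to prove Cayley's formula independently (for example, via a Prüfer-type code on the e-tree side) and thereby obtain a self-contained proof of the Hurwitz enumeration $n^{n-2}$ for minimal transitive factorizations of an $n$-cycle.
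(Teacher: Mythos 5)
Your proposal is correct and is essentially the paper's own argument: the paper likewise deduces $\left|\mathcal{E}_n^{*}\right| = n^{n-2}$ by comparing $\left|\mathcal{L}_n\right| = (n-1)!\,n^{n-2}$ (Cayley) with the bijection of Theorem~\ref{thm:Denes3} and $\left|\mathcal{C}_n\right| = (n-1)!$, then transfers the count to $\mathcal{F}_{n-1}^{\zeta_0}$ via Theorem~\ref{thm:Denes2}. Your version merely makes the double count of $\mathcal{L}_n$ slightly more explicit; the substance is identical.
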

\begin{proof}
  By Cayley's result, see e.g.~\cite{Moon1970}, $L_n$ has cardinality
  $(n-1)! n^{n-2}$ and since $\mathcal{C}_n$ has cardinality $(n-1)!$
  it follows by Theorem~\ref{thm:Denes3} that $\mathcal{E}_n^{*}$ has
  cardinality $n^{n-2}$ and therefore so does $\mathcal{F}_{n-1}^{\zeta_0}$
  by Theorem~\ref{thm:Denes2}.
\end{proof}

The fibers of $D$ are rather complicated, given a tree
$t \in \mathcal{V}_n$ there is an e-labeling of $t$ that makes it
being in the image of $\left\{ \zeta \right\}\times \mathcal{E}_n$ if
and only if $t$ is non-crossing with respect to $\zeta$.  If the
degree sequence of $t$ is $d_1, \ldots, d_n$ then there are
$\left( d_1\right)!\ldots \left( d_n\right)!$ such cycles
$\zeta$, see~\cite{Eden1962} and~\cite{DulPen1993}.  It follows that
one can not extract a bijection
$\mathcal{F}_{n-1}^{\zeta_0} \to \mathcal{V}_n$ from $D$, and D\'enes
in~\cite{Denes1959} posed the problem of finding such an explicit
bijection.

Based on the observations above, Moszkowski, in~\cite{Moszkowski1989},
realized that in order to solve the problem one has to \emph{delabel}
the vertices of the trees, and provided the following solution:

\begin{thm}
  \label{thm:slindingbij}
   There is a bijection $S\co \mathcal{F}_{n-1}^{\zeta_0} \to \mathcal{V}_n$.
\end{thm}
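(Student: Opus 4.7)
The plan is to construct $S$ explicitly by induction on $n$, using the Hurwitz action of the braid group (Section~\ref{sec:braid}) to perform ``slides'' that put $\rho$ in a canonical form, and then read off a v-tree from that canonical form. The base cases $n=1,2$ are immediate, so the content is in the inductive step. The key input from the already-developed theory is Theorem~\ref{thm:Denes1}: any $\rho\in\mathcal{F}_{n-1}^{\zeta_0}$ has $\Gamma(\rho)$ a tree, so we are really working with e-v-trees whose monodromy is $\zeta_0=(n,n-1,\ldots,1)$.

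For the inductive step, starting from $\rho=(\tau_1,\ldots,\tau_{n-1})$, the first step would be to locate the (unique) edge $e_n$ of $\Gamma(\rho)$ incident to the vertex~$n$ that is the \emph{last} edge at~$n$ in the leo, equivalently the last edge of $\overleftarrow{n}$. Since $\mu(\rho)(n)=n-1$, Proposition~\ref{prop:graphmu} identifies the other endpoint of $e_n$ as the vertex~$v$ where $\overleftarrow{n}=\overrightarrow{v}$, which records where $n$ should hang in the tree. Next, I would use a sequence of braid generators $\sigma_i$ (Equation~\eqref{eq:huract}), applied locally along the migt $\overleftarrow{n}$, to \emph{slide} the edge $e_n$ through its neighbors in the edge-ordering until it becomes the final transposition of the factorization. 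By the description of the Hurwitz action on e-v-graphs in Figure~\ref{fig:hurfacg}, each such slide preserves both the underlying unlabeled tree and the monodromy, and leaves untouched all edges not on $\overleftarrow{n}$. At the end of the slides we obtain a factorization $\rho'\,\tau'$ where $\tau'=(v,n)$ is a transposition attaching the leaf $n$ to $v$, and $\rho'\in\mathcal{F}_{n-2}^{\zeta_0'}$ where $\zeta_0'=(n-1,n-2,\ldots,1)$ is the standard cycle on $[n-1]$. Applying $S$ inductively to $\rho'$ yields a v-tree $T'$ on $[n-1]$, and I define $S(\rho)$ to be $T'$ with a new leaf $n$ attached at $v$.

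For invertibility: given $T\in\mathcal{V}_n$, locate the vertex $v$ adjacent to $n$, delete $n$ and its incident edge, apply $S^{-1}$ recursively to obtain $\rho'\in\mathcal{F}_{n-2}^{\zeta_0'}$, append the transposition $(v,n)$, and then run the slide procedure in reverse (which is possible since each generator $\sigma_i$ is invertible in $B_{n-1}$) to recover $\rho$. Well-definedness of the forward and inverse maps needs the verification that the vertex $v$ produced by the canonical form depends only on $\rho$ (not on choices made during sliding), and that different $\rho$'s produce different pairs $(\rho',v)$.

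The main obstacle is establishing this canonicity of the slides: although the Hurwitz action is very flexible, I need the specific slide sequence along $\overleftarrow{n}$ to produce a unique canonical representative, and I need to control precisely how the slides interact with the monodromy constraint $\mu(\rho)=\zeta_0$. The cleanest way to resolve this is to observe that the slides along $\overleftarrow{n}$ correspond (via Lemma~\ref{lem:daction} and its local version) to conjugation by an explicit initial segment of the Garside-type product $\delta_{k,n-1}$, so the resulting transposition $(v,n)$ and the residual factorization $\rho'$ are determined by closed-form expressions in $\tau_1,\ldots,\tau_{n-1}$. Once this formula is in hand, both bijectivity and the count $|\mathcal{F}_{n-1}^{\zeta_0}|=n^{n-2}=|\mathcal{V}_n|$ (Corollary~\ref{cor:enumln}) follow, and the inductive description matches the standard recursive description of v-trees by choosing a neighbor for the leaf~$n$.
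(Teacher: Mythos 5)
There is a genuine gap, and it is fatal to the inductive scheme rather than a fixable detail. Your recursion encodes $\rho$ by a pair $(\rho',v)$ with $\rho'\in\mathcal{F}_{n-2}^{\zeta_0'}$ and $v\in[n-1]$, and then outputs $T'$ with a new \emph{leaf} $n$ attached at $v$. This cannot be a bijection onto $\mathcal{V}_n$ for two independent reasons. First, the image consists only of v-trees in which $n$ is a leaf; there are $(n-1)\cdot(n-1)^{n-3}=(n-1)^{n-2}$ of those, while $|\mathcal{V}_n|=n^{n-2}=|\mathcal{F}_{n-1}^{\zeta_0}|$ by Corollary~\ref{cor:enumln} (already for $n=3$: $3\neq 2$). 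There is no ``standard recursive description of v-trees by choosing a neighbor for the leaf $n$''; Cayley's formula does not satisfy the recursion $t_n=(n-1)t_{n-1}$. Second, even the claimed canonical form is overdetermined by the monodromy: if $\rho=\rho'\,(v\,n)$ with $\rho'$ supported on $[n-1]$, then $\mu(\rho')=\zeta_0\,(v\,n)$, and $[\zeta_0(v\,n)](n)=(v\,n)(n-1)$ equals $n$ only when $v=n-1$. So the vertex $v$ carries no information at all, and your map lands in a set of size at most $(n-1)^{n-3}$. Along the way, the assertion that each Hurwitz slide ``preserves the underlying unlabeled tree'' is false (the middle row of Figure~\ref{fig:hurfacg} is exactly the case where the tree changes), and the appeal to Proposition~\ref{prop:graphmu} misidentifies the second endpoint of the last edge of $\overleftarrow{n}$ with the \emph{starting} vertex of that trail, which is a different vertex in general.

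For comparison, the paper's proof does not induct and does not touch the Hurwitz action at this point: it composes the D\'enes bijection $f_{\zeta_0}\co\mathcal{F}_{n-1}^{\zeta_0}\to\mathcal{E}_n^{*}$ of Theorem~\ref{thm:Denes2} with Moszkowski's label-sliding bijection $\mathcal{E}_n^{*}\to\mathcal{V}_n$: label the root $1$, increase every edge label by $1$, and push each edge label to the endpoint of that edge farther from the root; the inverse pulls each vertex label (decreased by $1$) back onto the incident edge nearest the root. The crucial structural fact making this work is that the relevant object is a \emph{rooted edge-labeled} tree with the vertex labels forgotten, so the count is $n^{n-2}$ on both sides with no leaf-attachment step. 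If you want to salvage an inductive argument, the induction would have to remove an \emph{edge} (e.g.\ the one labeled $n-1$) rather than the vertex $n$, and track a rooted e-tree throughout.
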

\begin{proof}
  $S$ is the composition of $f_{\zeta_0}$ with a bijection
  $\mathcal{E}_n \to \mathcal{V}_n$ defined by labeling the root of the
  tree $1$, increasing all e-labels by $1$ and then sliding each
  e-label to the vertex of that edge that is further away from
  the root.  Starting from a v-tree, we can recover the rooted e-tree, by
  declaring the vertex $1$ to be the root, and decreasing the vertex labels
  by $1$ and then sliding each v-label to its incident edge that is closest
  to the root; therefore $S$ is a bijection.
\end{proof}

We remark that the description above comes from~\cite{GouldenYong}, and is
also contained in~\cite{Poulalhon1997}.

Goulden and Yong in~\cite{GouldenYong} constructed a new bijection
$\phi\co \mathcal{F}_n^{\zeta_0} \to \mathcal{V}_n$, which with our
notation is defined by the following diagram:

    $$
  \begin{psmatrix}[mnode=R,colsep=2cm,rowsep=2cm]
    \mathcal{F}_n^{\zeta_0} & \mathcal{E}_n^{*}\\
    \mathcal{V}_n & \mathcal{E}_n^{*}
  \end{psmatrix}
  \psset{nodesep=0.3cm}
  \ncLine[arrowsize=.2]{->}{1,1}{1,2}
  \Aput{f_{\zeta_0}}
  \ncLine[arrowsize=.2,linestyle=dashed]{->}{1,1}{2,1}
  \Bput{\phi}
  \ncLine[arrowsize=.2]{->}{1,2}{2,2}
  \Aput{*}
  \ncLine[arrowsize=.2]{->}{2,2}{2,1}
  \Bput{S}
  $$

where $*\co \mathcal{E}_n^{*} \to \mathcal{E}_n^{*}$ is the
mind-body dual.  This bijection enjoys two ``structural'' properties
which we now explain.

For a transposition $\tau$ define it's
\emph{difference index} to be the cyclic distance of its moved
points, in other words, if $\tau = (s,t),\quad s<t$ then
$\delta(\tau) = \mathrm{min}\left\{ t-s, n-t+s \right\}$, and for
a factorization $\rho = \tau_1,\ldots, \tau_n$ define its \emph{difference
distribution} to be $\left(d_1,\ldots,d_n \right)$, where $d_i$ is the
number of elements in $\rho$ with difference index $i$.

For an edge $e$ of a tree $t\in \mathcal{V}_n$ define its
\emph{edge-deletion index} $\varepsilon(e)$ to be the minimum of the
orders of the two trees that result from $t$ after we delete $e$, and
the \emph{edge-deletion distribution} of $t$ to be $(a_1,\ldots,a_n)$
where $a_i$ is the number of edges of $t$ with edge-deletion index
$i$.

For a factorization $\rho \in \mathcal{F}_{n-1}^{\zeta_0}$ define its
\emph{degree distribution}, $d(\rho)$to be the degree distribution of the
associated e-v-tree. For a vertex $i$ of a v-tree $t\in \mathcal{V}_n$
define its \emph{maximal minimally increasing path} to be the path
obtained by starting at $i$ follow the edge that leads to the smallest
of its neighbors and then keep going to the smaller neighbor that is
larger than the vertex we are in, for as long as such neighbors exist.
The \emph{path-length distribution} of $t$ is the sequence
$l(t) = (l_1,\ldots, l_n)$ where $l_i$ is the number of vertices
of $t$ that have maximal minimal increasing path of length $i$.

\begin{thm}
  \label{thm:gy}
   The bijection $\phi\co \mathcal{F}_n^{\zeta_0} \to \mathcal{V}_n$ satisfies:
   \begin{enumerate}
   \item \label{item:deletionind} $\delta(\rho) = \varepsilon(\phi(\rho))$
   \item \label{item:degree} $d(\rho) = l(\phi(\rho))$
   \end{enumerate}
\end{thm}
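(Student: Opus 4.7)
I would factor $\phi$ as $\phi = S \circ (-)^{*} \circ f_{\zeta_0}$ and track each of the two invariants through the three stages, using the non-crossing-tree peg picture. The main leverage comes from the observation that, because $\mu(\rho) = \zeta_0$ has exactly one cycle and $\Gamma(\rho)$ is a tree, Item~\ref{item:peggenus} of Lemma~\ref{lem:eulermu} gives that $P(\Gamma(\rho))$ is a closed disk with $\Gamma(\rho)$ embedded as a non-crossing tree, with the vertices of $\Gamma(\rho)$ occupying the boundary circle in the cyclic order $1, n, n-1, \ldots, 2$ dictated by $\zeta_0$.

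\textbf{Part (1).} A direct calculation shows that in this boundary order the cyclic distance between the moved points $s$ and $t$ of $\tau = (s,t)$ equals $\delta(\tau)$, so the chord representing the edge $e_{\tau}$ splits the $n$ boundary arcs of $\partial P(\Gamma(\rho))$ into two families of sizes $\delta(\tau)$ and $n-\delta(\tau)$. By Proposition~\ref{prop:dualpeg}, $P(f_{\zeta_0}(\rho)^{*}) = P(\Gamma(\rho))^{*}$, so the vertices of the dual e-tree sit one per boundary arc and the dual edge $e_{\tau}^{*}$ is the same spatial chord as $e_{\tau}$; hence deleting $e_{\tau}^{*}$ partitions the vertex set of $f_{\zeta_0}(\rho)^{*}$ exactly into the two arc-families, giving $\varepsilon_{f_{\zeta_0}(\rho)^{*}}(e_{\tau}^{*}) = \delta(\tau)$. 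Because $S$ leaves the underlying unlabeled tree unchanged, $\varepsilon_{\phi(\rho)}(e_{\tau}^{*}) = \delta(\tau)$, and passing to distributions yields $\delta(\rho) = \varepsilon(\phi(\rho))$.

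\textbf{Part (2).} Item~\ref{item:startrail} of Theorem~\ref{thm:dualproperties} tells me that for each vertex $v$ of $\Gamma(\rho)$, the migt $\overrightarrow{v^{*}}$ in $T^{*} := f_{\zeta_0}(\rho)^{*}$ has length $\deg_{\Gamma(\rho)}(v)$; so $d(\rho)$ already coincides with the distribution of migt lengths over the vertices of $T^{*}$. What remains is to show that, for each $v^{*}$, the migt $\overrightarrow{v^{*}}$ and the maximal minimally increasing path starting at the image of $v^{*}$ in $\phi(\rho) = S(T^{*})$ have the same length. The key structural lemma, which I would prove by tracing how the first edge of $\overrightarrow{u}$ determines its companion migt back toward $\overrightarrow{1}$ in the non-crossing embedding, is that when $T^{*}$ is rooted at $1^{*}$ the smallest-labeled edge at every non-root vertex is its parent edge. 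After $S$, this property translates to: at every non-root vertex of $\phi(\rho)$ the parent's label is strictly smaller than that vertex's own label, and every child of a vertex $w$ has label strictly larger than $w$'s label. A step-by-step induction along the migt $\overrightarrow{v^{*}}$ then identifies it with the maximal minimally increasing path in $\phi(\rho)$: the migt rule ``smallest edge strictly greater than the previous edge label'' at each internal vertex of the trail translates, under $S$, into ``smallest unvisited neighbor strictly greater than the starting vertex's label'', and the two trails terminate at the same step.

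\textbf{Main obstacle.} The one genuinely nontrivial step is the structural lemma that the smallest edge at every non-root vertex of $T^{*}$ is its parent edge; everything else reduces either to the peg duality of Proposition~\ref{prop:dualpeg} and the arc counting above, or to the fact that $S$ is a tree-structure-preserving label shift. I expect the lemma to fall out cleanly from the disk geometry of $P(T)$—the first edge of a migt $\overrightarrow{u}$ corresponds to the arc separating the dual region for $u^{*}$ from the region of the root-migt $\overrightarrow{1}$—but the careful bookkeeping of which migt one ``returns to'' at the smallest-labeled edge is where the technical heart of the argument lies.
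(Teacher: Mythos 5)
Your overall strategy is the same as the paper's: factor $\phi = S \circ (\,\cdot\,)^{*} \circ f_{\zeta_0}$, get Part~(1) from the fact that $e^{*}$ and $e$ are the same chord of the disk $P(\Gamma(\rho))$ so that the dual edge inherits deletion index equal to the difference index, and get Part~(2) from Item~(1) of Theorem~\ref{thm:dualproperties} together with the claim that migts of $T^{*}$ become the maximal minimally increasing paths after $S$. Your Part~(1) is a correct and complete expansion of the paper's one-line justification.

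Part~(2), however, rests on a false lemma. Your ``structural lemma'' --- that when $T^{*}=f_{\zeta_0}(\rho)^{*}$ is rooted at $1^{*}$ the smallest-labeled edge at every non-root vertex is its parent edge --- fails already for $n=4$. Take $\rho = (3\,4),(2\,4),(1\,4) \in \mathcal{F}_{3}^{\zeta_0}$ (the product is $(1\,4\,3\,2)=\zeta_0$, and $\Gamma(\rho)$ is the star centered at $4$). The migts are $\overrightarrow{1}=(e_3)$, $\overrightarrow{2}=(e_2,e_3)$, $\overrightarrow{3}=(e_1,e_2)$, $\overrightarrow{4}=(e_1)$, so $T^{*}$ is the path $1^{*}-2^{*}-3^{*}-4^{*}$ with edge labels $3,2,1$ in that order; at the non-root vertex $2^{*}$ the smallest incident edge (label $2$) is its \emph{child} edge, not its parent edge. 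There is also a counting obstruction that shows the lemma cannot be repaired: your translated statement says that $\phi(\rho)$, rooted at $1$, has every parent label smaller than every child label, i.e.\ $\phi(\rho)$ is an increasing tree. Since $\phi$ is a bijection onto all of $\mathcal{V}_n$ and there are $n^{n-2}$ vertex-labeled trees but only $(n-1)!$ increasing ones, the property must fail for most $\rho$ as soon as $n\ge 4$. Consequently the step-by-step induction identifying $\overrightarrow{v^{*}}$ with a label-increasing vertex path collapses: in the example above the migt $\overrightarrow{4^{*}}$ maps under $S$ to the walk $2,3,4,1$, whose final step \emph{decreases} the vertex label, so the image of a migt is not in general traced out by a ``go to the smallest neighbor larger than the current label'' rule. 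The genuine content of Part~(2) is precisely the identification of these image walks with the paths counted by $l(\phi(\rho))$, and that identification needs an argument (or a more careful unwinding of the sliding map $S$ and of the path definition) that does not pass through monotonicity of labels along root-to-leaf paths.
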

\begin{proof}
  Both properties follow from the properties of mind-body duality.
  The first one from the fact that if $\tau$ has difference index $k$
  then $e^{*}$, the dual of the corresponding edge $e$ of the associated
  tree, will have deletion index $k$. The second one follows from the
  fact that the maximal minimally increasing paths are the image of
  migts under $S$.
\end{proof}

Item~\ref{item:deletionind} was observed in~\cite{GouldenYong},
Item~\ref{item:degree} is not explicitly stated there, although it is
implicit in the discussion.

\subsection{Mind-body duality for rooted e-graphs and flagged PCDs}
\label{sec:rootdual}

It turns out that for e-v-trees, mind-body duality at the level of
factorizations can be described via the functions $f_{\zeta}$
(see~\ref{thm:Denes2}) and mind-body duality at the level of rooted e-trees.
Mind-body duality can be extended to \emph{rooted} e-graphs in an
obvious way: if $\Gamma$ is a rooted e-graph with root $v_0$ then we
can take the root of $\Gamma^{*}$ to be $v_0^{*}$, i.e. the trail that
starts at $v_0$. Alternatively, the Hurwitz action of the (loop) braid
group extends to rooted e-graphs, the root just stays the
same, and we can use Theorem~\ref{thm:dualizer} to define the dual
of a rooted e-graph.  It's straightforward to check that these two
approaches define the same notion.  It's also easy to see that the
following diagram commutes, where $*$ stands for mind-body
duality of the relevant sets:

$$
\begin{psmatrix}[mnode=R,colsep=2cm,rowsep=2cm]
  \mathcal{F}_{n-1}^{\zeta_0} & \mathcal{E}_n^{*}\\
  \mathcal{F}_{n-1}^{\zeta_0^{-1}} & \mathcal{E}_n^{*}
\end{psmatrix}
\psset{nodesep=0.3cm}
\ncLine[arrowsize=.2]{->}{1,1}{1,2}
\Aput{f_{\zeta_0}}
\ncLine[arrowsize=.2]{->}{1,1}{2,1}
\Bput{*}
\ncLine[arrowsize=.2]{->}{1,2}{2,2}
\Aput{*}
\ncLine[arrowsize=.2]{->}{2,2}{2,1}
\Bput{f_{\zeta_0^{-1}}^{-1}}
$$

\begin{rem}
  \label{rem:rootcentral}
We note that one could use $f_{\zeta_0}^{-1}$ for the bottom arrow
to define duality between factorizations of the standard cycle,
i.e. one could define a duality
$\mathcal{F}_{n-1}^{\zeta_0} \to \mathcal{F}_{n-1}^{\zeta_0}$, to be 
the conjugate $f_{\zeta_0}^{-1}\circ * \circ f_{\zeta_0}$.  This
observation will be used in~\cite{Apostolakis2018a} to define a
``true'' duality for non-crossing trees, and study it's properties.
See also Remark~\ref{rem:sdevtrees} below.
\end{rem}

Clearly choosing a root for an e-graph, or more generally a leo, is
equivalent to choosing one of the chains of the PCD of its medial
digraph, and we can translate mind-body duality of rooted e-graphs (or
leos) in terms of PCDs with a distinguished chain.  we formalize this
in the following definition.

\begin{defn}
  \label{defn:flagged}  A \emph{flagged PCD} on a binary digraph $M$ is 
  a PCD $\mathcal{C}$ on $M$ endowed with a distinguished chain $f\in \mathcal{C}$
  called its \emph{flag}.

  For a chain $c$ in a PCD we use the notation $\alpha(f)$ (respectively
  $\omega(f)$) to denote the first (respectively last) vertex of $c$.

  The \emph{mind-body dual} of a flagged PCD
  $\left( \mathcal{C}, f \right)$ is the flagged PCD
  $\left( \mathcal{C}^{*}, f^{*} \right)$ where $f^{*}$ is defined as
  follows: $\alpha(f^{*}) = \alpha(f)$ and if $f$ is the only chain
  that starts at $\alpha(f)$ then $f^{*}$ is the only chain of
  $\mathcal{C}^{*}$ that starts at $\alpha(f)$, otherwise the first
  edge of $f^{*}$ is the outgoing edge incident at $\alpha(f)$ that
  does not belong to $f$, if no such edge exist then $f^{*}$ is a
  trivial chain, see Figure~\ref{fig:dualflag} where the flags of 
  the relevant PCDs are shown in red.
\end{defn}

\begin{figure}[ht]
  \centering
  \psset{arrowsize=0.15}
  \begin{pspicture}(-2.3,-.8)(11,5)
    \psdot(0,-.5)
    \psline(0,.5)(0,-.5)
    \psdot[linecolor=red](-.2,-.7)
    \psline[linecolor=blue](-.2,-.5)(-.2,.5)
    \psline{<->}(.5,0)(1.5,0)
    \uput[90](1,0){$\tiny *$}
    \rput(2,0){%
      \psdot(0,-.5)
    \psline(0,-.5)(0,.5)
    \psdot[linecolor=blue](.2,-.7)
    \psline[linecolor=red](.2,-.5)(.2,.5)}
    \rput(5.8,0){%
    \psdot(0,-.5)
     \psline(-.6,.5)(0,-.5)(.6,.5)      
     \psline[linecolor=red](-.8,.5)(-.2,-.5)
     \psline[linecolor=blue](.8,.5)(.2,-.5)}
    \psline{<->}(7,0)(8,0)
    \uput[90](7.5,0){$\tiny *$}
    \rput(9.2,0){%
    \psdot(0,-.5)
     \psline(-.6,.5)(0,-.5)(.6,.5)      
     \psline[linecolor=blue](-.8,.5)(-.2,-.5)
     \psline[linecolor=red](.8,.5)(.2,-.5)}
   \rput(-3,2.25){%
     \psdot(0,1)
     \psline(0,0)(0,1)
    \psdot[linecolor=red](-.2,1.2)
    \psline[linecolor=blue](-.2,0)(-.2,1)}
    \psline{<->}(-2.5,2.75)(-1.5,2.75)
    \uput[90](-2,2.75){$\tiny *$}
   \rput(-1,2.25){%
     \psdot(0,1)
     \psline(0,0)(0,1)
    \psdot[linecolor=red](-.2,1.2)
    \psline[linecolor=blue](-.2,0)(-.2,1)}
  \rput(1.5,2){%
  \psdot(0,1)
  \psline(0,2)(0,0)
  \psline[linecolor=blue](.2,2)(.2,0)
  \psdot[linecolor=red](-.2,1)}
  \psline{<->}(2,3)(3,3)
  \uput[90](2.5,3){$\tiny *$}
  \rput(3.5,2){%
  \psdot(0,1)
  \psline(0,2)(0,0)
  \psline[linecolor=blue](.2,.9)(.2,0)
  \psline[linecolor=red](.2,1.1)(.2,2)}
  \rput(7,2){%
    \psdot(0,1)
    \psline(-.6,2)(0,1)(0,0)      
    \psline(.6,2)(0,1)
    \psline[linearc=.25,linecolor=blue](-.2,0)(-.2,1)(-.8,2)
    \psline[linecolor=red](.8,2)(.2,1)}
  \psline{<->}(8,3)(9,3)
  \uput[90](8.5,3){$\tiny *$}
  \rput(10,2){%
    \psdot(0,1)
    \psline(-.6,2)(0,1)(0,0)      
    \psline(.6,2)(0,1)
    \psline[linecolor=red](-.8,2)(-.2,1)
    \psline[linearc=.25,linecolor=blue](.8,2)(.2,1)(.2,0)}
  \end{pspicture}
  \caption{The flag of the dual of a flagged PCD}
  \label{fig:dualflag}
\end{figure}
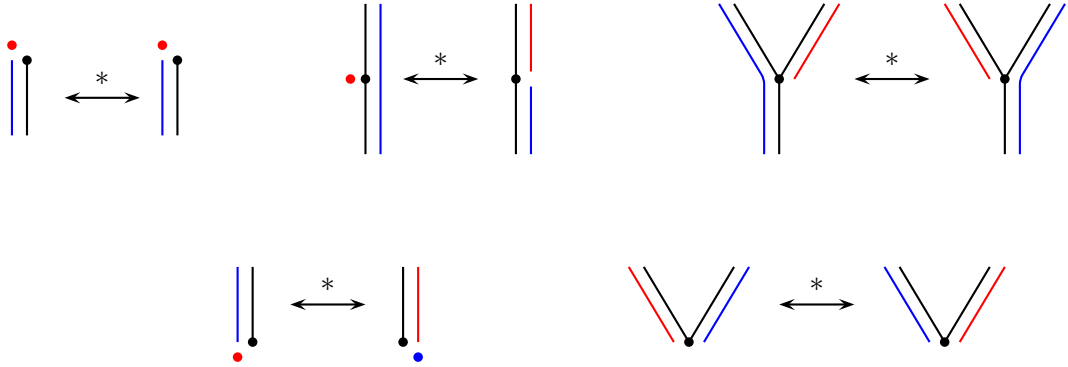

By Lemma~\ref{lem:leochi} the underlying graph of the medial digraph of
an e-tree is a tree.

\begin{defn}
  \label{defn:ditree}
   A \emph{ditree} is a digraph whose underlying graph is a tree.
\end{defn}

We finish this subsection by mentioning that the enumeration of
factorizations, or equivalently simple branched coverings, is a well
established area of research with connections to Geometric Topology,
Algebraic Geometry, and Mathematical Physics, and there are many
results with bijective proofs for various ``Hurwitz numbers''.  See
for example~\cite{DPS2014}.  The author hopes that the notion of
mind-body duality introduced in this paper will help provide explicit
bijections explaining known enumerative coincidences, as well
discovering new ones.

\subsection{Self-dual e-trees}
\label{sec:zigzag}

In every context where an interesting concept of duality is defined, a
natural question that arises is whether there are any self-dual
objects.  The question for general graphs will be studied in further
projects, in this subsection we concentrate on trees. In the context
of mind-body duality it is obvious that there are no self-dual
factorizations or e-v-trees\footnote{with the trivial exceptions of
  $n=1,2$ where the unique objects are obviously self-dual.} since the
monodromy of the dual is the inverse of the monodromy of the original
object\footnote{See however Remark~\ref{rem:rootcentral}
and Remark~\ref{rem:sdevtrees} at the end of this section}.
For e-trees the question is meaningful and has an interesting answer.

\begin{defn}
  \label{defn:sdetree}
   An e-tree $t$ is called \emph{self-dual} if $t^{*} = t$.
\end{defn}

\begin{defn}
  \label{defn:zigzag}

   For an integer $n$, the \emph{updown ditree with $n$ vertices} is the ditree
   with vertices $x_1,\ldots,x_n$ and an edge from $x_{2i-1}$ to $x_{2i}$, and
   an edge from $x_{2i}$ to $x_{2i+1}$ for each $i=1,\ldots, \lfloor n/2 \rfloor$.

   The \emph{downup ditree with $n$ vertices} is the ditree with
   vertices $x_1,\ldots,x_n$ and an edge from $x_{2i}$ to $x_{2i-1}$,
   and an edge from $x_{2i+1}$ to $x_{2i}$ for each
   $i=1,\ldots, \lfloor n/2 \rfloor$.

   A \emph{zigzag ditree} is an updown or downup ditree.
\end{defn}

For even $n$ the updown and downup ditrees are isomorphic, while for
odd $n$ there are two (inverse to each other) zigzag ditrees.  The
zigzag ditrees are the Hasse diagrams of the zigzag (or \emph{fence})
posets.  See~\cite{StanleyVolI}, page 157, Exercise 23 in Chapter 3.

With this definitions in place, we can now prove:
\begin{thm}
  \label{thm:zigzag}
   An e-tree $t$ is self-dual if and only if its medial ditree $\mathcal{M}(t)$
   is a zigzag ditree.
\end{thm}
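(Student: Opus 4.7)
The plan is to use the correspondence between e-trees and pairs (PCD, topological sort) on the medial ditree developed in Lemma~\ref{lem:int}. Set $M = \mathcal{M}(t)$, which is a ditree (as noted right before Definition~\ref{defn:ditree}), and let $\tau$ be the topological sort of $M$ induced by the edge labels of $t$. By Theorem~\ref{thm:dualmedial}, $\mathcal{M}(t^*) = M$, and since mind-body duality preserves edge labels, the leo of $t^*$ induces the same topological sort $\tau$. By Theorem~\ref{thm:dualofpcd}, if $\mathcal{C}$ is the PCD on $M$ induced by the leo of $t$ and $\mathcal{C}^*$ the PCD induced by the leo of $t^*$, then $\mathcal{C}^*$ is the dual of $\mathcal{C}$.

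First I would reduce the condition $t \cong t^*$ to the literal equality $\mathcal{C} = \mathcal{C}^*$. By the construction in Item~\ref{item:int3} of Lemma~\ref{lem:int}, every e-tree with medial digraph $M$ is recovered from a pair (PCD, topological sort), and two such pairs produce isomorphic e-trees precisely when they lie in the same orbit under the natural action of $\mathrm{Aut}(M)$. Hence $t \cong t^*$ if and only if there is $\sigma \in \mathrm{Aut}(M)$ with $\sigma(\mathcal{C}) = \mathcal{C}^*$ and $\sigma \cdot \tau = \tau$. Since $\tau$ is an injective labeling of $V(M)$, its stabilizer in $\mathrm{Aut}(M)$ is trivial, so $\sigma = \id$ and therefore $\mathcal{C} = \mathcal{C}^*$.

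By Definition~\ref{defn:dualofpcd} the dual PCD is obtained by flipping the binary choice at every internal vertex of $M$, so $\mathcal{C} = \mathcal{C}^*$ is equivalent to $M$ having no internal vertex. It remains to verify that a binary ditree with no internal vertex is exactly a zigzag ditree. One direction is immediate: every vertex of a zigzag is either a pure source or a pure sink. For the converse, if every vertex has bidegree $(a,0)$ or $(0,b)$ with $a,b \le 2$, then the total degree is at most $2$ and the underlying tree is a path; an arc $u \to v$ forces $u$ to be a pure source (its out-degree is positive) and $v$ to be a pure sink (its in-degree is positive) by non-internality, so sources and sinks alternate along the path, giving a zigzag. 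The main obstacle is the reduction $t \cong t^* \Leftrightarrow \mathcal{C} = \mathcal{C}^*$, which hinges on using the injectivity of $\tau$ to trivialize the stabilizer in $\mathrm{Aut}(M)$; once this is in place, the zigzag characterization is a short structural observation.
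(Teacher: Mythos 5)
Your proof is correct and follows essentially the same route as the paper: both reduce self-duality of $t$ to literal equality of the PCD on $\mathcal{M}(t)$ with its dual (using the topsorted/labeled vertices to kill any nontrivial automorphism), and then observe that the dual PCD flips the choice at every internal vertex, so equality holds precisely when there are no internal vertices, i.e.\ when $\mathcal{M}(t)$ is a zigzag ditree. Your only additions are to spell out two steps the paper treats as clear, namely the $\mathrm{Aut}(M)$-orbit reduction and the identification of binary ditrees without internal vertices with zigzag ditrees.
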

\begin{proof} The medial ditree of an e-tree is a topsorted binary ditree with
a PCD. It is clear that the set of zigzag ditrees coincides with the set
of binary ditrees with no internal vertices.  So we need to prove that an e-tree
is self-dual if and only if its medial ditree has no internal vertices.

By Definition~\ref{defn:dualofpcd}, it follows that if
$\mathcal{M}(t)$ has no internal vertices then $\mathcal{M}(t)$ is
self-dual and hence, by Theorem~\ref{thm:dualofpcd}, $t$ is self dual.

Conversely, since a PCD and its dual, differ at every internal vertex, and
$\mathcal{M}(t)$, being topsorted, has labeled vertices, it follows that if
$\mathcal{M}(t)$ has internal vertices then $t$ is not self-dual.
\end{proof}

It is well known that the number of topological sorts of a zigzag
ditree with $n$ vertices is given by the $n$-th Euler up/down number.
This is sequence \href{https://oeis.org/A000111}{A000111} in The
On-Line Encyclopedia of Integer Sequences~\cite{oeis}. This sequence
enumerates (among other things) the set of \emph{alternating
  permutations}, see~\cite{Andre1881}.

The fact that \href{https://oeis.org/A000111}{A000111} enumerates the
set of topological sorts of a given zigzag ditree is not enough to
conclude that it also enumerates self-dual e-trees, because of the
presence of automorphisms.  Indeed, while for even $n$ the zigzag
ditree has no non-trivial automorphisms, for odd $n$ there is a
non-trivial automorphism of order $2$.  However for odd $n$ there are
two, inverse to each other, zigzag ditrees and that introduces a factor
of $2$ that compensates.  So we have:

\begin{cor}
  \label{cor:zigzag}
   The number of self-dual e-trees with $n$ vertices is equal to the $(n-1)$-th
   Euler up/down number.
\end{cor}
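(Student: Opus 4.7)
The plan is to combine Theorem~\ref{thm:zigzag} with Item~\ref{item:int3} of Lemma~\ref{lem:int}: the former identifies self-dual e-trees as exactly those whose medial ditree is a zigzag ditree, and the latter counts e-graphs (up to isomorphism) with a prescribed medial dag. Since an e-tree on $n$ vertices has $n-1$ edges, its medial ditree has $n-1$ vertices, and the problem reduces to enumerating, for each isomorphism class of zigzag ditrees on $n-1$ vertices, the e-trees realizing it as their medial ditree, and then summing.

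Let $M$ be a zigzag ditree on $k := n-1$ vertices. By the very characterization of zigzag ditrees as the binary ditrees with no internal vertices, we have $\iota(M)=0$. A topological sort of $M$ is a linear extension of the corresponding zigzag poset on $k$ elements, and these are well known to be in bijection with the alternating permutations of $[k]$; hence $\tau(M)=E_k$, the $k$-th Euler up/down number. Item~\ref{item:int3} of Lemma~\ref{lem:int} then gives $E_{n-1}/\alpha(M)$ self-dual e-trees realizing $M$ as their medial ditree, where $\alpha(M)$ is the order of $\mathrm{Aut}(M)$.

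It remains to perform a short case analysis on the parity of $n-1$. When $n-1$ is even, the relabeling $x_i\mapsto x_{n-i}$ is an isomorphism between the updown and downup ditrees, so there is a single isomorphism class of zigzag ditree on $n-1$ vertices; inspecting the $(\text{in},\text{out})$-degree at each vertex shows that all vertex types are already distinguished, so $\alpha(M)=1$ and we get $E_{n-1}$ self-dual e-trees. When $n-1$ is odd, the updown ditree has two sources at its endpoints while the downup ditree has two sinks at its endpoints, so the two ditrees are non-isomorphic and account for two classes; each admits the central reflection $x_i\mapsto x_{n-i}$ as its unique non-trivial automorphism, so $\alpha(M)=2$ and the two classes together contribute $2\cdot E_{n-1}/2=E_{n-1}$. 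In both cases the total is $E_{n-1}$, as claimed.

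The main obstacle will be the mild case analysis and, in the odd $n-1$ case, the verification that the central reflection exhausts $\mathrm{Aut}(M)$; this follows because any automorphism must permute the two extreme endpoint sources (resp.\ sinks), after which the rest is pinned down by the unique-path property of trees. The integrality of $E_{n-1}/2$ in that case is automatic from the proof of Lemma~\ref{lem:int}, which exhibits a free action of $\mathrm{Aut}(M)$ on the set of topological sorts of $M$.
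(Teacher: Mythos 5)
Your proof is correct and follows essentially the same route as the paper: both arguments reduce, via Theorem~\ref{thm:zigzag}, to counting topological sorts of zigzag ditrees (the Euler up/down numbers) and then checking that for even $n-1$ there is one isomorphism class with trivial automorphism group, while for odd $n-1$ the two non-isomorphic classes each carry an order-two automorphism whose effect cancels; your version merely makes the paper's informal ``factor of $2$ that compensates'' precise by invoking the formula $2^{\iota}\tau/\alpha$ of Lemma~\ref{lem:int}. The only (harmless) imprecision is the degenerate case $n=2$, where the one-vertex ditree has trivial automorphism group and a single isomorphism class, so your odd-case analysis does not literally apply, though the count $E_1=1$ still comes out right.
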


The self-dual e-trees for $n=3,4,5$ are shown in
Figure~\ref{fig:sdetrees}, on the left side we have the zigzag
ditree(s), on the center all possible topsorted zigzag ditrees, and on
the right the corresponding e-graphs.

\begin{figure}[htbp]
  \centering
  \psset{unit=1.2}
\begin{pspicture}(-1.5,-1.6)(10,15.6)
  \rput(-1,-.5){$\mathbf{n=3}$}
  \rput(-1,3){$\mathbf{n=4}$}
  \rput(-1,10.75){$\mathbf{n=5}$}
  \rput(.5,-1){\rnode{0}{\psdot(0,0)}}
  \rput(.5,0){\rnode{1}{\psdot(0,0)}}
  \ncline[arrowsize=.2]{->}{0}{1}
  \rput(4,-1){%
    \rput(0,0){\rnode{0}{\psdot(0,0)}}
    \uput[-90](0,0){$1$}
    \rput(0,1){\rnode{1}{\psdot(0,0)}}
    \uput[90](0,1){$2$}
    \ncline[arrowsize=.15]{->}{0}{1}}
  \rput(8,-.75){%
    \psset{unit=.9}
    \rput(0,1){\rnode{1}{\psdot(0,0)}}
    \rput(-0.866025,-0.5){\rnode{2}{\psdot(0,0)}}
    \rput(0.866025,-0.5){\rnode{3}{\psdot(0,0)}}
    \ncline{1}{2}
    \ncput*{\blue {\small $2$}}
    \ncline{2}{3}
    \ncput*{\blue {\small $1$}}}
  \rput(0,1.5){\rnode{0}{\psdot(0,0)}}
  \rput(.5,2.5){\rnode{1}{\psdot(0,0)}}
  \rput(1,1.5){\rnode{2}{\psdot(0,0)}}
  \ncline[arrowsize=.15]{->}{0}{1}
  \ncline[arrowsize=.15]{->}{2}{1}

  \rput(3.5,-.5){
    \rput(0,2){\rnode{0}{\psdot(0,0)}}
    \uput[-90](0,2){$1$}
    \rput(.5,3){\rnode{1}{\psdot(0,0)}}
    \uput[90](.5,3){$3$}
    \rput(1,2){\rnode{2}{\psdot(0,0)}}
    \uput[-90](1,2){$2$}
  \ncline[arrowsize=.15]{->}{0}{1}
  \ncline[arrowsize=.15]{->}{2}{1}}

\rput(8,4){%
\psset{unit=.8}
\rput(0, 1.0){\rnode{1}{\psdot(0,0)}}
\rput(-1.0, 0){\rnode{2}{\psdot(0,0)}}
\rput(0, -1.0){\rnode{3}{\psdot(0,0)}}
\rput(1.0, 0){\rnode{4}{\psdot(0,0)}}
\ncline{1}{2}
\ncput*{\blue {\small $2$}}
\ncline{1}{3}
\ncput*{\blue {\small $1$}}
\ncline{3}{4}
\ncput*{\blue {\small $3$}}}

\rput(8,2){%
\psset{unit=.8}
\rput(0, 1.0){\rnode{1}{\psdot(0,0)}}
\rput(-1.0, 0){\rnode{2}{\psdot(0,0)}}
\rput(0, -1.0){\rnode{3}{\psdot(0,0)}}
\rput(1.0, 0){\rnode{4}{\psdot(0,0)}}
\ncline{1}{2}
\ncput*{\blue {\small $1$}}
\ncline{1}{3}
\ncput*{\blue {\small $3$}}
\ncline{3}{4}
\ncput*{\blue {\small $2$}}}

\rput(0,1.5){
  \rput(0,3){\rnode{0}{\psdot(0,0)}}
  \rput(.5,2){\rnode{1}{\psdot(0,0)}}
  \rput(1,3){\rnode{2}{\psdot(0,0)}}
  \ncline[arrowsize=.15]{->}{1}{0}
  \ncline[arrowsize=.15]{->}{1}{2}

  \rput(3.5,0){
  \rput(0,3){\rnode{0}{\psdot(0,0)}}
  \uput[90](0,3){$2$}
  \rput(.5,2){\rnode{1}{\psdot(0,0)}}
  \uput[-90](.5,2){$1$}
  \rput(1,3){\rnode{2}{\psdot(0,0)}}
  \uput[90](1,3){$3$}
  \ncline[arrowsize=.15]{->}{1}{0}
  \ncline[arrowsize=.15]{->}{1}{2}}}
\rput(0,1.7){%
  \rput(0,8.5){\rnode{0}{\psdot(0,0)}}
  \rput(.5,9.5){\rnode{1}{\psdot(0,0)}}
  \rput(1,8.5){\rnode{2}{\psdot(0,0)}}
   \rput(1.5,9.5){\rnode{3}{\psdot(0,0)}}
  \ncline[arrowsize=.15]{->}{0}{1}
  \ncline[arrowsize=.15]{->}{2}{1}
  \ncline[arrowsize=.15]{->}{2}{3}}

  \rput(3,6.25){\rnode{0}{\psdot(0,0)}}
  \uput[-90](3,6.25){$1$}
  \rput(3.5,7.25){\rnode{1}{\psdot(0,0)}}
  \uput[90](3.5,7.25){$3$}
  \rput(4,6.25){\rnode{2}{\psdot(0,0)}}
  \uput[-90](4,6.25){$2$}
   \rput(4.5,7.25){\rnode{3}{\psdot(0,0)}}
  \uput[90](4.5,7.25){$4$}
  \ncline[arrowsize=.15]{->}{0}{1}
  \ncline[arrowsize=.15]{->}{2}{1}
  \ncline[arrowsize=.15]{->}{2}{3}

\rput(0.5,2){%
  \rput(2.5,6.25){\rnode{0}{\psdot(0,0)}}
  \uput[-90](2.5,6.25){$1$}
  \rput(3,7.25){\rnode{1}{\psdot(0,0)}}
  \uput[90](3,7.25){$4$}
  \rput(3.5,6.25){\rnode{2}{\psdot(0,0)}}
  \uput[-90](3.5,6.25){$2$}
   \rput(4,7.25){\rnode{3}{\psdot(0,0)}}
  \uput[90](4,7.25){$3$}
  \ncline[arrowsize=.15]{->}{0}{1}
  \ncline[arrowsize=.15]{->}{2}{1}
  \ncline[arrowsize=.15]{->}{2}{3}}

\rput(0.5,4){%
  \rput(2.5,6.25){\rnode{0}{\psdot(0,0)}}
  \uput[-90](2.5,6.25){$2$}
  \rput(3,7.25){\rnode{1}{\psdot(0,0)}}
  \uput[90](3,7.25){$3$}
  \rput(3.5,6.25){\rnode{2}{\psdot(0,0)}}
  \uput[-90](3.5,6.25){$1$}
   \rput(4,7.25){\rnode{3}{\psdot(0,0)}}
  \uput[90](4,7.25){$4$}
  \ncline[arrowsize=.15]{->}{0}{1}
  \ncline[arrowsize=.15]{->}{2}{1}
  \ncline[arrowsize=.15]{->}{2}{3}}

\rput(0.5,6){%
  \rput(2.5,6.25){\rnode{0}{\psdot(0,0)}}
  \uput[-90](2.5,6.25){$2$}
  \rput(3,7.25){\rnode{1}{\psdot(0,0)}}
  \uput[90](3,7.25){$4$}
  \rput(3.5,6.25){\rnode{2}{\psdot(0,0)}}
  \uput[-90](3.5,6.25){$1$}
   \rput(4,7.25){\rnode{3}{\psdot(0,0)}}
  \uput[90](4,7.25){$3$}
  \ncline[arrowsize=.15]{->}{0}{1}
  \ncline[arrowsize=.15]{->}{2}{1}
  \ncline[arrowsize=.15]{->}{2}{3}}

\rput(0.5,8){%
  \rput(2.5,6.25){\rnode{0}{\psdot(0,0)}}
  \uput[-90](2.5,6.25){$3$}
  \rput(3,7.25){\rnode{1}{\psdot(0,0)}}
  \uput[90](3,7.25){$4$}
  \rput(3.5,6.25){\rnode{2}{\psdot(0,0)}}
  \uput[-90](3.5,6.25){$1$}
   \rput(4,7.25){\rnode{3}{\psdot(0,0)}}
  \uput[90](4,7.25){$2$}
  \ncline[arrowsize=.15]{->}{0}{1}
  \ncline[arrowsize=.15]{->}{2}{1}
  \ncline[arrowsize=.15]{->}{2}{3}}

\rput(8,6.6){%
\psset{unit=.8}
\rput(0, 1.0){\rnode{1}{\psdot(0,0)}}
\rput(-0.9510565162951535, 0.3090169943749475){\rnode{2}{\psdot(0,0)}}
\rput(-0.5877852522924732, -0.8090169943749473){\rnode{3}{\psdot(0,0)}}
\rput(0.5877852522924729, -0.8090169943749476){\rnode{4}{\psdot(0,0)}}
\rput(0.9510565162951536, 0.3090169943749472){\rnode{5}{\psdot(0,0)}}
\ncline{2}{3}
\ncput*{\blue {\small $1$}}
\ncline{1}{3}
\ncput*{\blue {\small $3$}}
\ncline{1}{4}
\ncput*{\blue {\small $2$}}
\ncline{4}{5}
\ncput*{\blue {\small $4$}}}

\rput(8,8.6){%
\psset{unit=.8}
\rput(0, 1.0){\rnode{1}{\psdot(0,0)}}
\rput(-0.9510565162951535, 0.3090169943749475){\rnode{2}{\psdot(0,0)}}
\rput(-0.5877852522924732, -0.8090169943749473){\rnode{3}{\psdot(0,0)}}
\rput(0.5877852522924729, -0.8090169943749476){\rnode{4}{\psdot(0,0)}}
\rput(0.9510565162951536, 0.3090169943749472){\rnode{5}{\psdot(0,0)}}
\ncline{2}{3}
\ncput*{\blue {\small $1$}}
\ncline{1}{3}
\ncput*{\blue {\small $4$}}
\ncline{1}{4}
\ncput*{\blue {\small $2$}}
\ncline{4}{5}
\ncput*{\blue {\small $3$}}}

\rput(8,10.6){%
\psset{unit=.8}
\rput(0, 1.0){\rnode{1}{\psdot(0,0)}}
\rput(-0.9510565162951535, 0.3090169943749475){\rnode{2}{\psdot(0,0)}}
\rput(-0.5877852522924732, -0.8090169943749473){\rnode{3}{\psdot(0,0)}}
\rput(0.5877852522924729, -0.8090169943749476){\rnode{4}{\psdot(0,0)}}
\rput(0.9510565162951536, 0.3090169943749472){\rnode{5}{\psdot(0,0)}}
\ncline{2}{3}
\ncput*{\blue {\small $2$}}
\ncline{1}{3}
\ncput*{\blue {\small $3$}}
\ncline{1}{4}
\ncput*{\blue {\small $1$}}
\ncline{4}{5}
\ncput*{\blue {\small $4$}}}

\rput(8,12.6){%
\psset{unit=.8}
\rput(0, 1.0){\rnode{1}{\psdot(0,0)}}
\rput(-0.9510565162951535, 0.3090169943749475){\rnode{2}{\psdot(0,0)}}
\rput(-0.5877852522924732, -0.8090169943749473){\rnode{3}{\psdot(0,0)}}
\rput(0.5877852522924729, -0.8090169943749476){\rnode{4}{\psdot(0,0)}}
\rput(0.9510565162951536, 0.3090169943749472){\rnode{5}{\psdot(0,0)}}
\ncline{2}{3}
\ncput*{\blue {\small $2$}}
\ncline{1}{3}
\ncput*{\blue {\small $4$}}
\ncline{1}{4}
\ncput*{\blue {\small $1$}}
\ncline{4}{5}
\ncput*{\blue {\small $3$}}}

\rput(8,14.6){%
\psset{unit=.8}
\rput(0, 1.0){\rnode{1}{\psdot(0,0)}}
\rput(-0.9510565162951535, 0.3090169943749475){\rnode{2}{\psdot(0,0)}}
\rput(-0.5877852522924732, -0.8090169943749473){\rnode{3}{\psdot(0,0)}}
\rput(0.5877852522924729, -0.8090169943749476){\rnode{4}{\psdot(0,0)}}
\rput(0.9510565162951536, 0.3090169943749472){\rnode{5}{\psdot(0,0)}}
\ncline{2}{3}
\ncput*{\blue {\small $3$}}
\ncline{1}{3}
\ncput*{\blue {\small $4$}}
\ncline{1}{4}
\ncput*{\blue {\small $1$}}
\ncline{4}{5}
\ncput*{\blue {\small $2$}}}
\end{pspicture}
  \caption{Self dual e-trees}
  \label{fig:sdetrees}
\end{figure}
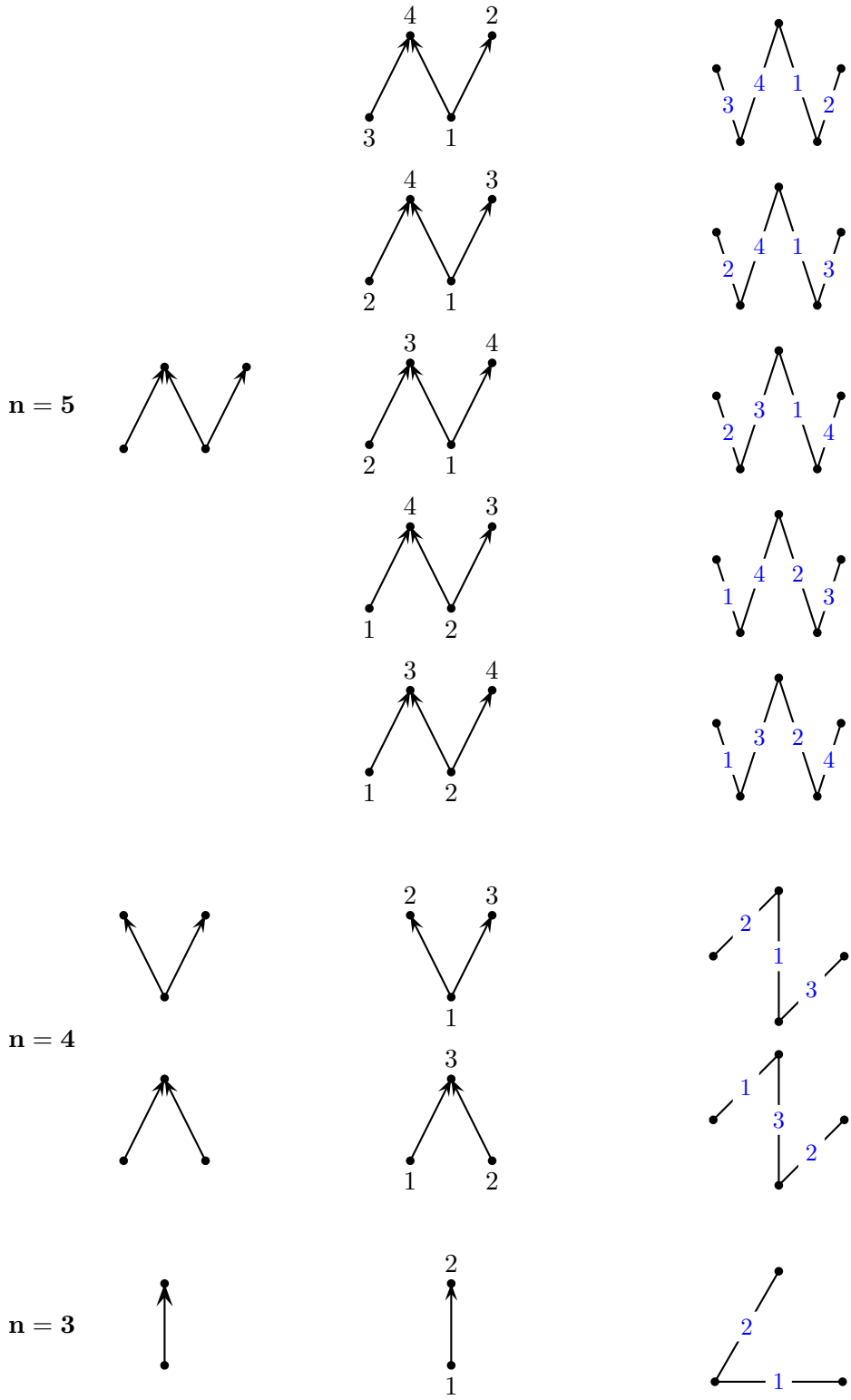

One can also ask if there are any self-dual rooted e-trees and if so,
how many.  The answer turns out to be again the Euler up/down numbers.
To see this notice that it follows from the discussion in
Section~\ref{sec:rootdual} (see Definition~\ref{defn:flagged}) that 
in order for a flagged PCD to be self-dual it is necessary that it's
flag is a trivial chain whose only vertex is a maximal leaf.  Therefore
an rooted e-tree is self-dual if and only if it's medial ditree
is a zigzag ditree, and the flag of its PCD is a maximal leaf.  For 
even $m$ each of the updown and downup ditrees with $m$ vertices
has exactly one maximal leaf, while for odd $n$ the updown ditree
has has no maximal leaf, while the downup ditree has exactly two.
So we also have the following corollary:

\begin{cor}
  \label{cor:zigzagrooted}
  The number of rooted self-dual e-trees with $n$ vertices is also
  equal to the $(n-1)$-th Euler up/down number.
\end{cor}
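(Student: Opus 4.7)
The plan is to adapt the strategy of Theorem~\ref{thm:zigzag} to the flagged setting and then count. Recall that rooting an e-tree at $v_0$ corresponds, via Subsection~\ref{sec:rootdual}, to marking the chain $\overrightarrow{v_0}$ of its PCD as the flag, and that distinct vertices of the tree yield distinct migts. Since a self-dual rooted e-tree is in particular a self-dual e-tree, Theorem~\ref{thm:zigzag} already forces the medial digraph to be a zigzag ditree $Z$; its PCD is then uniquely determined (no internal vertices means no choices in Lemma~\ref{lem:int}(\ref{item:int2})) and is automatically self-dual. The problem therefore reduces to identifying which flags $f$ of this PCD satisfy $f^{*}=f$, and to counting the resulting (topsort, flag) pairs up to $\mathrm{Aut}(Z)$.

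First I would catalogue the chains of the PCD of a zigzag ditree: since every vertex of $Z$ is either a source or a sink, the non-trivial chains are exactly the arcs of $Z$, while the trivial chains appear at leaf-sources (to partner with their single outgoing arc) and at leaf-sinks (to partner with their single incoming arc). Then I would apply Definition~\ref{defn:flagged} case by case. For a non-trivial arc chain $c=(u,v)$ the vertex $\alpha(c)=u$ is a source at which another chain starts — either its second outgoing arc (when $u$ has out-degree $2$) or the trivial chain at $u$ (when $u$ is a leaf-source) — and the recipe of Definition~\ref{defn:flagged} forces $c^{*}$ to be that other chain (or the trivial chain at $u$ if no outgoing arc remains outside $c$); in either case $c^{*}\ne c$. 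Symmetrically, the trivial chain at a leaf-source is mapped by duality to its unique outgoing arc chain and is not self-dual. The remaining case is the trivial chain at a leaf-sink (a \emph{maximal leaf}): the only chain starting at that vertex is the trivial chain itself, because the incoming arc chain starts at the other endpoint of the arc; by Definition~\ref{defn:flagged} it equals its dual. Hence self-dual flags correspond bijectively to maximal leaves of $Z$.

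With the characterization in hand, the count becomes a sum over isomorphism classes of zigzag ditrees $Z$ on $m:=n-1$ vertices of the quantity $\tau(Z)\cdot\ell(Z)/|\mathrm{Aut}(Z)|$, where $\tau(Z)$ is the number of topological sorts of $Z$ and $\ell(Z)$ its number of maximal leaves; this counts (topsort, maximal leaf)-orbits under the action of $\mathrm{Aut}(Z)$, which is free on topsorts because a non-trivial automorphism of a labeled $Z$ cannot fix any bijection $V(Z)\to[m]$. Splitting by parity: when $m$ is even the updown and downup ditrees are isomorphic, $\mathrm{Aut}(Z)$ is trivial, the unique $Z$ has exactly one maximal leaf (at the sink endpoint of the path), and $\tau(Z)=E_m$, giving contribution $E_m$. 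When $m$ is odd the two non-isomorphic ditrees each carry the path-reversal involution; the updown ditree has both endpoints as sources and contributes $0$, while the downup ditree has $\ell=2$ maximal leaves swapped by the involution and contributes $\tau\cdot 2/2=E_m$. Either way the total is $E_{n-1}$.

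The main obstacle I anticipate is executing the case analysis of Definition~\ref{defn:flagged} cleanly, especially keeping track of which chains \emph{start} at a given vertex (as opposed to merely passing through or ending there); the subtle asymmetry between incoming and outgoing arcs is precisely what singles out leaf-sinks as the unique locus of self-dual flags. A secondary delicate point is verifying that the two maximal leaves of the odd-$m$ downup ditree are amalgamated into a single orbit by the path-reversal automorphism, so that the factor of $2$ is correctly divided out and the same Euler number $E_{n-1}$ emerges in both parities.
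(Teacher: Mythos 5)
Your proof is correct and takes essentially the same route as the paper: reduce via Theorem~\ref{thm:zigzag} to a zigzag medial ditree, use Definition~\ref{defn:flagged} to show that a self-dual flag must be the trivial chain at a maximal leaf, and then count maximal leaves of the updown/downup ditrees by parity, dividing out the order-two automorphism in the odd case. You simply make explicit the flag case analysis and the orbit count that the paper leaves to the reader.
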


\begin{rem}
  \label{rem:sdevtrees} This means that if we use the alternative
  duality of Remark~\ref{rem:rootcentral} there are self-dual
  e-v-trees, since that duality is simply a conjugate of the duality
  of rooted e-trees.
\end{rem}

\section{Future directions}
\label{sec:future}
As we mentioned in the introduction, we expect that the main
application of this work will be in finding new, as well as
explaining already known results about Hurwitz numbers.  Pegs are more
attuned to the graph theoretical properties of the graph than cegs.

We conclude by listing a few further works that will use the theory
developed in this paper.

\subsection{Almost minimal factorizations of cycles}
\label{sec:unicycles}
The next class of e-graphs after trees is the class of e-unicycles,
i.e. e-graphs with a unique cycle.  Using Item~\ref{item:euler} of
Lemma~\ref{lem:eulermu}, and the classification of surfaces, we see
that the surface of the peg of such an e-unicycle is an annulus.  It
follows that the monodromy of an e-unicycle is a product of two disjoint
cycles, this was also observed in~\cite{Arnold1996} using a branched
covering argument.  The mind-body dual of an e-unicycle is also an
e-unicycle and so one obtains interesting ``structural'' bijections
between different classes of e-unicycles.

A particularly simple case is the case of e-unicycles whose monodromy
has a fixed point.  The set of these unicycles, with $n+1$ vertices
has cardinality $n^{n}$ and a question posed in~\cite{GouldenYong}, is
(or rather can be interpreted to be) whether mind-body duality can be
used to ``explain'' this simple counting.  It turns out that the migt
of the fixed vertex of such a unicycle is (the closed trail
corresponding to) the unique cycle, and so one of the ``structural''
properties of mind-body duality is that it takes the neighborhood of
the fixed point to the unique cycle, and that fact can be used to
provide bijections between various subsets of the set of those
e-unicycles.

The factorization that corresponds to an e-unicycle with a fixed
vertex expresses an $n$-cycle as the product of $n+1$ transpositions,
while the minimum number of transpositions needed is $n-1$, so we call
such factorizations \emph{almost minimal}.  Those and related topics
will be studied in~\cite{Apostolakis2018d}.

\subsection{Duality for non-crossing trees}
\label{sec:futurenc}

As we've mentioned a notion of duality for non-crossing trees has been
defined in~\cite{Hernando1999}, however in the context that it was
defined (vertex-labeled non-crossing trees), that duality is not
involutory: the dual of the dual is not the original, it becomes
involutory, and coincides with the peg duality as we defined it in
Definition~\ref{defn:dualpeg}, only if we descend to the level of
unlabeled non-crossing trees.  That ``duality'' is closely related
with the action of the Garside element (see Section~\ref{sec:dualhurw})
and it's periodic with period a multiple of~$n$.

In~\cite{Apostolakis2018a} we use the idea of
Remark~\ref{rem:rootcentral} to define a ``true'' duality for
(labeled) non-crossing trees, ask the question ``how many self-dual
non-crossing trees are there?'' and get an interesting answer.

\subsection{Duality for increasing trees}
\label{sec:increasefut}

The class of increasing trees is well studied in the literature, for
example see~\cite{VarietiesIncreasing1992}, these are rooted v-trees
in which the children of every vertex have labels greater than the
vertex.  It follows that the root is labeled $1$, and we can apply the
inverse of the sliding operation $\mathcal{E}_n^{*} \to \mathcal{V}_n$
defined in the proof of Theorem~\ref{thm:slindingbij} to convert the
class of increasing trees to a class of rooted e-trees that turns out
to be closed under the mind-body duality.  So one can define a duality
in the set of increasing trees and study its properties.  This will be
done in~\cite{Apostolakis2018c} where interesting bijection are
obtained for several classes of increasing trees.

We mention that the set of (topsorted) medial ditrees of increasing
trees consists of those binary ditrees that have exactly one minimum,
and that set is obviously in bijection with binary increasing trees, which
in turn are in bijection (see e.g.~\cite{Donaghey1975}) with the set of
alternating permutations.  So the Euler up/down numbers appear again!

\subsection{General theory of Properly Embedded Graphs}
\label{sec:pegsgen}

The focus of this paper is on e-graphs and factorizations, and we
developed enough of the theory of pegs to be able to treat this case.
However there is a more general theory of pegs, that treats the case
of pegs whose medial digraph is not a dag, as well as the case of
graphs properly embedded in non-orientable surfaces. One can even
consider \emph{semi-pegs}, where some of the vertices lie in the
boundary, and some in the interior of the surface. While most of the
ingredients for such a theory are already contained, or have been
hinted on, in this work, there are a few new ingredients needed
for such an extension.  We plan to pursue this in a future
work~\cite{Apostolakis2018b}.

\bibliographystyle{plain}
\bibliography{brcov}

\end{document}
